\newcommand{\R}{{\mathbb R}}       
\newcommand{\DD}{{\mathcal D}}
\newcommand{\EE}{{\mathcal E}}
\newcommand{\UU}{{\mathcal U}}
\newcommand{\cA}{{\mathcal A}}
\newcommand{\HH}{{\mathcal H}}
\newcommand{\LL}{{\mathcal L}}
\newcommand{\NB}{{\mathcal {NB}}}
\newcommand{\FL}{{\mathcal {FL}}}
\newcommand{\RR}{{\mathcal R}}
\newcommand{\ve}{{\varepsilon}}
\newcommand{\tve}{{\tilde\varepsilon}}
\newcommand{\tmu}{{\tilde\mu}}
\newcommand{\tsigma}{{\tilde\sigma}}
\newcommand{\tchi}{{\tilde\chi}}
\newcommand{\wt}[1]{{\widetilde{#1}}}
\newcommand{\wh}[1]{{\widehat{#1}}}
\newcommand{\rf}[1]{{\eqref{#1}}}
\pgfplotsset{compat=1.6}
\DeclareMathOperator{\D}{\mathcal{D}}
\DeclareMathOperator{\Div}{div}
\DeclareMathOperator{\diam}{diam}
\DeclareMathOperator{\n1}{\nabla_1}
\DeclareMathOperator{\supp}{supp}
\DeclareMathOperator{\Lip}{Lip}
\DeclareMathOperator{\dist}{dist}
\DeclareMathOperator{\Rn1}{\mathbb{R}^{n+1}}
\DeclareMathOperator{\E}{\mathcal{E}}
\DeclareMathOperator{\Stop}{\mathsf{Stop}}
\DeclareMathOperator{\Tree}{\mathsf{Tree}}
\DeclareMathOperator{\Nice}{\mathsf{Nice}}
\DeclareMathOperator{\Top}{\mathsf{Top}}
\DeclareMathOperator{\Ch}{\mathsf{Ch}}
\DeclarePairedDelimiter\norm{\lVert}{\rVert}
\theoremstyle{plain}
\newtheorem{theor}{Theorem}[section]
\theoremstyle{plain}
\newtheorem*{theor*}{Theorem}
\theoremstyle{plain}
\newtheorem*{theora*}{Theorem A}
\theoremstyle{plain}
\newtheorem*{theorb*}{Theorem B}
\theoremstyle{plain}
\newtheorem{prop}{Proposition}[section]
\theoremstyle{remark}
\newtheorem{rem}{\bf Remark}[section]
\newtheorem*{rem*}{\bf Remark}
\theoremstyle{definition}
\newtheorem{defin}{Definition}[section]
\theoremstyle{plain}
\newtheorem{coroll}{Corollary}[section]
\theoremstyle{remark}
\theoremstyle{plain}
\newtheorem{lemm}{Lemma}[section]
\numberwithin{equation}{section}
\def\Xint#1{\mathchoice
{\XXint\displaystyle\textstyle{#1}}%
{\XXint\textstyle\scriptstyle{#1}}%
{\XXint\scriptstyle\scriptscriptstyle{#1}}%
{\XXint\scriptscriptstyle\scriptscriptstyle{#1}}%
\!\int}
\def\XXint#1#2#3{{\setbox0=\hbox{$#1{#2#3}{\int}$ }
\vcenter{\hbox{$#2#3$ }}\kern-.58\wd0}}
\def\avint{\Xint-}
\begin{document}


\title[Gradients of Single Layer
Potentials and Uniform Rectifiability]{$L^2$-boundedness of Gradients of Single Layer
Potentials and Uniform Rectifiability}


\author{Laura Prat}

\address{Laura Prat
\\
Departament de Matem\`atiques
\\
Universitat Aut\`onoma de Barcelona
\\
08193 Bellaterra (Barcelona), Catalonia.
}
\email{laurapb@mat.uab.cat}

\author{Carmelo Puliatti}
\address{Carmelo Puliatti
\\
Departament de Matem\`atiques and BGSMath
\\
Universitat Aut\`onoma de Barcelona
\\
08193 Bellaterra (Barcelona), Catalonia.
}
\email{puliatti@mat.uab.cat}

\author{Xavier Tolsa}

\address{Xavier Tolsa
\\
ICREA, Passeig Llu\'{\i}s Companys 23 08010 Barcelona, Catalonia\\
 Departament de Matem\`atiques, and BGSMath
\\
Universitat Aut\`onoma de Barcelona
\\
08193 Bellaterra (Barcelona), Catalonia.
}
\email{xtolsa@mat.uab.cat} 

\thanks{All the authors were partially supported by 2017-SGR-0395 (Catalonia) and MTM-2016-77635-P (MICINN, Spain).
 X.T. was also partially supported by the ERC grant 320501 of the European Research Council (FP7/2007-2013) and MDM-2014-044 (MINECO, Spain). C.P. is also partially supported by the grant MDM-2014-0445  (MINECO through the Mar\'ia de Maeztu Programme for Units of Excellence in R\&D, Spain).
}

\begin{abstract}
Let $A(\cdot)$ be an $(n+1)\times (n+1)$ uniformly elliptic matrix with H\"older continuous real coefficients and let $\EE_A(x,y)$
be the fundamental solution of the PDE $\mathrm{div} A(\cdot) \nabla u =0$ in $\R^{n+1}$. Let $\mu$ be a compactly supported $n$-AD-regular measure in $\R^{n+1}$ and consider the associated operator
$$T_\mu f(x) = \int \nabla_x\EE_A(x,y)\,f(y)\,d\mu(y).$$
We show that if $T_\mu$ is bounded in $L^2(\mu)$, then $\mu$ is uniformly $n$-rectifiable. This extends the solution of the codimension $1$ David-Semmes problem for the Riesz transform to the gradient of the single layer potential.
Together with a previous result of Conde-Alonso, Mourgoglou and Tolsa, this shows that, given $E\subset\R^{n+1}$ with
finite Hausdorff measure $\HH^n$, 
if $T_{\HH^n|_E}$ is bounded in $L^2(\HH^n|_E)$, then $E$ is $n$-rectifiable.
Further, as an application we show that if the elliptic measure associated to the above PDE is absolute continuous with respect to surface measure, then it must be rectifiable, analogously to what happens with harmonic measure.

\end{abstract}

 \maketitle



\tableofcontents

\section{Introduction}

The purpose of this paper is to extend the solution of the codimension $1$ David-Semmes problem for the Riesz transform
to operators defined by gradients of singular layer potentials associated with elliptic PDE's in divergence form with H\"older continuous coefficients. The single layer potential and its gradient play an important role  in the
solvability of this type of equations and also in the study of the corresponding elliptic measure.
Recall that the David-Semmes problem deals with the connection between the Riesz transforms and rectifiability. 
This was solved in 1996 for the $1$-dimensional Riesz transform (or equivalently, for the Cauchy transform) by Mattila, Melnikov and Verdera in \cite{MMV} by using the connection between
Menger curvature and the Cauchy kernel.
The case of codimension $1$ was solved more recentely by Nazarov, Tolsa and Volberg in \cite{NTV_acta} by different methods, relying on the harmonicity of the codimension $1$ Riesz kernel. The David-Semmes problem is still open in the remaining dimensions $n\in [2,d-2]$ in $\R^d$.


Given a Borel measure  $\mu$  in $\mathbb{R}^{d}$ (from now on we assume all measures to be Borel in the paper), recall that its $n$-dimensional Riesz transform is 
defined by
\begin{equation}
\mathcal{R}^n\mu (x)=\int \frac{x-y}{|x-y|^{n+1}}\,d\mu(y),
\end{equation}
whenever the integral makes sense. Also, for a function $f\in L^1_{loc}(\mu)$, we write $\mathcal{R}^n_\mu f(x)= \mathcal{R}^n(f\mu) (x)$.


The $n$-dimensional Hausdorff measure is denoted by $\mathcal{H}^n$.
A set $E\subset \R^d$ is called $n$-rectifiable if there are Lipschitz maps $f_i:\mathbb{R}^{n}\to\R^d$, $i=1,2,\ldots$, such that
\begin{equation}
\mathcal{H}^n \Big( E\setminus\bigcup_i f_i(\mathbb{R}^n)\Big)=0.
\end{equation}
A set $F$ is called purely $n$-unrectifiable if $\mathcal{H}^n(F\cap E)=0$ for every $n$-rectifiable set $E$. As for sets, one can define a notion of rectifiabilty also for measures: a measure $\mu$ is said to be $n$-rectifiable if it vanishes outside an $n$-rectifiable set $E\subset\R^d$ and, moreover, it is absolutely continuous with respect to $\mathcal{H}^n|_{E}$.

In most of this work we deal with measures that present a certain degree of regularity.
A measure $\mu$ in $\R^d$ is called $n$-AD-regular (or just AD-regular or Ahlfors-David regular) if there exists some
constant $C_0>0$ such that
$$C_0^{-1}r^n\leq \mu(B(x,r))\leq C_0\,r^n\quad \mbox{ for all $x\in
\supp(\mu)$ and $0<r\leq \diam(\supp(\mu))$.}$$
 A set $E\subset\R^d$ is $n$-AD-regular if the measure $\HH^n|_E$ is $n$-AD-regular.

The set $E$ is  called uniformly  $n$-rectifiable if it is 
$n$-AD-regular and
there exist $\theta, M >0$ such that for all $x \in E$ and all $r>0$ 
there is a Lipschitz mapping $g$ from the ball $B_n(0,r)$ in $\R^{n}$ to $\R^d$ with $\Lip(g) \leq M$ such that$$
\HH^n (E\cap B(x,r)\cap g(B_{n}(0,r)))\geq \theta r^{n}.$$
A measure $\mu$ is called uniformly $n$-rectifiable if it is $n$-AD-regular and its support is uniformly $n$-rectifiable.

It is easy to check that if a set (or a measure) is uniformly $n$-rectifiable, then it is also $n$-rectifiable. The converse implication
is false. In fact, uniform $n$-rectifiability is a quantitative version of the notion of $n$-rectifiability introduced by David and Semmes \cite{david_semmes}.  One of their motivations to introduce this notion was the desire to find a good framework where one can study the $L^2(\mu)$ boundedness of singular integral operators.
Indeed, they showed that if $\mu$ is $n$-AD-regular, the fact that $\mu$ is uniformly $n$-rectifiable is equivalent to the
$L^2(\mu)$-boundedness of a sufficiently big class of singular integral operators with an odd and smooth enough Calder\'on-Zygmund kernel. In particular, if $\mu$ is uniformly $n$-rectifiable, then the $n$-dimensional Riesz transform $\mathcal{R}^n_\mu$
is bounded in $L^2(\mu)$.

 The David-Semmes problem consists in proving that the converse statement holds. That is, that 
under the background assumption of $n$-AD-regularity on the measure $\mu$, the $L^2(\mu)$ boundedness of  the  Riesz transform $\mathcal{R}^n_\mu$ implies the uniform $n$-rectifiability of $\mu$. As mentioned above, the answer is only known (and positive) in the cases $n=1$ and $n=d-1$ in $\R^d$, by  \cite{MMV} and \cite{NTV_acta}, respectively.

The solution of the David-Semmes problem has had important applications to the solution of other relevant questions.
In the dimension $1$ case in the plane, this has played an essential role in the geometric characterization 
of removable singularities
 for bounded analytic functions, and in particular in the solution of 
Vitushkin's conjecture for sets with finite length by David \cite{David-vitus}. 
In the codimension $1$ case, 
the analogous result involving the removable singularities for Lipschitz harmonic functions has been solved
in \cite{NTV_publ}.
Other remarkable applications of the solution of the David-Semmes problem in codimension $1$ deal with the metric and geometric properties of harmonic measure. In particular, this is a key ingredient in the recent solution of two
problems about harmonic measure raised by Christopher Bishop in the early 1990's \cite{Bishop-conjectures}. The first one is the fact that the mutual absolute continuity of harmonic measure for an open set $\Omega\subset\R^{n+1}$ with respect to the surface measure $\HH^n$ in a subset of $\partial\Omega$ implies the rectifiability of that subset \cite{AHM3TV}. The second one is the solution of the so called two-phase problem 
in the works \cite{AMT} and \cite{AMTV}.

The results just mentioned also make sense for solutions of elliptic equations and for the elliptic measure. So in view of potential applications, it is natural to try to extend the solution of the David-Semmes problem to gradients of single layer potentials, which are the analogues of the Riesz transform in the context of elliptic PDE's.

Next we introduce the precise ellipic PDE's in which we are interested.
 Let $A=(a_{ij})_{1\leq i,j \leq n+1}$ be an $(n+1)\times (n+1)$ matrix whose entries $a_{ij}\colon\R^{n+1} \to \R$  are measurable functions in $L^\infty(\R^{n+1})$. Assume also that there exists $\Lambda>0$ such that
\begin{align}\label{eqelliptic1}
&\Lambda^{-1}|\xi|^2\leq \langle A(x) \xi,\xi\rangle,\quad \mbox{ for all $\xi \in\R^{n+1}$ and a.e. $x\in\R^{n+1}$,}\\
&\langle A(x) \xi,\eta \rangle  \leq\Lambda |\xi| |\eta|, \quad \mbox{ for all $\xi, \eta \in\R^{n+1}$ and a.e. $x\in\R^{n+1}$.} \label{eqelliptic2}
\end{align}
We consider the elliptic equation
\begin{equation}\label{eq:ellipticeq}
L_A u(x)\coloneqq -\mathrm{div}\left(A(\cdot) \nabla u (\cdot) \right)(x)=0,
\end{equation}
which should be understood in the distributional sense.
We say that a function $u \in W^{1,2}_{\rm loc}(\Omega)$ is a {\it solution} of \eqref{eq:ellipticeq} or {\it $L_A$-harmonic} in an open set $\Omega \subset \R^{n+1}$ if
$$
\int A \nabla u \cdot \nabla \varphi = 0, \quad \mbox{ for all $\varphi \in C_c^\infty(\Omega)$.}
$$

We denote by $\mathcal{E}_A(x,y)$, or just by $\mathcal{E}(x,y)$ when the matrix $A$ is clear from the context, the {\it fundamental solution} for $L_A$ in $\R^{n+1}$, so that $L_A \mathcal{E}_A(\cdot,y) = \delta_y$ in the distributional sense, where $\delta_y$ is the Dirac mass at the point $y \in \R^{n+1}$. For a construction of the fundamental solution under the assumption \eqref{eqelliptic1} and \eqref{eqelliptic2} on the matrix $A$ we refer to \cite{HK}.
For a measure $\mu$, the function $f(x)=\int \mathcal{E}_A(x,y)\,d\mu(y)$  is usually known as the {\it single layer potential} of $\mu$.
We consider the singular integral operator $T$ whose kernel is
\begin{equation}\label{eq:Kdef}
K(x,y) = \nabla_1 \mathcal{E}_A(x,y)
\end{equation}
(the subscript $1$ means that we take the gradient with respect to the first variable), so that \begin{equation}\label{eq:Tmudef}
T\mu(x) = \int K(x,y) \,d\mu(y)
\end{equation}
when $x$ is away from $\mathrm{supp}(\mu)$.   That is, $T\mu$ is the gradient of the single layer potential of $\mu$.

Given a function $f\in L^1_{loc}(\mu)$,
we set also
\begin{equation}\label{eq:Tfdef}
T_\mu f(x) = T(f\,\mu)(x) = \int K(x,y) f(y)\,d\mu(y),
\end{equation}
and, for $\ve>0$, we consider the $\ve$-truncated version
$$
T_{\ve}\mu(x) = \int_{|x-y|>\ve} K(x,y) \,d\mu(y).
$$
We also write $T_{\mu,\ve} f(x) = T_\ve (f\mu)(x)$. We say that the operator $T_\mu$ is bounded in $L^2(\mu)$ if the operators $T_{\mu,\ve}$ are bounded in $L^2(\mu)$ uniformly on $\ve>0$.

In the special case when $A$ is the identity matrix, $-L_A$ is the Laplacian and $T$ is the $n$-dimensional Riesz transform up to a constant factor depending only on the dimension $n$.

Without any hypothesis on the smoothness of the coefficients of the matrix $A$, one cannot expect the kernel 
$K(\cdot,\cdot)$ in \rf{eq:Kdef} to be of Calder\'on-Zygmund type, and thus we need to impose  some regularity condition on $A$. 
We say that the matrix $A$ is H\"older continuous with exponent $\alpha$ (or briefly $C^\alpha$ continuous), if there exists $\alpha>0$ and $C_h>0$ such that
\begin{equation}\label{eq:Holdercont}
|a_{ij}(x)-a_{ij}(y)| \leq C_h |x-y|^\alpha\quad \mbox{ for all $x,y \in \mathbb{R}^{n+1}$ and $1 \leq i,j \leq n+1$.}
\end{equation}
Under this assumption on the coefficients, the kernel $K(\cdot,\cdot)$ turns out to be locally of Calder\'on-Zygmund type (see Lemma \ref{lemcz} for more details). However, we remark that in general $K(\cdot,\cdot)$ is neither homogeneous (of degree $-n$)  nor antisymmetric (even locally).

\vspace{1mm}
Our main result is the following.

\begin{theor} \label{teo1}
Let $\mu$ be a compactly supported $n$-AD-regular measure in $\mathbb R^{n+1}$.  Let $A$ be an elliptic matrix satisfying \eqref{eqelliptic1}, \eqref{eqelliptic2} and \eqref{eq:Holdercont}, and let $T_\mu$ be the associated operator given by \eqref{eq:Tfdef}.
The operator $T_\mu$ is bounded in $L^2(\mu)$ if and only if $\mu$ is uniformly $n$-rectifiable.
\end{theor}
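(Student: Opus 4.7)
The ``if'' direction is a consequence of general Calder\'on-Zygmund theory on uniformly rectifiable measures: since the kernel $K(x,y)=\nabla_1\EE_A(x,y)$ is locally of Calder\'on-Zygmund type (as stated in Lemma \ref{lemcz}), the classical David-Semmes theorem gives $L^2(\mu)$-boundedness of $T_\mu$ whenever $\mu$ is $n$-AD-regular and uniformly $n$-rectifiable. This can be verified by first checking $T(1)$-type estimates on Lipschitz graphs and then patching via the big-pieces criterion.

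The substantive direction is the converse. The plan is to reduce to the Nazarov-Tolsa-Volberg theorem \cite{NTV_acta} for the codimension $1$ Riesz transform through a ``frozen coefficients'' strategy. Fix a base point $x_0$ and set $A_0=A(x_0)$; let $\EE^{x_0}$ denote the fundamental solution of the constant-coefficient operator $L_{A_0}=-\mathrm{div}(A_0\nabla\cdot)$. Because $A_0$ is constant, $\EE^{x_0}$ is given in closed form as a multiple of $\langle A_0^{-1}(x-y),x-y\rangle^{(1-n)/2}$, and the gradient $\nabla_1\EE^{x_0}(x,y)$ is proportional to the classical $n$-dimensional Riesz kernel after the linear change of coordinates associated with the symmetric positive square root of $A_0+A_0^T$. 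The H\"older hypothesis \eqref{eq:Holdercont}, together with standard interior elliptic estimates, yields for $x,y$ in a ball $B(x_0,r)$ a bound
\[
|K(x,y)-\nabla_1\EE^{x_0}(x,y)|\leq C\,r^{\alpha}|x-y|^{-n},
\]
so that the difference is a CZ kernel with a small gain (in $r$) that produces a bounded operator on $L^2(\mu)$ for every AD-regular $\mu$, independently of any rectifiability hypothesis.

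With these ingredients, the proof would follow the architecture of \cite{NTV_acta}, substituting $T$ for the $n$-dimensional Riesz transform throughout. The central analytic facts used by NTV carry over to the elliptic setting: the kernel $K(\cdot,y)$ is $L_A$-harmonic away from the pole $y$, so one can invoke the maximum principle for $L_A$ and compare with the $L_A$-elliptic measure in place of harmonic measure. A corona/stopping-time decomposition built on the David-Mattila lattice for $\mu$ would produce stopping regions where $\mu$ is approximately flat (good) or where one can quantify a gain in some variational energy (bad); on each region of radius $r$ centered at some $x_0$ where freezing is performed, the frozen kernel $\nabla_1\EE^{x_0}$ is, after the affine transformation, a multiple of the Riesz kernel, and NTV's quantitative estimates transplant back to $T_\mu$ up to the H\"older error above.

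The principal obstacle is that $K$ is neither antisymmetric nor homogeneous, and several steps in \cite{NTV_acta} rely essentially on antisymmetry (most notably, the variational estimates converting oscillation of $T$ applied to $L^2$ test functions into decay of $\beta_2$ or $\alpha$ coefficients). The frozen-coefficient comparison recovers antisymmetry modulo a H\"older error, but this error is only small at scales much smaller than the freezing scale; coupling the freezing scale with the corona stopping scale, and absorbing the accumulated H\"older errors across the stopping tree without destroying the packing estimates, is the main technical difficulty. A secondary issue is that the affine change of variables at each base point perturbs the AD-regularity constants, and one has to keep these uniform as $x_0$ varies over $\supp(\mu)$.
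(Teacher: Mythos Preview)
Your proposal is a sketch rather than a proof, and it contains a genuine gap in the freezing step. You claim that for $x,y\in B(x_0,r)$,
\[
|K(x,y)-\nabla_1\EE^{x_0}(x,y)|\le C\,r^{\alpha}|x-y|^{-n},
\]
and that this difference ``produces a bounded operator on $L^2(\mu)$ for every AD-regular $\mu$, independently of any rectifiability hypothesis.'' This is false: the kernel $|x-y|^{-n}$ has the \emph{same} singularity order as the Riesz kernel itself, so its $L^2(\mu)$-boundedness on a general $n$-AD-regular $\mu$ is precisely the type of statement you are trying to prove, not something available for free. The small factor $r^\alpha$ does not help, because at every scale of the corona tree you would be absorbing an error operator whose norm is comparable to (a small multiple of) the norm of the Riesz transform on that piece of $\mu$---which you do not control a priori. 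What \emph{is} true (Lemma~\ref{lemm_freezing}) is that freezing at the \emph{variable} point, i.e.\ comparing $K(x,y)$ with $\nabla_1\Theta(x,y;A(x))$, gives an error $\lesssim|x-y|^{\alpha-n}$, which is a genuinely smoother kernel and defines a compact operator. But then the frozen operator has kernel depending on $A(x)$ and is not a single Riesz transform after one affine change of variables; it is a pointwise-varying family of Riesz-type kernels, and the NTV machinery does not apply to it as a black box.

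The paper does \emph{not} reduce to \cite{NTV_acta}; it re-runs the entire NTV scheme for the operator $T$ itself. The architecture is: assume the family of non-$\delta$-BAUP cubes is not Carleson and derive a contradiction via the David--Semmes BAUP criterion. The quasiorthogonality arguments of \cite{NTV_acta}, which rely on antisymmetry, are replaced by a martingale (Haar) difference decomposition on the David--Semmes lattice. For each ``nice'' tree $R$ one performs a change of variables so that $A(x_R)=Id$, constructs an auxiliary matrix $\wh A$ obtained by reflecting $A$ across a hyperplane (so that the associated operator $S\mu(x)=\wh T\mu(x)-\wh T\mu(x^*)$ has the localization needed for the variational argument), and builds approximating measures $\sigma$ and $\nu$ supported on hyperplanes. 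The maximum principle for $L_{\wh A}$-harmonic functions then drives a variational argument analogous to NTV's, yielding $\|T_R\mu\|_{L^2(\mu)}^2\gtrsim_\delta\mu(R)$, which contradicts the martingale orthogonality bound. The freezing lemma is used throughout, but only to control local errors of size $|x-y|^{\alpha-n}$---never to globally replace $T$ by a Riesz transform.
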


The assumption that $\mu$ is compactly supported in the theorem above is necessary and it is due to the fact that the $C^\alpha$ continuity of the matrix $A$ is a property which is not scale invariant.
We also remark that it is already known that $T_\mu$ is bounded in $L^2(\mu)$ if $\mu$ is uniformly $n$-rectifiable (see Theorem 2.5 from \cite{CMT}). Our contribution is the converse statement. 

Theorem \ref{teo1} should be compared to a recent result obtained by Conde-Alonso, Mourgoglou and Tolsa in \cite{CMT}, which in a sense complements our theorem. The precise result is the following.

\begin{theora*}[\cite{CMT}]
Let $\mu$ be a non-zero Borel measure in $\mathbb R^{n+1}$.  Let $A$ be an elliptic matrix satisfying \eqref{eqelliptic1}, \eqref{eqelliptic2} and \eqref{eq:Holdercont}, and let $T_\mu$ be the associated 
operator.
Suppose that
 the upper density $\limsup_{r\to0}\frac{\mu(B(x,r))}{(2r)^n}$  is positive $\mu$-a.e.\ in $\R^{n+1}$, and the lower density $\liminf_{r\to0}\frac{\mu(B(x,r))}{(2r)^n}$ vanishes $\mu$-a.e.\ in $\R^{n+1}$. Then $T_\mu$ is not bounded in $L^2(\mu)$.
\end{theora*}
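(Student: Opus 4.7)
The plan is to argue by contradiction via reduction to the Eiderman-Nazarov-Volberg (ENV) theorem for the $n$-dimensional Riesz transform: a non-zero Borel measure $\sigma$ in $\R^{n+1}$ with positive upper density and vanishing lower density $\sigma$-a.e.\ cannot support an $L^2(\sigma)$-bounded Riesz transform $\mathcal{R}^n_\sigma$. Suppose $T_\mu$ is bounded in $L^2(\mu)$; the goal is to use the Hölder continuity of $A$ to transfer this boundedness, at small scales, to the classical Riesz transform on a blowup of $\mu$, and then invoke ENV.

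The first step is coefficient freezing. Fix $x_0\in\supp\mu$, set $A_0:=A(x_0)$, and let $\mathcal{E}_{A_0}$ denote the fundamental solution of the constant-coefficient operator $L_{A_0}$. The kernel $K_0(x,y):=\nabla_1\mathcal{E}_{A_0}(x,y)$ is translation invariant, and a linear change of variables diagonalizing $A_0$ identifies it, up to a nonzero constant, with the classical Riesz kernel $(x-y)/|x-y|^{n+1}$. From \eqref{eq:Holdercont} and standard PDE estimates for fundamental solutions of elliptic equations with Hölder coefficients, one expects a pointwise comparison of the form
\[
|K(x,y)-K_0(x,y)|\lesssim \frac{\max(|x-x_0|,|y-x_0|)^{\alpha}}{|x-y|^{n}},
\]
valid for $x,y$ in a fixed neighborhood of $x_0$. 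Rescaling to unit scale makes this error acquire a factor $r^\alpha$, rendering it negligible as $r\to 0$.

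The second step is a blowup argument. At $\mu$-a.e.\ $x_0$, one selects scales $r_k\to 0$ along which the rescaled measures $\mu_k(E):=r_k^{-n}\mu(x_0+r_k E)$ converge weakly-$*$ to a non-trivial measure $\nu$ (using the positive upper-density hypothesis to guarantee a non-zero limit), while encoding the vanishing lower-density hypothesis so that it persists for $\nu$ at $\nu$-a.e.\ point. The Hölder error from Step~1 vanishes uniformly on compact sets in the limit, so the $L^2(\mu)$-boundedness of $T_\mu$ descends to $L^2(\nu)$-boundedness of the constant-coefficient operator with kernel $K_0$. Pushing $\nu$ forward by the linear change diagonalizing $A_0$ yields a measure $\wt\nu$ for which $\mathcal{R}^n_{\wt\nu}$ is bounded in $L^2(\wt\nu)$; the density dichotomy is preserved under this bi-Lipschitz linear map. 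Applying ENV to $\wt\nu$ then produces the required contradiction.

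The main obstacle is the blowup step: one must select scales and a limit measure that simultaneously preserve the positive-upper / vanishing-lower density dichotomy, carry the $L^2$-boundedness through the weak-$*$ limit (which requires quantitative control, not just pointwise vanishing, of the Hölder error uniformly on truncations), and handle the non-antisymmetric and non-homogeneous pieces of $K$ through that same error. A potentially cleaner route is to bypass tangent measures entirely: using the Hölder comparison, estimate $\|T_{\mu,\ve}f-c(x)\,\mathcal{R}^n_{\mu,\ve}f\|_{L^2(\mu)}$ for $\mu$-a.e.\ $x$, where $c(x)$ is a bounded coefficient encoding $A(x)$ via the relevant linear change, and deduce uniform-in-$\ve$ $L^2(\mu)$-boundedness of $\mathcal{R}^n_{\mu,\ve}$ directly, applying ENV to $\mu$ itself.
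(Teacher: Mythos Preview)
This statement is not proved in the present paper; it is quoted from \cite{CMT}, where the Eiderman--Nazarov--Volberg machinery is \emph{adapted} to the variable--coefficient kernel $\nabla_1\mathcal{E}_A$ rather than reduced to the Riesz case. So there is no ``paper's own proof'' to compare against here, and the natural question is whether your reduction strategy can replace that adaptation. Both routes you sketch have genuine gaps.

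For the blowup route, the obstacle you flag yourself is decisive. The hypotheses of ENV require the \emph{limit} measure $\nu$ (or $\wt\nu$) to have positive upper and vanishing lower $n$-density $\nu$-a.e. But these are statements about $\nu$ at \emph{its} points and \emph{its} small scales, which encode the behaviour of $\mu$ near $x_0$ at scales $r_k s$ with $s\to 0$ \emph{after} $k\to\infty$. The hypotheses on $\mu$ control a different diagonal regime, and there is no mechanism forcing the dichotomy to survive: tangent measures of irregular $\mu$ can be far more regular than $\mu$ itself (this is essentially Preiss's phenomenon). Carrying the $L^2$ bound through the weak-$*$ limit can be done with standard NTV technology, but without the density dichotomy for $\nu$ you cannot invoke ENV.

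For the ``cleaner'' direct route, the difficulty is that the frozen kernel at $x$ is not a scalar multiple of the Riesz kernel. By \eqref{grad_const_coeff_sol},
\[
\nabla_1\Theta(x,y;A(x))=\frac{c_n}{\sqrt{\det A_s(x)}}\,\frac{A_s(x)^{-1}(x-y)}{(A_s(x)^{-1}(x-y)\cdot(x-y))^{(n+1)/2}},
\]
which is the Riesz kernel \emph{composed with the linear map} $A_s(x)^{-1/2}$, then hit by a matrix. There is no function $c(x)$ for which $T_{\mu,\ve}f(x)-c(x)\mathcal{R}^n_{\mu,\ve}f(x)$ has a kernel dominated by $|x-y|^{-n+\alpha}$; the discrepancy between $\nabla_1\Theta(\cdot\,;A(x))$ and the Riesz kernel is of size $|A(x)-Id|/|x-y|^{n}$, which is not small unless you are already localized near a point where $A=Id$. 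One further freezing (replacing $A(x)$ by $A(y)$) still leaves you with an operator whose kernel involves an $x$-independent but $y$-dependent linear deformation, and boundedness of that operator does not imply boundedness of $\mathcal{R}^n_\mu$ by any soft argument---that implication is essentially the David--Semmes problem itself.

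In short, a clean reduction to ENV on $\mu$ or on a tangent of $\mu$ does not seem to be available; the proof in \cite{CMT} instead reruns the ENV argument (martingale decomposition, David--Mattila cells, the key localization lemma) with $\nabla_1\mathcal{E}_A$ in place of the Riesz kernel, using Lemma~\ref{lemm_freezing} only to handle local error terms at each step.
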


Notice that, in the case $\mu=\HH^n|_E$, the assumptions on the upper and lower densities in the theorem above
imply that $E$ is purely $n$-unrectifiable. 
This theorem extends an analogous result proved previously by Eiderman, Nazarov and Volberg \cite{ENV} for the $n$-dimensional Riesz transform.

Our proof of Theorem \ref{teo1} follows the same scheme as the proof of the corresponding result for
the Riesz transform in \cite{NTV_acta}. In particular, it also relies on a variational argument which
uses the fact that $L_A$-harmonic functions satisfy a maximum principle. It also uses the so-called BAUP criterion
of David and Semmes \cite[p.\ 139]{david_semmes}. However, there are some important
differences between our arguments and the ones in \cite{NTV_acta}. An important one is that we use
a martingale difference decomposition in terms of the David-Semmes lattice, instead of the quasiorthogonality arguments
in \cite{NTV_acta}. We think that using a martingale decomposition makes the whole construction much more transparent. Further, the quasiorthogonality arguments seem to require the antisymmetry of the kernel, which does not hold in our case. On the other hand, the fact that the matrix $A$ is non-constant makes
our arguments and estimates more involved and technical. For example, the reflection trick required to apply later the variational argument is more delicate, as well as  
 the  approximation techniques used
to transfer estimates among different measures (see Section \ref{choosingdelta} below).
 The reader can find the scheme of the proof of Theorem \ref{teo1} at the end of Section \ref{sec4}.

By combining Theorem \ref{teo1} and  Theorem A from \cite{CMT}, we are also able to derive the following rectifiability result for general sets.

\begin{theor}\label{teo2}
Let $E\subset\Rn1$ be a compact set with $\mathcal{H}^n(E)<\infty$. 
Let $A$ and $T$ be as in Theorem \ref{teo1}.
If $T_{\mathcal{H}^n{|_E}}$ is bounded in $L^2(\mathcal{H}^n{|_E})$, then $E$ is $n$-rectifiable.
\end{theor}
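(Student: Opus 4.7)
The plan is to reduce Theorem \ref{teo2} to Theorem \ref{teo1} and Theorem A via a dichotomy based on the lower $n$-density. I argue by contradiction: if $E$ were not $n$-rectifiable, then decomposing $E=E_r\cup E_u$ with $E_r$ $n$-rectifiable and $E_u$ purely $n$-unrectifiable would give $\HH^n(E_u)>0$. Because the truncated operators $T_{\HH^n|_E,\ve}$ commute with multiplication by $\chi_F$ and such multiplication contracts the $L^2(\HH^n|_E)$-norm, the $L^2$-boundedness of $T$ descends to $\HH^n|_F$ for every Borel $F\subset E$. Replacing $E$ by $E_u$, I may therefore assume from the start that $E$ is compact, purely $n$-unrectifiable, and $0<\HH^n(E)<\infty$.

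The first step is to eliminate the points of vanishing lower density. Set
\[E_0:=\Bigl\{x\in E:\liminf_{r\to 0}\frac{\HH^n(E\cap B(x,r))}{(2r)^n}=0\Bigr\},\qquad \mu_0:=\HH^n|_{E_0}.\]
Since $\mu_0\le\HH^n|_E$, the lower $n$-density of $\mu_0$ at every $x\in E_0$ vanishes; on the other hand, the classical upper density estimate for Borel sets of finite $\HH^n$ measure gives $\limsup_{r\to 0}\mu_0(B(x,r))/(2r)^n\ge 2^{-n}$ for $\mu_0$-a.e.\ $x$, so the upper density is positive $\mu_0$-a.e. Theorem A then predicts that $T_{\mu_0}$ is not $L^2(\mu_0)$-bounded, which combined with the descent to subsets forces $\HH^n(E_0)=0$.

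Consequently the lower density is strictly positive $\HH^n$-a.e.\ on $E$, and I select $k\in\N$ with $\HH^n(F_k)>0$, where $F_k:=\{x\in E:\liminf_{r\to 0}\HH^n(E\cap B(x,r))/(2r)^n\ge 1/k\}$. A first application of Egorov's theorem furnishes $F_k'\subset F_k$ with $\HH^n(F_k')>0$ and $r_0>0$ such that $\HH^n(E\cap B(x,r))\ge (2r)^n/(2k)$ for every $x\in F_k'$ and every $r\in(0,r_0]$. By Lebesgue differentiation, $\HH^n|_{F_k'}$-a.e.\ point of $F_k'$ is a density point of $F_k'$ relative to $\HH^n|_E$, and a second application of Egorov yields a compact $G\subset F_k'$ with $\HH^n(G)>0$ and $r_1\in(0,r_0]$ such that $\HH^n(F_k'\cap B(x,r))\ge\tfrac12 \HH^n(E\cap B(x,r))$ for every $x\in G$ and $r\in(0,r_1]$. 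After localizing to a ball of radius $\le r_1$ by Vitali, one obtains a compactly supported $n$-AD-regular measure $\HH^n|_G$; the matching upper bound $\HH^n(G\cap B(x,r))\lesssim r^n$ is automatic from $\HH^n(E)<\infty$.

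Finally, $T_{\HH^n|_G}$ is $L^2(\HH^n|_G)$-bounded by the descent to subsets, and $\HH^n|_G$ is compactly supported and $n$-AD-regular, so Theorem \ref{teo1} yields that $\HH^n|_G$ is uniformly $n$-rectifiable; in particular $G$ is $n$-rectifiable, contradicting $G\subset E$ being purely $n$-unrectifiable with $\HH^n(G)>0$. I expect the extraction of $G$ to be the main obstacle: every other step reduces to a direct application of Theorems \ref{teo1} and A together with elementary properties of Hausdorff measure, but the iterated Egorov and density-point construction must be carried out with enough care to transfer the pointwise lower-density estimate for $\HH^n|_E$ into a genuine AD-regularity lower bound for $\HH^n|_G$ at all small scales uniformly in $x\in G$.
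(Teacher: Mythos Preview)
There is a genuine gap in the extraction of the AD-regular set $G$. After your second Egorov step you obtain, for every $x\in G$ and $r\le r_1$,
\[
\HH^n(F_k'\cap B(x,r))\ \ge\ \tfrac12\,\HH^n(E\cap B(x,r))\ \ge\ c\,r^n,
\]
but this is a lower bound for $\HH^n|_{F_k'}$ at points of $G$, \emph{not} for $\HH^n|_G$. Since $G$ may be a proper subset of $F_k'$ of much smaller density at many scales, nothing in your argument forces $\HH^n(G\cap B(x,r))\gtrsim r^n$; hence $\HH^n|_G$ need not be lower AD-regular, and Theorem~\ref{teo1} does not apply to it. Iterating the density-point/Egorov step does not rescue this: each pass only controls the measure of the \emph{previous} set in balls centered on the new one, while the constants and threshold radii degrade. (A secondary issue: the upper bound $\HH^n(G\cap B(x,r))\lesssim r^n$ is not ``automatic from $\HH^n(E)<\infty$''; it requires the classical upper-density bound $\limsup_{r\to0}\HH^n(E\cap B(x,r))/(2r)^n\le1$ together with one more Egorov step, but this is easily repaired.)

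The paper circumvents this by \emph{extending} rather than restricting. Your set $F_k'$---on which $\HH^n(E\cap B(x,r))\ge c\,r^n$ holds uniformly for $r\le r_0$---is exactly the input for a Pajot-type covering lemma, which manufactures an $n$-AD-regular measure $\wt\mu_k$ with $\HH^n|_{F_k'}\le\wt\mu_k$ by adding pieces of hyperplanes in the gaps. The cost is that one must then transfer the $L^2$-boundedness of $T$ from $\HH^n|_E$ to the \emph{larger} measure $\wt\mu_k$; this is where Lemma~\ref{lemantisym} enters, reducing to the antisymmetric part $T^{(a)}$ so that the argument of \cite{NTV_publ} applies verbatim. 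Your overall architecture---use Theorem~A to annihilate the zero-lower-density part, then feed the rest into Theorem~\ref{teo1}---is exactly right; only the passage to an AD-regular measure must proceed by extension rather than by finding a subset.
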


The analogous result in case that $A$ is the identity and $T$ is the $n$-dimensional Riesz transform
(modulo some constant factor) has been proved in \cite{NTV_publ}. Theorem \ref{teo2} is proved almost in the same way as in \cite{NTV_publ}: by an argument inspired by a covering theorem of Pajot,
one decomposes $\mu=\HH^n|_E$ into a measure $\mu_0$ with vanishing lower density and a countable collection of measures $\mu_k$ such that each $\mu_k$ can be extended to another $n$-AD-regular measure $\wt \mu_k$
such that $T_{\wt\mu_k}$ is bounded in $L^2(\wt\mu_k)$.
Theorem A implies that $\mu_0\equiv0$, and Theorem \ref{teo1} implies that each measure $\wt\mu_k$ is uniformly $n$-rectifiable. The only specific feature of the Riesz kernel that is used in
\cite{NTV_publ} is its antisymmetry. As mentioned above, we cannot ensure that the kernel $K(\cdot,\cdot)$
is antisymmetric. However, this is not a problem in our case because by Lemma \ref{lemantisym} below it turns out that, for
any measure $\mu$ with growth of degree $n$ (see \rf{eqrwth56} for the definition), $T_\mu$ is bounded in $L^2(\mu)$ if and only if the operator 
$T_\mu^{(a)}$ associated with the antisymmetric part of $K(\cdot,\cdot)$ is bounded in $L^2(\mu)$. 
Then, in order to prove Theorem \ref{teo2} we just apply the same arguments as  in \cite{NTV_publ} to 
$T_\mu^{(a)}$ instead of the $n$-dimensional Riesz transform.

\vspace{1mm}

An important application of Theorem \ref{teo2} deals with elliptic measure. Given a Wiener regular open set $\Omega \subset \R^{n+1}$, the elliptic measure (or $L_A$-harmonic measure) for $\Omega$ with pole at $p\in \Omega$ is the probability measure
$\omega_{L_A}^p$ supported on $\partial\Omega$ such that, for every $f \in C_0(\partial \Omega)$,
$\int f\,d\omega_{L_A}^p$ equals the value at $p$ of the $L_A$-harmonic extension of $f$ to $\Omega$.
For a basic reference on elliptic measure, see \cite{Kenig-CBMS}, and for some additional background see \cite[Section 2.4]{AGMT}, for example. Analogously to harmonic measure, the connection between the metric properties of elliptic measure
and the geometric properties of $\Omega$ (in particular, the rectifiability of $\partial\Omega$) has been a subject
of intense investigation in the last years. See for example the works \cite{ABHM}, \cite{AGMT}, \cite{HKMP}, \cite{HoMiT},
\cite{HMT}, \cite{KKiPT}. Our result in connection with elliptic measure is the following.

\begin{theor}\label{teo3}
Let $n\geq 2$ and let $A$ be an elliptic matrix satisfying \eqref{eqelliptic1}, \eqref{eqelliptic2} and \eqref{eq:Holdercont}.
Let $\Omega\subsetneq\R^{n+1}$ be a bounded open  connected Wiener regular set, let $p\in\Omega$, and let $\omega_{L_A}^p$ be the elliptic measure in $\Omega$ associated with $L_A$, with pole $p$.
Suppose that there exists a set $E\subset\partial\Omega$ such that $0<\HH^n(E)<\infty$ 
and that the elliptic measure $\omega_{L_A}^p|_E$ is absolutely continuous with respect to $\HH^n|_{E}$. Then
 $\omega_{L_A}^p|_E$ is $n$-rectifiable.
\end{theor}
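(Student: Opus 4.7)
The plan is to reduce Theorem \ref{teo3} to Theorem \ref{teo2} applied to the transposed matrix $A^{T}$ (which satisfies the same hypotheses \eqref{eqelliptic1}, \eqref{eqelliptic2}, \eqref{eq:Holdercont} as $A$), using the Green function representation of the single layer potential as a bridge. Denote by $T^B$ the gradient-of-single-layer potential operator associated with a matrix $B$ satisfying our hypotheses, and set $\omega = \omega_{L_A}^p$. By Borel regularity of the Radon-Nikodym derivative $h = d\omega/d\HH^n|_E$ and inner regularity of $\HH^n$, one decomposes $\omega|_E$ (modulo an $\omega$-null set) into a countable sum $\sum_k \omega|_{F_k}$, where the $F_k$ are disjoint compact subsets of $E$ on which $2^{-k}\le h\le 2^{k}$, so that $\omega|_{F_k}$ and $\HH^n|_{F_k}$ are mutually comparable. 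It then suffices to show that each such $F = F_k$ is $n$-rectifiable.

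The key identity comes from combining the symmetry $G_A(x,y) = G_{A^T}(y,x)$ with the fact that, for fixed $x \in \Omega$, the function $\EE_A(x,\cdot) - G_A(x,\cdot)$ is $L_{A^T}$-harmonic in $\Omega$ with boundary values $\EE_A(x,\cdot)|_{\partial\Omega}$. The representation of Dirichlet data via $L_{A^T}$-harmonic measure yields, for $x,y \in \Omega$,
\begin{equation*}
G_A(x,y) = \EE_A(x,y) - \int \EE_A(x,z)\,d\omega_{L_{A^T}}^y(z).
\end{equation*}
Interchanging the roles of $A$ and $A^{T}$ and taking $\nabla_x$ produces
\begin{equation*}
T^{A^T}\omega(x) = \nabla_x \EE_{A^T}(x,p) - \nabla_x G_{A^T}(x,p).
\end{equation*}
For $x \in \R^{n+1}\setminus\overline{\Omega}$ we have $G_{A^T}(\cdot,p)\equiv 0$, giving the uniform exterior bound
\begin{equation*}
|T^{A^T}\omega(x)| = |\nabla_x \EE_{A^T}(x,p)| \lesssim |x-p|^{-n},
\end{equation*}
valid for $x$ in a neighborhood of $\partial\Omega$ outside $\overline{\Omega}$.

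The main obstacle is upgrading this exterior $L^\infty$-bound on $T^{A^T}\omega$ to $L^2(\omega|_F)$-boundedness of the operator $T^{A^T}_{\omega|_F}$. Following the scheme developed for harmonic measure in \cite{AHM3TV}, one first restricts further to a subset of $F$ where $\omega|_F$ has polynomial upper growth of degree $n$, which follows from standard upper-density estimates for Hausdorff measure combined with the comparability $\omega|_F \sim \HH^n|_F$. A non-tangential/sawtooth argument then transfers the exterior estimate into uniform bounds on the truncated operators $T^{A^T}_{\ve}\omega|_F$ at boundary points, and a non-homogeneous $T1$-type theorem à la Nazarov-Treil-Volberg and Tolsa finishes the job. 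The Calder\'on-Zygmund character of the kernel of $T^{A^T}$ (Lemma \ref{lemcz}) and the interior regularity of $L_{A^T}$-solutions are what permit this transfer from the harmonic to the elliptic setting.

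By comparability, the $L^2(\omega|_F)$-boundedness of $T^{A^T}_{\omega|_F}$ is equivalent to the $L^2(\HH^n|_F)$-boundedness of $T^{A^T}_{\HH^n|_F}$. Since $A^{T}$ satisfies the hypotheses of Theorem \ref{teo2}, that theorem applies and yields that $F$ is $n$-rectifiable, completing the proof.
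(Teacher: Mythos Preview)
Your overall architecture is right—Green function identity, reduction to a maximal/$Tb$-type criterion, then Theorem \ref{teo2}—and it matches the paper's route in outline. But the central step is not as you describe it, and the version you sketch does not go through under the stated hypotheses.

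You obtain the bound $|T^{A^T}\omega(x)|\lesssim |x-p|^{-n}$ only for $x\in\R^{n+1}\setminus\overline\Omega$, and then propose to ``transfer the exterior estimate into uniform bounds on the truncated operators at boundary points'' by a non-tangential/sawtooth argument. That transfer requires, at each scale $\ve$ and for $\omega$-a.e.\ $x\in\partial\Omega$, an exterior point $x'\notin\overline\Omega$ with $|x'-x|\approx\ve$ and $\dist(x',\partial\Omega)\approx\ve$; only then can one compare $T^{A^T}_{\ve}\omega(x)$ with $T^{A^T}\omega(x')$ up to $CM_n\omega(x)$. No such exterior corkscrew is assumed: $\Omega$ is merely bounded, connected and Wiener regular, so $\R^{n+1}\setminus\overline\Omega$ need not approach $\partial\Omega$ at all near the points of $E$. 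This is a genuine gap, not a routine detail, and it is \emph{not} how \cite{AHM3TV} proceeds.

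What the paper actually does (following \cite{AHM3TV}) is work entirely from the \emph{interior}. One keeps the formula $G(p,z)=\EE(p,z)-\int\EE(y,z)\,d\omega^p(y)$ for $z\in\Omega$, differentiates a smoothly truncated version, and controls the resulting $|\nabla u_r(x)|$ on $\partial\Omega$ via the mean-value/Caccioppoli bound over $B(x,r)\subset\Omega$-side balls. The crucial quantitative input is not an exterior corkscrew but (i) a Green-function lower bound $\omega^p(aB)\gtrsim \inf_{z\in 2B\cap\Omega}\omega^z(aB)\,r^{n-1}G(p,y)$ (Lemma \ref{l:w>G}) and (ii) Bourgain's estimate $\omega^z(B)\gtrsim \HH^n_\infty(\partial\Omega\cap\delta B)/(\delta r)^n$ (Lemma \ref{lembourgain}), which together with Frostman and the level-set decomposition $E_k=\{M_n\omega^p\le k\}$ yield $r^{-1}G(p,y)\lesssim_k 1$ on the relevant balls. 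This gives $M_n\omega^p+T_*\omega^p<\infty$ $\omega^p$-a.e.\ on $E$ (Lemma \ref{lemaxcor}). One then passes to the antisymmetric part (Lemma \ref{lemantisym}), invokes the Nazarov--Treil--Volberg $Tb$ theorem to extract $G_0\subset E$ with $T_{\omega^p|_{G_0}}$ bounded in $L^2(\omega^p|_{G_0})$, applies Theorem \ref{teo2}, and exhausts. Your Radon--Nikodym decomposition into the sets $F_k$ is a reasonable substitute for the $E_k$'s at the endgame, but it does not help with the missing interior estimate.
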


Remark that  $\omega_{L_A}^p|_E$ being $n$-rectifiable means that it is concentrated on an $n$-rectifiable
set and it is absolutely continuous with respect to $\HH^n|_E$.
In the case of $-L_A$ being the Laplacian and $\omega_{L_A}$ the harmonic measure, the same result has been proved in \cite{AHM3TV}, and it can be considered as a kind of converse of the famous Riesz brothers theorem on harmonic measure in planar simply connected domains. The preceding result follows from Theorem \ref{teo2} by  essentially the same arguments as the ones for harmonic measure in \cite{AHM3TV}. Nevertheless, for the
reader's convenience the arguments are sketched in the final Section \ref{sec12*}.


\section{Preliminaries}

\subsection{General notation}

We use the standard notation $a\lesssim b$ if there is a fixed constant $C>0$ (depending on other fixed parameters, such as the ambient dimension)
such that $a\leq C b$. To make the dependence of the constant on a parameter $t$ explicit, we will also write $a\lesssim_t b.$ We will also write $b\gtrsim a$ if $a\lesssim b$ and $a\approx b$ if both $a\lesssim b$ and $b\lesssim a.$

We use the notation $B(x,r)$ for the open ball in $\Rn1$ centered at $x$ of radius $r$. For a ball $B = B(x,r)$ and $a>0$
we write $aB = B(x,ar)$ for the centered rescaling of the ball.
For $0<r<R$, we denote by
\begin{equation}
A(x,r,R)\coloneqq \{y\in\Rn1:r<|x-y|<R\}
\end{equation}
the open annulus centered at $x$ with radii $r$ and $R$. Also, given $t>0$ and a set $E$, we write
\begin{equation}
\mathcal{U}_t(E)\coloneqq \{x\in\Rn1:\dist(x,E)\leq t\}
\end{equation}
for the closed $t$-neighborhood of $E$. 

Given a measure $\mu$, we write $\langle \cdot, \cdot\rangle_\mu$ for the scalar product in $L^2(\mu)$ and 
$m_{\mu,E}f\coloneqq \mu(E)^{-1}\int_E fd\mu$ for the $\mu$-average of a measurable function $f$ on a set $E$. 

{We denote by $AD(n,C_0,\mathbb{R}^{d})$ the set of $n$-$AD$-regular measures on $\mathbb{R}^{d}$ with constant $C_0$.
We say that $\mu$ has growth of degree $n$ (or $n$-growth) if
\begin{equation}\label{eqrwth56}
\mu(B(x,r))\leq C\,r^n\quad \mbox{ for all $x\in\R^{n+1}$.}
\end{equation}
 We denote the Lebesgue measure in $\R^{n+1}$ by $\LL^{n+1}$. Quite often we will also use the standard notations $dx$ or $dy$ when integrating against this measure.

}
Given a matrix $A(\cdot)$ with variable coefficients, we denote by $A^T(\cdot)$ its transpose.


\subsection{David-Semmes dyadic cubes}

In this section we collect some standard definitions and results that we need throughout the rest of the paper. Let us start by introducing a dyadic system of (so-called) cubes associated with an AD-regular measure $\mu.$ They were introduced by David (see \cite{david_cubes}, \cite[Appendix 1]{david_wavelets} and also the work of Christ \cite{christ}). We remark that in the general case they are not euclidean cubes, so that in case of ambiguity we also refer to them as David-Semmes cubes or $\mu$-cubes.
\begin{defin}[David-Semmes lattice $\D_\mu$]Let $\mu\in AD(n,C_0,\mathbb{R}^{n+1})$. The David and Semmes' lattice $\D_\mu$ associated with $\mu$ is a countable disjoint union of families of Borel sets, that we denote as $\D_\mu^j.$ The elements of $\D^j_\mu$ are called dyadic $\mu$-cubes (or just cubes) of the $j$-th generation and satisfy the following properties:
\begin{enumerate}
\item $\D^j_\mu$ is a partition of $\supp\mu$. This means that $\supp\mu=\bigcup_{Q\in\D^j_\mu}Q$ and $Q\cap Q'=\varnothing$ for every $Q,Q'\in\D^j_\mu$ with $Q\neq Q'.$
\item If $Q\in\D^j_\mu$ and $Q'\in\D^k_\mu$ for $k\geq j$, then either $Q'\subset Q$ or $Q\cap Q'=\varnothing.$
\item For every $k$ and $Q\in\D^k_\mu$ we have
\begin{equation}
2^{-k}\lesssim \diam Q\leq 2^{-k}
\end{equation}
and
\begin{equation}
\mu(Q)\approx 2^{-kn}.
\end{equation}
\item The cubes have thin boundary, i.e.\ there exist two constants $C,\gamma_0>0$ depending on $C_0$ and the dimension $n$ such that for every $\varepsilon>0$ and $Q\in\DD_\mu^k$ we have
\begin{equation}\label{thin_bdry}
\mu\{x\in Q\colon\dist(x,\supp\mu\setminus Q)< \varepsilon 2^{-k}\}+\mu\{x\in\supp\mu\setminus Q\colon\dist(x,Q)<\varepsilon2^{-k}\}\leq C\varepsilon^{\gamma_0}\mu(Q).
\end{equation}
\item For $Q\in\D^k_\mu$ there exists a point $x_Q\in Q$, also called \textit{center} of $Q$, such that
\begin{equation}
\dist(x_Q,\supp \mu\setminus Q)\gtrsim 2^{-k}.
\end{equation}
\end{enumerate}
\end{defin}

We need to associate a typical side length to each cube. For $Q\in\D^k_\mu,$ the natural temptation is to define $\ell(Q)\coloneqq  2^{-k}.$ However, we have to take into account that a cube may belong to $\D^j_\mu\cap\D^k_\mu$ for some $j\neq k$. A solution to this problem is to think about a cube as a couple $(Q,k),$ so that the side length is now well defined. Bearing this in mind, in what follows we decide to omit this occurrence and simply indicate a cube by $Q$. We also associate the ball $B_Q\coloneqq B(x_Q,\ell(Q))$  with $Q$.

For $Q\in\D^k_\mu$, we denote by
\begin{equation}
\Ch(Q)\coloneqq \{P\in\D^{k+1}_\mu\colon P \subset Q\}
\end{equation}
the family of children of $Q$. Also, for $N>1$, $\D_\mu^N(Q)$ is the family of the $N$-descendants of $Q$, that is  $\D_\mu^N(Q) = \{P\in\D^{k+N}_\mu\colon P \subset Q\}$.

\subsection{$\beta$ and $\alpha$-numbers} 
Let us consider a ball $B=B(x,r)\subset \mathbb{R}^{n+1}$ and a Radon measure $\mu$. For a hyperplane $L$ in $\mathbb{R}^{n+1}$, we set
\begin{equation}
\beta^L_\mu(B)\coloneqq \sup_{x\in\supp\mu\cap B}\frac{\dist(x,L)}{r},\qquad
\beta^L_{\mu,1}(B)\coloneqq \frac1{r^n}\int_B\frac{\dist(x,L)}{r}\,d\mu(x),
\end{equation}
and taking the infimum over all the hyperplanes $L$ in $\Rn1$, we define
\begin{equation}
\beta_\mu(B)\coloneqq \inf_{L}\beta^L_\mu(B),\qquad
\beta_{\mu,1}(B)\coloneqq \inf_{L}\beta^L_{\mu,1}(B).
\end{equation}

Let $\mu,\nu$ be two Radon measures on $\mathbb{R}^{n+1}$. We define the distance
\begin{equation}\label{distanceballs}
d_B(\mu, \nu)=\sup_f\int fd(\mu-\nu),
\end{equation}
where the supremum is taken over all $1$-Lipschitz functions whose support is contained in $B$. Given a hyperplane $L$, we define
\begin{equation}\label{alpha_number_def}
\alpha^L_\mu(B)\coloneqq \frac{1}{r^{n+1}}\inf_{c\geq 0}d_B(\mu, c\mathcal{H}^n|_{L})
\end{equation}
and
\begin{equation}
\alpha_\mu(B)\coloneqq \inf_L \alpha^L_\mu(B),
\end{equation}
where the infimum is taken over all hyperplanes.  

For an $n$-AD-regular measure $\mu$ and a ball $B$ such that $\frac12B\cap \supp\mu\neq\varnothing$, the following
 inequalities are standard (see \cite[p.\ 27]{david_semmes} and \cite{Tolsa-alfa}):
$$\beta_\mu^L(B)^{n+1} \lesssim \beta_{\mu,1}^L\big(\tfrac32B\big) \lesssim \alpha_\mu^L(2B).$$

Given a hyperplane $H$ through the origin, we also denote
$$\beta_\mu^{(H)}(B) = \inf_L \beta_\mu^L(B),\qquad \alpha_\mu^{(H)}(B) = \inf_L \alpha_\mu^L(B),$$
where in both cases the infimum is taken over all hyperplanes $L$ which are parallel to $H$.


\subsection{Carleson packing condition and Riesz families}

The following are standard definitions.
\begin{defin}[Carleson packing condition] We say that $\mathcal{F}\subset \D_\mu$ is a Carleson family if there exists a constant $C>0$ such that for every $P\in\D_\mu$ we have
\begin{equation}
\sum_{Q\in\mathcal{F},\,Q\subset P}\mu(Q)\leq C\mu(P).
\end{equation}
\end{defin}
\begin{defin}[Riesz families and Riesz systems] Let $\{\psi_Q\}_{Q\in\D_\mu}$ be a family of functions in $L^2(\mu).$ We say that $\{\psi_Q\}_{Q\in\D_\mu}$ forms a Riesz family with constant $C>0$ if
\begin{equation}
\norm[\bigg]{\sum_{Q\in\D_\mu}a_Q\psi_Q}^2_{L^2(\mu)}\leq C\sum_{Q\in\D_\mu}a_Q^2
\end{equation}
for any sequence $\{a_Q\}_Q$ of real numbers with finitely many non-zero terms.
The family $\{\Psi_Q\}_{Q\in\D_\mu}$ of sets of functions is said to be a Riesz system with constant $C>0$ if $\{\psi_Q\}_{Q\in\D_\mu}$ is a Riesz family with constant $C$ for every choice of $\psi_Q\in\Psi_Q$.
\end{defin}
A particular Riesz system that is useful for our purposes is the so-called \textit{Haar system.} Let $N$ be a positive integer. Given $Q\in\D_\mu$ and $C>0$, we define $\Psi^{Haar}_Q(N)$ as the set of functions $\psi$ such that
\begin{enumerate}
\item $\supp \psi\subset Q.$
\item $\psi$ is constant on every $\mu$-cube $Q'$ which is $N$ levels down from $Q$, that is, $\psi$ is constant in each cube from $\D_\mu^N(Q)$.
\item $\int \psi d\mu=0$ and $\int \psi^2d\mu\leq C.$ 
\end{enumerate}
The set of functions $\Psi^{Haar}_{Q}(N)$ forms a Riesz family with constant $C$.

Let $\{\Psi_Q\}_{Q\in\D_\mu}$ be a Riesz system. For any $Q\in\D_\mu$ and $\widetilde{M}>1$ we define
\begin{equation}\label{xi_Q}
\xi_{\widetilde{M}}(Q)\coloneqq \inf_{{\underset{\mu(E)<+\infty}{E: E\supset \widetilde{M}B_Q},}}\,\,\sup_{\psi\in\Psi_Q}\mu(Q)^{-1/2}|\langle T_\mu\chi_E,\psi\rangle_\mu|.
\end{equation}

\begin{lemm}\label{lemma_carleson_family}
Let $\delta>0$ and $\widetilde{M}>1$. If $T_\mu$ is bounded in $L^2(\mu)$, then 
the family
\begin{equation}
\mathcal{F}_\delta\coloneqq \{Q\in\D_\mu:\xi_{\widetilde{M}}(Q)>\delta\}
\end{equation}
is Carleson.
\end{lemm}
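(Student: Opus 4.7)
The plan is to fix $P\in\D_\mu$ and establish $\sum_{Q\in\mathcal{F}_\delta,\,Q\subset P}\mu(Q)\lesssim \delta^{-2}\mu(P)$. The first step is to replace the infimum over admissible sets $E$ in \eqref{xi_Q} by a single set $E_P$ that works for every $Q\in\D_\mu$ with $Q\subset P$ simultaneously. Since $x_Q\in P$ and $\ell(Q)\leq\ell(P)$ for all such $Q$, there is a constant $c$ (depending on the AD-regularity and the dyadic lattice constants) for which $\widetilde{M}B_Q \subset c\widetilde{M}B_P$, and setting
\[
E_P \coloneqq c\widetilde{M}B_P,
\]
the $n$-AD-regularity of $\mu$ gives $\mu(E_P)\leq C_0(c\widetilde{M}\ell(P))^n\lesssim_{\widetilde{M}}\mu(P)$, so in particular $\chi_{E_P}\in L^2(\mu)$.

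Next, using $\xi_{\widetilde{M}}(Q)>\delta$ and the admissibility of $E_P$, I would extract for each $Q\in\mathcal{F}_\delta$ with $Q\subset P$ a function $\psi_Q\in\Psi_Q$ satisfying $|\langle T_\mu\chi_{E_P},\psi_Q\rangle_\mu|^2>\delta^2\mu(Q)$. Defining $b_Q\coloneqq \langle T_\mu\chi_{E_P},\psi_Q\rangle_\mu$ for such $Q$ and $b_Q\coloneqq 0$ otherwise, and $h\coloneqq \sum_Q b_Q\psi_Q$, one gets
\[
\sum_Q b_Q^2 \;=\; \langle T_\mu\chi_{E_P},h\rangle_\mu \;\leq\; \|T_\mu\chi_{E_P}\|_{L^2(\mu)}\,\|h\|_{L^2(\mu)}
\]
by Cauchy--Schwarz. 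The Riesz system property bounds $\|h\|_{L^2(\mu)}^2\leq C\sum_Q b_Q^2$, and the $L^2(\mu)$-boundedness of $T_\mu$ combined with the estimate on $\mu(E_P)$ gives $\|T_\mu\chi_{E_P}\|_{L^2(\mu)}^2\lesssim \mu(P)$. Cancelling a factor of $(\sum_Q b_Q^2)^{1/2}$ leaves
\[
\delta^2 \sum_{Q\in\mathcal{F}_\delta,\,Q\subset P} \mu(Q) \;\leq\; \sum_Q b_Q^2 \;\lesssim\; \mu(P),
\]
which is the desired Carleson packing bound with constant $\lesssim \delta^{-2}$.

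I don't anticipate any real obstacle: the one point where the argument could fail is forgetting to commit to a single set $E_P$ for all $Q\subset P$ at once---choosing a different $E_Q$ for each $Q$ would derail the recombination against the single fixed function $T_\mu\chi_{E_P}$. A minor technical matter is that $T_\mu$ is defined only as the uniformly bounded family $\{T_{\mu,\varepsilon}\}_{\varepsilon>0}$, but since every estimate above is uniform in $\varepsilon$, the argument goes through at each $\varepsilon>0$ and hence to any reasonable weak-limit interpretation of $T_\mu$.
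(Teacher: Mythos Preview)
Your proof is correct and follows essentially the same argument as the one the paper defers to in \cite[Section 14]{NTV_acta}: fix a single admissible set $E_P\supset \widetilde{M}B_Q$ for all $Q\subset P$, select $\psi_Q\in\Psi_Q$ realizing the supremum, and exploit the Riesz system inequality together with $\|T_\mu\chi_{E_P}\|_{L^2(\mu)}^2\lesssim_{\widetilde M}\mu(P)$ to close the estimate. The only cosmetic point is that the Riesz family inequality is stated for finitely many nonzero coefficients, so one should first restrict to finitely many generations below $P$ and then let the cutoff go to infinity; this is routine and does not affect your argument.
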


\begin{proof}
See \cite[Section 14]{NTV_acta}. There the proof is presented in the case of the Riesz transform, but it works without any difference in our framework.
\end{proof}

\subsection{Partial Differential Equations}

For any uniformly elliptic matrix $A$ with H\"older continuous coefficients, one can show that
$K(x,y)=\nabla_1 \E(x,y)$ is locally a Calder\'on-Zygmund kernel:

\begin{lemm}\label{lemcz}
Let $A$ be an elliptic matrix with H\"older continuous coefficients satisfying \eqref{eqelliptic1}, \eqref{eqelliptic2} and \eqref{eq:Holdercont}. If $K(\cdot,\cdot)$ is given by \eqref{eq:Kdef}, then it is locally a Calder\'on-Zygmund kernel. That is, 
 for any given $R>0$,
\begin{itemize}
\item[(a)] $|K(x,y)| \lesssim |x-y|^{-n}$ for all $x,y\in \R^{n+1}$ with $x \not=y$ and $|x-y|\leq R$.
\item[(b)] $|K(x,y)-K(x,y')| + |K(y,x) - K(y',x)| \lesssim |y-y'|^{\alpha} |x-y|^{-n-\alpha}$ for all $y,y'\in B(x,R)$ with $2|y-y'| \leq |x-y|$.
\item[(c)] $|K(x,y)| \lesssim |x-y|^{{(1-n)}/2}$  for all $x,y\in \R^{n+1}$ with $|x-y|\geq 1$.
\end{itemize}
All the implicit constants in (a), (b) and (c) depend on $\Lambda$ and  $C_h$, while the ones in (a) and (b) depend also on $R$.
\end{lemm}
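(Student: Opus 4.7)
The plan is to deduce each of (a), (b), (c) from standard interior regularity theory for the $L_A$-harmonic equation together with known pointwise bounds on the fundamental solution itself. The single crucial external input is the Grüter--Widman (or Hofmann--Kim) type estimate
\[
|\mathcal{E}_A(x,y)| \lesssim |x-y|^{1-n},
\]
valid on all of $\R^{n+1}$ (for $n\ge 2$) and depending only on the ellipticity constant $\Lambda$. Combined with this, I will use the symmetry $\mathcal{E}_A(x,y)=\mathcal{E}_{A^T}(y,x)$, which transfers properties of $\mathcal{E}_A(\cdot,y)$ as a function of its first slot into properties of $\mathcal{E}_{A^T}(y,\cdot)$ as a function of its second slot; this is what will let me upgrade Schauder estimates (which only see the ``first variable'') into estimates in the $y$-variable for $K(x,y)$.

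\smallskip

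For (a), I would fix $y$ and note that $\mathcal{E}_A(\cdot,y)$ is $L_A$-harmonic on $B(x,r)$ with $r=|x-y|/2$. By the interior gradient estimate for elliptic operators with Hölder continuous coefficients, after rescaling $B(x,r)$ to $B(0,1)$ (which turns $C_h$ into $r^\alpha C_h$), one obtains
\[
\|\nabla_1\mathcal{E}_A(\cdot,y)\|_{L^\infty(B(x,r/2))}
\lesssim_{\Lambda}\,(1+r^\alpha C_h)\,r^{-1}\,\|\mathcal{E}_A(\cdot,y)\|_{L^\infty(B(x,r))}
\lesssim (1+R^\alpha C_h)\,r^{-n},
\]
where in the last step I use the pointwise bound on $\mathcal{E}_A$ and $r\le R$. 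This proves (a).

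\smallskip

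For (b), the Schauder $C^{1,\alpha}$-interior estimate, rescaled exactly as above, yields
\[
r^\alpha[\nabla_1\mathcal{E}_A(\cdot,y)]_{C^\alpha(B(x,r/2))}
\lesssim_{\Lambda}(1+R^\alpha C_h)\,r^{-1}\,\|\mathcal{E}_A(\cdot,y)\|_{L^\infty(B(x,r))},
\]
which after combining with the pointwise bound on $\mathcal{E}_A$ and taking $r\approx|y-y'|$ gives the estimate $|K(y,x)-K(y',x)|\lesssim |y-y'|^\alpha |x-y|^{-n-\alpha}$. The main subtlety is the first term $|K(x,y)-K(x,y')|$, where the varying slot is the second one; here I invoke the symmetry $\mathcal{E}_A(x,y)=\mathcal{E}_{A^T}(y,x)$, whence $K(x,y)=\nabla_2\mathcal{E}_{A^T}(y,x)$. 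Then the dependence on $y$ of $K(x,y)$ is the dependence on the \emph{first} slot of $\nabla_2\mathcal{E}_{A^T}(y,x)$, and I can apply Schauder to the $L_{A^T}$-harmonic function $\mathcal{E}_{A^T}(\cdot,x)$ together with an extra interior differentiation (or equivalently a finite-difference version of Schauder) to obtain the same $C^\alpha$-in-$y$ estimate for $\nabla_2\mathcal{E}_{A^T}(y,x)$. Since $A^T$ satisfies the same hypotheses as $A$, the constants are of the same form.

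\smallskip

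For (c), the point is to trade the $R^\alpha$-factor in (a) for an algebraic decay in $|x-y|$: rerunning the Schauder argument with $R:=|x-y|\ge 1$ gives
\[
|K(x,y)|\lesssim (1+|x-y|^\alpha C_h)\,|x-y|^{-n}\lesssim_{\Lambda,C_h}|x-y|^{\alpha-n}.
\]
Since $\alpha\le 1$ and $n\ge 1$, one checks $\alpha-n\le(1-n)/2$, so on $|x-y|\ge 1$ the bound $|x-y|^{\alpha-n}\le |x-y|^{(1-n)/2}$ holds and the constant is independent of $R$. The main technical obstacle across the whole lemma is the careful bookkeeping of the Schauder constants under rescaling (their precise dependence on $r^\alpha C_h$ and on the ellipticity), but once that is laid out the three estimates fall out simultaneously from the pointwise bound on $\mathcal{E}_A$, Schauder's $C^{1,\alpha}$ interior regularity, and the symmetry that reduces the $y$-variable estimates to Schauder estimates for $L_{A^T}$.
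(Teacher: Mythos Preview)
The paper does not prove this lemma; it calls the statements ``rather standard'' and refers to \cite{CMT}. Your strategy---the Gr\"uter--Widman/Hofmann--Kim bound on $\mathcal{E}_A$, interior Schauder estimates, and the symmetry $\mathcal{E}_A(x,y)=\mathcal{E}_{A^T}(y,x)$---is a correct standard route, but two points in the execution need fixing.

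In (b) your displayed Schauder estimate is applied to $\mathcal{E}_A(\cdot,y)$ on $B(x,r)$, whereas to bound $|K(y,x)-K(y',x)|=|\nabla_1\mathcal{E}_A(y,x)-\nabla_1\mathcal{E}_A(y',x)|$ you must apply it to $\mathcal{E}_A(\cdot,x)$ on a ball centered near $y$ with radius $r\approx|x-y|$ (not $r\approx|y-y'|$); the factor $|y-y'|^\alpha$ then comes from the resulting $C^\alpha$ seminorm of $\nabla_1\mathcal{E}_A(\cdot,x)$. Your finite-difference idea for $|K(x,y)-K(x,y')|$ is correct once made precise: $z\mapsto \mathcal{E}_A(z,y)-\mathcal{E}_A(z,y')$ is $L_A$-harmonic near $x$, so the interior gradient bound reduces matters to its sup-norm, which is controlled by part (a) for $A^T$ via the symmetry.

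The more genuine gap is in (c). Your argument assumes the rescaled Schauder gradient constant is $\lesssim_\Lambda(1+r^\alpha C_h)$, i.e.\ \emph{linear} in the H\"older seminorm of the rescaled matrix. This is not part of the standard Schauder statement and would require separate justification; if the dependence were, say, polynomial of high degree, your conclusion would fail for small $\alpha$. You can sidestep the issue entirely: for $|x-y|\ge 1$, apply the interior gradient estimate on the \emph{fixed} ball $B(x,1/2)$, where the Schauder constant depends only on $n,\Lambda,\alpha,C_h$ and not on $|x-y|$. This gives directly
\[
|K(x,y)|\lesssim_{n,\Lambda,\alpha,C_h}\|\mathcal{E}_A(\cdot,y)\|_{L^\infty(B(x,1/2))}\lesssim |x-y|^{1-n},
\]
which is stronger than (c) and needs no tracking of constants under large rescalings.
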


The statements above are rather standard. For more details, see Lemma 2.1 from \cite{CMT}.

Let $\omega_{n}$ denote the surface measure of the unit sphere of $\mathbb{R}^{n+1}$.
For any elliptic matrix $A_0$ with constant coefficients, we have an explicit expression for the fundamental solution of $L_{A_0}$, which we denote by $\Theta(x,y;A_0).$ More precisely, $\Theta(x,y;A_0)=\Theta(x-y;A_0)$ with
\begin{equation}\label{fund_sol_const_matrix}
\Theta(z;A_0)=\Theta(z;A_{0,s})=
\begin{cases}\displaystyle
\frac{-1}{(n-1)\omega_n\sqrt{\det A_{0,s}}}\frac{1}{(A_{0,s}^{-1}z\cdot z)^{(n-1)/2}} \;\;\;\text{ for }n\geq 3, \\\\
\displaystyle
\frac{1}{4\pi\sqrt{\det A_{0,s}}}\log\big(A_{0,s}^{-1}z\cdot z\big)\;\;\text{ for }n=2,
\end{cases}
\end{equation}
where $A_{0,s}$ is the symmetric part of $A_0$, that is, $A_{0,s}=\frac12(A+A^T)$.

As a consequence of \eqref{fund_sol_const_matrix}, we have
\begin{equation}\label{grad_const_coeff_sol}
\nabla \Theta(z;A_0)=\frac{1}{\omega_n\sqrt{\det A_{0,s}}}\frac{A_{0,s}^{-1}z}{(A_{0,s}^{-1}z\cdot z)^{(n+1)/2}}.
\end{equation}

The next result is proven in Lemma 2.2 of \cite{Kenig-Shen}.

\begin{lemm}\label{lemm_freezing}
Let $A$ be an elliptic matrix with H\"older continuous coefficients satisfying \eqref{eqelliptic1}, \eqref{eqelliptic2} and \eqref{eq:Holdercont}. Let also  $\Theta(\cdot,\cdot; \cdot)$ be given by  \eqref{fund_sol_const_matrix}. Then,
for all $x,y\in \R^{n+1}$ with $x \not=y$ and $|x-y|\leq R$,
\begin{enumerate}
\item $|\mathcal{E}_A (x,y) - \Theta(x,y;A(x))| \lesssim |x-y|^{\alpha-n+1}$,
\item $|\nabla_1\mathcal{E}_A (x,y) - \nabla_1\Theta(x,y;A(x))| \lesssim |x-y|^{\alpha-n}$,
\item $|\nabla_1\mathcal{E}_A (x,y) - \nabla_1\Theta(x,y;A(y))| \lesssim |x-y|^{\alpha-n}$.
\end{enumerate}
Similar inequalities hold if we reverse the roles of $x$ and $y$ and we replace $\nabla_1$ by $\nabla_2$.
All the implicit constants depend on $\Lambda$,  $C_h$, and  $R$.
\end{lemm}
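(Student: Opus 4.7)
The plan is the classical coefficient-freezing argument. Fix a base point $x_0\in\R^{n+1}$, to be specified later as either $x$ or $y$, and set
\begin{equation*}
v(z):=\mathcal{E}_A(z,y)-\Theta(z-y;A(x_0)),
\end{equation*}
viewed as a function of $z$ with $y,x_0$ held as parameters. Since $L_A\mathcal{E}_A(\cdot,y)=\delta_y$ and $L_{A(x_0)}\Theta(\cdot-y;A(x_0))=\delta_y$, while $L_A u = L_{A(x_0)}u - \mathrm{div}\bigl((A-A(x_0))\nabla u\bigr)$, the two Dirac masses cancel and one obtains
\begin{equation*}
L_A v(z) = \mathrm{div}_z\bigl((A(z)-A(x_0))\,\nabla_z\Theta(z-y;A(x_0))\bigr)
\end{equation*}
in the distributional sense.

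For (1) I would take $x_0=x$. Using the duality $\mathcal{E}_{A^T}(z,x)=\mathcal{E}_A(x,z)$ and integrating by parts (justified by a standard cutoff argument exploiting the far-field decay of Lemma \ref{lemcz}(c)), one represents
\begin{equation*}
v(x) = -\int \nabla_z\mathcal{E}_A(x,z)\cdot\bigl(A(z)-A(x)\bigr)\nabla_z\Theta(z-y;A(x))\,dz.
\end{equation*}
Combining the H\"older bound $|A(z)-A(x)|\lesssim|z-x|^\alpha$ with the Calder\'on--Zygmund-type estimates $|\nabla_z\mathcal{E}_A(x,z)|\lesssim|z-x|^{-n}$ (which is Lemma \ref{lemcz}(a) applied to $A^T$) and $|\nabla_z\Theta(z-y;A(x))|\lesssim|z-y|^{-n}$ (from \eqref{grad_const_coeff_sol} and ellipticity), the integrand is dominated by $|z-x|^{\alpha-n}|z-y|^{-n}$. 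A standard three-region split of $\R^{n+1}$ into $B(x,r/2)$, $B(y,r/2)$ and the complement, with $r:=|x-y|$, yields the bound $O(r^{\alpha-n+1})$ for each piece, which proves (1).

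For (2) and (3) I would not differentiate this representation directly but instead invoke interior regularity for $L_A$-solutions. On the ball $B:=B(x,r/4)$ the point $y$ lies outside $B$, so $v$ solves there the divergence-form equation $L_A v = \mathrm{div}\,F$ with $F(z)=(A(z)-A(x_0))\nabla_z\Theta(z-y;A(x_0))$ smooth on $B$ and $\|F\|_{L^\infty(B)}\lesssim r^{\alpha-n}$. Standard interior Schauder-type estimates for divergence-form operators with H\"older-continuous coefficients, combined with the sup-norm bound from (1), give
\begin{equation*}
|\nabla v(x)|\lesssim r^{-1}\|v\|_{L^\infty(B)}+\|F\|_{L^\infty(B)}\lesssim r^{\alpha-n}.
\end{equation*}
For (2) we take $x_0=x$, observing that since $A(x_0)$ is frozen we have $\nabla v(x)=\nabla_1\mathcal{E}_A(x,y)-\nabla_1\Theta(x-y;A(x))$; for (3) we repeat with $x_0=y$.

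The main technical obstacle is making the Green's representation in the proof of (1) rigorous, particularly in the borderline case $n=2$ where $\Theta$ grows only logarithmically at infinity and the integrals are delicate. This is dealt with by a standard cutoff exhaustion together with the far-field bound of Lemma \ref{lemcz}(c); alternatively one can work locally on a large ball containing $\{x,y\}$ and absorb the difference into a bounded $L_A$-harmonic correction, whose gradient is controlled by interior estimates and therefore harmless for the scaling we need.
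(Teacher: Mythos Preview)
The paper does not prove this lemma; it simply cites \cite[Lemma 2.2]{Kenig-Shen}. Your sketch is the standard coefficient-freezing argument and is essentially the same route taken there: write $v=\mathcal E_A(\cdot,y)-\Theta(\cdot-y;A(x_0))$, observe that the Dirac masses cancel so that $L_A v=\mathrm{div}\,F$ with $F(z)=(A(z)-A(x_0))\nabla\Theta(z-y;A(x_0))$, recover the pointwise bound on $v$ from a Green's representation and a three-region integral estimate, and then upgrade to the gradient bound via interior regularity for $L_A$-solutions of $\mathrm{div}$-form equations. Your identification of the only real technicality (justifying the global representation formula / handling the far field, especially in low dimension) is accurate, and the remedies you indicate --- cutoff exhaustion plus Lemma~\ref{lemcz}(c), or localization to a large ball with a harmless $L_A$-harmonic correction --- are exactly what is done in practice.

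One small point worth tightening: in the tail region you use $|A(z)-A(x_0)|\lesssim|z-x_0|^\alpha$, which is only useful on bounded sets; far away you should simply use $|A(z)-A(x_0)|\lesssim 1$ together with the improved decay of $\nabla_2\mathcal E_A(x,\cdot)$ from Lemma~\ref{lemcz}(c). This does not affect the outcome since the constants are allowed to depend on $R$.
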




The following lemma is an easy consequence of the preceding result.

\begin{lemm}
Let $\mu$ be a compactly supported $n$-AD-regular measure in $\mathbb R^{n+1}$.  Let $A$ be an elliptic matrix satisfying \eqref{eqelliptic1}, \eqref{eqelliptic2} and \eqref{eq:Holdercont}, and let $T_\mu$ be the associated operator given by \eqref{eq:Tfdef}. Let 
$A_s=\frac12 (A+A^T)$ be the symmetric part of $A$. Consider the operator
$$T_\mu^s f(x) =\int \nabla_1\E_{A_s}(x,y)\,f(y)\,d\mu(y).$$
Then, $T_\mu - T_\mu^s$ is compact in $L^p(\mu)$, for $1<p<\infty$. In particular, $T_\mu$ is bounded in $L^2(\mu)$ if and only if 
$T_\mu^s$ is bounded in $L^2(\mu)$.
\end{lemm}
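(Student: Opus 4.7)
My plan rests on the algebraic observation that the formulas \eqref{fund_sol_const_matrix}--\eqref{grad_const_coeff_sol} for $\Theta(\,\cdot\,;A_0)$ and $\nabla\Theta(\,\cdot\,;A_0)$ depend on $A_0$ only through its symmetric part $A_{0,s}$. Hence, for each fixed $x$,
$$\nabla_1\Theta(x-y;A(x))=\nabla_1\Theta(x-y;A_s(x)).$$
Applying Lemma \ref{lemm_freezing}(2) to both $A$ and $A_s$ (the latter is also uniformly elliptic and $C^\alpha$-continuous with comparable constants) with $R:=\diam\supp\mu+1$, and using the triangle inequality, I would obtain the pointwise bound
$$|H(x,y)|:=|\nabla_1\E_A(x,y)-\nabla_1\E_{A_s}(x,y)|\lesssim|x-y|^{\alpha-n}\quad\text{for all distinct }x,y\in\supp\mu.$$
Thus the kernel of $T_\mu-T_\mu^s$ is only weakly singular, in sharp contrast with the kernels themselves.

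From this estimate, compactness on $L^p(\mu)$ would follow from a standard approximation argument. Using $n$-AD-regularity one has $\int_{|x-y|<\rho}|H(x,y)|\,d\mu(y)\lesssim\rho^\alpha$, and analogously in the $x$-variable. I would fix a smooth cutoff $\phi_\ve$ vanishing on $[0,\ve/2]$ and equal to $1$ on $[\ve,\infty)$, and split $H=H_\ve+H^\ve$ with $H_\ve(x,y):=H(x,y)\bigl(1-\phi_\ve(|x-y|)\bigr)$. Schur's test applied to $H_\ve$ produces an operator of $L^p(\mu)$-norm $\lesssim\ve^\alpha$, which tends to $0$ as $\ve\to 0$. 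On the other hand, $H^\ve$ is continuous on $\supp\mu\times\supp\mu$ (by the H\"older continuity off the diagonal of $\nabla_1\E_A$ and $\nabla_1\E_{A_s}$ contained in Lemma \ref{lemcz}(b)) and uniformly bounded by $C\ve^{\alpha-n}$. Hence the integral operator with kernel $H^\ve$ sends bounded subsets of $L^p(\mu)$ into uniformly bounded, equicontinuous subsets of $C(\supp\mu)$, which are precompact by Arzel\`a--Ascoli and therefore also precompact in $L^p(\mu)$ through the continuous inclusion $C(\supp\mu)\hookrightarrow L^p(\mu)$. This operator is therefore compact, and $T_\mu-T_\mu^s$, being its operator-norm limit as $\ve\to 0$, is compact as well. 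The $L^2$-equivalence in the lemma is then immediate, because boundedness is preserved under compact perturbations.

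The one point requiring care is the equicontinuity of $\{T_{H^\ve}f:\|f\|_{L^p(\mu)}\le 1\}$: I would need $\|H^\ve(x_1,\cdot)-H^\ve(x_2,\cdot)\|_{L^{p'}(\mu)}\to 0$ uniformly as $x_1\to x_2$, which (via H\"older's inequality applied to $|T_{H^\ve}f(x_1)-T_{H^\ve}f(x_2)|$) reduces to pointwise continuity of $x\mapsto H^\ve(x,y)$ off the diagonal combined with dominated convergence, justified by the uniform bound $|H^\ve|\le C\ve^{\alpha-n}$ on the compact set $\supp\mu\times\supp\mu$.
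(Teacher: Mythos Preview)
Your proof is correct and follows essentially the same approach as the paper: both use the identity $\nabla_1\Theta(\cdot;A(x))=\nabla_1\Theta(\cdot;A_s(x))$ together with Lemma~\ref{lemm_freezing} applied to $A$ and $A_s$ to obtain the bound $|H(x,y)|\lesssim|x-y|^{\alpha-n}$, and then infer compactness from this weakly singular estimate. The paper simply writes ``by standard arguments, using the AD-regularity of $\mu$'' at the point where you supply the explicit Schur/Arzel\`a--Ascoli argument.
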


Recall that $\E_{A_s}$ stands for the fundamental solution of $L_{A_s} u\coloneqq  -\mathrm{div}\left(A_s \nabla u  \right).$

\begin{proof}
For any function $f\in L^p(\mu)$, we have
$$T_\mu f(x)- T_\mu^s f(x) =\int \bigl(\nabla_1\mathcal{E}_A (x,y) - \nabla_1\mathcal{E}_{A_s} (x,y)\bigr)
f(y)\,d\mu(y).$$
By \eqref{fund_sol_const_matrix} $\Theta(x,y;A(x)) = \Theta(x,y;A_s(x))$ and thus,
by Lemma \ref{lemm_freezing}, the kernel of $T_\mu- T_\mu^s$ satisfies,
for all $x,y\in\R^{n+1}$ with $x \not=y$ and $|x-y|\leq R$,
\begin{align}
|\nabla_1\mathcal{E}_A (x,y) - \nabla_1\mathcal{E}_{A_s} (x,y)| &
\leq  |\nabla_1\mathcal{E}_A (x,y) - \nabla_1\Theta(x,y;A(x))| \\
&\quad + 
|\nabla_1\Theta(x,y;A_s(x))- \nabla_1\mathcal{E}_{A_s} (x,y)|\\
& \lesssim \frac1{|x-y|^{n-\alpha}}.
\end{align}
By standard arguments, using the AD-regularity of $\mu$,
this implies that $T_\mu - T_\mu^s$ is compact, and thus bounded in $L^p(\mu)$.
\end{proof}

Because of the preceding lemma, it is clear that to prove Theorem \ref{teo1} we can assume
that the matrix $A$ is symmetric. So in the rest of the paper {\em we will assume $A$ to be symmetric.}

By almost the same arguments as above we derive that
\begin{equation} \label{remantis}
\bigl|\nabla_1\mathcal{E}_A (x,y) + \nabla_1\mathcal{E}_A (y,x)\bigr|\lesssim \frac1{|x-y|^{n-\alpha}}
\quad\mbox{ for all $x,y\in \R^{n+1}$ with $x \not=y$ and $|x-y|\leq R$.}
\end{equation}
So, modulo the regularizing kernel ${|x-y|^{-(n-\alpha)}}$, $\nabla_1\mathcal{E}_A (x,y)$ behaves
as if it were antisymmetric. In particular, we have the following result.

\begin{lemm}\label{lemantisym}
Let $\mu$ be a compactly supported $n$-AD-regular measure in $\mathbb R^{n+1}$.  Let $A$ be an elliptic matrix satisfying \eqref{eqelliptic1}, \eqref{eqelliptic2} and \eqref{eq:Holdercont}, and let $T_\mu$ be the associated operator given by \eqref{eq:Tfdef}, with kernel $K(x,y) = \nabla_1 \mathcal{E}_A(x,y)$.
Consider the antisymmetric operator $T_\mu^{(a)}$ and the symmetric operator $T_\mu^{(s)}$ associated  with the kernels
$$K^{(a)}(x,y) = \frac12\bigl(K(x,y) - K(y,x)\bigr)\qquad\mbox{ and }\qquad
K^{(s)}(x,y) = \frac12\bigl(K(x,y) + K(y,x)\bigr)$$
respectively, so that $T_\mu = T_\mu^{(a)} + T_\mu^{(s)}$. Then the operator $T_\mu^{(s)}$
is compact in $L^p(\mu)$, for $1<p<\infty$. In particular, $T_\mu$ is bounded in $L^2(\mu)$ if and only if 
$T_\mu^{(a)}$ is bounded in $L^2(\mu)$.
\end{lemm}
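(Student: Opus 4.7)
The key observation is \eqref{remantis}, which implies that on the compact support of $\mu$ the kernel
\[
K^{(s)}(x,y) = \tfrac{1}{2}\bigl(K(x,y)+K(y,x)\bigr)
\]
satisfies $|K^{(s)}(x,y)|\lesssim |x-y|^{-(n-\alpha)}$ for all $x,y\in\supp\mu$ with $x\neq y$ (take $R=\diam(\supp\mu)$ in \eqref{remantis}). The singularity of $K^{(s)}$ is thus strictly weaker than the critical $n$-dimensional one, so the plan is to deduce from this the compactness of $T_\mu^{(s)}$ on $L^p(\mu)$, $1<p<\infty$, by exactly the same ``standard arguments using AD-regularity'' that were invoked in the preceding lemma for the compactness of $T_\mu-T_\mu^s$. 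Once this is done, the decomposition $T_\mu = T_\mu^{(a)}+T_\mu^{(s)}$, together with the fact that compact operators are bounded, will immediately yield the equivalence of the $L^2(\mu)$-boundedness of $T_\mu$ and of $T_\mu^{(a)}$.

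The compactness argument itself will proceed in two steps. First I would verify, via a dyadic decomposition combined with the $n$-growth of $\mu$, the Schur-type bound
\[
\sup_{x\in\supp\mu}\int_{|x-y|\le\delta}|x-y|^{-(n-\alpha)}\,d\mu(y) \lesssim \delta^\alpha,
\]
together with its analogue exchanging the roles of $x$ and $y$. Applying Schur's test then shows that the operator $T^{(s)}_{\mu,\delta}$ associated with the truncated kernel $K^{(s)}(x,y)\,\chi_{\{|x-y|\le\delta\}}$ has $L^p(\mu)$-operator norm of order $\delta^\alpha$, for every $1<p<\infty$. Second, the complementary kernel $K^{(s)}(x,y)\,\chi_{\{|x-y|>\delta\}}$ is bounded on $\supp\mu\times\supp\mu$, which has finite $\mu\otimes\mu$-measure; the associated integral operator is therefore Hilbert--Schmidt on $L^2(\mu)$ and, by the usual uniform approximation of bounded kernels on a finite product measure space by finite-rank kernels of tensor-product form, compact on $L^p(\mu)$ for every $1<p<\infty$.

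Letting $\delta\to 0$ then realises $T_\mu^{(s)}$ as the operator-norm limit of the compact operators $T^{(s)}_{\mu,\delta}$, so $T_\mu^{(s)}$ itself is compact on $L^p(\mu)$. Since a compact operator is in particular bounded, the decomposition $T_\mu=T_\mu^{(a)}+T_\mu^{(s)}$ gives the desired equivalence. The only point requiring any care is the $L^p$-compactness (rather than merely $L^2$-Hilbert--Schmidt property) of the truncated operators; this is why we rely on the truncated kernel being bounded on the compact product, rather than on some weaker integrability condition on the product measure. Everything else is routine once \eqref{remantis} is in hand.
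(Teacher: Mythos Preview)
Your proof is correct and follows precisely the approach the paper indicates: the paper does not give an explicit proof of this lemma but simply remarks that it follows from \eqref{remantis} by the same ``standard arguments using the AD-regularity of $\mu$'' invoked for the compactness of $T_\mu-T_\mu^s$ in the preceding lemma. You have spelled out those standard arguments (Schur's test on the near-diagonal piece plus compactness of the bounded-kernel far piece, then $\delta\to0$) in a way that is entirely consistent with what the paper has in mind.
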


Contrarily to the natural temptation at this point, in the rest of the paper we \textit{do not} assume the kernel to be antisymmetic. This is because our proof heavily relies on a maximum principle (see, for example, Lemma \ref{Le:PrMax}), which cannot be ensured to hold if we work just with the antisymmetric part.

From Lemma \ref{lemantisym} we derive the existence of a ``weak limit operator'':

\begin{prop}\label{teoconvdebil}
Let $\mu$ be a compactly supported $n$-AD-regular measure in $\mathbb R^{n+1}$.  Let $A$ be an elliptic matrix satisfying \eqref{eqelliptic1}, \eqref{eqelliptic2} and \eqref{eq:Holdercont}, and let $T_\mu$ be the associated operator given by \eqref{eq:Tfdef}. Suppose that $T_\mu$ is bounded in $L^2(\mu)$.
 Then, for all $1<p<\infty$ and $f\in L^p(\mu)$, 
 $T_{\mu,\ve} f$ has a weak limit in $L^p(\mu)$ as $\ve\to0$. Further, denoting by $T_\mu^w f$
 such a weak limit, the operator $T_\mu^w$ is bounded in $L^p(\mu)$ for $1<p<\infty$ and, for all $f\in L^p(\mu)$,
\begin{equation}\label{eqsjt55} 
T_\mu f(x) = T_\mu^w f(x)\quad \mbox{ for $\mu$-a.e.\ $x\in \supp\mu\setminus \supp f$.}
\end{equation}
 \end{prop}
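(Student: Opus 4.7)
The plan is to prove the proposition in three steps: (i) upgrade the uniform $L^2(\mu)$ bound on the truncations $T_{\mu,\ve}$ to uniform $L^p(\mu)$ bounds for $1<p<\infty$, (ii) show that the dual pairing $\langle T_{\mu,\ve}f,g\rangle_\mu$ is Cauchy as $\ve\to 0$ for $f,g$ in a dense subspace, and (iii) extend by density and identify the limit pointwise off $\supp f$. For (i), since $\mu$ is compactly supported and $n$-AD-regular, Lemma \ref{lemcz}(a)--(b) shows that the kernels $K(x,y)\chi_{|x-y|>\ve}$ and their transposes $K(y,x)\chi_{|x-y|>\ve}$ satisfy the Calder\'on--Zygmund size and H\"ormander conditions with constants independent of $\ve$; starting from the $L^2$ hypothesis, the standard Calder\'on--Zygmund machinery on the doubling space $(\supp\mu,\mu)$ (CZ decomposition, weak-$(1,1)$, Marcinkiewicz interpolation, and duality for $p>2$) yields uniform bounds $\|T_{\mu,\ve}\|_{L^p(\mu)\to L^p(\mu)}\lesssim_p 1$.

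For (ii), I would take $f,g$ to be compactly supported Lipschitz functions, which are dense in $L^p(\mu)$ and $L^{p'}(\mu)$. The difference $\langle T_{\mu,\ve_1}f,g\rangle_\mu-\langle T_{\mu,\ve_2}f,g\rangle_\mu$ equals $\iint_{\ve_2<|x-y|\leq\ve_1}K(x,y)f(y)g(x)\,d\mu(y)\,d\mu(x)$. Split $K=K^{(s)}+K^{(a)}$ as in Lemma \ref{lemantisym}. The symmetric part is controlled by the near-antisymmetry bound \eqref{remantis}, giving $|K^{(s)}(x,y)|\lesssim|x-y|^{\alpha-n}$ on bounded sets; by $n$-growth this integrand is absolutely integrable and the annular integral is $O(\ve_1^{\alpha})$. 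For the antisymmetric part, Fubini and the substitution $(x,y)\leftrightarrow (y,x)$ allow me to rewrite the integral as $\tfrac12\iint_{\ve_2<|x-y|\leq\ve_1}K^{(a)}(x,y)\bigl[f(y)g(x)-f(x)g(y)\bigr]\,d\mu(y)\,d\mu(x)$. The Lipschitz regularity gives $|f(y)g(x)-f(x)g(y)|\lesssim|x-y|$, and combined with $|K^{(a)}(x,y)|\lesssim|x-y|^{-n}$ the integrand is controlled by $|x-y|^{1-n}$; again by $n$-growth of $\mu$, the annular integral is $O(\ve_1)$. Hence the pairing is Cauchy, and the limit $\Psi(f,g):=\lim_{\ve\to 0}\langle T_{\mu,\ve}f,g\rangle_\mu$ exists for this dense class.

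For (iii), the bilinear form $\Psi$ satisfies $|\Psi(f,g)|\lesssim\|f\|_{L^p(\mu)}\|g\|_{L^{p'}(\mu)}$ thanks to (i), and a standard $3\varepsilon$-argument using density and the uniform $L^p$ bound extends convergence of the pairing to all $f\in L^p(\mu)$ and $g\in L^{p'}(\mu)$, producing a bounded operator $T_\mu^w$ on $L^p(\mu)$ with $T_{\mu,\ve}f\to T_\mu^w f$ weakly in $L^p(\mu)$ (boundedness being inherited from lower semicontinuity of the norm under weak convergence). To verify \eqref{eqsjt55}, fix $E\subset\supp\mu\setminus\supp f$ with $\dist(E,\supp f)\geq\delta>0$ and $\mu(E)<\infty$: for $\ve<\delta$ the truncation equals the absolutely convergent integral $T_\mu f(x)$ pointwise on $E$, with an $x$-independent bound $|T_\mu f(x)|\lesssim \delta^{-n}\|f\|_{L^1(\mu)}$ (using compactness of $\supp\mu$), so dominated convergence gives $\int_E T_{\mu,\ve}f\,d\mu\to\int_E T_\mu f\,d\mu$; matched against the weak limit this forces $T_\mu^w f=T_\mu f$ $\mu$-a.e.\ on $E$, and letting $\delta\downarrow 0$ covers $\supp\mu\setminus\supp f$. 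The main obstacle is step (ii): without the near-antisymmetry \eqref{remantis} inherited from the H\"older continuity of $A$, the singular annular integral would have no obvious cancellation, and one could not reduce both halves of the Cauchy difference to absolutely convergent pieces controlled by the Lipschitz modulus of the test functions.
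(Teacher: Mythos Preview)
Your proof is correct and follows essentially the same approach as the paper: both split $T_\mu=T_\mu^{(a)}+T_\mu^{(s)}$, handle the symmetric piece via the near-antisymmetry estimate \eqref{remantis} (your step (ii), symmetric part, is exactly the paper's observation that $T_{\mu,\ve}^{(s)}f\to T_{\mu}^{(s)}f$ strongly), and obtain weak convergence of the antisymmetric piece by the symmetrization trick on pairings. The only difference is cosmetic: the paper cites Mattila--Verdera \cite{MV} (and \cite{NTV_acta}) for the antisymmetric convergence, whereas you reproduce the core of that argument directly for Lipschitz test functions and then extend by density using your step (i).
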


Recall that saying that
$T_{\mu,\ve} f$ has a weak limit $T_\mu^w f$ in $L^p(\mu)$ as $\ve\to0$
means that for all $g\in L^{p'}(\mu)$,
$$\lim_{\ve\to 0}\int T_{\mu,\ve}f\,g\,d\mu = \int T_{\mu}^wf\,g\,d\mu.$$

\begin{proof}
Consider the antisymmetric and symmetric operators $T_\mu^{(a)}$, $T_\mu^{(s)}$ from Lemma \ref{lemantisym},
so that, for all $\ve>0$,
$$T_{\mu,\ve} f = T_{\mu,\ve}^{(a)} f + T_{\mu,\ve}^{(s)} f.$$
Since $T_{\mu}^{(a)}$ is antisymmetric, for all $f\in L^p(\mu)$, $1<p<\infty$, the functions $T_{\mu,\ve}^{(a)} f$ converge weakly in $L^p(\mu)$ as $\ve\to0$. This was shown by Mattila and Verdera in \cite{MV} and
an alternative argument is provided in \cite{NTV_acta}. 

Concerning the symmetric operator $T_\mu^{(s)}$, from the estimate \rf{remantis} it easily follows that
$T_{\mu,\ve}^{(s)}f$ converges to $T_{\mu}^{(s)}f = T_{\mu,0}^{(s)}f$ strongly in $L^p(\mu)$, and thus also
weakly in $L^p(\mu)$. Hence, $T_{\mu,\ve} f$ admits a weak limit in $L^p(\mu)$ as $\ve\to0$.

The last statement in the lemma follows by standard arguments.
\end{proof}

From now on, for $\mu$ and $T_\mu$ as above, when $T_\mu$ is bounded in $L^2(\mu)$ we will identify $T_\mu$ with the weak limit operator $T_\mu^{w}$, so that for any function $f\in L^p(\mu)$, $T_\mu f$ makes sense as a function in $L^p(\mu)$.


\section{The flattening lemmas and the alternating layers}

From this section until the end of Section \ref{sec12} we assume that $\mu$ is an $n$-AD-regular measure with compact support and
that $T_\mu$ is bounded in $L^2(\mu)$. In order to prove Theorem \ref{teo1} we have to show that
$\mu$ is uniformly $n$-rectifiable.

\subsection{Existence of balls with small $\beta$-number}

We want to prove that in any ball centered at a point of $\supp\mu$ either we can find a ball, which is not too small, in which the measure is very flat or we have a lower bound for a regularized two-sided truncation of $T\mu$ at some point and at proper scales.

Let $\psi_0\colon[0,+\infty)\to [0,1]$ be a continuous function such that $\psi_0(x)=1$ for $x\leq 1$ and $\psi_0(x)=0$ for $x\geq 2$.
For $z\in\Rn1$ and $0<r_1<r_2,$ we define
\begin{equation}
\psi_{z,r_1,r_2}(x)\coloneqq \psi_0\Big(\frac{|z-x|}{r_2}\Big)-\psi_0\Big(\frac{|z-x|}{r_1}\Big).
\end{equation}
We have that $\supp\psi_{z,r_1,r_2}\subset B(z,r_2)\setminus B(z,r_1)$ and $0\leq \psi_{z,r_1,r_2}\leq 1.$
The proof of the following lemma relies on a touching point argument and it is based on the scheme of the proof of \cite[Lemma 3.3]{tolsa_unif_measures}. We remark that this can also be proved via a variation on the blow-up argument  in \cite[Lemma 5]{NTV_acta}.

\begin{lemm}\label{prop_flat_beta}
Let $\mu\in AD(n,C_0,\Rn1),$ $R\leq 4$ and let $B=B(x,R)$ be a ball centered at $\supp\mu.$ Let $K,\varepsilon>0.$ There is $\rho=\rho(K,\varepsilon,C_0)$ small enough such that at least one of the two following conditions is verified:
\begin{enumerate}
\item There exists a ball $B(x',r)\subset B$  centered at $\supp\mu$ with $r\in [\rho R, R]$ such that
\begin{equation}
\beta_\mu(B(x',r))\leq \varepsilon.
\end{equation}
\item There is a point $z\in \supp\mu\cap B(x,R/4)$ and $r\in [\rho R,R]$, such that
\begin{equation}
|T(\psi_{z,\rho R, r}\mu)(z)|>K.
\end{equation}
\end{enumerate}
\end{lemm}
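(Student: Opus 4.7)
The plan is to argue by contradiction via a blow-up/compactness scheme followed by a touching-point argument, in the spirit of \cite[Lemma 3.3]{tolsa_unif_measures}. Assume the conclusion fails: for every $\rho_k \downarrow 0$ one can find $\mu_k \in AD(n, C_0, \Rn1)$, elliptic matrices $A_k$ satisfying \eqref{eqelliptic1}, \eqref{eqelliptic2}, \eqref{eq:Holdercont} with uniform constants, and balls $B_k = B(x_k, R_k)$ with $x_k \in \supp \mu_k$ and $R_k \leq 4$, for which both (1) and (2) fail with parameter $\rho_k$. Translate and rescale by setting $\tmu_k(E) := R_k^{-n} \mu_k(x_k + R_k E)$ and $\tilde A_k(y) := A_k(x_k + R_k y)$; then $\tmu_k \in AD(n, C_0, \Rn1)$ with $0 \in \supp \tmu_k$, and $\tilde A_k$ is elliptic and H\"older continuous with uniformly bounded constants. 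By weak-$*$ compactness of AD-regular measures and Arzel\`a--Ascoli for the matrices, pass to a subsequence along which $\tmu_k \rightharpoonup \tmu_\infty$ weakly (with supports converging in Hausdorff metric on compact sets) and $\tilde A_k \to \tilde A_\infty$ uniformly on compact sets. The limit $\tmu_\infty$ lies in $AD(n, C_0, \Rn1)$ with $0 \in \supp \tmu_\infty$, and $\tilde A_\infty$ is an elliptic matrix.

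By lower semicontinuity of $\beta$-numbers under weak convergence with Hausdorff-converging supports, the failure of (1) passes to the limit as $\beta_{\tmu_\infty}(B(y, r)) \ge \varepsilon$ for every ball $B(y, r) \subset B(0, 1)$ centered on $\supp \tmu_\infty$. Using the telescoping identity $\psi_{z, \rho_k, r} = \psi_{z, \rho_k, \eta} + \psi_{z, \eta, r}$, the failure of (2) forces $|T_{\tilde A_k}(\psi_{z, \eta, r} \tmu_k)(z)| \le 2K$ for all $z \in \supp \tmu_k \cap \overline{B(0, 1/4)}$ and all $\rho_k \le \eta \le r \le 1$; since $\psi_{z, \eta, r}$ is supported away from $z$, Lemmas \ref{lemcz} and \ref{lemm_freezing} allow us to pass the integral to the limit as $k \to \infty$ for fixed $\eta, r, z$, obtaining
\begin{equation*}
\Bigl| \int \psi_{z, \eta, r}(y) \, \nabla_1 \E_{\tilde A_\infty}(z, y) \, d\tmu_\infty(y) \Bigr| \le 2K \quad \mbox{for all } z \in \supp \tmu_\infty \cap \overline{B(0, 1/4)},\; 0 < \eta \le r \le 1.
\end{equation*}

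The contradiction comes from a touching-point argument. Provided $\rho$ has been chosen small enough that $1/16 \in [\rho, 1]$, the non-flatness at scale $1/8$ inside $B(0, 1)$ yields a best approximating plane $L$ for $\tmu_\infty$ on $B(0, 1/8)$, a unit normal $v$ to $L$ (with an appropriate sign), and a point $z^* \in \supp \tmu_\infty \cap \overline{B(0, 1/8)}$ that maximizes $\langle \cdot, v\rangle$ on this set. Then $z^* \in \overline{B(0, 1/8)} \subset B(0, 1/4)$, and $\supp \tmu_\infty \cap \overline{B(0, 1/8)} \subset \{y : \langle y - z^*, v\rangle \le 0\}$. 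With $v' := \tilde A_\infty(z^*) v$, the explicit formula \eqref{grad_const_coeff_sol} shows that $\nabla_1 \Theta(z^* - y; \tilde A_\infty(z^*)) \cdot v'$ has a constant non-negative sign on this supporting half-space; Lemma \ref{lemm_freezing} controls the replacement of $\nabla_1 \E_{\tilde A_\infty}$ by $\nabla_1 \Theta(\cdot; \tilde A_\infty(z^*))$ up to an error $O(|z^* - y|^{\alpha - n})$, which integrates to a bounded quantity by AD-regularity. The non-flatness of $\tmu_\infty$ at each dyadic scale $r$ then provides, via the $\beta_1$-version of the bound, an average deviation $\langle z^* - y, v\rangle \gtrsim_\varepsilon r$ on the annular piece in the supporting half-space, so each scale contributes $\gtrsim \varepsilon^{n+1}$ to the $v'$-component of the truncated operator, with a consistent sign. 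Summing over $O(\log(1/\eta))$ dyadic scales from $\eta$ up to $1/16$, and bounding the contribution from outside $\overline{B(0, 1/8)}$ by a constant using the $n$-growth of $\tmu_\infty$ together with the decay of the kernel, one obtains $|T_{\tilde A_\infty}(\psi_{z^*, \eta, 1/16}\tmu_\infty)(z^*)| \gtrsim c(\varepsilon, C_0) \log(1/\eta) - C$. For $\eta$ small enough this exceeds $2K$, contradicting the limiting bound above and completing the proof.

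The main obstacle I anticipate is the quantitative lower bound at the touching point: although the sign of the integrand is determined by the supporting-plane geometry, extracting the logarithmic accumulation requires carefully aligning the supporting plane at $z^*$ with the best approximating planes at all smaller scales, which a priori may be tilted relative to $L^*$. Using the $\beta_1$-variant of the non-flatness at each scale lets one average out these tilts and produce a quantitative bound, but this coupling is delicate. Additional technical care is needed to verify that the H\"older correction from Lemma \ref{lemm_freezing} remains summable across dyadic scales without spoiling the sign of the main term, and to justify the passage of the truncated operator to the weak limit given that $\nabla_1 \E_A$ is only locally Calder\'on--Zygmund and not homogeneous.
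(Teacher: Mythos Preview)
Your blow-up wrapper is unnecessary but not wrong: the paper gives a direct argument on the original measure (assuming (1) fails and estimating the truncated operator at a touching point), and it notes that a blow-up route as in \cite[Lemma~5]{NTV_acta} is an alternative. The substantive gap is in your touching-point step.

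You take $z^*$ to be the maximizer of $\langle\cdot,v\rangle$ on $\supp\tmu_\infty\cap\overline{B(0,1/8)}$, with $v$ normal to a best approximating plane. This only guarantees the half-space constraint $\langle y-z^*,v\rangle\le 0$ for $y\in\supp\tmu_\infty\cap\overline{B(0,1/8)}$. If $z^*$ lands on $\partial B(0,1/8)$ (which is not excluded; it happens precisely when the support ``continues upward'' in the $v$-direction just outside the ball), then at \emph{every} small scale $\eta\le r\le 1/16$ the annulus $A(z^*,\eta,1/8)$ contains support points outside $\overline{B(0,1/8)}$ with the wrong sign of $\langle y-z^*,v\rangle$. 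Your claim that this exterior piece is ``bounded by a constant'' is incorrect: it lives at all scales down to $\eta$ and can contribute $O(\log(1/\eta))$ with the opposite sign, cancelling the accumulation you want. The ``plane-alignment'' difficulty you flag is not the real obstacle (choosing the fixed plane through $z^*$ perpendicular to $v$ and using $\beta\ge\ve$ for that plane already gives a point with $\langle z^*-y,v\rangle\ge\ve r$, and with geometric scales $(2/\varepsilon)^j$ these points fall into disjoint annuli); the boundary leakage is.

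The paper avoids this by using a \emph{touching ball}: by AD-regularity there is a ball $B'\subset\tfrac14B$ with $r(B')\gtrsim R$ disjoint from $\supp\mu$, and enlarging it one gets $z\in\partial B'\cap\supp\mu$. Then $\supp\mu$ lies outside $B'$, which forces the parabolic bound $\dist(y,L)\le |y-z|^2/(2r(B'))$ for all $y$ on the ``wrong'' side of the tangent plane $L$, valid at every scale $\le r(B')$. This makes the wrong-side contribution a convergent geometric series, while non-flatness forces mass on the ``good'' side in each annulus $B_{j+1}\setminus B_{j-1}$ (with scales $r(B_j)=(2/\varepsilon)^j\rho R$), and each such annulus contributes $\gtrsim c(\varepsilon)$ in the direction $A(z)^T\vec n$ after freezing. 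If you want to keep the blow-up, replace your extremal-point step by this touching-ball step on $\tmu_\infty$; the rest of your argument then goes through.
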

Before reporting the proof, we remark that the assumption $R\leq 4$ in the statement of the lemma is justified by the fact that we are interested in applying this result to the balls associated with David-Semmes cubes with small enough side length.

\begin{proof}
Suppose that the alternative (1) in the statement of the lemma does not hold. Then
\begin{equation}\label{big_beta_contradiction_hp}
\beta_\mu(B(x',r))>\varepsilon
\end{equation}
for every $x'\in\supp\mu\cap B$ and $r\in[\rho R, R]$ such that $B(x',r)\subset B$.

Being the measure $\mu$ $n$-AD-regular, by standard arguments it follows that there exists an open ball $B'$ contained in $\frac14 B$ such that $B'\cap \supp\mu=\varnothing$ and $r(B')\geq c_1 R$ with $c_1=c_1(n,C_0)$. Possibly by taking a dilation of this ball, we can suppose that $B'\cap \supp\mu=\varnothing$ but there is at least a point $z\in\partial B'\cap \supp\mu$. Without loss of generality, let $z=0$ and suppose that  $\vec{n}\coloneqq (0,\ldots,0,1)$ is the outer normal vector to $\partial B'$ at $z$. Since $B'\subset \frac14B$, we also have that $r(B')\leq R/4$.

We denote by $L$ the hyperplane $\{x\colon x\cdot \vec{n}=0\}$, by $U$ the upper half space $\{x\colon x\cdot\vec{n}>0\}$ and by $D$ the lower one $D\coloneqq \mathbb{R}^{n+1}\setminus (U\cup L)$. For $0<\rho\ll 1$ to be chosen later and for $j\geq 0$, we denote by $B_j$ the ball centered at $0$ and with radius
\begin{equation}
r(B_j)\coloneqq \Big(\frac{2}{\varepsilon}\Big)^j\rho \,R.
\end{equation}
Let $j$ be such that $r(B_j)\leq r(B')$. Short geometric computations prove the inequality
\begin{equation}\label{dist_d_bj}
\dist(y,L)\leq \frac{1}{2}\frac{r(B_j)^2}{r(B')}\quad\mbox{ for every $y\in D\cap B_j\setminus B'.$}
\end{equation}

We denote $\vec{v}\coloneqq A(0)^{T}\vec{n}$.
Using the definition of $\vec{v}$ and \eqref{grad_const_coeff_sol}, we get that there exists $c_2>0$
such that
\begin{equation}\label{gt0}
\vec{v}\cdot \nabla_1\Theta(0,y;A(0))\geq c_2 \frac{\vec{v}\cdot A(0)^{-1}y}{|y|^{n+1}}= c_2 \frac{\vec{n}\cdot y}{|y|^{n+1}}>0\quad\mbox{ for every $y\in U$.}
\end{equation}
 Choose now an integer $N>1$ such that $r\coloneqq r(B_N)\leq r(B').$
As a direct application of Lemma \ref{lemm_freezing} and the growth of $\mu$, we can find two constants $c_3,c_3'>0$ such that
\begin{equation}\label{approx_U_freezing}
\begin{split}
\Big|\int_{D\cap B(0,r)}\vec{v}\cdot \big(\nabla_1\E(0,y)-\nabla_1\Theta(0,y;A(0))\big)\psi_{0,\rho R, r}(y)d\mu(y)\Big|
\leq c_3' \int_{D\cap B}\frac{1}{|y|^{n-\alpha}}d\mu(y)\leq c_3 R^\alpha.
\end{split}
\end{equation}
Let $\chi_{0,r_1, r_2}$ be the characteristic function of the annulus centered at 0 with inner and outer radius $r_1$ and $r_2$ respectively. 
Then, choosing $\rho$ small enough to get $r>2\rho R$ and using \eqref{gt0}, we have that
\begin{equation}\label{ineq_psi_chi}
\begin{split}
\int_{U\cap B(0,r)}\vec{v}\cdot \nabla_1\Theta(0,y;A(0))\psi_{0,\rho R, r}(y)d\mu(y)
\geq \int_{U\cap B(0,r)}\vec{v}\cdot \nabla_1\Theta(0,y;A(0))\chi_{0,2\rho R, r}(y)d\mu(y).
\end{split}
\end{equation}
Since $\beta_\mu(B_j)\geq\varepsilon$ by hypothesis \eqref{big_beta_contradiction_hp}, we have that there exists $y\in \supp\mu\cap B_j$ whose distance from $L$ is greater than $\varepsilon r(B_j)$. As a consequence of \eqref{dist_d_bj}, the point $y$ cannot belong to $D$ if
\begin{equation}
\varepsilon \,r(B_j)\geq \frac{1}{2}\frac{r(B_j)^2}{r(B')},
\end{equation}
which implies that $y \in U\cap B_j$ for every $r(B_j)\leq 2\varepsilon r(B').$
 Since $\mu\in AD(n,C_0,\Rn1),$ assuming $\varepsilon$ small enough if necessary, it follows that 
 $$\mu\big(U\cap B_{j+1}\setminus (B_{j-1}\cup\mathcal{U}_{\varepsilon r(B_j)/2}(L))\big)\geq C_0^{-1}c(\varepsilon)r(B_j)^n,$$
 for some constant $c(\varepsilon)>0$, where $\mathcal{U}_{\varepsilon r(B_j)/2}(L)$ stands for the $\varepsilon r(B_j)/2$-neighborhood of $L$. Taking into account \eqref{gt0}, for $j\geq 0$ we deduce that
 \begin{equation}
 \begin{split}
\int_{U\cap B_{j+1}\setminus B_{j-1}} \vec{v} \cdot \nabla_1\Theta(0,y;A(0))d\mu(y)&\geq \mu\big(U\cap B_{j+1}\setminus (B_{j-1}\cup\mathcal{U}_{\varepsilon r(B_j)/2}(L))\big)\frac{\varepsilon r(B_j)}{2r(B_{j-1})^{n+1}}\\&\geq C_0^{-1}c(\varepsilon).
\end{split}
\end{equation}
for some constant $c(\varepsilon)$. 
Therefore 
\begin{equation}\label{ineq_sum_balls_j}
\begin{split}
&\int_{U\cap B(0,r)}\vec{v}\cdot \nabla_1\Theta(0,y;A(0))\chi_{0,2\rho R, r}(y)d\mu(y)\\
&\qquad=\sum_{j=1}^N\int_{U\cap B_j\setminus B_{j-1}}\vec{v}\cdot \nabla_1\Theta(0,y;A(0))d\mu(y)
\geq C_0^{-1} \sum_{j=2}^{N-1} c(\varepsilon)=C_0^{-1} c(\varepsilon)(N-2).
\end{split}
\end{equation}
Now we need to study the analogous integrals for the lower half-space. As in \eqref{approx_U_freezing}, we have
\begin{equation}\label{approx_freezing_2}
\Big|\int_{U\cap B(0,r)}\vec{v}\cdot \big(\nabla_1\E(0,y)-\nabla_1\Theta(0,y;A(0))\big)\psi_{0,\rho R, r}(y)d\mu(y)\Big|\leq c_4 R^\alpha
\end{equation}
for some $c_4>0$. Moreover, by \eqref{dist_d_bj} and the growth of $\mu$,
\begin{equation}\label{sum_balls_j_down}
\begin{split}
&\sum_{j=1}^N\Big|\int_{D\cap B_j\setminus B_{j-1}}\vec{v}\cdot \nabla_1\Theta(0,y;A(0))d\mu(y)\Big|\\
&\qquad\leq c_5\sum_{j=1}^N\Big|\int_{D\cap B_j\setminus B_{j-1}}\frac{\dist(y,L)}{|y|^{n+1}}d\mu(y)\Big|
\leq c_5 C_0 c(\varepsilon)\sum_{j=1}^N\frac{r(B_j)}{r(B')}\leq C_0 c_6(\varepsilon).
\end{split}
\end{equation}
Gathering \eqref{approx_U_freezing}, \eqref{ineq_psi_chi}, \eqref{ineq_sum_balls_j}, \eqref{approx_freezing_2} and \eqref{sum_balls_j_down} we get
\begin{equation}
\vec{v}\cdot T(\psi_{z,\rho R, r}\mu)(z)\geq C_0^{-1} c(\varepsilon)(N-2) - (c_3+c_4)R^\alpha - C_0 c_6(\varepsilon),
\end{equation}
which gives the desired estimate for $N$ big enough (which forces $\rho$ to be small enough).
\end{proof}

\subsection{Existence of balls and cubes with small $\alpha$-number}
Our proof of the existence of balls and cubes with small $\alpha$-number relies on the following result by Girela-Sarri\'on and Tolsa.

\begin{lemm}[Existence of $\alpha$-flat balls]\label{existence_alpha_flat_balls}
Let $\mu\in AD(n,C_0,\mathbb{R}^{n+1}).$ 
Let $B=B(x,r)$ be a ball centered at $x\in\supp\mu$ with $\beta^L_\mu(B)<\varepsilon$ for some hyperplane $L$ and some $\ve$ small enough. For every $M>10$ and $\tve>0$, there exists $\wt B=B(\tilde x, \tilde r)$ with $\tilde{x}\in\supp\mu$ such that:
\begin{enumerate}
\item $M\wt B\subset B(x,r)$.
\item $\tilde{r}>\sigma r$ for some constant $\sigma$ depending on $\widetilde{\varepsilon}.$
\item $\alpha^L_\mu (M\wt B)\leq \widetilde{\varepsilon}.$
\end{enumerate}
\end{lemm}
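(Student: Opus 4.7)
I would argue by contradiction, using weak compactness of $n$-AD-regular measures. Fix $M_0>10$ and $\tve_0>0$ and suppose no $\sigma>0$ meets the conclusion. Then for each $k\ge 1$ there exist $\mu_k\in AD(n,C_0,\R^{n+1})$, a ball $B_k=B(x_k,r_k)$ centered at $\supp\mu_k$, and a hyperplane $L_k$ with $\beta^{L_k}_{\mu_k}(B_k)<\ve$, such that no ball $\wt B=B(\tilde x,\tilde r)$ with $\tilde x\in\supp\mu_k$, $M_0\wt B\subset B_k$ and $\tilde r>r_k/k$ satisfies $\alpha^{L_k}_{\mu_k}(M_0\wt B)\le\tve_0$.

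After translating, rescaling, and rotating, normalize so that $x_k=0$, $r_k=1$, and $L_k=L:=\{x_{n+1}=0\}$; the hypothesis then reads $\supp\mu_k\cap B(0,1)\subset\{|x_{n+1}|\le\ve\}$. By the standard weak-compactness theorem for $n$-AD-regular measures with uniformly bounded local mass, a subsequence converges weakly, $\mu_k\rightharpoonup\mu_\infty$, to an $n$-AD-regular measure $\mu_\infty$ with constant $C_0'=C_0'(C_0)$. Since the slab is closed, $\supp\mu_\infty\cap B(0,1)\subset\{|x_{n+1}|\le\ve\}$, so in particular $\beta^L_{\mu_\infty}(B(0,1))\le\ve$.

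The heart of the argument---and the main obstacle---is to exhibit a sub-ball of $\mu_\infty$ on which $\alpha^L_{\mu_\infty}$ is as small as we wish. The near-concentration of $\supp\mu_\infty$ on $L$ forces $\mu_\infty$ to behave like an $n$-dimensional measure living on $L$, and one extracts a very flat scale via a secondary blow-up / tangent-measure argument (or, equivalently, Lebesgue differentiation applied to the projection $\pi_\#\mu_\infty$ onto $L$): at $\mu_\infty$-a.e.\ point $\tilde x$, rescalings of $\mu_\infty$ around $\tilde x$ accumulate at a flat measure $c_{\tilde x}\HH^n|_L$, so there exists some $\tilde r\in(0,1/(2M_0))$ with $M_0 B(\tilde x,\tilde r)\subset B(0,1/2)$ and $\alpha^L_{\mu_\infty}(M_0B(\tilde x,\tilde r))<\tve_0/2$. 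Making this step quantitative (or even reducing it to a clean pigeonhole on $\mu_\infty$) is the delicate point, since it requires that the small-$\beta$ hypothesis be strong enough to suppress any non-flat accumulation.

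To conclude, one transfers the estimate back. For any ball $D$ compactly contained in $B(0,1)$, the functional $\nu\mapsto\alpha^L_\nu(D)$ is continuous in the weak topology of finite Radon measures, and moreover $\supp\mu_k\to\supp\mu_\infty$ locally in Hausdorff distance. Pick $\tilde x_k\in\supp\mu_k$ with $\tilde x_k\to\tilde x$; then for all $k$ large enough, the ball $\wt B_k:=B(\tilde x_k,\tilde r)$ satisfies $\tilde x_k\in\supp\mu_k$, $M_0\wt B_k\subset B(0,1)$, $\tilde r>1/k$, and $\alpha^L_{\mu_k}(M_0\wt B_k)<\tve_0$, contradicting the assumption. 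Tracking the construction gives a constant $\sigma:=\tilde r>0$ with the stated dependence $\sigma=\sigma(\tve_0,M_0,C_0,n)$.
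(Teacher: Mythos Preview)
The paper does not give a self-contained proof of this lemma; it simply cites \cite[Lemma~3.2]{GirTolsa}. So there is no in-paper argument to compare with, only the reference.

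Your compactness scheme is a natural strategy, but as written it has a genuine gap, and it is precisely the step you flag as ``the main obstacle.'' You fix $\varepsilon$ once and for all and only let $\sigma=1/k\to 0$. After normalizing and passing to the weak limit, $\mu_\infty$ is an $n$-AD-regular measure whose support in $B(0,1)$ lies in the slab $\{|x_{n+1}|\le\varepsilon\}$ --- exactly the hypothesis you started from. Nothing has been gained, and you must still produce a sub-ball of $\mu_\infty$ with small $\alpha^L$. Your claim that rescalings of $\mu_\infty$ accumulate at $c_{\tilde x}\HH^n|_L$ at $\mu_\infty$-a.e.\ point is false in this generality: take $\mu_\infty$ to be surface measure on the graph of $f(x)=\varepsilon\sin(x_1/(2\varepsilon))$, which sits in the $\varepsilon$-slab but whose tangent planes are tilted away from $L$ at most points; or worse, take a purely unrectifiable $n$-AD-regular set squeezed into the slab, which has no flat tangents at all. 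The slab condition does not propagate downward --- as you zoom in, the relative width $\varepsilon/\tilde r$ blows up --- so neither the tangent-measure nor the Lebesgue-differentiation-on-$\pi_\sharp\mu_\infty$ idea can control $\alpha^L$ at small scales.

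The fix is to use the clause ``for some $\varepsilon$ small enough'' in the hypothesis: the lemma asserts the existence of both a threshold $\varepsilon_0$ and a scale $\sigma$. Negating this, you should take $\varepsilon_k\to 0$ along with $\sigma_k\to 0$. Then the weak limit $\mu_\infty$ is supported \emph{exactly} on $L$ inside $B(0,1)$, hence $\mu_\infty|_{B(0,1)}=g\,\HH^n|_L$ with $g\in L^\infty$ bounded above and below (by AD-regularity). Now Lebesgue differentiation on $g$ genuinely gives, at a Lebesgue point $\tilde x$ of $g$, a radius $\tilde r>0$ with $\alpha^L_{\mu_\infty}(M_0B(\tilde x,\tilde r))<\tve_0/2$, and your transfer step (upper semicontinuity of $\alpha^L$ under weak convergence with a fixed optimal constant $c$, plus local Hausdorff convergence of supports) closes the contradiction. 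With this repair the argument is correct; it is softer and non-effective, whereas the argument in \cite{GirTolsa} is direct and quantitative.
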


This is an immediate consequence of \cite[Lemma 3.2]{GirTolsa}. We remark that this lemma was originally stated in a setting which is more general than the one of AD-regular measures. Note also that the $L^2(\mu)$-boundedness of any singular integral operator is not required in the lemma, so the statement is purely geometric.

\vspace{1mm}

The scheme of the proof of the next lemma resembles that of \cite[Section 15]{NTV_acta}.

\begin{lemm}\label{flat_alpha_prop}
For every $M>1$ and $\bar{\varepsilon}>0$ there exist an integer $N$, a finite set $\mathcal{H}$ of hyperplanes through the origin and a Carleson family $\mathcal{F}\subset\D_\mu$ with the following property. If $P\in \D_\mu\setminus\mathcal{F},$ there exist $H\in\mathcal{H}$ and a cube $Q\subset P$ at most $N$ levels down from $P$ for which
\begin{equation}\label{small_alpha_cube}
\alpha^{(H)}_\mu\bigl(MB_Q\big)\leq \bar{\varepsilon}.
\end{equation}
\end{lemm}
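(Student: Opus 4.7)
The plan is to combine the flattening lemma (Lemma~\ref{prop_flat_beta}), the existence of $\alpha$-flat balls (Lemma~\ref{existence_alpha_flat_balls}), the Carleson criterion (Lemma~\ref{lemma_carleson_family}), and a compactness argument on the Grassmannian of $n$-planes, following the scheme of \cite[Section~15]{NTV_acta}. The auxiliary parameters $K\gg 1$ and $\varepsilon,\tilde\varepsilon,\eta\ll 1$ will be fixed at the end in terms of $\bar\varepsilon$ and $M$. Since $\supp\mu$ is compact, only finitely many cubes of $\D_\mu$ satisfy $\ell(P)\geq 1$; put them into $\mathcal{F}$. For every other $P$, apply Lemma~\ref{prop_flat_beta} to the ball $B(x_P,R)$ with $R=c\,\ell(P)\leq 4$, obtaining the scale $\rho=\rho(K,\varepsilon,C_0)$; enlarge $\mathcal{F}$ by adjoining every remaining $P$ for which alternative (2) of that lemma is the one that holds.

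The first main step is to verify that $\mathcal{F}$ is Carleson via Lemma~\ref{lemma_carleson_family}. Fix $P\in\mathcal{F}$ with $\ell(P)<1$ and choose $z_P\in\supp\mu\cap B(x_P,R/4)$ and $r_P\in[\rho R,R]$ witnessing $|T(\psi_{z_P,\rho R,r_P}\mu)(z_P)|>K$. Since $\psi_{z_P,\rho R,r_P}$ is supported outside $B(z_P,\rho R)$, the H\"older continuity of $K(\cdot,y)$ from Lemma~\ref{lemcz}(b) propagates the lower bound to
$$
|T(\psi_{z_P,\rho R,r_P}\mu)(w)|>K/2\quad \text{for every } w\in B(z_P,c\rho R)\cap\supp\mu,
$$
with $c=c(\alpha,C_h,\rho)>0$. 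Passing to $T_\mu^{(a)}$ via Lemma~\ref{lemantisym} (the difference $T_\mu-T_\mu^{(a)}$ is $L^2$-compact and hence contributes only a Carleson-harmless error), and exploiting antisymmetry in the form $\langle T_\mu^{(a)}\chi_{E_P},\psi_P\rangle_\mu=-\int_{E_P}T_\mu^{(a)}\psi_P\,d\mu$, one converts this pointwise bound on a set of $\mu$-measure $\gtrsim(\rho R)^n$ into a pairing estimate $|\langle T_\mu\chi_{E_P},\psi_P\rangle_\mu|\gtrsim K\,\mu(P)^{1/2}$ for an appropriate mean-zero normalized Haar function $\psi_P\in\Psi^{Haar}_P(N_0)$ (supported on cubes within $B(z_P,c\rho R)$ together with a counterbalance elsewhere in $P$) and for $E_P=\widetilde M B_P$ with $\widetilde M=\widetilde M(\rho)$ large enough that $E_P$ contains $\supp\psi_{z_P,\rho R,r_P}$. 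Hence $\xi_{\widetilde M}(P)\gtrsim_\rho K$, and Lemma~\ref{lemma_carleson_family} yields that $\mathcal{F}$ is Carleson.

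For $P\notin\mathcal{F}$, alternative (1) of Lemma~\ref{prop_flat_beta} produces a ball $B'=B(x',r)\subset B(x_P,R)$ centered in $\supp\mu$ with $r\geq\rho R$ and $\beta_\mu(B')\leq\varepsilon$. Provided $\varepsilon$ is sufficiently small, Lemma~\ref{existence_alpha_flat_balls} applied to $B'$ with parameters $M'=CM$ (for a large absolute constant $C$) and $\tilde\varepsilon$ produces $\wt B=B(\tilde x,\tilde r)$ with $M'\wt B\subset B'$, $\tilde r\geq\sigma r\geq\sigma\rho R$, and a hyperplane $L$ satisfying $\alpha^L_\mu(M'\wt B)\leq\tilde\varepsilon$. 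Pick a David--Semmes cube $Q\in\D_\mu$ with $x_Q\in\wt B\cap\supp\mu$ and $\ell(Q)\approx\tilde r$; since $\tilde r\gtrsim\sigma\rho\,\ell(P)$, the cube $Q$ lies at most $N=\lceil\log_2(1/(\sigma\rho))\rceil+O(1)$ generations below $P$. Taking $C$ large ensures $MB_Q\subset M'\wt B$, and standard scaling of the $\alpha$-number gives $\alpha^L_\mu(MB_Q)\lesssim_M\tilde\varepsilon$.

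Finally, produce the finite set $\mathcal{H}$ by a compactness argument: the Grassmannian of $n$-planes through the origin, identified via unit normals with $\mathbb{R}P^n$, is compact, so cover it by finitely many $\eta$-balls and let $\mathcal{H}$ be a set of representative hyperplanes. Pick $H\in\mathcal{H}$ whose unit normal lies within $\eta$ of that of the $L$ produced above, and let $L'$ be the translate of $H$ closest to $L$ inside $2MB_Q$. Then $\sup_{y\in MB_Q}|\dist(y,L)-\dist(y,L')|\lesssim_M\eta\,\ell(Q)$, which forces $\alpha^{(H)}_\mu(MB_Q)\leq\alpha^{L'}_\mu(MB_Q)\leq\alpha^L_\mu(MB_Q)+C(M)\eta\lesssim_M\tilde\varepsilon+\eta$. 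Choose $\tilde\varepsilon$ and $\eta$ so small that the right-hand side is $\leq\bar\varepsilon$; then choose $K$ large (which fixes $\rho$, $\sigma$, hence $N$), and finally $\varepsilon$ small enough to run Lemma~\ref{existence_alpha_flat_balls} with the parameters above. The main obstacle is Step~1: converting the pointwise bound from Lemma~\ref{prop_flat_beta} into the integral pairing required by Lemma~\ref{lemma_carleson_family} demands both the H\"older continuity of $K(\cdot,y)$ (to promote the pointwise bound to a bound on a set of positive $\mu$-measure) and the near-antisymmetry of $T_\mu$ (Lemma~\ref{lemantisym}) to justify passing to the adjoint pairing; the remaining steps are geometric and essentially routine.
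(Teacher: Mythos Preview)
Your overall architecture is right, and the geometric part (Case~(1), the $\varepsilon$-net on the Grassmannian) matches the paper's argument up to harmless reordering. The gap is in your Step~1, the Carleson verification for $\mathcal F$.

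You have propagated the pointwise bound $|T(\psi_{z_P,\rho R,r_P}\mu)(w)|>K/2$ to a small neighborhood of $z_P$ via H\"older continuity---that much is fine, and the paper does the analogous thing. But your mechanism for converting this into the pairing estimate $|\langle T_\mu\chi_{E_P},\psi_P\rangle_\mu|\gtrsim K\mu(P)^{1/2}$ is not correct. The antisymmetry identity $\langle T_\mu^{(a)}\chi_{E_P},\psi_P\rangle_\mu=-\int_{E_P}T_\mu^{(a)}\psi_P\,d\mu$ relates the pairing to $T_\mu^{(a)}$ applied to the \emph{Haar function} $\psi_P$, not to $T_\mu$ applied to the \emph{annular bump} $\psi_{z_P,\rho R,r_P}$; these are entirely different objects, and your lower bound is on the latter. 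Moreover, a Haar function has mean zero, so even a uniform lower bound on $|T_\mu\chi_{E_P}|$ over $\supp\psi_P$ would give nothing---what you need is a large \emph{oscillation} of $T_\mu\chi_{E_P}$ between two scales, and the antisymmetry detour does not produce this.

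The paper's argument supplies exactly the missing idea: take $\psi_P=(\theta\ell(P))^{n/2}\bigl(\chi_Q/\mu(Q)-\chi_{Q'}/\mu(Q')\bigr)$ where $Q'\subset Q\subset P$ are cubes containing $z_P$ with $\ell(Q')\approx\rho R$ and $\ell(Q)\approx r_P$, so that the pairing becomes a difference of averages $m_{\mu,Q}(T_\mu\chi_E)-m_{\mu,Q'}(T_\mu\chi_E)$. One then decomposes $\chi_E=f_1+\psi_{z_P,\rho R,r_P}+f_2$ with $\supp f_1\subset B(z_P,2\rho R)$ and $\supp f_2\cap B(z_P,r_P)=\varnothing$. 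The H\"older regularity you already used gives $|m_{\mu,Q'}(T_\mu\psi_{z_P,\rho R,r_P})|\geq K-C$, while $L^2$-boundedness bounds $|m_{\mu,Q}(T_\mu\psi_{z_P,\rho R,r_P})|$ and the $f_1,f_2$ contributions by absolute constants. No antisymmetry is needed (Lemma~\ref{lemantisym} plays no role here); the $L^2(\mu)$-boundedness of $T_\mu$ itself, together with the kernel regularity, suffices.
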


\begin{proof}
The idea is to combine Lemma \ref{prop_flat_beta} and Lemma \ref{existence_alpha_flat_balls}. We fix a cube and we show that either the condition (1) in Lemma \ref{prop_flat_beta} is verified, so that we can find a ball with small $\beta$-number and apply Lemma \ref{existence_alpha_flat_balls}, or the cube belongs to a Carleson family. We define the family $\mathcal{F}$ as the collection of cubes for which condition (1) in Lemma \ref{prop_flat_beta} does not apply.

Let $P\in\D_\mu$ and let $R\coloneqq \ell(P).$ Let $\varepsilon$ and $K$ be as in Lemma \ref{prop_flat_beta}, to be chosen later. We analyze the two different cases starting from the ``flat" one.

\subsubsection*{Case (1).} Suppose that there is $\rho>0$ such that $r>\rho R$ and we can find a ball $B(z,r)\subset B(x_P,R)$ with
\begin{equation}
\beta^L_\mu(B(z,r))\leq \varepsilon
\end{equation}
for some hyperplane $L$. Let $H$ be a hyperplane through the origin whose normal spans an angle at most $\varepsilon$ with the normal to $L$. Elementary geometric considerations lead to
\begin{equation}
\beta^{(H)}_\mu(B(z,r))\leq 2\varepsilon.
\end{equation}
It is possible to suppose that $H$ belongs to a finite family $\mathcal{H}$ of hyperplanes: it suffices to define $\mathcal{H}$ as the family of hyperplanes whose normal vectors form an $\varepsilon$-net on the unit sphere $\mathbb{S}^n$.

 By Lemma \ref{existence_alpha_flat_balls} for every $\widetilde{\varepsilon}>0$ to be chosen later and $\varepsilon$ small enough (depending on $\widetilde{\varepsilon}$) there are $\sigma>0$ and a ball $B(\widetilde{z},2(M+2)\widetilde{r})$ such that $\widetilde{r}>\sigma r$ and
\begin{equation}\label{small_epsilon_tilde}
\alpha^{(H)}_\mu \big(B(\widetilde{z},2(M+2)\widetilde{r})\big)\leq \widetilde{\varepsilon}.
\end{equation}
Take a point $z'\in\supp\mu$ such that $|\widetilde{z}-z'|<\widetilde{\varepsilon}\widetilde{r}$. We choose the cube $Q\in\D_\mu$ as the one such that $z'\in Q$ and $\widetilde{r}\leq \ell(Q)\leq 2\widetilde{r}.$ For $\widetilde{\varepsilon}<1$ we have
\begin{equation}
|\tilde z-x_Q|\leq |z'-x_Q|+|z'-\tilde z|< \ell(Q) + \widetilde{\varepsilon}\widetilde{r}<2\ell(Q).
\end{equation}
Now we use the stability of the $\alpha$-number under small shifts and proper rescalings to compare $\alpha^{(H)}_\mu(MB_Q)$ to $\alpha^{(H)}_\mu(B(\widetilde{z},2(M+2)\widetilde{r}))$ and, hence, to prove that it is small. Being $M>1,$ we have $(M+2)/3<M$. So, using the inclusions
\begin{equation}
MB_Q\subset B(x_Q, 2M\widetilde{r})\subset B(\tilde{z}, 2(M+2)\widetilde{r}),
\end{equation}
for some plane $L$ parallel to $H$
we can write
\begin{equation}
\begin{split}
\alpha^{(H)}_\mu(MB_Q) = \alpha^{L}_\mu(MB_Q)&=\frac{1}{(M\ell(Q))^{n+1}}\inf_{c\geq 0}d_{MB_Q}(\mu,c\mathcal{H}^n{|_L})\\
&\leq \frac{2^{n+1}}{(2M\widetilde{r})^{n+1}}\inf_{c\geq 0}d_{B(x_Q,2M\widetilde{r})}(\mu,c\mathcal{H}^n{|_L})\\
&\leq \Big(\frac{2(M+2)}{M}\Big)^{n+1}\frac{1}{(2(M+2)\widetilde{r})^{n+1}}\inf_{c\geq 0}d_{B(\widetilde{z},2(M+2)\widetilde{r})}(\mu,c\mathcal{H}^n{|_L})\\
&\leq {6^{n+1}} \alpha^{(H)}_\mu \big(B(\widetilde{z},2(M+2)\widetilde{r})\big).
\end{split}
\end{equation}
Then, recalling \eqref{small_epsilon_tilde} we have
\begin{equation}
\alpha^{(H)}_\mu\big(MB_Q\big)\leq 6^{n+1}\widetilde{\varepsilon}.
\end{equation}
The proof of \eqref{small_alpha_cube} is completed by choosing $\varepsilon$ such that $\bar{\varepsilon}=6^{n+1}\widetilde{\varepsilon},$ where $\bar{\ve}$ is as in the statement of the lemma.
The cube $Q$ is at most $N$ levels down from $P$ for some $N$ that, being $\ell(Q)\geq \ell(P)\sigma\rho/2$, satisfies
\begin{equation}\label{bound_n}
N\leq \log_{2}\frac{\ell(P)}{\ell(Q)}\leq 1- \log_{2}\rho -\log_{2}\sigma.
\end{equation}
Again, we remark that the estimate in the right hand side of \eqref{bound_n} depends just on $M$ and $\bar{\varepsilon}$.
\subsubsection*{Case (2).} Let $z$ be a point in $\supp\mu\cap B(x,R/4)$, such that
\begin{equation}\label{bound_psi_below}
|T(\psi_{z,\rho R, r}\mu)(z)|>K.
\end{equation}
Let $Q$ be the largest $\mu$-cube containing $z$ with $\ell(Q)<r/32$ and let $Q'$ be the largest $\mu$-cube containing $z$ with $\ell(Q')<\rho R/32.$ Then $Q'\subset Q\subset P.$

The idea of this part of the proof is to apply Lemma \ref{lemma_carleson_family} to prove that the family $\mathcal{F}$ of $\mu$-cubes $P$ for which case (2) applies is Carleson. To this purpose, consider the set $E=10B_P$, which contains $B(z,2R)$.
We claim that there is a constant $\widetilde{C}$ such that
\begin{equation}\label{diff_mean_E}
|m_{\mu,Q}(T_\mu \chi_E)-m_{\mu, Q'}(T_\mu \chi_E)|\geq K - \widetilde{C}.
\end{equation}
To prove this, we consider two continuous functions $f_1$ and $f_2$ with $|f_1|,|f_2|\leq 1$ and such that
\begin{equation}
\chi_E=f_1+\psi_{z,\rho R, r}+f_2,
\end{equation}
$\supp f_1\subset B(z,2\rho r)$ and $\supp f_2\cap B(z,r)=\varnothing.$

Using the $L^2(\mu)$-boundedness of $T_\mu$, the regularity of the measure and the fact that $Q'\subset Q$, we have
\begin{equation}
\begin{split}
\int |T_\mu f_1|^2d\mu&\lesssim \int |f_1|^2 d\mu\leq \mu(\supp f_1)\\
&\leq \mu(B(z,2\rho R))\lesssim (\rho R)^n\lesssim \ell(Q')^n\lesssim \mu(Q')\leq \mu(Q),
\end{split}
\end{equation}
which yields that there exists a constant $C_1>0$ such that
\begin{equation}\label{diff_mean_f_1}
|m_{\mu,Q}(T_\mu f_1)-m_{\mu, Q'}(T_\mu f_1)|\leq |m_{\mu,Q}(T_\mu f_1)|+|m_{\mu, Q'}(T_\mu f_1)|\leq C_1.
\end{equation}
Using $L^2(\mu)$-boundedness again we have
\begin{equation}
\norm{T_\mu\psi_{z,\rho R, r}}_{L^2(\mu)}\lesssim \norm{\psi_{z,\rho R, r}}_{L^2(\mu)}\leq \mu(B(z,2r))^{1/2}\lesssim r^{n/2}\lesssim \ell(Q)^{n/2}\lesssim \mu(Q)^{1/2},
\end{equation}
which implies that there exists a constant $C_2>0$ such that 
\begin{equation}\label{mean_Q}
|m_{\mu,Q}(T_\mu \psi_{z,\rho R, r})|\leq C_2.
\end{equation}
By the choice of $Q'$, we have that $Q'\subset B(z,\rho R/2).$ Indeed
\begin{equation}\label{inclusion_chains}
Q'\subset B(z',8 \ell(Q'))\subset B(z',\rho R/4)\subset B(z,\rho R/2).
\end{equation}
Being $B(z,\rho R)\cap \supp\mu=\varnothing,$ we have the following estimate for the H\"older norm:
\begin{equation}\label{holder_norm_psi}
\norm{T_\mu\psi_{z,\rho R,r}}_{C^\alpha(B(z,\rho R/2))}\lesssim (\rho R)^{-\alpha},
\end{equation}
so there exists a constant $C_3>0$ such that for every $y\in Q'$ 
\begin{equation}\label{mean_Q'}
\begin{split}
&|m_{\mu, Q'}(T_\mu\psi_{z,\rho R,r})|\geq |T_\mu(\psi_{z,\rho R,r})(y)|-|m_{\mu, Q'}(T_\mu\psi_{z,\rho R,r}) - T_\mu(\psi_{z,\rho R,r})(y)|\\
&\qquad \geq K - \norm{T_\mu \psi_{z,\rho R,r}}_{C^{\alpha}(B(z,\rho R/2))}\dist(Q',\supp \psi_{z,\rho R, r})^\alpha\geq K - C_3.
\end{split}
\end{equation}
Gathering \eqref{mean_Q} and \eqref{mean_Q'} we get
\begin{equation}\label{diff_mean_psi}
|m_{\mu, Q'}(T_\mu\psi_{z,\rho R,r}) - m_{\mu,Q}(T_\mu \psi_{z,\rho R, r})|\geq K - C_2-C_3.
\end{equation}

Let us estimate the difference between the averages of $T_\mu f_2$ over the $\mu$-cubes $Q$ and $Q'$. Arguing as in \eqref{inclusion_chains} and \eqref{holder_norm_psi}, we have that $Q\subset B(z,r/2)$ and
\begin{equation}
\norm{T_\mu f_2}_{C^\alpha(B(z,r/2))}\lesssim \ell(Q)^{-\alpha},
\end{equation}
so there exists a constant $C_4>0$ such that
\begin{equation}\label{diff_mean_f_2}
 |m_{\mu,Q}(T_\mu f_2)-m_{\mu, Q'}(T_\mu f_2)|\leq C_4.
\end{equation}
Gathering \eqref{diff_mean_f_1}, \eqref{diff_mean_psi} and \eqref{diff_mean_f_2}, we prove the claim \eqref{diff_mean_E}. Now, if we choose $\theta>0$ and we define\begin{equation}
\psi_P\coloneqq \big(\theta \ell(P)\big)^{n/2}\Big(\frac{\chi_Q}{\mu(Q)}-\frac{\chi_{Q'}}{\mu(Q')}\Big),
\end{equation}
as a consequence of \eqref{diff_mean_E} we get
\begin{equation}\label{geq_below}
\begin{split}
&\mu(P)^{-1/2}|\langle T_\mu \chi_E ,\psi_P\rangle_\mu|\\
&\qquad=\mu(P)^{-1/2}\big(\theta \ell(P)\big)^{n/2}|m_{\mu,Q}(T_\mu\chi_E)-m_{\mu, Q'}(T_\mu \chi_E)|\geq C_0^{-1/2}\theta^{n/2}(K-\widetilde{C}).
\end{split}
\end{equation}
We remark that $\theta$ serves as a normalizing factor in order to get a bound on the $L^2(\mu)$ norm of $\psi_P.$
In this way, we have that $\psi_P$ belongs to the Haar system $\Psi^{Haar}_P(N)$ of depth 
\begin{equation}
N=\log_{2} (\ell(P)/\ell(Q))\leq \log_{2}\theta^{-1}+\widetilde{C}
\end{equation}
so that we can combine \eqref{geq_below} and Lemma \ref{lemma_carleson_family}. Indeed, recalling the definition of $\xi_{\widetilde{M}}(P)$ provided in \eqref{xi_Q}, \eqref{geq_below} proves that $\xi_5(P)\geq C_0^{-1/2}\theta^{n/2}(K-\widetilde{C})$, which implies that $\mathcal{F}$ is a Carleson family for $K$ big enough.
\end{proof}

As an immediate consequence of the preceding Corollary \ref{flat_alpha_prop} we get the following.

\begin{coroll}\label{flat_alpha_prop'}
For every $M>1$ and $\bar{\varepsilon}>0$ there exist an integer $N'$ and a finite set $\mathcal{H}$ of hyperplanes through the origin with the following property: for every $P\in \D_\mu,$ there exist $H\in\mathcal{H}$ and a cube $Q\subset P$ at most $N'$ levels down from $P$ for which
$
\alpha^{(H)}_\mu\big(MB_Q\big)\leq \bar{\varepsilon}.
$
\end{coroll}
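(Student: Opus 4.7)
The plan is to bootstrap Lemma \ref{flat_alpha_prop} by exploiting the Carleson packing property of the exceptional family $\mathcal{F}$: inside any cube $P\in\mathcal{F}$, the family $\mathcal{F}$ cannot occupy the entire subtree of descendants of $P$ beyond a bounded depth, so we can always descend to a cube outside $\mathcal{F}$ and then invoke Lemma \ref{flat_alpha_prop} there.

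First I would apply Lemma \ref{flat_alpha_prop} with the prescribed parameters $M$ and $\bar{\varepsilon}$ to obtain the integer $N$, the finite hyperplane collection $\mathcal{H}$, and the Carleson family $\mathcal{F}\subset\D_\mu$ with some Carleson constant $C\geq 1$. The set $\mathcal{H}$ will be reused verbatim in the statement of the corollary, while $N'$ will be $N$ enlarged by a purely combinatorial correction depending only on $C$.

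Fix $P\in\D_\mu$ and let $k_P$ denote its generation. If $P\notin\mathcal{F}$, then Lemma \ref{flat_alpha_prop} applied to $P$ directly supplies the desired $H\in\mathcal{H}$ and descendant $Q$ at most $N$ levels below $P$. Otherwise $P\in\mathcal{F}$, and the key observation is that if every $\mu$-cube contained in $P$ of generation $k_P, k_P+1,\ldots,k_P+k_0$ were to belong to $\mathcal{F}$, one would have
\[
\sum_{Q\in\mathcal{F},\, Q\subset P}\mu(Q) \;\geq\; \sum_{j=0}^{k_0}\mu(P) \;=\; (k_0+1)\,\mu(P),
\]
which for $k_0\geq C$ contradicts the packing bound $\sum_{Q\in\mathcal{F},\, Q\subset P}\mu(Q)\leq C\mu(P)$. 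Hence, setting $k_0:=\lceil C\rceil$, there must exist a cube $P'\subset P$ at depth at most $k_0$ below $P$ with $P'\notin\mathcal{F}$.

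Finally I would apply Lemma \ref{flat_alpha_prop} to $P'$ to obtain $H\in\mathcal{H}$ and a cube $Q\subset P'$ at most $N$ levels below $P'$ with $\alpha^{(H)}_\mu(MB_Q)\leq\bar{\varepsilon}$. Since $P'$ sits at most $k_0$ levels below $P$, the cube $Q$ sits at most $N+k_0$ levels below $P$, and the choice $N':=N+\lceil C\rceil$ closes the argument. I do not anticipate any substantive obstacle here: the entire content reduces to the pigeonhole remark that a Carleson family cannot saturate the full subtree of descendants of a cube down to arbitrary depth.
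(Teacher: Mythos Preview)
Your proposal is correct and follows essentially the same approach as the paper's proof. The paper simply asserts in one line that, because $\mathcal{F}$ is Carleson, every $P\in\D_\mu$ contains some $P'\in\D_\mu\setminus\mathcal{F}$ with $\ell(P')\approx\ell(P)$, and then invokes Lemma~\ref{flat_alpha_prop} at $P'$; you spell out the underlying pigeonhole argument (that a Carleson family cannot fill every generation down to depth $\lceil C\rceil$) explicitly, which is exactly the content behind the paper's assertion.
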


\begin{proof}
Consider the family $\mathcal{F}$ in the preceding lemma. Since this is a Carleson family, for any $P\in\D_\mu$ there exists
some $P'\in\D_\mu\setminus \mathcal F$ contained in $P$ with $\ell(P')\approx\ell(P)$. Then, by definition, there exists
a cube $Q\subset P'$, with $\ell(Q)\approx\ell(P')\approx\ell(P)$ and such that $\alpha^{(H)}_\mu\big(MB_Q\big)\leq \bar{\varepsilon}$  for some hyperplane $H\in\mathcal{H}$.
\end{proof}


\subsection{The alternating layers}

A general feature of non-Carleson families is that, for every positive integer $K_0$, it is possible to find a $\mu$-cube and $(K_0+1)$ layers of finitely many cubes so that each of them tiles up the initial cube up to a set of small measure (for the details see \cite[Lemma 7]{NTV_acta}). This result can be refined by finding intermediate layers of very flat cubes using Corollary \ref{flat_alpha_prop'}. For the proof of the following lemma we refer to \cite[Section 16]{NTV_acta}. 

\begin{lemm}\label{altern_nb_flat}
Let $\varepsilon>0,$ $M>1$ and let $H$ be a hyperplane through the origin in $\Rn1$. Let $\mathcal A\subset\D_\mu$ be a non-Carleson family such that each $Q\in\mathcal A$ contains a cube $Q'\in\D_\mu$  at most $N'$ levels down from $Q$ such that $\alpha^{(H)}_\mu(MB_{Q'})<\ve$. Then, for every positive integer $K$ and every $\eta>0$ there exist a cube $R_0\in\mathcal A$ and $(K+1)$ alternating pairs of finite layers $\NB_k$ and $\FL_k$ in $\mathcal{D}_\mu$ with $k=0,1,\ldots,K$ such that the following properties hold
\begin{enumerate}
\item $\NB_0=\{R_0\}.$
\item $\NB_k\subset \{Q\in\D_\mu\colon Q\subset R_0\}\cap \mathcal A$ for any $k=0,\ldots,K$.
\item for every $k=0,\ldots, K$ and $Q\in \FL_k$ we have
\begin{equation}
\alpha^{(H)}_\mu (MB_Q)<\varepsilon.
\end{equation}
\item for every $k=0,\ldots,K$ and $Q\in \FL_k$ there exists a cube $P\in \NB_k,$ $P\supset Q.$
\item for every $k=1,\ldots,K$ and $P'\in \NB_k$ there exists a cube $Q\in \FL_{k-1},$ $P'\subset Q.$
\item \label{property_6} $\sum_{Q\in \FL_K}\mu(Q)\geq (1-\eta)\mu(R_0).$
\end{enumerate}
\end{lemm}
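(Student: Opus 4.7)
The plan is to build the layers inductively, starting from $R_0$ at the top, alternating between passing from a non-Carleson layer $\NB_k$ to a flat layer $\FL_k$ via the one-level hypothesis, and from $\FL_k$ back to $\NB_{k+1}$ by exploiting the large Carleson excess of $\mathcal{A}$ inside $R_0$. Since $\mathcal{A}$ fails the Carleson packing condition, for any constant $C^*=C^*(K,N',\eta)$ (to be fixed below) we can find $R_0\in\D_\mu$ with $\sum_{Q\in\mathcal A,\,Q\subset R_0}\mu(Q)\geq C^*\mu(R_0)$; after a standard pigeonhole argument we may assume $R_0\in\mathcal A$, and we set $\NB_0:=\{R_0\}$.

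Given $\NB_k\subset\mathcal A$, I would extract $\FL_k$ by a stopping-time scheme: for each $Q\in\NB_k$, apply the hypothesis to produce a flat descendant $Q_1\subset Q$ at most $N'$ levels down; then cover the remainder $Q\setminus Q_1$ by its maximal $\mathcal A$-subcubes, apply the hypothesis to each of these, and iterate. Provided the Carleson excess inside $Q$ is sufficiently large, this process exhausts $Q$ up to an $\eta_k$-proportion, producing a disjoint family $\FL_k(Q)$ with $\sum_{Q'\in\FL_k(Q)}\mu(Q')\geq(1-\eta_k)\mu(Q)$; set $\FL_k:=\bigsqcup_{Q\in\NB_k}\FL_k(Q)$. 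Then for each $Q'\in\FL_k$, let $\NB_{k+1}(Q')$ be the family of maximal $\mathcal A$-cubes strictly contained in $Q'$, and set $\NB_{k+1}:=\bigsqcup_{Q'\in\FL_k}\NB_{k+1}(Q')$; the residual Carleson excess of $Q'$ again guarantees $\sum_{P\in\NB_{k+1}(Q')}\mu(P)\geq(1-\eta'_k)\mu(Q')$. Properties (1)--(5) are immediate from this construction.

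The final property (6) follows by telescoping: $\sum_{Q\in\FL_K}\mu(Q)\geq\mu(R_0)\prod_{j=0}^{K}(1-\eta_j)\prod_{j=0}^{K-1}(1-\eta'_j)$, so choosing each $\eta_j,\eta'_j$ of size $\eta/(2K+1)$ delivers the desired lower bound. The crux, and the main obstacle, is to make the Carleson excess \emph{propagate} through all $K$ alternations: cubes $Q'\in\FL_k$ need not themselves lie in $\mathcal A$, yet the next stopping step demands that $Q'$ contain $\mathcal A$-mass close to $\mu(Q')$. Since each $Q'$ sits at most $N'$ levels below some $Q\in\NB_k$, each iteration can only dilute excess by a factor depending on $N'$ and $C_0$, so it is enough to start with initial excess $C^*\gtrsim_{K,N',C_0}\eta^{-K}$. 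The hypothesis that $\mathcal A$ is \emph{non}-Carleson is precisely what makes such a $C^*$ available; a Carleson family would block the iteration at some step, since excess cannot be generated out of nothing.
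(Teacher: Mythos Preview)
Your inductive scheme and your identification of the crux are both on target, but the justification you give for the propagation of excess does not go through. First, in your iterative construction of $\FL_k(Q)$ the flat cubes are \emph{not} all within $N'$ dyadic levels of $Q$: at each pass you descend to maximal $\mathcal A$-subcubes of the remainder, and this inner recursion can go arbitrarily deep, so the sentence ``each $Q'$ sits at most $N'$ levels below some $Q\in\NB_k$'' is simply false for the construction you describe. More importantly, a large Carleson excess does not force your stopping process to exhaust most of $Q$. Suppose that the only $\mathcal A$-cubes strictly inside $Q$ all lie in a single tiny cube $R\subset Q\setminus f(Q)$, with a very deep $\mathcal A$-tree inside $R$. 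Then $\sum_{P\in\mathcal A,\,P\subset Q}\mu(P)$ can be made as large as desired, yet your $\FL_k(Q)$ is contained in $\{f(Q)\}\cup\{Q':Q'\subset R\}$, hence $\sum_{Q'\in\FL_k(Q)}\mu(Q')\le\mu(f(Q))+\mu(R)$, a fixed fraction strictly below $\mu(Q)$ regardless of the excess. The same configuration breaks the step $\FL_k\to\NB_{k+1}$: a flat cube $Q'$ can have enormous $\mathcal A$-packing concentrated in a corner of negligible measure, so the maximal $\mathcal A$-subcubes of $Q'$ need not cover anything close to $(1-\eta'_k)\mu(Q')$. A huge packing sum reflects many \emph{nested} cubes, not a large \emph{covered} set, and that is exactly what defeats the bounded-dilution heuristic.

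The argument in \cite[Sections~13 and 16]{NTV_acta}, to which the paper defers, avoids this by choosing $R_0$ more carefully. One first forgets about flat cubes and proves the pure non-Carleson statement: starting from $R^*$ with packing $\ge C^*\mu(R^*)$, form the standard maximal-$\mathcal A$-descendant layers $\mathcal L_m$ inside $R^*$ and set $a_m=\sum_{Q\in\mathcal L_m}\mu(Q)$. The sequence $(a_m)_m$ is nonincreasing with $\sum_m a_m\ge C^*\mu(R^*)$, so a pigeonhole (if $a_{m+K_0}<(1-\eta')a_m$ for every $m$ then $\sum_m a_m\le K_0\mu(R^*)/\eta'$) yields an $m$ with $a_{m+K_0}\ge(1-\eta')a_m$; averaging over $\mathcal L_m$ then isolates a single cube $R_0\in\mathcal L_m$ whose next $K_0$ restricted layers each cover $(1-\eta')\mu(R_0)$. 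The key is that $R_0$ is not merely a high-excess cube but one where the layer masses are \emph{stable} over a long run of generations --- and stability, not raw excess, is what rules out the concentrated-corner obstruction above. With these $K_0\gg K$ good $\mathcal A$-layers in hand, a second stage threads in the flat layers using the $N'$-level hypothesis; the surplus of $\mathcal A$-layers absorbs the losses incurred when passing to flat descendants.
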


We will apply Lemma \ref{altern_nb_flat} to the study of non-BAUP cubes (see the next section for the definition); this explains the choice of the notation `$\NB_k$' for some layers. The other layers are denoted as `$\FL_k$' to indicate that they consist of quite flat cubes (i.e. with a small $\alpha$-number).
\begin{rem}
The property \ref{property_6} in the lemma says that $\FL_K$ tiles up $R_0$ up to a set of negligible measure. If follows that the same holds for any $\FL_k$ for every $k=0,\ldots,K$. Moreover, as a consequence of the inductive construction in \cite{NTV_acta}, the lattice
\begin{equation}
\FL=\bigcup_k \FL_k
\end{equation}
has only finitely many elements.\footnote{Each of the so-called \textit{non-Carleson layers} $\{\mathcal{L}_m\}_{m=0}^M$ appearing in \cite[Section 13]{NTV_acta} is finite.}
 This is useful for technical purposes.
\end{rem}


\section{The non-BAUP cubes and the martingale difference decomposition}\label{section_baup}\label{sec4}

{
%
The acronym BAUP referred to a $\mu$-cube literally stands for \textit{Bilaterally Approximable by a Union of Planes}. Being more suitable to our purposes, in what follows we prefer to formulate the equivalent definition of non-BAUP cubes as in \cite[Section 22]{NTV_acta}, instead of the original definition of David and Semmes in \cite{david_semmes}.

\begin{defin}[Non-BAUP cube]\label{definition_NB_cube} A cube $Q\in\D_\mu$ is said to be non-BAUP with parameter $\delta>0$ (or non-$\delta$-BAUP) if there exists a point $z_Q^a\in Q\cap\supp\mu$ such that for every affine hyperplane $L$ passing through $z_Q^a$ we can find a point $z_Q^b\in L\cap B(z_Q^a,\ell(Q))$ such that $B(z_Q^b,\delta \ell(Q))\cap \supp\mu=\varnothing.$
\end{defin}

 A geometric criterion for uniform rectifiability provided by David and Semmes (see \cite{david_semmes}) asserts that if,
 for any parameter $\delta>0,$ the cubes which are non-$\delta$-BAUP form a Carleson family, then $\mu$ is uniformly rectifiable.

\vspace{1mm}
To prove Theorem \ref{teo1} we will use the BAUP criterion. {\em We will assume that, for some $\delta>0$, the family of
non-BAUP cubes with parameter $\delta$ is non-Carleson and we will get a contradiction.} Our assumption implies that, for some $H\in\mathcal H$ and all $\ve>0$, $M>1$ (to be chosen below), the family 
$\mathcal A = \mathcal A(M,\varepsilon,H,N')$ of cubes $Q\in\D_\mu$ which are non-BAUP with parameter $\delta$ and contain a cube $Q'\in\D_\mu$  at most $N'$ levels down from $Q$ such that $\alpha^{(H)}_\mu(MB_{Q'})<\ve$ is also non-Carleson.
So we can apply Lemma \ref{altern_nb_flat} with this family $\mathcal A$ to construct the layers of cubes $\NB_k$ and $\FL_k$ with the parameters $\eta$ and $K$ in the lemma to be chosen below. 

 We remark now a property that will be used later on: for $R\in\FL_k$ and $Q\subset R$ such that $Q\in\NB_{k+1}$ for some $k$, 
we have 
\begin{equation}\label{Delta}
\ell(Q)\leq C\varepsilon\delta^{-1}\ell(R).
\end{equation}
In particular, for any $\Delta>0$, choosing $\varepsilon\delta^{-1}\ll\Delta$,  one has $\ell(Q)\ll\Delta\ell(R)$.

Let $R_0\in\D_\mu$ be as in Lemma \ref{altern_nb_flat}. We are interested in partitioning the collection of cubes contained in $R_0$ and below a suitable subfamily of cubes that we denote $\Top_1$ (see \eqref{def_top_k} for its definition) into subfamilies (the so-called \textit{trees}) with intermediate layers of non-$\delta$-BAUP cubes like in \cite{NTV_acta}. We proceed via a stopping time argument.

A collection  $\mathcal{T} \subset \D_\mu$ is a \textit{tree} if the following properties hold:
\begin{itemize}
\item $\mathcal{T}$ has a maximal element (with respect to inclusion) $Q(\mathcal{T})$ which contains all the other elements of $\mathcal{T}$ as subsets of $\Rn1$.
The cube $Q(\mathcal{T})$ is called the \textit{root} of $\mathcal{T}$.
\item If $Q, Q_0$ belong to $\mathcal{T}$ and $Q \subset Q_0$, then any cube $Q' \in \D_\mu$
such that $Q \subset Q' \subset Q_0$ also belongs to $\mathcal{T}$.
\item If $Q \in \mathcal{T}$, then either all cubes in $\Ch(Q)$ belong to $\mathcal{T}$ or none of them do.
\end{itemize}
Now we proceed to build the trees.
For $1\leq k\leq K-1$, we denote
\begin{equation}\label{def_top_k}
\Top_k:=\{Q\in\Ch(Q'): Q'\in \FL_k\}
\end{equation}
and, for $Q\in\Top_k$,
$${\NB(Q):=\{Q'\in \NB_{k+1}: Q'\subset Q\} \quad\mbox{ and } } \quad \Stop(Q):=\{Q'\in \FL_{k+1}: Q'\subset Q\}.$$
Note that { $\NB(Q)$ and $\Stop(Q)$ are finite families because $\NB_{k+1}$ and $\FL_{k+1}$ are} finite.

We write $\Top\coloneqq \bigcup_{k=1}^{K-1}\Top_k.$
Now, for every $Q\in\Top$ we let $\Tree(Q)$ be the collection of $\mu$-cubes which are contained in $Q$ and are not strictly contained in any cube from $\Stop(Q)$. Clearly $Q$ is the root of $\Tree(Q)$.



For $f\in L^2(\mu)$ and $Q\in\D_\mu$ we denote 
\begin{equation}\label{eqdq1}
\Delta_Q f=\sum_{S\in\Ch(Q)}m_{\mu,S}(f)\chi_S-m_{\mu,Q}(f)\chi_Q,
\end{equation}
so that we have the orthogonal expansion
$$\chi_{R_0} \bigl(f - m_{\mu,R_0}(f)\bigr) = \sum_{Q\in\DD_\mu:Q\subset R_0}\Delta_Q f,$$
in the $L^2(\mu)$-sense. 
Then, taking $f=T\mu$ (recall that this function makes sense because of Proposition
\ref{teoconvdebil})
and using the notation $T_R\mu\coloneqq \sum_{Q\in\Tree(R)}\Delta_Q T\mu$ for $R\in\Top,$ we can write
\begin{equation}
\int_{R_0} |T\mu - m_{\mu,R_0}(T\mu)|^2\,d\mu =\sum_{Q\in\DD_\mu:Q\subset R_0} \|\Delta_Q T\mu\|^2_{L^2(\mu)}\geq\sum_{R\in\Top}\|T_R\mu\|^2_{L^2(\mu)}.
\end{equation}
Since $T_\mu$ is bounded from $L^\infty(\mu)$ to $BMO(\mu)$,  the left hand side is bounded above by $\mu(R_0)$, and thus we get
\begin{equation}\label{eqgscu88}
\sum_{R\in\Top}\|T_R\mu\|^2_{L^2(\mu)} \leq C\,\mu(R_0).
\end{equation}

Let $0<\eta\ll 1$ (to be chosen later) be the parameter defining the lattice of alternating layers from Lemma \ref{altern_nb_flat}. Denote by $\Nice$ the subfamily of the cubes $R\in\Top$ such that
\begin{equation}\label{nice_cubes}
\sum_{Q\in \Stop(R)}\mu(Q)\geq (1-\eta^{1/2})\,\mu(R).
\end{equation}
The following easy lemma concerns the abundance of $\Nice$ cubes.

\begin{lemm} \label{lem768}
We have
\begin{equation}
\sum_{R\in\Top\setminus\Nice}\mu(R)\leq (K-1)\eta^{1/2}\mu(R_0).
\end{equation}
\end{lemm}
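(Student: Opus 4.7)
The proof will be a Chebyshev-type counting argument carried out level by level. The idea is to exploit the fact that, between consecutive ``flat'' layers $\FL_k$ and $\FL_{k+1}$, only an $\eta$-fraction of $\mu(R_0)$ can be ``lost'' (by property (6) of Lemma \ref{altern_nb_flat}, combined with the Remark after it), and to distribute this loss among the non-$\Nice$ cubes sitting in between.

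First I would fix $k \in \{1,\dots,K-1\}$ and set up the combinatorics. Since $\Top_k = \bigcup_{P\in\FL_k}\Ch(P)$, the cubes in $\Top_k$ are pairwise disjoint and their union agrees, up to a $\mu$-null set coming from the thin-boundary property, with $\bigcup_{P\in\FL_k}P$. Moreover, every cube $Q' \in \FL_{k+1}$ lies in a unique $R\in\Top_k$: by property (4) of Lemma \ref{altern_nb_flat} it is contained in some $P'\in\NB_{k+1}$, which by (5) lies in some $P\in\FL_k$; since $Q'$ is strictly smaller than $P$, it must be contained in exactly one child $R\in\Ch(P)$. Therefore
\begin{equation*}
\bigsqcup_{R\in\Top_k}\Stop(R)=\{Q'\in\FL_{k+1}:Q'\subset R_0\}=\FL_{k+1}.
\end{equation*}

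Next I would sum the defects $\mu(R)-\sum_{Q\in\Stop(R)}\mu(Q)$ over $R\in\Top_k$. By the previous step this telescopes to
\begin{equation*}
\mu\Big(\bigcup_{P\in\FL_k}P\Big)-\mu\Big(\bigcup_{Q'\in\FL_{k+1}}Q'\Big),
\end{equation*}
and by property (6) of Lemma \ref{altern_nb_flat} applied at level $k+1$ together with the inclusion $\bigcup_{\FL_{k+1}}Q'\subset\bigcup_{\FL_k}P\subset R_0$, this difference is at most $\mu(R_0)-(1-\eta)\mu(R_0)=\eta\,\mu(R_0)$.

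To conclude, I would apply Chebyshev: by \eqref{nice_cubes}, every $R\in\Top_k\setminus\Nice$ satisfies $\mu(R)-\sum_{Q\in\Stop(R)}\mu(Q)>\eta^{1/2}\mu(R)$, so
\begin{equation*}
\eta^{1/2}\sum_{R\in\Top_k\setminus\Nice}\mu(R)\le \sum_{R\in\Top_k}\Big(\mu(R)-\sum_{Q\in\Stop(R)}\mu(Q)\Big)\le\eta\,\mu(R_0),
\end{equation*}
giving $\sum_{R\in\Top_k\setminus\Nice}\mu(R)\le\eta^{1/2}\mu(R_0)$. Summing over $k=1,\dots,K-1$ yields the claimed bound $(K-1)\eta^{1/2}\mu(R_0)$. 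There is no real obstacle here: the only point worth stating carefully is the bijective correspondence $\bigsqcup_{R\in\Top_k}\Stop(R)=\FL_{k+1}$, which is pure bookkeeping about the nesting properties (4) and (5) of the alternating layers.
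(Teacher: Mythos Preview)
Your proof is correct and follows essentially the same Chebyshev-type idea as the paper: for each non-$\Nice$ cube the defect $\mu(R)-\sum_{Q\in\Stop(R)}\mu(Q)$ exceeds $\eta^{1/2}\mu(R)$, and the total defect per layer is at most $\eta\,\mu(R_0)$. The only cosmetic difference is that the paper bounds each defect uniformly by $\mu\bigl(R\setminus\bigcup_{Q\in\FL_K}Q\bigr)$ (using only the bottom layer $\FL_K$ and property~(6) directly), whereas you work level by level via the identification $\bigsqcup_{R\in\Top_k}\Stop(R)=\FL_{k+1}$ and the Remark after Lemma~\ref{altern_nb_flat}; both routes yield the same bound.
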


\begin{proof}
By construction, the cubes $R\in\Top\setminus \Nice$ satisfy
$$\mu(R) \leq \frac1{\eta^{1/2}}\,\mu\Bigg(R\setminus \bigcup_{Q\in\Stop(R)}Q\Bigg)\leq \frac1{\eta^{1/2}}\,
\mu\Bigg(R\setminus \bigcup_{Q\in\FL_K}Q\Bigg)$$
Thus, recalling that $\sum_{Q\in\FL_K}\mu(Q)\geq (1-\eta)\mu(R_0)$
and that there are $K-1$ layers of cubes in the family $\Top$,
we get
\begin{equation}
\begin{split}
\sum_{R\in\Top\setminus\Nice}\mu(R)&\leq \frac{1}{\eta^{1/2}}\sum_{R\in\Top\setminus\Nice}\mu\Bigg(R\setminus \bigcup_{Q\in\FL_K}Q\Bigg)\\
&\leq \frac{K-1}{\eta^{1/2}} \mu\Bigg(R_0\setminus \bigcup_{Q\in\FL_K}Q\Bigg)
\leq
\frac{(K-1)\eta}{\eta^{1/2}}\,\mu(R_0)= (K-1)\eta^{1/2}\mu(R_0).\qedhere
\end{split}
\end{equation}
\end{proof}
\vspace{1mm}

The main ingredient for the proof of Theorem \ref{teo1} is the following result.

\begin{prop}  \label{prop3}
Assume that $\ve$ and $\eta$ are chosen small enough in the construction of the alternating layers in Lemma
\ref{altern_nb_flat}, depending on $\delta$.
Then there is $c_1>0$ depending also on $\delta$ such that for every $R\in\Nice$ with $\ell(R)$ small enough we have 
\begin{equation}\label{eqnice28}
\|T_R\mu\|^2_{L^2(\mu)}\geq c_1\mu(R).
\end{equation}
\end{prop}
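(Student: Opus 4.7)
The plan is to exploit the tree structure of $\Tree(R)$ together with the non-BAUP property and the $\alpha$-flatness of the stopping cubes, and argue via a variational/maximum-principle scheme analogous to \cite{NTV_acta} but adapted to the $L_A$-harmonic setting.

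\textbf{Step 1 (rewriting $T_R\mu$).} Since $\Delta_Q$ is the usual martingale difference at scale $\ell(Q)$ and since stopping cubes belong to $\Tree(R)$ but their proper descendants do not, a telescoping gives
\begin{equation*}
T_R\mu = \sum_{S\in\Stop(R)} \bigl(m_{\mu,S}(T\mu)-m_{\mu,R}(T\mu)\bigr)\chi_S + \chi_{R\setminus\bigcup_{S\in\Stop(R)}S}\bigl(T\mu-m_{\mu,R}(T\mu)\bigr).
\end{equation*}
Because $R\in\Nice$, the last summand is supported on a set of $\mu$-measure at most $\eta^{1/2}\mu(R)$ and has $L^2(\mu)$ norm controlled by the $BMO(\mu)$-seminorm of $T\mu$. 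Thus, up to an $O(\eta^{1/2}\mu(R))$ error, the proposition reduces to bounding below
\begin{equation*}
\sum_{S\in\Stop(R)}\mu(S)\,\bigl|m_{\mu,S}(T\mu)-m_{\mu,R}(T\mu)\bigr|^{2}\geq c_1'\mu(R).
\end{equation*}

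\textbf{Step 2 (flattening at the two scales).} Both $R$ and each $S\in\Stop(R)$ belong to $\FL_k$ layers, hence $\alpha^{(H)}_\mu(MB_R)<\ve$ and $\alpha^{(H)}_\mu(MB_S)<\ve$. I would replace $\mu$ on $MB_R$ (resp.\ $MB_S$) by a flat measure $c_R\HH^n|_{L_R}$ (resp.\ $c_S\HH^n|_{L_S}$), where $L_R, L_S$ are hyperplanes parallel to $H$. A standard comparison argument, combined with the Calder\'on-Zygmund estimates of Lemma \ref{lemcz} for the tails and with Lemma \ref{lemm_freezing} for replacing $\nabla_1\E_A$ by the constant-coefficient kernel $\nabla_1\Theta(\cdot,\cdot;A(x_S))$, shows that
\begin{equation*}
m_{\mu,S}(T\mu)= c_S\,\nabla_1\Theta\bigl(\cdot\,;A(x_S)\bigr)*\HH^n|_{L_S}(x_S) + O(\ve^{\alpha'}) + O\bigl(\ell(R)^{\alpha}\bigr),
\end{equation*}
and similarly for $m_{\mu,R}$. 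So it suffices to estimate the variation, from scale $R$ to each stopping cube $S$, of the \emph{gradient of single layer potential of a flat $n$-plane} with respect to the frozen operator $L_{A(x_S)}$.

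\textbf{Step 3 (forcing variation via non-BAUP).} Between $R$ and $S\in\Stop(R)$ there must lie a cube $Q\in\NB(R)\subset\NB_{k+1}$, with $\ell(Q)\ll\Delta\ell(R)$ by \eqref{Delta}. At such a $Q$ the non-BAUP property furnishes $z_Q^a\in Q\cap\supp\mu$ and a hole $B(z_Q^b,\delta\ell(Q))\cap\supp\mu=\varnothing$ at controlled distance from $z_Q^a$ across \emph{any} hyperplane through $z_Q^a$. I would pick that hyperplane to be the one produced by the flatness information around $Q$. Then a reflection/variational argument `\`a la \cite[Sections 24--30]{NTV_acta} applies: one compares the single layer potential $u_\mu(x)=\int\mathcal E_A(x,y)\,d\mu(y)$ to its reflection across the flat plane, in the hole $B(z_Q^b,\delta\ell(Q))$, where both are $L_A$-harmonic and the maximum principle (see the Lemma Le:PrMax referenced later in the paper) applies. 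The existence of the hole, together with the fact that $\mu$ is \emph{not} symmetric across that plane (since it is layered only from one side or otherwise inhomogeneous as measured by non-BAUP), forces a jump of definite size $\gtrsim\delta^{c}$ in $\nabla u_\mu= T\mu$ between the two sides of the plane, which translates into a definite gap between $m_{\mu,S}(T\mu)$ for appropriate stopping cubes on the two sides.

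\textbf{Step 4 (summing and choosing parameters).} Aggregating the per-$Q$ contributions coming from the non-BAUP cubes in $\NB(R)$ (whose layer $\NB_{k+1}$ tiles $R$ up to a set of small $\mu$-measure by Lemma \ref{altern_nb_flat}), one obtains
\begin{equation*}
\sum_{S\in\Stop(R)}\mu(S)\,\bigl|m_{\mu,S}(T\mu)-m_{\mu,R}(T\mu)\bigr|^{2}\gtrsim \delta^{c}\mu(R) - O(\ve^{\alpha'})\mu(R) - O\bigl(\ell(R)^{\alpha}\bigr)\mu(R).
\end{equation*}
Choosing $\ve$ so that the flattening error is $\ll\delta^{c}$, then $\eta\ll\delta^{2c}$ so that the Nice error in Step 1 is absorbed, and finally taking $\ell(R)$ small enough to absorb the H\"older-continuity error from Lemma \ref{lemm_freezing}, one arrives at \eqref{eqnice28} with $c_1\approx\delta^{c}$.

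\textbf{Main obstacle.} The most delicate step is Step 3: in \cite{NTV_acta} the variational/reflection trick relies crucially on the harmonicity and antisymmetry of the codimension-$1$ Riesz kernel. Here the kernel $\nabla_1\mathcal E_A$ is not antisymmetric and not translation-invariant, so the reflection must be carried out with respect to the frozen symmetric matrix $A(x_Q)$ (the pullback of the Euclidean reflection through $A(x_Q)^{1/2}$), and one must control all the error terms coming from the H\"older continuity of the coefficients and from the fact that the reflected measure no longer exactly solves the same PDE. This is where the authors' warning about the ``more delicate reflection trick'' and the ``more involved approximation techniques'' from Section \ref{choosingdelta} enters; these technicalities must be absorbed by the freedom in choosing $\ve$, $\eta$ and the bound on $\ell(R)$.
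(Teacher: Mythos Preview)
Your proposal identifies the right ingredients (reflection, maximum principle, non-BAUP, the $\alpha$-flatness of the stopping layer) but Step~3 contains a genuine gap: the ``direct'' argument you sketch there does not produce the claimed lower bound.

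You write that in the hole $B(z_Q^b,\delta\ell(Q))$ one compares the single layer potential $u_\mu$ to its reflection, applies the maximum principle, and thereby ``forces a jump of definite size $\gtrsim\delta^c$ in $\nabla u_\mu=T\mu$''. But the maximum principle only says an $L_A$-harmonic function is bounded by its boundary values; it does not, by itself, convert the existence of a hole in $\supp\mu$ into a lower bound on $|T\mu|$ or on $|m_{\mu,S}(T\mu)-m_{\mu,R}(T\mu)|$. Even for the Riesz kernel in \cite{NTV_acta} this direct route is not taken, and no such pointwise jump estimate is available. Your acknowledgment under ``Main obstacle'' that the reflection is delicate does not address this; the problem is not just that the reflected measure solves a perturbed PDE, but that the whole logical direction of Step~3 is inverted.

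The paper argues by \emph{contradiction}: assume $\|T_R\mu\|^2_{L^2(\mu)}\le\ve_1\mu(R)$. After the change of variable (Section~\ref{sec6}), one builds a matrix $\wh A$ that is genuinely symmetric under the reflection $x\mapsto x^*$ across $H$ (not merely the frozen matrix), defines the reflected operator $S^H\nu(x)=\wh T^H\nu(x)-\wh T^H\nu(x^*)$, and shows via the approximation lemmas (Section~\ref{choosingdelta}) that the contradiction hypothesis forces $\|S^H\nu\|^2_{L^2(\nu)}\le\lambda\,\nu(\R^{n+1})$ with $\lambda$ small. Then a \emph{variational} argument (Lemma~\ref{Le:DesigualdadAERiNu}): one minimizes a functional $J(a)=\lambda\|a\|_\infty\nu(\R^{n+1})+\int|S^H(a\nu)|^2a\,d\nu$ over admissible densities; the first-variation condition at the minimizer $b$ yields the pointwise inequality
\[
|S^H\eta(x)|^2+2S^{H,*}\bigl((S^H\eta)\eta\bigr)(x)\le C(\delta)\,\lambda\quad\text{on }\supp\eta,\qquad \eta=b\nu.
\]
Because $S^{H,*}$ applied to an absolutely continuous measure is $\wh A$-harmonic off its support and vanishes on $H$, the maximum principle (Lemma~\ref{Le:PrMax}) extends this inequality to all of $\R^{n+1}_+$. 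Only now do the non-BAUP cubes enter: one builds a vector field $\Psi=\sum_Q\Psi_Q$ supported in $\R^{n+1}_+$ with $\wh T^{H,*}(\Psi_Q\,\LL^{n+1})=h_Q+e_Q$, where $h_Q$ is a bump pair exploiting the hole at $z_Q^b$, and integrates the pointwise inequality against $|\Psi|$. The left side is bounded below by $c(\delta)\mu(R)$ (via $\int h\,d\eta\ge c_7(\delta)\nu(\R^{n+1})$), while the right side is $\lesssim C(\delta)(\lambda^{1/2}+\ell(R)^{\alpha/2})\mu(R)$, giving the contradiction.

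So the maximum principle is used not to detect a jump in a hole, but to propagate a smallness inequality from $\supp\eta$ to the location of $\Psi$; and the non-BAUP data enters through the test vector field $\Psi$, not through a direct comparison at individual cubes. Your Steps~1--2 are along the right lines, but Step~3 needs to be replaced by this contradiction/variational/maximum-principle/integration-against-$\Psi$ scheme.
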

\vspace{1mm}

We remark that the smallness condition on the $\Nice$ cubes in the proposition depends just on $\delta$, the H\"older and elliptic conditions on the matrix $A$ and the AD-regularity of $\mu$.

\begin{proof}[\bf Proof of Theorem \ref{teo1} using Proposition \ref{prop3}]
By Lemma~\ref{lem768} and the property (6) in Lemma~\ref{altern_nb_flat}, assuming $\eta\leq 1/4$, we have
\begin{align}
\sum_{R\in\Nice}\mu(R) & \geq \sum_{R\in\Top}\mu(R) - (K-1)\eta^{1/2} \mu(R_0) \\
& \geq \sum_{k=1}^{K-1} \sum_{Q\in\FL_k}\mu(Q) - (K-1)\eta^{1/2}\,\mu(R_0) \\
& \geq (K-1)(1-\eta-\eta^{1/2}) \mu(R_0) \geq \frac14 (K- 1)\,\mu(R_0).
\end{align}

Denote by $\Nice'$ the family of $\Nice$ cubes $R$ which are small enough so that \rf{eqnice28} holds for them. Clearly
$$\sum_{R\in\Nice}\mu(R) \leq \sum_{R\in\Nice'}\mu(R) + C'\,\mu(R_0),$$
with $C'$ depending on the smallness condition for $R$ and on $\diam(\supp\mu)$.
By \rf{eqgscu88}, we have
$$\sum_{R\in\Nice'}\mu(R)\leq c_1(\delta)^{-1}\sum_{R\in\Nice'} \|T_R\mu\|^2_{L^2(\mu)}
\leq c_1(\delta)^{-1} C\,\mu(R_0).$$
Thus 
$$\frac14(K- 1)\,\mu(R_0)\leq C'\,\mu(R_0) + c_1(\delta)^{-1} C\,\mu(R_0).$$
So
we get a contradiction if $K$ is chosen big enough. Hence, the initial assumption that the family of non-$\delta$-BAUP cubes is not Carleson cannot be true.
\end{proof}

Proposition \ref{prop3} will be proved along the next Sections \ref{sec5}-\ref{sec12}.

\subsection{Scheme of the proof of Proposition \ref{prop3}}

We argue by contradiction, assuming that
\begin{equation}\label{contrad_hyp}
\|T_R\mu\|^2_{L^2(\mu)}\ll \mu(R).
\end{equation}
First, it is important to determine how $L_A$ and its associated objects transform under a change of variable. For this reason, we include the relevant formulas in \textit{Section \ref{sec5}}.

Then in \textit{Section \ref{sec6}} we show that it suffices to prove the proposition with the additional assumption $A(x_R)=Id$ and $H$ equal to the horizontal hyperplane through the origin; this 
puts us in a simpler geometric situation
and makes the other technicalities in the rest of the proof  more transparent.

A measure $\sigma$ supported on hyperplanes which approximate $\mu$ at the level of the children of cubes from $\Stop(R)$ is introduced in \textit{Section \ref{sec7}}.

In \textit{Section \ref{choosingdelta}} we construct the auxiliary matrix $\widehat A$, that we define via reflections with respect to a suitable hyperplane, and we study the gradient of its associated single layer potential $\widehat T_\mu$. We assume the hyperplane to be horizontal. In particular, we prove that the horizontal component of $\widehat T\sigma(x)$ is very close, in some $L^2(\sigma)$ sense, to that of $\widehat T\sigma(x^*)$, $x^*$ denoting the reflection of $x$ with respect to the horizontal plane. This proof relies on $R$ belonging to $\Nice$, the properties of $\widehat A$, and the contradiction hypothesis \eqref{contrad_hyp}.

\textit{Section \ref{secnou9}} and \textit{Section \ref{secpsi}} contain the definitions and the properties of a new approximating measure $\nu$, a vector field $\Psi$, and other mathematical objects important for the conclusion of the proof. In particular, we highlight that Section \ref{secpsi} uses the intermediate non-BAUP layers.

\textit{Section \ref{sec12}} concludes the proof of Proposition \ref{prop3} via a variational argument. This method produces a pointwise inequality that, integrated against the vector field $\Psi$ constructed in Lemma \ref{lemapsi}, gives the desired contradiction.


\section{The change of variable}\label{sec5}

The fact that we are considering a matrix $A$ which is uniformly elliptic and symmetric allows to perform a particular change of variables.
The following lemma and its corollary are standard. For the proofs we refer to \cite[Lemma 4.8]{AM}.
\begin{lemm}\label{lem:Pullback}
Let $\Omega \subset \R^{n+1}$ be an open set, and assume that $A$ is a uniformly elliptic matrix in $\Omega$ with real entries and $\phi:\R^{n+1} \to \R^{n+1}$ is a bi-Lipschitz map. If we set
$$A_\phi\coloneqq   |\det D{(\phi)}|\, D(\phi^{-1}) (A\!\circ\!\phi) D(\phi^{-1})^T,$$ 
where $D$ denotes the differential matrix,
then $A_\phi$ is a uniformly elliptic matrix in $\phi^{-1}(\Omega)$ and $u:\Omega\to\R$ is a weak solution of $L_{A} u =0$ in $\Omega$ if and only if $\wt{u}=u\circ \phi$ is a weak solution of $L_{A_\phi} \wt u =0$ in $\phi^{-1}(\Omega)$.
\end{lemm}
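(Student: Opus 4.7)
The plan is to verify two separate claims: first, uniform ellipticity of $A_\phi$ on $\phi^{-1}(\Omega)$, and second, the equivalence $L_A u=0 \Longleftrightarrow L_{A_\phi}\wt u=0$ via the bi-Lipschitz change of variable $x=\phi(y)$. Both are direct computations; the only care needed concerns test functions, since $\phi$ is only bi-Lipschitz and not smooth.

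For ellipticity, Rademacher's theorem together with the bi-Lipschitz hypothesis implies that $D\phi$ and $D(\phi^{-1})$ exist a.e., are bounded above in operator norm, and have bounded inverses; in particular $|\det D\phi|$ is bounded above and away from zero. Writing
\[
\langle A_\phi(y)\xi,\xi\rangle = |\det D\phi(y)|\,\langle A(\phi(y))\eta,\eta\rangle, \qquad \eta := D(\phi^{-1})(\phi(y))^T\xi,
\]
the ellipticity of $A$, the two-sided comparability $|\eta|\approx|\xi|$ coming from the bi-Lipschitz bounds on $D\phi$, and the bounds on $|\det D\phi(y)|$ together yield uniform ellipticity of $A_\phi$ with constants depending only on $\Lambda$ and on the bi-Lipschitz constants of $\phi$.

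For the PDE equivalence, starting from the weak formulation $\int_\Omega A\nabla u\cdot\nabla\varphi\,dx=0$, I would perform the change of variable $x=\phi(y)$. The chain rule for Lipschitz compositions gives $\nabla u(\phi(y)) = D(\phi^{-1})(\phi(y))^T\nabla\wt u(y)$ a.e., and the analogous identity holds for $\varphi$ with $\wt\varphi:=\varphi\circ\phi$. Combining this with $dx = |\det D\phi(y)|\,dy$ and the definition of $A_\phi$, the integrand transforms exactly into $A_\phi(y)\nabla\wt u(y)\cdot\nabla\wt\varphi(y)$, yielding
\[
\int_{\Omega} A\nabla u\cdot\nabla\varphi\,dx \;=\; \int_{\phi^{-1}(\Omega)} A_\phi\nabla\wt u\cdot\nabla\wt\varphi\,dy.
\]
Thus the vanishing of the left-hand side for all admissible $\varphi$ is equivalent to the vanishing of the right-hand side for all admissible $\wt\varphi$; applying the same computation to $\phi^{-1}$ in place of $\phi$ gives the reverse implication.

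The main (and essentially only) obstacle I expect is a bookkeeping one: composition with a merely bi-Lipschitz map sends $C_c^\infty$ test functions to compactly supported Lipschitz functions, so the above identity initially provides equivalence of the two PDE's only when one tests against the larger class of Lipschitz compactly supported functions. This is resolved by the standard fact that such functions are dense in $W^{1,2}_0$ of each domain, so one may enlarge the class of admissible test functions in the weak formulation without changing the solution class. With this observation the proof is complete.
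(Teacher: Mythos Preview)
Your argument is correct and is exactly the standard change-of-variables verification one would expect. Note, however, that the paper does not actually supply a proof of this lemma: it simply states that the result is standard and refers to \cite[Lemma 4.8]{AM}. Your write-up is thus not competing against any argument in the paper itself, and the approach you give (Rademacher plus the bi-Lipschitz bounds for ellipticity; the change of variable $x=\phi(y)$ with the chain rule for the weak formulation; density of compactly supported Lipschitz functions in $W^{1,2}_0$ to handle the test-function class) is precisely the computation underlying the cited reference.
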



\begin{coroll}\label{cor:A(x0)=id}
Let $\Omega \subset \R^{n+1}$ be an open set, and assume that $A$ is a uniformly elliptic symmetric matrix in $\Omega$ with real entries. Let $O:\R^{n+1}\to\R^{n+1}$ be a rotation. For a fixed point $y_0 \in \Omega$ define $S= \sqrt{A(y_0)}\,O$. If  
\[
{A_S}(\cdot) = S^{-1} (A\circ S)(\cdot) (S^{-1})^T,
\]
then ${A}_S$ is uniformly elliptic in $S^{-1}(\Omega)$ and  $A_S(z_0) = Id$ for $z_0 = S^{-1}y_0$. Further, $u$ is a weak solution of $L_{A} u =0$ in $\Omega$ if and only if $\wt{u}=u\circ S$ is a weak solution of $L_{A_S} \wt u =0$ in $S^{-1}(\Omega)$ . 
\end{coroll}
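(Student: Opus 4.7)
The plan is to apply Lemma \ref{lem:Pullback} with the linear map $\phi\colon\R^{n+1}\to\R^{n+1}$ given by $\phi(z)=Sz$. Since $A(y_0)$ is symmetric and uniformly elliptic, $\sqrt{A(y_0)}$ is a well-defined symmetric invertible matrix, and since $O$ is orthogonal, $S$ is an invertible linear bi-Lipschitz map sending $S^{-1}(\Omega)$ onto $\Omega$. Because $\phi$ is linear, $D\phi\equiv S$ and $D(\phi^{-1})\equiv S^{-1}$ are constant, and $|\det D\phi|=|\det S|$ is a positive constant. Plugging into the formula of Lemma \ref{lem:Pullback} produces
$$\wt{A}_S(z)\;=\;|\det S|\,S^{-1}A(Sz)(S^{-1})^T,$$
and the lemma asserts that $u$ solves $L_A u=0$ in $\Omega$ if and only if $u\circ S$ solves $L_{\wt A_S}(u\circ S)=0$ in $S^{-1}(\Omega)$.

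The matrix $A_S$ defined in the corollary differs from $\wt{A}_S$ only by the positive scalar $|\det S|$. Since $L_{cB}=c\,L_B$ for any constant $c>0$, the equations $L_{A_S}\tilde u=0$ and $L_{\wt A_S}\tilde u=0$ have the same weak solutions, which yields the equivalence in the corollary. To check $A_S(z_0)=Id$, I would use the symmetry of $\sqrt{A(y_0)}$ and the orthogonality of $O$ to rewrite $S^{-1}=O^T A(y_0)^{-1/2}$ and $(S^{-1})^T=A(y_0)^{-1/2}O$, so that, since $Sz_0=y_0$,
$$A_S(z_0)=S^{-1}A(y_0)(S^{-1})^T=O^T A(y_0)^{-1/2}\,A(y_0)\,A(y_0)^{-1/2}O=O^TO=Id.$$

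Uniform ellipticity of $A_S$ on $S^{-1}(\Omega)$ follows for free: for any $\xi\in\R^{n+1}$ and a.e.\ $z$, one has $\langle A_S(z)\xi,\xi\rangle=\langle A(Sz)\eta,\eta\rangle$ with $\eta=(S^{-1})^T\xi$, and the ellipticity bounds on $A$ combined with $\|S\|_{\mathrm{op}}^{-1}|\xi|\leq|\eta|\leq\|S^{-1}\|_{\mathrm{op}}|\xi|$ give two-sided bounds of the form $c|\xi|^2\leq\langle A_S(z)\xi,\xi\rangle\leq C|\xi|^2$. There is essentially no obstacle here; the only point worth flagging is the appearance of the prefactor $|\det S|$ in Lemma \ref{lem:Pullback}, which is precisely the reason one is free to normalize $A_S$ without it.
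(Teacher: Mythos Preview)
Your proof is correct and follows the same approach as the paper: both derive the corollary from Lemma \ref{lem:Pullback} applied to the linear map $\phi(z)=Sz$, and both verify $A_S(z_0)=Id$ by the same direct computation. The paper merely refers to \cite[Lemma 4.8]{AM} for the details and sketches the $A_S(z_0)=Id$ identity, whereas you supply the full argument, including the handling of the $|\det S|$ prefactor and the explicit ellipticity check.
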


In Corollary \ref{cor:A(x0)=id} we identified $S$ with its associated linear map. The matrix $S$ is well defined because $A$ is symmetric and uniformly elliptic, so that it admits a 
unique square root with the property of being symmetric, uniformly elliptic and having real entries. Further, we have
$$A_S(z_0) = (\sqrt{A(y_0)}\,O)^{-1} A(S(z_0))((\sqrt{A(y_0)}\,O)^{-1})^T = Id.
$$


Some standard linear algebra gives that $S^{-1}$ is a special bi-Lipschitz change of variables that takes balls to ellipsoids and its eigenvalues determine lengths of semi-axes. Denoting by $\lambda_{\max}$  and $\lambda_{\min}$ respectively the maximal and the minimal eigenvalues of $S^{-1}$, the maximum eccentricity of the image of a ball is 
$\sqrt{\lambda_{\max} / {\lambda_{\min}}}$. The ellipticity allows to bound it from below by $\sqrt{\Lambda}^{-1}$ and above by $\sqrt{\Lambda}$. 

It follows that  $\Lambda^{-1/2} \leq \|S^{-1} \|\leq   \Lambda^{1/2}$, so that $S^{-1}$ distorts distances by at most a constant depending on ellipticity. 
The collection $\widetilde\DD_{\mu}\coloneqq  \{ S^{-1}(Q) \}_{Q \in \DD_\mu}$  forms a dyadic grid on $S^{-1}(\supp\mu) =\supp (S^{-1}_\sharp\mu)$ of cubes of David-Semmes type, 
where the involved constants depend on the ones in $\DD_\mu$ and ellipticity.

The next easy lemma shows how the fundamental solution and the gradient of the single layer potential transform after a change of variable. 

\begin{lemm}\label{ChangeOfVar}
Let $\phi:\R^{n+1}\to\R^{n+1}$ be a locally bilipschitz map and let $\E_A$ be the fundamental solution of $L_A=-\Div(A\nabla\cdot).$ 
Set $A_\phi=|\det D(\phi)|\,D(\phi^{-1})(A\circ\phi)D(\phi^{-1})^T$. Then 
$$\E_{A_\phi}(x,y)=\E_A(\phi(x),\phi(y))$$
and $$\quad \nabla_1\E_{A_\phi}(x,y)= D(\phi)^T(x)\,\nabla_1\E_A(\phi(x),\phi(y))
\;\;\mbox{ for }x,\;y\in\R^{n+1}.$$ 
\end{lemm}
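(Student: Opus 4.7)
The proof plan has two parts, one for each displayed identity, with the first being the substantive one.

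For the identity $\mathcal{E}_{A_\phi}(x,y)=\mathcal{E}_A(\phi(x),\phi(y))$, my strategy is to fix $y \in \mathbb{R}^{n+1}$, set $\tilde u(x) := \mathcal{E}_A(\phi(x),\phi(y))$, and show that $\tilde u$ satisfies the defining PDE of the fundamental solution $\mathcal{E}_{A_\phi}(\cdot,y)$, namely $L_{A_\phi}\tilde u = \delta_y$. The conclusion will follow from uniqueness of the fundamental solution of the uniformly elliptic operator $L_{A_\phi}$ (which holds in the framework of \cite{HK} cited for the construction of $\mathcal{E}_A$). By definition $u := \mathcal{E}_A(\cdot,\phi(y))$ satisfies $L_A u = \delta_{\phi(y)}$ in the distributional sense, i.e.,
\begin{equation}
\int A\,\nabla u \cdot \nabla \varphi \, dz = \varphi(\phi(y)) \quad \text{for all } \varphi \in C_c^\infty(\mathbb{R}^{n+1}).
\end{equation}
Given a test function $\psi\in C_c^\infty(\mathbb{R}^{n+1})$, apply this with $\varphi := \psi\circ \phi^{-1}$ and perform the change of variables $z=\phi(x)$. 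Since $\nabla_z(\psi\circ \phi^{-1})(z) = D(\phi^{-1})^T(z)\,\nabla_x\psi(\phi^{-1}(z))$ and similarly for $u$, together with $dz = |\det D\phi(x)|\,dx$, the integrand becomes exactly
\begin{equation}
\bigl(|\det D\phi|\,D(\phi^{-1})(A\circ\phi)D(\phi^{-1})^T\bigr)\!\cdot\!\nabla_x \tilde u \cdot \nabla_x \psi \;=\; A_\phi\,\nabla_x \tilde u \cdot \nabla_x \psi,
\end{equation}
where I use $D(\phi^{-1})(\phi(x)) = [D\phi(x)]^{-1}$ in the bookkeeping. The right-hand side becomes $\psi(\phi^{-1}(\phi(y)))=\psi(y)$, so $L_{A_\phi}\tilde u = \delta_y$ in the distributional sense, as required. (Equivalently, one can avoid working with the delta directly by applying Lemma~\ref{lem:Pullback} to verify that $\tilde u$ is $L_{A_\phi}$-harmonic away from $y$, and then identify its singularity at $y$ by a localization argument near the pole.)

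For the gradient identity, I would simply apply the chain rule to $x\mapsto \mathcal{E}_A(\phi(x),\phi(y))$ using the just-established first identity:
\begin{equation}
\nabla_1 \mathcal{E}_{A_\phi}(x,y) = \nabla_x\bigl[\mathcal{E}_A(\phi(x),\phi(y))\bigr] = D(\phi)^T(x)\,(\nabla_1\mathcal{E}_A)(\phi(x),\phi(y)),
\end{equation}
which is the claim. Since $\phi$ is only locally bilipschitz (so $D\phi$ exists a.e.\ by Rademacher), this differentiation is to be understood in the weak sense, but the formula is valid at Lebesgue points of $D\phi$ by interpreting everything distributionally and using the fact that $\mathcal{E}_A(\cdot,\phi(y))$ is smooth (indeed $C^{1,\alpha}$ by Hölder regularity theory) away from $\phi(y)$.

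The main obstacle, if any, is purely technical bookkeeping: keeping track of where $D(\phi^{-1})$ is evaluated ($\phi(x)$ vs $x$) in the change of variables and matching it to the formula for $A_\phi$, together with invoking uniqueness of the fundamental solution in the correct class. Neither is genuinely difficult, which is consistent with the lemma being labeled ``easy''; its role is notational convenience for the reflection construction in Section \ref{choosingdelta}.
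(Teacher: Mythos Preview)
Your proposal is correct and follows essentially the same approach as the paper: both verify that $E(x,y)=\E_A(\phi(x),\phi(y))$ satisfies the distributional equation defining the fundamental solution of $L_{A_\phi}$ by plugging a test function into the defining identity for $\E_A$ and performing the change of variables $z=\phi(x)$ with the chain rule. The only cosmetic difference is that the paper integrates in the second variable (using $\nabla_2\E_A$) while you integrate in the first, and the paper does not explicitly invoke uniqueness but rather reads off the identity directly from the resulting equation; for the gradient identity both simply apply the chain rule.
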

\begin{proof}
 The proof is an application of the change of variable formula for the integral. Let $f\in C^\infty_c(\Rn1)$. For every $x\in\Rn1$, the definition of fundamental 
 solution gives
\begin{equation}
f(\phi(x))=\int A(y)\nabla_2 \E_A(\phi(x),y)\cdot \nabla f(y)\,dy.
\end{equation}
Set  $E(x,y)\coloneqq \E_A(\phi(x),\phi(y)).$ If we denote $y'\coloneqq \phi^{-1}(y)$ and use the standard change of variable formula together with the chain rule, we get
\begin{equation}
\begin{split}
f(\phi(x))&=\int |\det D(\phi)(y')|\,A(\phi(y'))\nabla_2\E_A(\phi(x),\phi(y'))\cdot \nabla f(\phi(y'))\,dy'\\
&= \int |\det D(\phi)(y')\,|A(\phi(y'))D(\phi^{-1})^T(y')\nabla_2 E(x,y')\cdot D(\phi^{-1})^T(y')\nabla (f\circ \phi)(y')\,dy'\\
&=\int {A}_\phi(y')\nabla_2 E(x,y')\cdot \nabla (f\circ \phi)(y')\,dy',
\end{split}
\end{equation}
which proves the first identity in the lemma. The second identity follows from the chain rule.
\end{proof}

Define 
\begin{equation}\label{eq:tphinu}
T_{\phi}\nu(x)=\int\nabla_1\E_{A_\phi}(x,y)\,d\nu(y).
\end{equation}
Analogously, define the operator $T_{\phi,\nu}$ as in \eqref{eq:Tmudef}.
Then, by the previous lemma we have:

\begin{lemm}\label{lemfac*1}
Let $\phi:\R^{n+1}\to\R^{n+1}$ be a bilipschitz map, $\nu$ a Radon measure,  and $\phi_{\sharp}\nu$ its image measure. Then,
 $$T_{\phi}\nu(x)=D(\phi)^T(x)\,T\phi_{\sharp}\nu(\phi(x)).$$
\end{lemm}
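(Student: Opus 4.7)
The plan is a direct, one-line-at-a-time unpacking of the two definitions plus an application of the previous lemma. Starting from the definition \eqref{eq:tphinu},
\begin{equation*}
T_{\phi}\nu(x)=\int\nabla_1\E_{A_\phi}(x,y)\,d\nu(y),
\end{equation*}
I would first invoke Lemma \ref{ChangeOfVar} pointwise in $y$ to replace the integrand by $D(\phi)^T(x)\,\nabla_1\E_A(\phi(x),\phi(y))$. Since $D(\phi)^T(x)$ is independent of $y$, I pull the matrix outside the integral.

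Next, I would apply the standard change of variables for pushforward measures: for any Borel function $g$ and any Radon measure $\nu$, $\int g(\phi(y))\,d\nu(y)=\int g(z)\,d(\phi_\sharp\nu)(z)$, provided $g\circ\phi\in L^1(\nu)$. I would apply this with $g(z)=\nabla_1\E_A(\phi(x),z)$, which is locally integrable with respect to $\phi_\sharp\nu$ away from $\phi(x)$ by the Calder\'on--Zygmund bound of Lemma \ref{lemcz} and the assumption that we interpret the integral in the principal-value/weak sense whenever $x$ is in the support (exactly as the rest of the paper does via Proposition \ref{teoconvdebil}). This yields
\begin{equation*}
T_{\phi}\nu(x)=D(\phi)^T(x)\int\nabla_1\E_A(\phi(x),z)\,d(\phi_\sharp\nu)(z)=D(\phi)^T(x)\,T\phi_\sharp\nu(\phi(x)),
\end{equation*}
which is the identity claimed.

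The only step that requires any care is the change-of-variable application when $x\in\supp\nu$, because then $\phi(x)$ lies in $\supp(\phi_\sharp\nu)$ and both integrals are only defined as principal values (or as the weak limit operator from Proposition \ref{teoconvdebil}). However, this is not a genuine obstacle: the truncated version $T_{\phi,\ve}\nu$ is defined by restricting to $|x-y|>\ve$, and since $\phi$ is bilipschitz, the sets $\{y:|x-y|>\ve\}$ and $\{z:|\phi(x)-z|>\ve'\}$ are comparable (with $\ve'\asymp\ve$), so the identity holds for the truncations up to an error controlled by the regularizing kernel $|x-y|^{-(n-\alpha)}$ of Lemma \ref{lemm_freezing}, which vanishes in the limit. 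Thus the computation above is justified both pointwise away from the support and in the appropriate limiting sense on the support, giving the lemma.
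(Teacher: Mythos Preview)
Your proof is correct and follows exactly the same route as the paper: apply Lemma \ref{ChangeOfVar} to rewrite the integrand, pull out $D(\phi)^T(x)$, and use the pushforward change-of-variable formula. The paper's proof is the bare one-line computation without the final paragraph on truncations; your extra care there is harmless (though the error between the two truncations is controlled simply by the size bound $|\wh K|\lesssim |x-y|^{-n}$ on an annulus, not by the freezing Lemma \ref{lemm_freezing}).
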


\begin{proof}
The proof is an immediate application of Lemma \ref{ChangeOfVar} and the change of variable formula. Indeed
\begin{equation}
\begin{split}
&T_\phi\nu(x)=\int\n1\E_{{A_\phi}}(x,y)\,d\nu(y)=\int D(\phi)^T(x)\n1\E_A\big(\phi(x),\phi(y)\big)d\nu(y)\\
&\qquad =D(\phi)^T(x)\int\n1\E_A(\phi(x),z)\,d(\phi_{\sharp}\nu)(z)=D(\phi)^T(x)\, T\phi_{\sharp}\nu(\phi(x)).\qedhere
\end{split}
\end{equation}
\end{proof}


\section{Reduction to the case $A(x_R)=Id$ and $H$ horizontal}\label{sec6}
From now on, unless specified, we will denote by $R$ a given cube in $\Nice$.
In this section we will show that to prove Proposition \ref{prop3} we may assume that $A(x_R)=Id$ and the hyperplane $H$ in Lemma \ref{altern_nb_flat} to be horizontal. Indeed,
let $O:\R^{n+1}\to\R^{n+1}$ be a rotation which transforms the horizontal hyperplane (through the origin) $H'$ into $(\sqrt{A(x_R)})^{-1}H$. Consider
 the linear map $\phi\colon\R^{n+1}\to\R^{n+1}$ associated with the matrix $S= \sqrt{A(x_R)} \,O$ and, as in Corollary \ref{cor:A(x0)=id}, set
\[
{A}_\phi(\cdot) = S^{-1} (A\circ \phi)(\cdot) (S^{-1})^T,
\]
so that ${A}_\phi$ is uniformly elliptic and  $A_\phi(y_R) = Id$ for $y_R = S^{-1}x_R$.
Consider also the measure $\nu=(\phi^{-1})_\sharp\mu$ and the operator $T_\phi$ defined in \eqref{eq:tphinu}.
By Lemma \ref{lemfac*1}, 
\begin{equation}\label{eqff45}
T_{\phi}\nu(x)=S \cdot\,T\phi_{\sharp}\nu(\phi(x)) = S \cdot\,T\mu(\phi(x)).
\end{equation}
 Also, for any function $f$,
 $$T_{\phi}(f\nu)(x)=S \cdot\,T(\phi_{\sharp}(f\nu))(\phi(x)) = S \cdot\,T\big((f\circ \phi^{-1})\mu\big)(\phi(x)).$$
Therefore, by the $L^2(\mu)$-boundedness of $T_\mu$,
\begin{align}
\int \big|T_{\phi}(f\nu)(x)\big|^2\,d\nu(x) & \approx  \int \big|T((f\circ \phi^{-1})\mu)(\phi(x))\big|^2\,d \phi^{-1}_\sharp\mu(x)\\
& =  \int \big|T((f\circ \phi^{-1})\mu)(y)\big|^2\,d\mu(y)\\
& \leq  C\int |f\circ \phi^{-1}|^2\,d\mu = C\int |f|^2\,d\nu.
\end{align}
So $T_{\phi,\nu}$ is bounded in $L^2(\nu)$.

Let $\D_\nu$ be the lattice $\D_\nu=\{\phi^{-1}(Q):Q\in\D_\mu\}$.
Momentarily, use the notation $\Delta_Q^\mu$ instead of $\Delta_Q$, which we used in \eqref{eqdq1}, and define $\Delta_{Q'}^\nu$ analogously for $Q'\in\DD_\nu$.
Write also
$$T_{\phi,\phi^{-1}(R)} \nu = \sum_{Q\in\Tree(R)} \Delta_{\phi^{-1}(Q)}^\nu T_{\phi}\nu.$$
Assuming Proposition \ref{prop3} to hold in the case $A_\phi(y_R)=Id$ (applied to $\nu$ and $T_\phi$), we deduce that
\begin{equation}\label{eqcp2}
\|T_{\phi,\phi^{-1}(R)} \nu\|^2_{L^2(\nu)}\geq c\,\nu(\phi^{-1}(R))=c\,\mu(R),
\end{equation}
taking into account that the BAUP property is stable by homothecies, as well as the smallness of the $\alpha$-numbers for the stopping cubes and the root of the tree.

We claim that
\begin{equation}\label{eqcp3}
\|T_{\phi,\phi^{-1}(R)} \nu\|^2_{L^2(\nu)}\approx \|T_R \mu\|^2_{L^2(\mu)}
\end{equation}
with the implicit constant in \eqref{eqcp3} independent of the cube $R$.

Together with \eqref{eqcp2} this implies that $\|T_R \mu\|^2_{L^2(\mu)}\gtrsim \mu(R)$ and proves Proposition \ref{prop3}
in full generality. The proof of \eqref{eqcp3} is a routine task which we show now for the reader's convenience.  Observe
that for any cube $Q\in\D_\mu$, by \eqref{eqff45},
\begin{align}
m_{\nu,\phi^{-1}(Q)} (T_\phi\nu) & = \frac1{\nu(\phi^{-1}(Q))} \int_{\phi^{-1}(Q)} T_\phi\nu\,d\nu\\
& = \frac1{\mu(Q)} \int_{Q} T_\phi\nu(\phi^{-1}(x))\,d\mu(x)\\
&=  \frac1{\mu(Q)} \int_{Q}  S \cdot\,T\mu(x)\,d\mu(x) = S\cdot m_{\mu,Q}(T\mu).
\end{align}
Denote by ${\Ch}_{\Stop}(R)$ the family of all children of cubes from $\Stop(R)$. By the preceding identity, we obtain
\begin{align}
\|T_{\phi,\phi^{-1}(R)} \nu\|^2_{L^2(\nu)} &= \sum_{Q\in{\Ch}_{\Stop}(R)} \bigl|m_{\nu,\phi^{-1}(Q)} (T_\phi\nu)\,
  - m_{\nu,\phi^{-1}(R)}(T_\phi\nu)\bigr|^2\,\nu(\phi^{-1}(Q))\\
& = \sum_{Q\in{\Ch}_{\Stop}(R)} \bigl|S\cdot\bigl( m_{\mu,Q} (T\mu) - m_{\mu,R} (T\mu)\bigr)\bigr|^2\,\mu(Q)\\
& \approx  \sum_{Q\in{\Ch}_{\Stop}(R)} \bigl| m_{\mu,Q} (T\mu) - m_{\mu,R} (T\mu)\bigr|^2\,\mu(Q)\\
& = \|T_{R} \mu\|^2_{L^2(\mu)},
\end{align}
as claimed.

Remark also that if $\mu$ is well approximated in some cube $Q\in\DD_\mu$ by some measure of the form $c\HH^n|_L$, where $L$ is some hyperplane parallel to $H$, then it follows that
$\nu = (\phi^{-1})_\sharp \mu$ is well approximated in $\phi^{-1}(Q)$ by a measure of the form
$$\phi^{-1}_\sharp(c\HH^n|_L) = c'\HH^n|_{\phi^{-1}(L)}.$$
Observe that $\phi^{-1}(L)$ is a hyperplane parallel to the horizontal hyperplane $H'$, by the definition of $O$.
Using this fact, the reader can check that if $\alpha^{(H)}_\mu (MB_Q)<\varepsilon$, then $\alpha^{(H')}_\mu (\phi^{-1}(MB_Q))<c''\varepsilon$.


\section{The approximating measure}\label{sec7}


From now on, in order to prove Proposition \ref{prop3} for a given $R\in\Nice$, we assume that $A(x_R)=Id$ and that $H$ is the horizontal hyperplane through the origin.
Recall also that we assume $A$ to be symmetric.
In this section  we will construct a measure $\sigma$ which should be considered as an approximation of $\mu$,
in a sense. 

For every $Q\in{\Ch}_{\Stop}(R)$, $R\in\Nice$, let $L_Q$ be a hyperplane parallel to $H=\{x\in\R^{n+1}\colon x_{n+1}=0\}$ such that $\alpha_\mu^{L_Q}(MB_Q)\leq C\ve$.
Let $\tilde \ve,t>0$ be some parameters  to be chosen later, with $\ve\ll\tilde \ve\ll t\ll1$ and such that
$\beta_{\infty,\mu}^{L_Q}(MB_Q) + \beta_{\infty,\mu}^{L_Q}(B_Q)\leq\tve/10$ for all $Q\in{\Ch}_{\Stop}(R)$, $R\in\Nice$.

Denote $$Q_{(t)} = \{x\in Q:\dist(x,\supp\mu\setminus Q) \geq t\,\ell(Q)\}.$$ 
Now for $Q\in{\Ch}_{\Stop}(R)$ with $R\in\Nice$, set $\tilde\mu_Q=\mu|_{Q_{(t)}}$ and $\tmu=\underset{\tiny{Q\in{\Ch}_{\Stop}(R)}}
\sum\tmu_Q$. 
Let $\varphi$ be some $C^\infty$ radial
function supported on $B(0,1)$ such that $\int\varphi(x)d\HH^n|_H(x)=1$
and, for $r>0$, set $\varphi_{r}(x)=r^{-n}\varphi(x/r)$. Denote by $\Pi_{L_Q}$ the orthogonal
projection on $L_Q$, define 
$$\tsigma_Q=\Pi_{L_{Q}\sharp}\mu|_{Q_{(t)}}\quad \mbox{ and }\quad
\sigma_Q=(\tilde\sigma_Q*\varphi_{2\tve\ell(Q)})\HH^n|_{L_Q},$$
and then set
$$\sigma=\underset{\tiny{Q\in{\Ch}_{\Stop}(R)}}\sum\sigma_Q.$$ 

Observe that $\|\sigma_Q\|=\|\tilde\sigma_Q\|=\|\mu_{Q_{(t)}}\|$ for every $Q\in\Ch_{\Stop}(R)$, so
$$
\|\sigma\|=\|\tmu\|.
$$
Moreover, using the thin boundary condition and the abundance parameter $\eta$,
\begin{equation}\label{eqdif79}
\|\tmu-\mu|_R\|\leq \mu\biggl(R\setminus\bigcup_{Q\in{\Ch}_{\Stop}(R)} Q\biggr) + \sum_{Q\in{\Ch}_{\Stop}(R)}
\mu(Q\setminus Q_{(t)}) \leq \eta^{1/2} \mu(R)+ Ct^{\gamma_0}\mu(R)\lesssim t^{\gamma_0}\mu(R),
\end{equation}
taking $\eta\ll t$.

Note also that, for each $Q\in{\Ch}_{\Stop}(R)$,  by the definition of $\sigma_Q$, 
\begin{equation}\label{eqsigma75}
\supp\sigma_Q \subset \mathcal U_{3\tve\ell(Q)}(\supp \Pi_{L_Q\sharp}\mu|_{Q_{(t)}})
\subset \mathcal U_{3\tve\ell(Q)}(\mathcal U_{3\tve\ell(Q)}(Q_{(t)})) = \mathcal U_{6\tve\ell(Q)}(Q_{(t)}).
\end{equation}
As a consequence, for $P,Q\in{\Ch}_{\Stop}(R)$ with $P\neq Q$, we have
\begin{equation}\label{eqdistsigma}
\begin{split}
\dist(\supp\sigma_P,\supp\sigma_Q) &\geq \dist\bigl(\mathcal U_{6\tve\ell(P)}(P_{(t)}),\mathcal U_{6\tve\ell(Q)}(Q_{(t)})\bigr)\geq \dist(P_{(t)}, Q_{(t)}) - 6\tve\,(\ell(P)+\ell(Q))\\& \geq t\,\max(\ell(P),\ell(Q)) - 6\tve\,(\ell(P)+\ell(Q))
\geq \frac t2\,\max(\ell(P),\ell(Q)).
\end{split}
\end{equation}

We will need the following lemma:

\begin{lemm}\label{dif}
Let $Q\in{\Ch}_{\Stop}(R)$. If $f\in\Lip_\alpha\Big(\UU_{10\tve\ell(Q)}\big(Q_{(t)}\big)\Big)$, 
then $$\left|\int f(x)d(\sigma_Q-\tmu_Q)(x)\right|\lesssim M^{\alpha}\Lip_\alpha(f)\tve^{\alpha}\ell(Q)^\alpha\mu(Q).$$ 
\end{lemm}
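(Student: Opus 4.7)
The plan is to decompose the difference as $\sigma_Q-\tmu_Q=(\sigma_Q-\tsigma_Q)+(\tsigma_Q-\tmu_Q)$ and estimate each piece separately. The first piece captures the smoothing step (convolution with $\varphi_{2\tve\ell(Q)}$) while the second captures the projection step onto $L_Q$; in both pieces the displacement of mass is at most of order $\tve\ell(Q)$, so the Hölder regularity of $f$ will give a factor $\tve^{\alpha}\ell(Q)^{\alpha}$, and the total mass $\|\tmu_Q\|=\mu(Q_{(t)})\leq\mu(Q)$ will produce the remaining factor.

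For the projection piece, the key identity is $\int f\,d\tsigma_Q=\int f(\Pi_{L_Q}x)\,d\tmu_Q(x)$ (by the definition $\tsigma_Q=\Pi_{L_{Q}\sharp}\mu|_{Q_{(t)}}$), so
\[
\int f\,d(\tsigma_Q-\tmu_Q)=\int\bigl[f(\Pi_{L_Q}x)-f(x)\bigr]\,d\tmu_Q(x).
\]
Since $x\in Q_{(t)}\subset Q\subset\overline{B_Q}\subset MB_Q$ and $\beta_{\infty,\mu}^{L_Q}(MB_Q)\leq\tve/10$, the vertical distance $|x-\Pi_{L_Q}x|=\dist(x,L_Q)$ is at most $M\tve\ell(Q)/10$. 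This keeps both $x$ and $\Pi_{L_Q}x$ (and the segment between them) inside $\UU_{10\tve\ell(Q)}(Q_{(t)})$ for the choices of parameters made, so I can apply the Hölder estimate of $f$ to get $|f(\Pi_{L_Q}x)-f(x)|\lesssim \Lip_\alpha(f)\,(M\tve\ell(Q))^\alpha$, and integrating over $\tmu_Q$ produces the desired bound.

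For the smoothing piece, using $\int_{L_Q}\varphi_{2\tve\ell(Q)}(y-z)\,d\HH^n|_{L_Q}(y)=1$ for each $z\in L_Q$ (by the normalization of $\varphi$ on $H$, since $L_Q\parallel H$), Fubini gives
\[
\int f\,d(\sigma_Q-\tsigma_Q)=\int_{L_Q}\!\!\int_{L_Q}\bigl[f(y)-f(z)\bigr]\varphi_{2\tve\ell(Q)}(y-z)\,d\HH^n|_{L_Q}(y)\,d\tsigma_Q(z).
\]
On the support of the convolution kernel $|y-z|\leq 2\tve\ell(Q)$; and since $z\in\supp\tsigma_Q\subset\Pi_{L_Q}(Q_{(t)})\subset\UU_{\tve\ell(Q)/10}(Q_{(t)})$ (by the $\beta$ bound), the point $y$ lies in $\UU_{3\tve\ell(Q)}(Q_{(t)})\subset \UU_{10\tve\ell(Q)}(Q_{(t)})$, so Hölder continuity of $f$ applies and yields $|f(y)-f(z)|\lesssim \Lip_\alpha(f)\,(\tve\ell(Q))^\alpha$. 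Since $\|\tsigma_Q\|=\|\tmu_Q\|\leq\mu(Q)$, this produces the claimed bound.

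Adding the two contributions gives the stated inequality with the factor $M^\alpha\Lip_\alpha(f)\tve^\alpha\ell(Q)^\alpha\mu(Q)$; the only marginally delicate point is checking that all points encountered in the transport lie in the Hölder domain $\UU_{10\tve\ell(Q)}(Q_{(t)})$, which is why the parameters were chosen with $\beta_{\infty,\mu}^{L_Q}(MB_Q)+\beta_{\infty,\mu}^{L_Q}(B_Q)\leq\tve/10$. There is no substantial obstacle; the estimate is essentially a soft two-step displacement argument.
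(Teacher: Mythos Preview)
Your proof is correct and follows essentially the same approach as the paper: the same two-step decomposition $(\sigma_Q-\tsigma_Q)+(\tsigma_Q-\tmu_Q)$, the same push-forward identity for the projection piece, and the same Fubini argument for the smoothing piece. You are slightly more explicit than the paper in verifying that all displaced points remain inside the H\"older domain $\UU_{10\tve\ell(Q)}(Q_{(t)})$, but the underlying argument is identical.
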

\begin{proof}
Write 
$$\left|\int fd(\sigma_Q-\tmu_Q)\right|\leq\left|\int fd(\sigma_Q-\tsigma_Q)\right|+\left|\int fd(\tsigma_Q-\tmu_Q)\right|= T_1+T_2.$$

By the definition of $\tsigma_Q$ and the fact that $Q\in\Ch_{\Stop}(R)$ and therefore $\beta_{\infty,\mu}(MB_Q)\leq\tve$,
$$T_2=\left|\int\big(f(\Pi_{L_Q}(x))-f(x)\big)d\tmu_Q(x)\right|\lesssim M^{\alpha}\Lip_\alpha(f)\tve^\alpha\ell(Q)^\alpha\mu(Q).$$

For the other term, by Fubini
\begin{equation}
\begin{split}
T_1&=\left|\int f(y)d\tsigma_Q(y)-\int f(y)d(\tsigma_Q*\varphi_{2\tve\ell(Q)})\HH^n|_{L_Q}(y)\right|\\&
=\left|\int\big( f(y)-f*(\varphi_{2\tve\ell(Q)}\HH^n|_{L_Q})(y)\big)d\tsigma_Q(y)\right|\\&\leq\int\sup_{|z|\leq 2\tve\ell(Q)}\big|f(y)-f(y+z)\big|d\tsigma_Q(y)
\lesssim M^{\alpha}\Lip_\alpha(f)\tve^\alpha\ell(Q)^\alpha\mu(Q).\qedhere
\end{split}
\end{equation}
\end{proof}

Next we show that $\sigma$ has $n$-growth.

\begin{lemm}\label{lemgrowthsigma}
The measure $\sigma$ has polynomial growth of degree $n$. That is,
$$\sigma(B(x,r))\leq C\,r^n\quad\mbox{ for all $x\in\R^{n+1}$, $r>0$.}$$
\end{lemm}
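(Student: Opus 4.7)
The plan is to decompose $\sigma(B(x,r)) = \sum_{Q \in \mathcal{Q}_B} \sigma_Q(B(x,r))$ using the pairwise disjointness of the supports $\{\supp \sigma_Q\}$ given by \eqref{eqdistsigma}, where $\mathcal{Q}_B := \{Q \in \Ch_{\Stop}(R) : \supp \sigma_Q \cap B(x,r) \neq \varnothing\}$. A quick application of \eqref{eqdistsigma} shows that at most one cube in $\mathcal{Q}_B$ can have $\ell(Q) > 4r/t$, since two such cubes would produce a support separation exceeding $2r$, contradicting the fact that both supports meet $B(x,r)$ (a set of diameter $2r$). Hence $\mathcal{Q}_B$ splits into a family of ``small'' cubes with $\ell(Q) \leq 4r/t$ plus at most one exceptional ``large'' cube $Q^*$.

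For the small cubes I would aim at the pointwise estimate $\sigma_Q(B(x,r)) \leq \mu(B(x,2r) \cap Q)$. By Fubini and the identity $\int_{L_Q} \varphi_{2\tve\ell(Q)}(\cdot - z)\,d\HH^n = 1$ (valid because $L_Q \parallel H$), one obtains $\sigma_Q(B(x,r)) \leq \tilde\sigma_Q\big(\mathcal{U}_{2\tve\ell(Q)}(B(x,r))\big)$. The $\beta_\infty$-flatness $\beta^{L_Q}_{\infty,\mu}(MB_Q)\leq \tve/10$ implies that $|x' - \Pi_{L_Q}(x')| \leq M\tve\ell(Q)/10$ for every $x' \in Q_{(t)} \subset Q \subset MB_Q$, so that $\tilde\sigma_Q(\mathcal{U}_{2\tve\ell(Q)}(B(x,r))) \leq \mu(B(x, r + CM\tve\ell(Q)) \cap Q) \leq \mu(B(x,2r) \cap Q)$, as soon as $\tve \ll t/M$. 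This parameter choice is consistent with the hierarchy $\ve \ll \tve \ll t \ll 1$ fixed at the start of Section~\ref{sec7}. Summing over the pairwise disjoint cubes and using the $n$-growth of $\mu$ then yields a bound of order $r^n$ for the small-cube contribution.

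For the at most one large cube $Q^*$ I would instead use a density argument. The measure $\sigma_{Q^*}$ has density $f_{Q^*}$ with respect to $\HH^n|_{L_{Q^*}}$, and the same projection-displacement estimate gives the uniform pointwise bound
\begin{equation*}
f_{Q^*}(y) \lesssim (\tve\ell(Q^*))^{-n}\, \mu\bigl(B(y, CM\tve\ell(Q^*))\bigr) \lesssim M^n
\end{equation*}
by the $n$-growth of $\mu$. Combined with the trivial bound $\HH^n(B(x,r) \cap L_{Q^*}) \leq \omega_n r^n$, this delivers $\sigma_{Q^*}(B(x,r)) \lesssim M^n r^n$. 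Adding the two contributions gives the desired $\sigma(B(x,r)) \lesssim r^n$ (with implicit constant depending on $M$).

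The only genuinely delicate point is the balancing of parameters to make the projection displacement negligible, namely $\tve \ll t/M$; this is fully compatible with the parameter hierarchy fixed earlier. Nothing in the proof uses the $L^2(\mu)$-boundedness of $T_\mu$ or any PDE input: only the AD-regularity of $\mu$, the definition of $\sigma$, and the separation estimate \eqref{eqdistsigma} are needed.
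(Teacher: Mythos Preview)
Your proof is correct and follows the same overall strategy as the paper: split the contributing cubes into ``small'' and ``large'' ones, control the small ones by the $n$-growth of $\mu$ via disjointness, and control the large ones by a uniform bound on the density $g_Q$ of $\sigma_Q$ with respect to $\HH^n|_{L_Q}$.

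There are two minor differences worth noting. First, the paper takes the threshold at $\ell(Q)=r$ and then asserts that the family $I_2=\{Q:\ell(Q)>r,\;2B_Q\cap B(x,r)\neq\varnothing\}$ has bounded cardinality depending only on the AD-regularity constants; your use of the separation estimate \eqref{eqdistsigma} to conclude that at most \emph{one} cube can satisfy $\ell(Q)>4r/t$ is a cleaner and more self-contained way to handle the large regime (at the cost of a $t$-dependent threshold, which is harmless). Second, for the density bound you invoke $\beta^{L_Q}_{\infty,\mu}(MB_Q)\leq\tve/10$, which produces a displacement of order $M\tve\ell(Q)$ and hence $\|g_{Q}\|_\infty\lesssim M^n$; the paper instead uses the companion bound $\beta^{L_Q}_{\infty,\mu}(B_Q)\leq\tve/10$ (also recorded in Section~\ref{sec7}), giving displacement $\lesssim\tve\ell(Q)$ and the sharper $\|g_Q\|_\infty\lesssim 1$. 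This also removes the need for the extra constraint $\tve\ll t/M$ in your small-cube step: using the $B_Q$-scale flatness, $\tve\ll t$ already suffices. None of this affects correctness, only the size of the implicit constant.
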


\begin{proof}
First we will check that $\sigma_Q$ has $n$-growth for each $Q\in{\Ch}_{\Stop}(R)$. Denoting $g_Q=\tilde\sigma_Q*\varphi_{2\tve\ell(Q)}$ and since $\sigma_Q=g_Q\,\HH^n|_{L_Q}$, this is equivalent to showing that $\|g_Q\|_\infty\lesssim1$.
To prove this, for $x\in L_Q$, using that $\Pi_{L_Q}(x)=x$, we write
\begin{align}
g_Q(x) & = \int \varphi_{2\tve\ell(Q)}(x-y)\,d\Pi_{L_Q\sharp}\mu|_{Q_{(t)}}(y)  = \int \varphi_{2\tve\ell(Q)}(x-\Pi_{L_Q}(y))\,d\mu|_{Q_{(t)}}(y) \\
& = \int_{Q_{(t)}} \bigl(\varphi_{2\tve\ell(Q)}\circ\Pi_{L_Q}\bigr)(x-y)\,d\mu(y) \lesssim \frac1{(\tve\ell(Q))^n}\,\mu\Bigl(Q \cap \Pi_{L_Q}^{-1}\bigl(B(x,2\tve\ell(Q))\bigr)\Bigr).
\end{align} 
Since $\beta_{\infty,\mu}^{L_Q}(B_Q)\leq\tve/10$, there is some constant $C$ depending at most on $n$ such that 
$$\mu\Bigl(Q \cap \Pi_{L_Q}^{-1}\bigl(B(x,2\tve\ell(Q)\bigr)\Bigr)\leq \mu(B(x,C\,\tve\ell(Q)))\lesssim (\tve\ell(Q))^n,$$
which ensures that $\|g_Q\|_\infty\lesssim1$, as wished.

Next, for a fixed ball $B(x,r)$, let $I$ be the family of cubes $Q\in{\Ch}_{\Stop}(R)$ such that $2B_Q\cap B(x,r)\neq\varnothing$.
We split $I=I_1\cup I_2$, where $I_1$ is the subfamily of the cubes from $I$ with side length at most $r$ and $I_2=I\setminus I_1$.
Then we have
$$\sigma(B(x,r))\leq \sum_{Q\in I_1}\|\sigma_Q\| + \sum_{Q\in I_2} \sigma_Q(B(x,r)).$$
For each $Q\in I_1$, we have $\supp\sigma_Q\subset 2B_Q\subset B(x,4r)$, and thus
$$\sum_{Q\in I_1}\|\sigma_Q\|\leq C\sum_{Q\in I_1}\mu(Q) \leq C \mu(B(x,4r))\leq C\,r^n.$$
On the other hand, it is immediate to check that there is a bounded number of cubes $Q\in I_2$, with the bound depending on the 
parameters of the lattice $\D_\mu$ and thus on the
AD-regularity constant of $\mu$. Hence, using also the $n$-growth of $\sigma_Q$,
$$\sum_{Q\in I_2} \sigma_Q(B(x,r))\leq C\sum_{Q\in I_2}r^n\leq C\,r^n,$$
which completes the proof of the lemma.
\end{proof}


\section{Approximation argument and reflection}\label{choosingdelta}

\subsection{The matrix $\wh A$ and its associated operators $\wh T$ and $S$}


Recall that we assume that $A$ is a symmetric matrix such that $A(x_R)=Id$. Given a parameter $\Delta\in (0,1/10)$ to be chosen below, we set 
$d=\Delta \ell(R)$ and we assume that a ``good'' approximating hyperplane for $\supp\mu\cap B_R$ is $L_R=\{x\in\R^{n+1}\colon x_{n+1}=2d\}$. That is, $\alpha_\mu^{L_R}(MB_R)\leq\ve$. We also take $H=\{x\in\R^{n+1}\colon x_{n+1}=0\}$, so that $L_R$ is a translation of $H$ along the $(n+1)$-th direction. Further, we suppose that
$B_R\subset B(0,2\ell(R))$.

Given $x\in\R^{n+1}$ we denote by $x^*$ the reflection of $x$ with respect to $H$, that is $x^*=(x_1,x_2,\ldots,x_n,-x_{n+1})$.
Now we define a matrix $\wh A$ which satisfies some kind of invariance under this reflection. First, we consider an auxiliary matrix
$B$ defined on $\{x:x_{n+1}\geq 0\}$ by
$$\label{auxiliary_matrix_B}
B(x)=\left\{\begin{array}{l}
                A(x)\;\;\;\;\mbox{ if }x_{n+1}\geq d\,\\\\
                A(x)\frac{x_{n+1}}{d}+Id\left(1-\frac{x_{n+1}}{d}\right)\;\;\;\;\mbox{ if }0\leq x_{n+1}\leq d.
               \end{array}\right.$$
Notice that $B(0)=Id$.
For $x_{n+1}<0$ we set
$$B(x) = \left(\;\;\begin{matrix} 
                     b_{1,1}(x^*)&\cdots& b_{1,n}(x^*) & -b_{1,n+1}(x^*)\\
                      b_{2,1}(x^*)&\cdots& b_{2,n}(x^*) &-b_{2,n+1}(x^*)\\
                      \vdots&\ddots&\vdots&\vdots\\
                       b_{n,1}(x^*)&\cdots& b_{n,n}(x^*) &-b_{n,n+1}(x^*)\\
                      -b_{n+1,1}(x^*)&\cdots&-b_{n+1,n}(x^*) &\hspace{.65cm}b_{n+1,n+1}(x^*)
                     \end{matrix}\right),
$$
where $b_{ij}(x^*)$ are the coefficients of $B(x^*)$.
In this way, for $\phi(x) = x^*$, it holds 
$$B = 
|\det D(\phi)|\, D(\phi^{-1}) (B\!\circ\!\phi) D(\phi^{-1})^T.$$
Observe that 
$$D(\phi^{-1}) = D(\phi^{-1})^T=
\left(\;\;\begin{matrix} 
                     1&0&\cdots&0& 0\\
                     0&1&\cdots&0& 0\\
                      \vdots&\vdots&\ddots&\vdots&\vdots\\
                      0&0&\cdots&1&0\\
                      0&0&\cdots&0&\!\!\!\!-1
                     \end{matrix}\right).
$$
Next we define
$$\wh A(x)=\left\{\begin{array}{l}
                B(x)\;\;\;\;\mbox{ if }|x|\leq 100\ell(R),\\\\
                 \left(2 - \frac{|x|}{100\ell(R)}\right)B(x) \,+\left(\frac{|x|}{100\ell(R)}-1\right)Id\;\;\;\;\mbox{ if }100\ell(R)\leq |x|\leq 200\ell(R)\\\\
                Id\;\;\;\;\mbox{ if }|x|\geq 200\ell(R)
               \end{array}\right.$$
Note that, for $\phi(x) = x^*$, we still have 
$$\wh A = 
|\det D(\phi)|\, D(\phi^{-1}) (\wh A\!\circ\!\phi) D(\phi^{-1})^T.$$
So, denoting  $D = D(\phi^{-1}) = D(\phi^{-1})^T$, we have
\begin{equation}\label{eqref54}
\wh A(x)  = 
D\,\wh A(x^*)\,D.
\end{equation}

\begin{lemm}\label{holder/2}
 For $\ell(R)$ small enough, the matrix $\wh A(x)$ just defined is H\"older continuous with exponent $\alpha/2$ in $R$.
\end{lemm}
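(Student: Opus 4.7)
The plan is to verify the bound $|\wh A(x)-\wh A(y)|\leq C|x-y|^{\alpha/2}$ region by region, and then stitch across interfaces by inserting intermediate points on the separating hyperplane or sphere. The function $\wh A$ is defined piecewise in four regions: the outer region $\{|x|\geq 200\ell(R)\}$ where $\wh A\equiv Id$, the radial transition annulus $\{100\ell(R)\leq|x|\leq 200\ell(R)\}$, the interior slab $\{|x|\leq 100\ell(R),\,x_{n+1}\geq d\}$ where $\wh A=A$, and the thin strip $\{|x|\leq 100\ell(R),\,0\leq x_{n+1}\leq d\}$ where $\wh A$ linearly interpolates between $A$ and $Id$; and $\wh A$ on $\{x_{n+1}<0\}$ is obtained from the reflection identity \eqref{eqref54}.

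In the outer region and the interior slab the estimate is trivial (in the latter, it is the $\alpha$-Hölder continuity of $A$, which is stronger than $\alpha/2$). In the strip $0\leq x_{n+1}\leq d$ the decomposition
\[
\wh A(x)-\wh A(y)=\frac{x_{n+1}}{d}\bigl(A(x)-A(y)\bigr)+\frac{x_{n+1}-y_{n+1}}{d}\bigl(A(y)-Id\bigr)
\]
isolates a genuinely Hölder term and a Lipschitz-like term with large constant $1/d=1/(\Delta\ell(R))$. The point is that the second factor $A(y)-Id=A(y)-A(x_R)$ is small: for $y\in B(0,200\ell(R))$ the $\alpha$-Hölder continuity of $A$ gives $|A(y)-Id|\leq C\ell(R)^{\alpha}$. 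Combining this with $|x-y|\leq 400\ell(R)$ in the strip yields a bound of the form $(C/\Delta)\ell(R)^{\alpha/2}|x-y|^{\alpha/2}$, which is admissible once $\ell(R)$ is small enough compared with $\Delta$. The same mechanism, with the compensating estimate $|B(x)-Id|\leq C\ell(R)^{\alpha}$ coming from $|x-x_R|\approx\ell(R)$, handles the radial transition annulus, whose cutoff has Lipschitz constant $1/(100\ell(R))$.

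The cross-region pairs are handled by inserting an auxiliary point. If $x_{n+1}>d>y_{n+1}\geq 0$, set $\tilde y=(y_1,\ldots,y_n,d)$ and split along $\wh A(x)-\wh A(\tilde y)+\wh A(\tilde y)-\wh A(y)$, observing that the interpolation formula gives $\wh A(\tilde y)=A(\tilde y)$ so no jump occurs at $x_{n+1}=d$, and that $|x-\tilde y|,|\tilde y-y|\leq|x-y|$. If $x_{n+1}\geq 0>y_{n+1}$, set $\hat y=(y_1,\ldots,y_n,0)$; since $\wh A$ equals $Id$ on $H$ (the interpolation vanishes there), we have $\wh A(\hat y)=Id=D\wh A(\hat y)D$ and by \eqref{eqref54}
\[
\wh A(\hat y)-\wh A(y)=D\bigl(\wh A(\hat y)-\wh A(y^{*})\bigr)D,
\]
with $|\hat y-y^{*}|=|y_{n+1}|\leq|x-y|$, reducing this second piece to the upper-half-space estimate just proved. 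The same interface argument handles the transition $|x|=100\ell(R)$ and $|x|=200\ell(R)$.

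The only real obstacle is the loss of Hölder exponent: the very large Lipschitz constant $1/d$ in the strip forces us to convert $|x-y|$ to $|x-y|^{\alpha/2}$ via the inequality $|x-y|^{1-\alpha/2}\leq C\ell(R)^{1-\alpha/2}$ inside $B(0,200\ell(R))$. The resulting Hölder constant scales like $\ell(R)^{\alpha/2}/\Delta$, which is uniformly bounded once $\ell(R)$ is taken small enough depending on $\Delta$; this is the precise meaning of the smallness assumption in the statement, and it forces the exponent to be $\alpha/2$ rather than $\alpha$.
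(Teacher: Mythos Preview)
Your argument is correct and follows essentially the same approach as the paper: isolate the strip $0\leq x_{n+1}\leq d$ as the only nontrivial region, use the smallness $|A(y)-Id|=|A(y)-A(x_R)|\lesssim\ell(R)^\alpha$ to absorb the large Lipschitz constant $1/d$, and then handle the radial transition and the interfaces by the same mechanism. Your treatment of the cross-region pairs via intermediate points on the separating hyperplane/sphere is slightly more explicit than the paper's, but the substance is identical.
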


\begin{proof} 
As a first step, we prove that the auxiliary matrix $B$ defined above is $C^{\alpha/2}$ inside the ball $B(0,200 \ell(R))$.
Because of the definition of $B$, it suffices to check the H\"older regularity condition for $0\leq x_{n+1},y_{n+1}\leq d.$ In this case
 \begin{align}
 \big|B(x)-B(y)\big|&=\left|A(x)\frac{x_{n+1}}{d}-A(y)\frac{y_{n+1}}{d}+ Id\left(1-\frac{x_{n+1}}{d}\right)- Id\left(1-\frac{y_{n+1}}{d}\right)\right|\\
 &=\left|(A(x)-Id)\frac{x_{n+1}}{d}-(A(y)-Id)\frac{y_{n+1}}{d}\right|\\
 & \leq 
\frac{|x_{n+1}-y_{n+1}|}d\, |A(x)-Id| + \frac{y_{n+1}}d\,|A(x)-A(y)|\\
& \leq C\frac{|x_{n+1}-y_{n+1}|}d\, \ell(R)^\alpha +
 C\,|x-y|^{\alpha},
 \end{align}
where we took into account that
$$|A(x)-Id| = |A(x)-A(x_R)|\leq C\,\ell(R)^\alpha.$$
 Now we write
$$ \frac{|x_{n+1}-y_{n+1}|}d\, \ell(R)^\alpha\leq \frac{|x_{n+1}-y_{n+1}|^\alpha}{d^\alpha}\, \ell(R)^\alpha
 = \frac1{\Delta^\alpha}\,|x_{n+1}-y_{n+1}|^\alpha \leq \frac{\ell(R)^{\alpha/2}}{\Delta^\alpha}\,|x_{n+1}-y_{n+1}|^{\alpha/2}.$$
 Thus for $\ell(R)$ small enough, we have ${\ell(R)^{\alpha/2}}/{\Delta^\alpha}\leq1$ and we get
 $$ \big|B(x)-B(y)\big|\leq C\,|x-y|^{\alpha/2} + C\,|x-y|^\alpha\leq  C\,|x-y|^{\alpha/2},$$
 since $|x-y|\lesssim \ell(R)\lesssim 1$. This proves the $(\alpha/2)$-H\"older regularity in the ball $B(0,200\ell(R))$.
 
The next step is to prove that the matrix $\wh A$ is $C^{\alpha/2}$ inside the ball $B(0,200\ell(R)).$ The regularity inside $B(0,100\ell(R))$ follows from the regularity of $B$. Consider $x,y\in B(0,200\ell(R))\setminus B(0,100 \ell(R)).$ Exploiting the definition of $\wh A$ together with the H\"older regularity of the matrix $B$ inside $B(0,200\ell(R))$ we have
\begin{equation}
\begin{split}
\big|\wh A(x)-\wh A(y)\big|&=\Big|\Big(2-\frac{|x|}{100\ell(R)}\Big)B(x)-\Big(2-\frac{|y|}{100\ell(R)}\Big)B(y)+\Big(\frac{|x|-|y|}{100\ell(R)}\Big)Id\Big|\\
&\leq 2|B(x)-B(y)|+\Big|(B(x)-Id)\,\frac{|x|}{100\ell(R)}-(B(y)-Id)\,\frac{|y|}{100\ell(R)}\Big|\\
&\leq 2|B(x)-B(y)|+|B(x)- Id|\,\frac{\big||x|-|y|\big|}{100\ell(R)}+|B(x)-B(y)|\,\frac{|y|}{100\ell(R)}\\
&\leq C|x-y|^{\alpha/2}+ |B(x)-Id|\,\frac{|x-y|}{100\ell(R)}+ C|x-y|^{\alpha/2}\frac{|y|}{100\ell(R)}
\end{split}
\end{equation}
so that, being $x,y\in B(0,200\ell(R))$ and $B(0)=Id$, we can write
\begin{equation}
\begin{split}
\big|\wh A(x)-\wh A(y)\big|&\leq C|x-y|^{\alpha/2}+ C|x|^{\alpha/2}\frac{|x-y|}{100\ell(R)}+ C|x-y|^{\alpha/2}\frac{|y|}{100\ell(R)}\\
&\leq C|x-y|^{\alpha/2}+ C\ell(R)^{\alpha/2}\,\frac{|x-y|^{\alpha/2}}{\ell(R)^{\alpha/2}}+ C|x-y|^{\alpha/2}\leq C|x-y|^{\alpha/2}.
\end{split}
\end{equation}
The matrix $\wh A$ is trivially $C^{\alpha/2}$ in $\Rn1\setminus B(0,200\ell(R))$. To finish the proof, take $x$ with $|x|\leq 200\ell(R)$, $y$ with $|y|\geq 200\ell(R)$ and 
choose a point $\tilde{y}$ with $|\tilde{y}|=200\ell(R)$ and $|x-\tilde{y}|\leq |x-y|$. Then write
\begin{equation}
\begin{split}
\big|\wh A(x)-\wh A(y)\big|=\big|\wh A(x)-Id|=\big|\wh A(x)-\wh A(\tilde{y})\big|\leq C|x-\tilde{y}|^{\alpha/2}\leq C|x-y|^{\alpha/2}. \qedhere
\end{split}
\end{equation}
\end{proof}

From now on we assume that $\ell(R) \leq 1$ so that the estimates in Lemma \ref{lemcz} hold for all $x,y\in R$. 
Also, the estimates in Lemma \ref{lemcz} and  Lemma \ref{lemm_freezing} hold for $\wh A$ with $\alpha/2$ replacing $\alpha$. Further, we will 
take $0<\ve\ll \Delta\ll1$, so that $A(x) = \wh A(x)$ for all $x$ in a neighborhood of $R$.

Let $\E_{\wh A}$ be the fundamental solution associated with $L_{\wh A}$, set 
$\wh K(x,y)=\n1\E_{\wh A}(x,y)$, and define
$$\wh T\mu(x)=\int\wh K(x,y)d\mu(y).$$
Note that, by \rf{eqref54}
and Lemma \ref{ChangeOfVar}, for $x,\;y\in\R^{n+1}$ we have
\begin{equation}\label{eqref55}
\E_{\wh A}(x,y)=\E_{\wh A}(x^*,y^*)
\quad\text{ and } \quad \wh K(x,y)= \nabla_1\E_{\wh A}(x,y)= D\,\nabla_1\E_{\wh A}(x^*,y^*)=D\,\wh K(x^*,y^*)
. 
\end{equation}

Define now the operator 
$$S{\mu}(x)=\int K_S(x,y)d\mu(y),$$
associated with the kernel  $$K_S(x,y)=\wh K(x,y)-\wh K(x^*,y),$$
so that
$$S\mu(x) = \wh T\mu(x) - \wh T\mu(x^*).$$

\begin{rem}\label{reml2ts}
The operators $\wh T_\mu$ and $S_\mu$ are bounded in $L^2(\mu|_R)$.
Indeed, the $L^2(\mu|_R)$ boundedness of $\wh T_\mu$ follows from the one of $T_\mu$ and the fact that the difference between their kernels is bounded in modulus by $1/|x-y|^{n-\alpha/2}$, by a freezing argument using Lemma \ref{lemm_freezing}.
 Then to prove the $L^2(\mu|_R)$ boundedness
of $S_\mu$ it suffices to show that the operator $U_\mu$ defined by 
$$U_\mu f(x) = \wh T_\mu f(x^*)$$
is bounded in $L^2(\mu|_R)$. To show this, write
$$\int_R |U_\mu f(x)|^2\,d\mu(x) = \int_R |\wh T_\mu f(x^*)|^2\,d\mu(x) = \int |\wh T_\mu f(y)|^2\,d\phi_\sharp\mu (y),$$
where $\phi_\sharp \mu$ is the image measure of $\mu|_R$ by the reflection $\phi:x\mapsto x^*$.
Since  $\wh T_\mu$ is bounded in $L^2(\mu|_R)$ and $\phi_\sharp\mu$ has $n$-polynomial growth, it follows that
$\wh T_\mu$ is bounded from $L^2(\mu|_R)$ to $L^2(\phi_\sharp\mu)$, which implies that $U_\mu$ is bounded in $L^2(\mu|_R)$, as wished.
\end{rem}

\vspace{2mm}
Recall that $H=\{x\colon x_{n+1}=0\}$. We denote by $\Pi_H$ the orthogonal projection on $H$, we set
$$\wh T^H\mu(x) = \Pi_H(\wh T\mu(x)),\quad S^H\mu(x) = \Pi_H(S\mu(x)),$$
and we define similarly  $\wh T^H_\mu$, $S^H_\mu$, etc. The kernel of $\wh T^H$ is $\wh K^H(x,y) \coloneqq  \Pi_H(\wh K(x,y))$ and the one of $S^H$ is $K_S^H(x,y)\coloneqq\Pi_H(K_S(x,y))$. Note that, from the second identity in \rf{eqref55}, we get
\begin{equation}\label{eqref56}
\wh K^H(x,y) = \wh K^H(x^*,y^*)\quad \mbox{ for all $x,y\in\R^{n+1}$ with $x\neq y$.}
\end{equation}

\vspace{2mm}

\subsection{The approximation lemmas}

This section is devoted to announce some technical approximation lemmas.

\begin{lemm}[First Approximation Lemma] \label{lemapprox0}
For every $R\in\Nice$ we have
\begin{equation}\label{eqsh421}
\| \wh T\sigma\|^2_{L^2(\sigma)}\leq C\,\mu(R)
\end{equation}
and
\begin{equation}\label{eqsh422}
\| S\sigma\|^2_{L^2(\sigma)}\leq C\,\mu(R).
\end{equation}
\end{lemm}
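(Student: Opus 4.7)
The strategy is to exploit the $L^2(\mu|_R)$-boundedness of $\wh T_\mu$ and $S_\mu$ (from Remark \ref{reml2ts}) and transfer it to the approximating measure $\sigma$ via the pair of tools at our disposal: Lemma \ref{dif}, which measures how close $\sigma_Q$ is to $\tmu_Q$ when paired against Hölder test functions, and the Calderón-Zygmund regularity of $\wh K$ (Lemma \ref{lemcz} for $\wh A$, with exponent $\alpha/2$ from Lemma \ref{holder/2}). I focus on the estimate for $\wh T\sigma$; the bound for $S\sigma$ will follow by the same scheme since $|S\sigma(x)|\leq |\wh T\sigma(x)|+|\wh T\sigma(x^*)|$ and the reflection symmetry \eqref{eqref55} of $\wh K$ reduces the second term to the analysis of $\wh T$ against the reflected measure $\sigma^*$, which has the same $n$-growth and AD-regular-approximation structure as $\sigma$.

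The first step is a pointwise decomposition. For $x\in\supp\sigma_P$, write
\begin{equation}
\wh T\sigma(x)=\wh T\tmu(x)+\sum_{Q\in\Ch_{\Stop}(R)}\wh T(\sigma_Q-\tmu_Q)(x).
\end{equation}
For each $Q$ such that $x$ lies far from $Q$ (say $|x-x_Q|\geq 2\ell(Q)$, in particular for $Q\neq P$ using \eqref{eqdistsigma}), the function $y\mapsto \wh K(x,y)$ is $C^{\alpha/2}$ on the enlargement $\UU_{10\tve\ell(Q)}(Q_{(t)})$ containing $\supp\sigma_Q\cup \supp\tmu_Q$, with Hölder seminorm bounded by $C|x-x_Q|^{-n-\alpha/2}$ (from the smoothness estimates for $\wh K$). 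Applying Lemma \ref{dif} componentwise,
\begin{equation}
|\wh T(\sigma_Q-\tmu_Q)(x)|\lesssim M^{\alpha/2}\tve^{\alpha/2}\,\frac{\ell(Q)^{\alpha/2}}{|x-x_Q|^{n+\alpha/2}}\,\mu(Q).
\end{equation}
A standard geometric series argument using the $n$-AD-regularity of $\mu$ (which controls the density of cubes at a given distance and scale) yields $\sum_Q |\wh T(\sigma_Q-\tmu_Q)(x)|\lesssim \tve^{\alpha/2}$ uniformly in $x$. For the local term $Q=P$, I would bound $|\wh T\sigma_P(x)|$ and $|\wh T\tmu_P(x)|$ separately in $L^2(\sigma_P)$ using the CZ character of $\wh K$ together with the fact that $\sigma_P$ has bounded density on a flat hyperplane $L_P$ (from the proof of Lemma \ref{lemgrowthsigma}) and that $\tmu_P$ is $n$-AD-regular inside $Q_{(t)}$; this produces a contribution controlled by $\mu(P)$.

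The remaining term $\int |\wh T\tmu|^2\,d\sigma$ is the main obstacle. Here the plan is to reduce to $\int |\wh T\tmu|^2\,d\tmu$, which is bounded by $C\|\tmu\|\leq C\mu(R)$ thanks to the $L^2(\mu|_R)$-boundedness of $\wh T_\mu$ supplied by Remark \ref{reml2ts}. The reduction rests on observing that $\supp\sigma_Q$ lies in the $6\tve\ell(Q)$-neighborhood of $Q_{(t)}\subset\supp\tmu$ (recall \eqref{eqsigma75}), so $\wh T\tmu$ evaluated at $x\in\supp\sigma_Q$ differs from $\wh T\tmu$ at nearby points of $\supp\tmu_Q$ only through: (i) a contribution from the self-interaction of $\tmu_Q$, controlled by the maximal function of $\tmu_Q$ (hence in $L^2$ against $\sigma$ of total mass $\|\sigma_Q\|\approx \mu(Q)$); and (ii) a contribution from far cubes, where the $\alpha/2$-Hölder regularity of $y\mapsto \wh K(x,y)$ in the variable $x$ gives an oscillation of order $(\tve\ell(Q))^{\alpha/2}/|x-x_Q'|^{n+\alpha/2}\mu(Q')$ summable over $Q'$. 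Integrating the pointwise bounds against $d\sigma$ and using the $n$-growth of $\sigma$ (Lemma \ref{lemgrowthsigma}) together with Cauchy-Schwarz produces the desired $\lesssim\mu(R)$ bound.

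The main technical obstacle will be the careful bookkeeping of the local/global split near each $P\in\Ch_{\Stop}(R)$: one must handle the "self" contribution $\wh T\sigma_P$ on $\supp\sigma_P$ cleanly, which amounts to estimating a CZ operator against a smooth density on a hyperplane — a routine computation provided one uses the full CZ structure of $\wh K$ (exponent $\alpha/2$) and the explicit bounded density of $\sigma_P$. Once this local piece and the Hölder-plus-summation of the far pieces are in hand, the two claimed estimates of the lemma follow from summing over $P\in\Ch_{\Stop}(R)$ and invoking the mass bound $\|\sigma\|=\|\tmu\|\leq\mu(R)$.
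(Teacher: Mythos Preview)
Your high-level strategy is correct and parallels the paper's: compare $\sigma$ to $\tmu$ via Lemma~\ref{dif} for far cubes, then invoke the $L^2(\mu|_R)$-boundedness of $\wh T_\mu$ from Remark~\ref{reml2ts}. Two points deserve attention.

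First, a minor inaccuracy: the claim that $|x-x_Q|\geq 2\ell(Q)$ for $x\in\supp\sigma_P$, $Q\neq P$, is not what \eqref{eqdistsigma} gives. The separation is only $\gtrsim t\max(\ell(P),\ell(Q))$, so the H\"older seminorm of $y\mapsto\wh K(x,y)$ on $\UU_{10\tve\ell(Q)}(Q_{(t)})$ is $\lesssim C(t)\,D(P,Q)^{-n-\alpha/2}$ with $D(P,Q)=\ell(P)+\ell(Q)+\dist(P,Q)$. The summation argument still works, just with $t$-dependent constants, exactly as the paper records in \eqref{lip_alpha}--\eqref{difS3}.

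The more serious gap is in the step ``$\int|\wh T\tmu|^2\,d\sigma\to\int|\wh T\tmu|^2\,d\tmu$''. You assert that for $x\in\supp\sigma_P$ and a nearby $x'\in P_{(t)}$, the local difference is ``controlled by the maximal function of $\tmu_Q$''. But $|\wh T\tmu(x)-\wh T\tmu(x')|$ involves $\int_{|y-x'|\lesssim\tve\ell(P)}\wh K(x,y)\,d\tmu(y)$ with $x\in L_P$ arbitrarily close to $\supp\tmu$; the \emph{size} bound $|\wh K(x,y)|\lesssim|x-y|^{-n}$ integrated against an $n$-growth measure diverges logarithmically, so a maximal-function bound alone does not suffice. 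One genuinely needs the cancellation of the kernel here, and that requires a separate argument.

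The paper handles this by a three-scale smooth decomposition of the kernel at scales $\tau\ell(Q)$ and $M\ell(Q)$ (with $\tve\ll\tau\ll t$): the far piece $S_3$ is treated via Lemma~\ref{dif} as you do; the intermediate piece $S_2$ is \emph{truncated away from the diagonal}, hence H\"older in $x$ at scale $\tve\ell(Q)$, so the Fubini identity $\int f\,d\sigma_Q=\int (f*\varphi_{2\tve\ell(Q)}\HH^n|_{L_Q})\,d\tsigma_Q$ cleanly transfers the $L^2(\sigma)$-norm to $L^2(\tmu)$; the truly local piece $S_1$ is controlled using the smooth density of $\sigma_Q$ on $L_Q$ and the antisymmetry of the frozen kernel on $L_Q$. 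Rather than comparing $\wh T\sigma$ to pointwise values of $\wh T\tmu$, the paper compares to the piecewise-constant function $\tilde h=\sum_Q m_{\mu,Q}(\wh T\tmu)\chi_Q$ (Lemma~\ref{lemftr2}), which is then trivially bounded in $L^2(\mu)$ by $\|\wh T\tmu\|_{L^2(\mu|_R)}$. This averaging step is what allows the far-scale analysis to connect directly to the known $L^2$-boundedness without ever needing a pointwise bound on $\wh T\tmu$ off $\supp\mu$. Your two-scale split (local $Q=P$ versus far $Q\neq P$) could in principle be made to work, but you would have to reinsert an intermediate truncation scale and argue as above for the genuinely singular local contribution.
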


For the horizontal operator $S^H$ we have a much better estimate:

\begin{lemm}[Second Approximation Lemma] \label{lemapprox}
Let $R\in\Nice$.
Let $\ve_1,\ve_2>0$ and suppose that $\|T_R{\mu}\|^2_{L^2(\mu)}\leq \ve_1\,\mu(R)$.
Then 
\begin{equation}\label{eqsh423}
\| S^H\sigma\|^2_{L^2(\sigma)}\leq \varepsilon_2\,\mu(R),
\end{equation}
if $\ve_1$, $\ell(R)$, $t$, and $\Delta$ are small enough and $M$ is big enough.
\end{lemm}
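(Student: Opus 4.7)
Plan. The strategy is to exploit the reflection invariance $\wh K^H(x,y)=\wh K^H(x^*,y^*)$ from \eqref{eqref56}. Changing variables $y\mapsto y^*$ in $\wh T^H\sigma(x^*)=\int\wh K^H(x^*,y)\,d\sigma(y)$ and using $\wh K^H(x^*,y)=\wh K^H(x,y^*)$ gives $\wh T^H\sigma(x^*)=\wh T^H\sigma^*(x)$, where $\sigma^*$ is the push-forward of $\sigma$ under reflection across $H$. Consequently,
\[
S^H\sigma(x) \;=\; \wh T^H(\sigma-\sigma^*)(x),
\]
so the task reduces to bounding $\|\wh T^H(\sigma-\sigma^*)\|_{L^2(\sigma)}^2$ by $\ve_2\mu(R)$.

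The first reduction is a transference chain from $\sigma$ to $\mu|_R$ while keeping the operator $\wh T$ fixed. Applying Lemma~\ref{dif} on each stopping cube $Q$, the off-diagonal part of the replacement $\sigma\leadsto\tmu$ is controlled by the $C^{\alpha/2}$-Hölder regularity of $\wh K^H(x,\cdot)$ (Lemma~\ref{lemcz} applied to $\wh A$); the near-diagonal part is controlled by the $L^2(\mu)$-boundedness of $\wh T_\mu$ from Remark~\ref{reml2ts} together with the polynomial growth of $\sigma$ (Lemma~\ref{lemgrowthsigma}); the total $L^2(\sigma)$-error is $\lesssim M^{\alpha/2}\tve^{\alpha/2}\mu(R)^{1/2}$. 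The thin-boundary estimate \eqref{eqdif79} then passes from $\tmu$ to $\mu|_R$ at $L^2$-cost $\lesssim t^{\gamma_0/2}\mu(R)^{1/2}$. Applying the analogous reductions symmetrically to $\sigma^*$ and $\tmu^*$, the problem is reduced to showing
\[
\bigl\|\wh T^H\bigl(\mu|_R-(\mu|_R)^*\bigr)\bigr\|_{L^2(\mu|_R)}^2 \;\leq\; \tfrac12\ve_2\mu(R),
\]
and by the reflection identity applied once more this equals $\int_R|\wh T^H(\mu|_R)(x)-\wh T^H(\mu|_R)(x^*)|^2\,d\mu(x)$.

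For the reduced estimate, we couple the hypothesis with the reflection symmetry of $\wh A$. Since $A=\wh A$ on a neighborhood of $R$, Lemma~\ref{lemm_freezing} gives $|\wh T^H\mu(x)-T^H\mu(x)|\lesssim\ell(R)^{\alpha/2}$ for $x\in\supp\mu\cap R$. The hypothesis $\|T_R\mu\|_{L^2(\mu)}^2\leq\ve_1\mu(R)$ and the orthogonality of the martingale differences $\{\Delta_Q T\mu\}$ yield
\[
\sum_{Q\in\Ch_{\Stop}(R)}\bigl|m_{\mu,Q}(T\mu)-m_{\mu,R}(T\mu)\bigr|^2\mu(Q)\;\lesssim\;\ve_1\mu(R),
\]
so $\wh T^H\mu$ is within $L^2(\mu|_R)$-distance $O(\ve_1^{1/2}\mu(R)^{1/2})$ of the constant $c_R^H\coloneqq m_{\mu,R}(T^H\mu)$; the bounded-Lipschitz contribution of $\wh T^H(\mu-\mu|_R)$ on $R$ (whose kernel is non-singular for $x\in R$) can be absorbed into a modified constant, giving the same conclusion for $\wh T^H(\mu|_R)$. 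For the term $\wh T^H(\mu|_R)(x^*)$ we invoke horizontal continuity of the single-layer-potential gradient across the approximately flat $\supp\mu$: in the frozen constant-coefficient case, the explicit formula \eqref{grad_const_coeff_sol} shows that the horizontal component of $\nabla\Theta(\cdot,y;\wh A(x))$ is continuous across any hyperplane parallel to $H$, so that integration against a flat measure supported on such a hyperplane yields the same value at $x$ and $x^*$. Lemma~\ref{lemm_freezing} bounds the variable-coefficient correction by the $C^{\alpha/2}$-modulus of $\wh A$, and the $\tve$-flatness of $\mu|_R$ at the stopping scales controls the deviation of $\mu|_R$ from a true flat measure on $L_R$. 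Combining, $|\wh T^H(\mu|_R)(x)-\wh T^H(\mu|_R)(x^*)|$ is small in $L^2(\mu|_R)$, giving the bound for $\tve,t,\Delta,\ve_1,\ell(R)$ small and $M$ large.

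The main obstacle is the horizontal-continuity estimate in the last paragraph: one must rigorously quantify the cancellation of $\wh T^H$ across the approximately flat $\supp\mu$ in the variable-coefficient case, where the approximating hyperplanes $L_Q$ may sit at slightly different heights near $L_R$ and the density of $\mu|_Q$ varies from cube to cube. Success hinges on the specific construction of $\wh A$ via reflection—which forces $\wh K^H(x,y)=\wh K^H(x^*,y^*)$ and thus opens the door to this cancellation—together with a careful freezing argument at each stopping scale and the simultaneous use of the $\alpha$-flatness of the layers $\FL_k$ supplied by Lemma~\ref{altern_nb_flat}.
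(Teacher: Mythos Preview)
Your overall strategy—exploit the reflection identity \eqref{eqref56} to write $S^H\sigma=\wh T^H(\sigma-\sigma^*)$, transfer the measure from $\sigma$ back toward $\mu$, and then feed in the hypothesis—matches the paper's. But two steps in your outline do not go through as written.

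\textbf{Averages versus pointwise values.} In step~4 you assert that $\|T_R\mu\|_{L^2(\mu)}^2\le\ve_1\mu(R)$ implies that $\wh T^H\mu$ (or $\wh T^H(\mu|_R)$) is $L^2(\mu|_R)$-close to the constant $c_R^H=m_{\mu,R}(T^H\mu)$. This is false: $T_R\mu$ is a sum of martingale differences, and its smallness only says that the \emph{averages} $m_{\mu,Q}(T\mu)$ over $Q\in\Ch_{\Stop}(R)$ are close to $m_{\mu,R}(T\mu)$. Inside each $Q$ the function $T\mu$ may oscillate at unit scale, so $\|T\mu-c_R^H\|_{L^2(\mu|_R)}^2$ is only $\lesssim\mu(R)$, not small. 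The paper avoids this by never reducing to the pointwise integral $\int_R|S^H(\mu|_R)|^2\,d\mu$. Instead it introduces the piecewise-constant function $f^H=\sum_{Q\in\Ch_{\Stop}(R)}m_{\mu,Q}(S^H\mu)\chi_Q$ and proves three auxiliary lemmas: $\|S^H\sigma\|_{L^2(\sigma)}^2\le C\|\tilde f^H\|_{L^2(\mu)}^2+\ve_5\mu(R)$ (Lemma~\ref{lemftr2}, requiring a three-scale splitting $S^H=S_1^H+S_2^H+S_3^H$ with a separate argument for the near-diagonal piece using the smooth density of $\sigma_Q$), then $\|\tilde f^H-f^H\|_{L^2(\mu)}^2\le\ve_4\mu(R)$ (Lemma~\ref{lemftr1.5}), and finally $\|T_R^H\mu-f^H\|_{L^2(\mu)}^2\le\ve_3\mu(R)$ (Lemma~\ref{lemftr1}). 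Your near-diagonal transference via ``$L^2$-boundedness of $\wh T_\mu$'' gives only a bounded error, not a small one; this is exactly why the paper needs the $S_1^H$-argument.

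\textbf{The cancellation at $x^*$.} In step~5 you argue by ``horizontal continuity'' of $\nabla\Theta$ across a hyperplane parallel to $H$. Continuity is true but irrelevant: $x$ and $x^*$ are reflected across $H$, not across $L_R$, so they sit at different distances from $\supp\mu$, and for a \emph{truncated} flat measure $\wt\chi_{M,R}\HH^n|_{L_R}$ there is no reason for the values at $x$ and $x^*$ to agree. What is actually needed (and what the paper proves in the $I_4^H,I_5^H$ estimates of Lemma~\ref{lemftr1}) is that, after freezing to $\wh A(x_R)=Id$, both $m_R\bigl(\wh T^H(\wt\chi_{M,R}\HH^n|_{L_R})\bigr)$ and $\wh T^H(\wt\chi_{M,R}\HH^n|_{L_R})(x^*)$ are individually $O(1/M)$. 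This comes from the \emph{oddness} of the horizontal Riesz kernel together with the near-radial symmetry of $\wt\chi_{M,R}$ about $x_R'\in L_R$ (see the change of variable $z=2w-y$ leading to \eqref{eq1mm}), not from any continuity statement. The far-field pieces are then matched via the Calder\'on--Zygmund regularity ($I_2^H,I_3^H$). This is precisely the ``main obstacle'' you flagged, and resolving it requires replacing your continuity heuristic with the cancellation argument just described.
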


Essentially, the estimates in the above lemmas hold because $\sigma$ is a very good approximation of the measure
$\mu$ at the scales and location of $\Tree(R)$. Further, in the case of Lemma \ref{lemapprox} the 
reflection involved in the definition of $S$ plays an essential role in the localization that allows to transfer the
estimates
from the measure $\mu$ to the compactly supported measure $\sigma$ with a small error.

\vspace{2mm}

The proof of Lemmas \ref{lemapprox0} and \ref{lemapprox} follows from the next three auxiliary lemmas.

\begin{lemm}\label{lemftr1}
Let $R\in\Nice$ and let  $$f = \sum_{Q\in{\Ch}_{\Stop}(R)} m_{\mu,Q}(S\mu)\,\chi_Q\quad \mbox{and}\quad f^H = \sum_{Q\in{\Ch}_{\Stop}(R)} m_{\mu,Q}(S^H\mu)\,\chi_Q.$$
Then, 
\begin{equation}\label{eqsh431}
\|T_R\mu-f\|_{L^2(\mu)}^2\lesssim \mu(R),
\end{equation}
and, for any $\ve_3>0$,
\begin{equation}\label{eqsh432}
\|T_R^H\mu-f^H\|_{L^2(\mu)}^2\leq \ve_3\,\mu(R),
\end{equation}
if  $\ve$, $\tilde \ve$, and $\ell(R)$ are small enough and $M$ is big enough.
\end{lemm}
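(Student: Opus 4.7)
The plan begins with the telescoping of martingale differences on the tree $\Tree(R)$: since $\Tree(R)$ is rooted at $R$ with bottom layer $\Stop(R)$, the sum $\sum_{Q\in\Tree(R)}\Delta_Q T\mu$ collapses to
\[
T_R\mu \;=\; \sum_{Q\in{\Ch}_{\Stop}(R)} m_{\mu,Q}(T\mu)\,\chi_Q \;-\; m_{\mu,R}(T\mu)\,\chi_R.
\]
Combining this with the definitions of $f$ and $S\mu(x)=\wh T\mu(x)-\wh T\mu(x^*)$, I would obtain the pointwise decomposition
\[
T_R\mu - f \;=\; E_1 + E_2 + E_3,
\]
with $E_1=\sum_Q m_{\mu,Q}\!\big((T-\wh T)\mu\big)\chi_Q$, $E_2=\sum_Q m_{\mu,Q}\!\big(\wh T\mu(\cdot^*)\big)\chi_Q$ and $E_3=-m_{\mu,R}(T\mu)\chi_R$, with an entirely parallel decomposition for the horizontal components.

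For the crude bound \eqref{eqsh431}, the plan is to show each $E_i$ is pointwise $O(1)$ on $R$, giving $\|E_i\|_{L^2(\mu)}^2\lesssim\mu(R)$. The bound on $E_1$ uses that $A=\wh A$ on a neighborhood of $R$, ensured by \eqref{Delta} together with $\ve\delta^{-1}\ll\Delta$: Lemma \ref{lemm_freezing} applied to both matrices gives $|K-\wh K|(x,y)\lesssim |x-y|^{\alpha/2-n}$ in that neighborhood, while Lemma \ref{lemcz} handles the far field; integrating against the $n$-growth measure $\mu$ produces a pointwise constant bound. The bound on $E_2$ comes from the $\tve$-flatness of $\mu$ near $L_R$: the reflected point $x^*$ sits at distance $\gtrsim\Delta\ell(R)$ from $\supp\mu\cap R$, so Lemma \ref{lemcz} yields $|\wh T\mu(x^*)|\lesssim 1$. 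The bound on $E_3$ follows from the $L^2(\mu)$-boundedness of $T_\mu$ applied to the local part $T(\chi_{2B_R}\mu)$ together with Calder\'on--Zygmund tail estimates on the remainder.

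For the refined bound \eqref{eqsh432}, the key ingredient is the reflection symmetry \eqref{eqref56}, $\wh K^H(x,y)=\wh K^H(x^*,y^*)$, which gives $\wh T^H\mu(x^*)=\wh T^H\mu^*(x)$ with $\mu^*=\phi_\sharp\mu$ the reflected measure. Using the small $\alpha$-numbers guaranteed by the construction (at the stopping cubes $Q\in{\Ch}_{\Stop}(R)$ via $\alpha_\mu^{L_Q}(MB_Q)\leq C\ve$, and at the top of the tree since $R$ inherits flatness from its parent in $\FL_k$), I would replace $\mu$ and $\mu^*$ inside the relevant balls by flat models of the form $c\,\HH^n|_{L_R}$ and $c\,\HH^n|_{L_R^*}$, with errors controlled by $\ve$, $\tve$ and $M$. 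For these flat models, the reflection symmetry of $\wh A$ forces the horizontal values of $E_1^H+E_2^H+E_3^H$ to cancel identically. The surviving error terms are governed by $\ve$ (from $\alpha$-flatness), $\Delta$ (from the vertical shift $2d$ between $L_R$ and the reflection hyperplane $H$), and $\ell(R)^{\alpha/2}$ (from freezing), each of which can be made $\ll\ve_3^{1/2}$ by the hypotheses.

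The main obstacle is the quantitative bookkeeping of these errors at the coarsest scale: one must match $m_{\mu,R}(T^H\mu)$, which is not individually small, with the combined stopping-scale average of $m_{\mu,Q}(\wh T^H\mu(\cdot^*))$ via the reflection symmetry of $\wh A$, while simultaneously keeping the freezing error for $E_1^H$ and the flat-model approximation error under control. The entire construction of $\wh A$ in Section \ref{choosingdelta} -- in particular the choice to reflect about $H$ rather than $L_R$ and to place the transition layer in $\{0\leq x_{n+1}\leq d\}$ with $d=\Delta\ell(R)$ -- is tailored precisely to make this cancellation argument feasible.
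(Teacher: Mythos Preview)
Your decomposition $T_R\mu - f = E_1 + E_2 + E_3$ is algebraically correct, but the plan to bound each $E_i$ pointwise by $O(1)$ fails for $E_2$ and $E_3$, and this is where the argument breaks down.

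For $E_3 = -m_{\mu,R}(T\mu)$: the tail $m_{\mu,R}\bigl(T(\chi_{(2B_R)^c}\mu)\bigr)$ is of order $\log(1/\ell(R))$, not $O(1)$, since $\int_{|x-y|>2\ell(R)}|x-y|^{-n}\,d\mu(y)$ diverges logarithmically for an $n$-AD-regular measure. ``CZ tail estimates'' control the \emph{oscillation} of $T(\chi_{(2B_R)^c}\mu)$ across $R$, not its value. For $E_2$: the claim that $\dist(x^*,\supp\mu\cap R)\gtrsim\Delta\ell(R)$ implies $|\wh T\mu(x^*)|\lesssim 1$ is wrong on two counts. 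First, the kernel bound alone gives only $|\wh T(\chi_{MB_R}\mu)(x^*)|\lesssim M^n\Delta^{-n}$; getting $O(1)$ requires replacing $\mu$ by $c_R\HH^n|_{L_R}$ via the $\alpha$-numbers and then recognizing that the \emph{vertical} component of the frozen operator $\wh T_{x_R}$ acting on a flat measure is essentially the Poisson extension of the density (hence bounded). Second, the far part $\wh T(\chi_{(MB_R)^c}\mu)(x^*)$ is again logarithmically unbounded.

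The paper repairs both issues by localizing with a smooth cutoff $\wt\chi_{M,R}$ and regrouping into five pieces $I_1,\dots,I_5$. The far contributions of your $E_1,E_2,E_3$ combine into $I_2+I_3$, which are small precisely because they are \emph{differences}: the oscillation of $T_\mu(1-\wt\chi_{M,R})$ across $R$, and of $\wh K(x,\cdot)$ versus $\wh K(x^*,\cdot)$ in the far field. The near contributions become $I_1$ (freezing), $I_4=|m_R(T_\mu\wt\chi_{M,R})|$, and $I_5=|m_Q(\wh T^\phi_\mu\wt\chi_{M,R})|$. For the refined bound \eqref{eqsh432}, $I_4$ and $I_5^H$ are shown to be \emph{individually} small, not by matching each other: $I_4$ via near-antisymmetry of $K$ together with a rather involved $\alpha$-number and thin-boundary argument, and $I_5^H$ via the $\alpha$-number replacement and the horizontal-Riesz cancellation on a truncated flat measure (which yields $O(1/M)$). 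The reason only the crude bound \eqref{eqsh431} holds for the full operator is that the vertical component in $I_5$ picks up the Poisson term, which is $O(1)$ but not small.
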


\begin{lemm}\label{lemftr1.5}
Let $R\in\Nice$, denote  $$\tilde f = \sum_{Q\in{\Ch}_{\Stop}(R)} m_{\mu,Q}(S\tmu)\,\chi_Q\quad \mbox{and}\quad \tilde f^H = \sum_{Q\in{\Ch}_{\Stop}(R)} m_{\mu,Q}(S^H\tmu)\,\chi_Q,$$
and let $f$, $f^H$ be as in Lemma \ref{lemftr1}.
Then, for any $\ve_4>0$, if $t$ and $\Delta$ are small enough,
$$\|\tilde f - f\|_{L^2(\mu)}^2 + \|\tilde f^H- f^H\|_{L^2(\mu)}^2\leq \ve_4\,\mu(R).$$
\end{lemm}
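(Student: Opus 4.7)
The plan is to reduce, via Jensen's inequality, to bounding an $L^2(\mu)$-norm:
$$\|\tilde f - f\|_{L^2(\mu)}^2 = \sum_{Q\in\Ch_{\Stop}(R)} |m_{\mu,Q}(S(\mu-\tmu))|^2\mu(Q) \leq \int_R |S(\mu-\tmu)|^2\,d\mu,$$
and similarly for $\|\tilde f^H - f^H\|_{L^2(\mu)}^2$. I then decompose $\mu-\tmu = \nu_1 + \nu_2$, where $\nu_1 := \mu|_{R \setminus \bigcup_{Q \in \Ch_{\Stop}(R)} Q_{(t)}}$ captures the small-mass ``interior error'' and $\nu_2 := \mu|_{R^c}$ is the exterior piece. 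The thin boundary property of David--Semmes cubes together with $R\in\Nice$ (choosing $\eta \ll t^{2\gamma_0}$) gives $\|\nu_1\| \lesssim t^{\gamma_0}\mu(R)$, and the $L^2(\mu|_R)$-boundedness of $S_\mu$ and $S_\mu^H$ from Remark \ref{reml2ts} then yields
$\int_R |S\nu_1|^2\,d\mu \lesssim \|\nu_1\| \lesssim t^{\gamma_0}\mu(R)$, which is below $\ve_4\mu(R)/4$ for $t$ small.

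For $\nu_2$, I further split $\nu_2 = \nu_2|_{\Rn1\setminus MB_R} + \nu_2|_{MB_R\setminus R}$. On the far piece, the key observation is that $|x-x^*| = 2|x_{n+1}| \lesssim \Delta\ell(R)$ for every $x\in\supp\mu\cap R$ (since $x$ lies within $M\tve\ell(R)$ of $L_R = \{x_{n+1}=2d\}$, provided $M\tve\ll\Delta$), while $|x-y|\gtrsim M\ell(R)$ for $y$ in the far region. The $\alpha/2$-Hölder regularity of $\wh K$ from Lemma \ref{lemcz}(b) (applied to $\wh A$) yields
$$|K_S(x,y)| = |\wh K(x,y)-\wh K(x^*,y)| \lesssim \frac{(\Delta\ell(R))^{\alpha/2}}{|x-y|^{n+\alpha/2}},$$
and dyadic integration using the $n$-growth of $\mu$ produces the pointwise bound $|S(\nu_2|_{\Rn1\setminus MB_R})(x)|\lesssim(\Delta/M)^{\alpha/2}$, so this contributes $\lesssim(\Delta/M)^\alpha\mu(R)$.

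The main obstacle will be the near piece $\nu_2^{\rm near} := \mu|_{MB_R\setminus R}$, whose mass is comparable to $\mu(R)$, so no crude bound on $S$ yields smallness. I would exploit the flatness $\alpha_\mu^{L_R}(MB_R)\leq\ve$ and $\beta^{L_R}_{\infty,\mu}(MB_R)\leq\tve/10$ to approximate $\nu_2^{\rm near}$ by a truncated planar measure of the form $c\HH^n|_{L_R\cap MB_R \setminus U}$, where $U \subset L_R$ is a disk-like ``hole'' slightly larger than $\Pi_{L_R}(R)$, chosen so that $|x - y| \gtrsim \ell(R)$ uniformly for $x \in R$ and $y$ in the planar support. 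The approximation error is bounded via a Lipschitz-testing adaptation of Lemma \ref{dif} at scale $\ell(R)$, contributing $C(\ve, \tve^{\alpha/2})\mu(R)$. For the planar remainder the Hölder estimate $|K_S(x,y)|\lesssim(\Delta\ell(R))^{\alpha/2}/|x-y|^{n+\alpha/2}$ applies uniformly, and dyadic summation over horizontal annuli in $L_R\cap MB_R$ gives $|S(c\HH^n|_{L_R\cap MB_R\setminus U})(x)|\lesssim\Delta^{\alpha/2}$ pointwise on $R$, so this piece contributes $\lesssim \Delta^\alpha\mu(R)$. The argument for $S^H$ proceeds identically (one may also exploit $\wh K^H(x,y) = \wh K^H(x^*,y^*)$ from \eqref{eqref56} for additional cancellation). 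Choosing $\ve\ll\tve\ll t\ll\Delta\ll M^{-1}\ll\ve_4$ absorbs every error into $\ve_4\mu(R)/4$. The delicate point is that this near contribution carries $\mathcal O(\mu(R))$ of mass, so smallness must arise from combining the flatness of $\mu$ near $R$ with the $\Delta^{\alpha/2}$-gain of the reflection kernel $K_S$; tracking the interplay between the flatness approximation and the Hölder-reflection estimate is the technical heart of the argument.
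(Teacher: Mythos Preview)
Your reduction to $\int_R |S(\mu-\tmu)|^2\,d\mu$, the treatment of $\nu_1$ via the thin boundary property and $L^2(\mu|_R)$-boundedness of $S_\mu$, and the H\"older--reflection estimate on the far piece are all correct and match the paper. The gap is in your handling of the near piece $\nu_2^{\rm near}=\mu|_{MB_R\setminus R}$.

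Your flatness-based approximation of $\nu_2^{\rm near}$ by $c\HH^n|_{L_R\cap MB_R\setminus U}$ does not work as sketched. The difference
\[
\mu|_{MB_R\setminus R}-c\HH^n|_{L_R\cap MB_R\setminus U}
=(\mu-c\HH^n|_{L_R})|_{MB_R}-\bigl(\mu|_R-c\HH^n|_{L_R\cap U}\bigr)
\]
contains the piece $\mu|_R-c\HH^n|_{L_R\cap U}$, and neither $\alpha_\mu^{L_R}(MB_R)\le\ve$ nor any adaptation of Lemma~\ref{dif} controls this: the David--Semmes cube $R$ and the ``disk-like hole'' $U$ are different shapes, and $S$ applied to their mismatch can be $O(1)$ pointwise on $R$ (the kernel $K_S(x,\cdot)$ is singular at $x$, so nearby boundary discrepancies are not harmless). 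Lemma~\ref{dif} works because $\sigma_Q$ is built from $\tmu_Q$ by a transport of distance $O(\tve\ell(Q))$; there is no analogous transport relating $\mu|_{MB_R\setminus R}$ to your planar-with-hole measure. Moreover, your claimed error $C(\ve,\tve^{\alpha/2})\mu(R)$ would depend on $\ve,\tve$ rather than on the parameters $t,\Delta$ that the lemma statement allows you to take small.

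The paper avoids this entirely by choosing a different splitting radius. Instead of $M\ell(R)$, split $R^c$ at the \emph{intermediate} scale $\Delta^{1/2}\ell(R)$: set $D_1=\UU_{\Delta^{1/2}\ell(R)}(R)\setminus R$ and $D_2=\R^{n+1}\setminus\UU_{\Delta^{1/2}\ell(R)}(R)$. The near region $D_1$ now has \emph{small mass} $\mu(D_1)\lesssim\Delta^{\gamma_0/2}\mu(R)$ by the thin boundary property of $R$, so $L^2$-boundedness of $S_\mu$ gives $\|S(\chi_{D_1}\mu)\|_{L^2(\mu|_R)}^2\lesssim\Delta^{\gamma_0/2}\mu(R)$ with no flatness needed. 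On $D_2$ one has $|x-x^*|\lesssim\Delta\ell(R)\ll\Delta^{1/2}\ell(R)\le|x-y|$, so your H\"older--reflection estimate applies and dyadic summation yields the pointwise bound $|S(\chi_{D_2}\mu)(x)|\lesssim\Delta^{\alpha/4}$. The point is that $\Delta^{1/2}$ is simultaneously small enough for thin boundary to give smallness and large enough (relative to $\Delta$) for the reflection gain; this is the idea you were missing.
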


\begin{lemm}\label{lemftr2}
Let $R\in\Nice$ and $\tilde f,\tilde f^H$ be as in Lemmas \ref{lemftr1} and \ref{lemftr1.5}.
Also, set
$$\tilde h = \sum_{Q\in{\Ch}_{\Stop}(R)} m_{\mu,Q}(\wh T\tilde \mu)\,\chi_Q.$$
Then, for any $\ve_5>0$ we have
\begin{equation}\label{eqsh452}
\|\wh T\sigma\|^2_{L^2(\sigma)}\leq C\,\|\tilde h\|_{L^2(\mu)}^2 + \ve_5\,\mu(R),
\end{equation}
\begin{equation}\label{eqsh453}
\|S\sigma\|^2_{L^2(\sigma)}\leq C\,\|\tilde f\|_{L^2(\mu)}^2 + \ve_5\,\mu(R),
\end{equation}
and
\begin{equation}\label{eqsh454}
\|S^H\sigma\|^2_{L^2(\sigma)}\leq C\,\|\tilde f^H\|_{L^2(\mu)}^2 + \ve_5\,\mu(R)
\end{equation}
if $\ve$, $\tilde \ve$, $t$ and $\ell(R)$ are small enough.
\end{lemm}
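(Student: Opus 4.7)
The three estimates \eqref{eqsh452}–\eqref{eqsh454} have parallel proofs, so I would focus on \eqref{eqsh452}; the other two follow verbatim on replacing the kernel $\wh K$ by $K_S$ or $K_S^H$, since these are constructed from $\wh K$ via the reflection $x\mapsto x^*$ and therefore enjoy the same Hölder bounds (with exponent $\alpha/2$, cf.\ Lemma \ref{holder/2} and Lemma \ref{lemcz}) on any neighborhood of $P_{(t)}$ with $P\neq Q$. Using the separation \eqref{eqdistsigma}, I would first write
\begin{equation*}
\|\wh T\sigma\|_{L^2(\sigma)}^2 \;=\; \sum_{Q\in\Ch_{\Stop}(R)} \int_{\supp\sigma_Q}|\wh T\sigma|^2\,d\sigma,
\end{equation*}
and for each $Q$ split $\wh T\sigma(x) = m_{\mu,Q}(\wh T\tilde\mu) + E_Q(x)$. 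The inequality $|a+b|^2\le 2|a|^2+2|b|^2$ and $\|\sigma_Q\| = \|\tilde\mu_Q\|\le\mu(Q)$ yield
\begin{equation*}
\|\wh T\sigma\|_{L^2(\sigma)}^2 \;\le\; 2\|\tilde h\|_{L^2(\mu)}^2 + 2\sum_Q \|E_Q\|_{L^2(\sigma_Q)}^2,
\end{equation*}
reducing the problem to $\sum_Q\|E_Q\|_{L^2(\sigma_Q)}^2 \le \frac{\ve_5}{4}\mu(R)$.

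To estimate $E_Q$, I would introduce $\sigma_Q^c=\sigma-\sigma_Q$ and $\tilde\mu_Q^c=\tilde\mu-\tilde\mu_Q$ and decompose
\begin{equation*}
E_Q(x) \;=\; \underbrace{\bigl[\wh T\sigma_Q^c(x) - m_{\mu,Q}(\wh T\tilde\mu_Q^c)\bigr]}_{E_Q^{\rm far}(x)} + \underbrace{\bigl[\wh T\sigma_Q(x) - m_{\mu,Q}(\wh T\tilde\mu_Q)\bigr]}_{E_Q^{\rm near}(x)}.
\end{equation*}
For the far part, note that for every $P\neq Q$ the kernel $\wh K(x,\cdot)$ is $C^{\alpha/2}$ on $\UU_{10\tilde\ve\ell(P)}(P_{(t)})$ with Lipschitz seminorm $\lesssim \dist(x,P)^{-(n+\alpha/2)}$; by Lemma \ref{dif} and \eqref{eqdistsigma}, this gives
\begin{equation*}
\bigl|\wh T\sigma_P(x)-\wh T\tilde\mu_P(x)\bigr| \;\lesssim\; \frac{M^{\alpha/2}\tilde\ve^{\alpha/2}\,\ell(P)^{\alpha/2}\mu(P)}{\bigl(t\max(\ell(P),\ell(Q))\bigr)^{n+\alpha/2}},
\end{equation*}
and the dyadic sum over $P\neq Q$ is $O\bigl(M^{\alpha/2}t^{-(n+\alpha/2)}\tilde\ve^{\alpha/2}\bigr)$ by the $n$-growth of $\mu$. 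The remaining discrepancy $\wh T\tilde\mu_Q^c(x)-m_{\mu,Q}(\wh T\tilde\mu_Q^c)$ is the oscillation across $\supp\sigma_Q\cup Q$ of a function which is $C^{\alpha/2}$ with bounded seminorm near $Q$ (since $\tilde\mu_Q^c$ stays at distance $\gtrsim t\ell(Q)$ from $Q_{(t)}$), contributing $O((\tilde\ve+t)^{\alpha/2})$.

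The main obstacle is $E_Q^{\rm near}$, where both $\wh T\sigma_Q(x)$ and $m_{\mu,Q}(\wh T\tilde\mu_Q)$ carry genuine near-singular contributions. The key observation is that $\sigma_Q-\tilde\mu_Q$ is a signed measure of zero total mass, supported in a $6\tilde\ve\ell(Q)$-neighborhood of the flat disk $L_Q\cap MB_Q$, and Lemma \ref{dif} quantifies its smallness when tested against Hölder functions. The plan is to truncate $\wh K$ at scale $\tilde\ve^{1/2}\ell(Q)$: on the complement of a small tubular neighborhood of $\supp\sigma_Q\cup\supp\tilde\mu_Q$ the kernel becomes Hölder regular and Lemma \ref{dif} gives the required smallness, while the integrals of the truncated kernel against $\sigma_Q$ and $\tilde\mu_Q$ separately are controlled in $L^2(\sigma_Q)$ and $L^2(\mu|_Q)$ by $O(\tilde\ve^{\alpha/4})\mu(Q)^{1/2}$, using the $L^2$-boundedness of $\wh T$ on the uniformly $n$-rectifiable measure $\sigma_Q$ (a bounded density on a hyperplane) and the boundedness of $\wh T_\mu$ (Remark \ref{reml2ts}). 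Summing over $Q$ and choosing $\ve$, $\tilde\ve$, $t$, $\Delta$ and $\ell(R)$ in the hierarchy $\ell(R)\ll\ve\ll\tilde\ve\ll t\ll 1$ makes the total error smaller than $\ve_5\mu(R)/4$, completing the argument.
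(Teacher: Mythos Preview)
Your overall strategy---write $\wh T\sigma=\tilde h+E$ on each $\supp\sigma_Q$ and control $E$ via Lemma~\ref{dif}---is the right shape, but two of the three steps do not close as written.

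The oscillation estimate in your far part has the wrong sign in the exponent. For $x\in\supp\sigma_Q$, $x'\in Q$ and $y\in\supp\tilde\mu_Q^c$ you only know $|x-x'|\lesssim\ell(Q)$ and $|x-y|\gtrsim t\,\ell(Q)$, so
\[
\bigl|\wh T\tilde\mu_Q^c(x)-\wh T\tilde\mu_Q^c(x')\bigr|\;\lesssim\;\int_{|x-y|\gtrsim t\ell(Q)}\frac{|x-x'|^{\alpha/2}}{|x-y|^{n+\alpha/2}}\,d\mu(y)\;\lesssim\;t^{-\alpha/2},
\]
which blows up as $t\to0$; hence $\wh T\tilde\mu_Q^c(x)-m_{\mu,Q}(\wh T\tilde\mu_Q^c)$ is \emph{not} $O((\tilde\ve+t)^{\alpha/2})$. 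This is precisely why the paper introduces a \emph{second} large scale $M\ell(Q)$ and splits the kernel into three smooth annular pieces $S_1^H+S_2^H+S_3^H$ at the scales $\lesssim\tau\ell(Q)$, $[\tau\ell(Q),M\ell(Q)]$, $\gtrsim M\ell(Q)$: only the outer piece $S_3^H$ is handled through oscillation (now of size $M^{-\alpha/2}$), while the intermediate piece $S_2^H$ is made small by freezing and the $\alpha$-number hypothesis on $MB_Q$. Relatedly, in your displayed far bound you replaced $\dist(x,P)$ by $t\max(\ell(P),\ell(Q))$; this throws away the distance decay and the sum over $P$ diverges. In the paper the denominator is kept as $D(P,Q)=\ell(P)+\ell(Q)+\dist(P,Q)$ and the resulting sum $g$ in \eqref{g} is shown to be bounded in $L^2(\mu)$ via a maximal-function duality argument, not pointwise.

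The near part also cannot be handled by ``truncate at $\tilde\ve^{1/2}\ell(Q)$ and use $L^2$-boundedness''. For $x\in\supp\sigma_Q\subset L_Q$, after freezing one gets $|\wh T_{<r}\sigma_Q(x)|\lesssim r\,\Lip(g_Q)+r^{\alpha/2}$, but the a priori bound is only $\Lip(g_Q)\lesssim(\tilde\ve\ell(Q))^{-1}$, so with $r=\tilde\ve^{1/2}\ell(Q)$ this is $\tilde\ve^{-1/2}$; and $L^2$-boundedness of $\wh T_{\sigma_Q}$ by itself gives $\|\wh T_{<r}\sigma_Q\|_{L^2(\sigma_Q)}\lesssim\mu(Q)^{1/2}$ with no small factor. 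The paper's cure is to restrict to the convex hull $J_Q$ of $\UU_{10\tilde\ve\ell(Q)}(Q_{(3\tau)})\cap L_Q$ (whose complement has $\sigma_Q$-measure $\lesssim\tau^{\gamma_0}\mu(Q)$ by thin boundaries), and on $J_Q$ to exploit the $\alpha$-number bound to prove
\[
|\nabla g_Q|\;\lesssim\;\alpha_\mu^{L_Q}(MB_Q)\,\frac{M^{n+1}}{\tilde\ve^{n+2}\ell(Q)},
\]
which is small because $\ve\ll\tilde\ve^{n+2}$. A separate argument (near-antisymmetry of the kernel plus thin boundaries) is then needed to show that $|m_{\mu,Q}(S_1^H\tilde\mu)|$ is small.
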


\begin{proof}[Proof of the Approximation Lemmas \ref{lemapprox0}, \ref{lemapprox} using Lemmas
\ref{lemftr1}, \ref{lemftr1.5}, \ref{lemftr2}]
The estimates \rf{eqsh422} and \rf{eqsh423} follow just by an immediate application of the three auxiliary
lemmas and the triangle inequality. For example, to show \rf{eqsh423},
assume $\|T_R{\mu}\|^2_{L^2(\mu)}\leq \varepsilon_1\,\mu(R)$ and 
then by \rf{eqsh454}, \rf{eqsh432}, and Lemma \ref{lemftr1.5},
\begin{equation}\label{prooftriangleineq}
 \begin{split}
  \|S^H\sigma\|_{L^2(\sigma)}^2&\leq C\|\tilde f^H\|^2_{L^2(\mu)}+\ve_5\mu(R)\\&\leq  
  C\|T_R^H\mu\|^2_{L^2(\mu)}+C\|T_R^H\mu-f^H\|^2_{L^2(\mu)}+ C \|f^H - \tilde f^H\|^2_{L^2(\mu)}+ \ve_5\mu(R)\\&\lesssim (\ve_1+\ve_3+\ve_4+\ve_5)\,\mu(R).
 \end{split}
\end{equation}
The proof of \rf{eqsh422} is analogous.

To show \rf{eqsh421} 
we just apply \rf{eqsh452} and use the fact that $\wh T_\mu$ is bounded in $L^2(\mu|_R)$:
\begin{align}
\| \wh T\sigma\|^2_{L^2(\sigma)} &\leq C\,\|\tilde h\|_{L^2(\mu)}^2 + \ve_5\,\mu(R) =
C \,\Big\|\sum_{Q\in{\Ch}_{\Stop}(R)} m_{\mu,Q}(\wh T\tilde \mu)\,\chi_Q\Big\|_{L^2(\mu)}^2 + \ve_5\,\mu(R)\\
&\leq C\,\|\wh T\tilde \mu\|_{L^2(\mu|_R)}^2 +  \ve_5\,\mu(R)\lesssim\mu(R).\qedhere
\end{align}
\end{proof}

\vspace{2mm}


\subsection{Proof of Lemma \ref{lemftr1}}

First we set $\wh T^{\phi}\mu(x)=\wh T\mu(\phi(x))=\wh T\mu(x^*)$ and $\wh T^{\phi,H}\mu(x)=\wh T^H\mu(\phi(x))=\wh T^H\mu(x^*)$, so that $S\mu(x)=\wh T\mu(x)-\wh T^{\phi}\mu(x)$ and $S^H\mu(x)=\wh T^H\mu(x)-\wh T^{\phi,H}\mu(x).$ In what follows we write $m_Q (f)=m_{\mu,Q}(f)$ to simplify the
notation. Denote by $x_R'$ the orthogonal projection of $x_R$ on $L_R$. Notice that
$$|x_R'-x_R|\lesssim \alpha_\mu^{L_R}(2B_R)^{1/(n+1)}\ell(R)\lesssim (M^{n+1}\ve)^{1/(n+1)}\ell(R)\ll\ell(R).$$
Consider a $C^1$ function $\wt\chi_{M,R}$, radial with respect to $x_R'$, and such that $\chi_{B(x_R',M\ell(R)/2)}\leq \wt\chi_{M,R}\leq
\chi_{B\big(x_R',\frac34M\ell(R)\big)}$ and $\|\nabla\wt\chi_{M,R}\|_{\infty}\lesssim(M\ell(R))^{-1}$.
For $x\in Q\in{\Ch}_{\Stop}(R)$ and $M>1$, we split the difference $T_R\mu(x)-f(x)$ as follows:
\begin{equation}
 \begin{split}
 T_R\mu(x)-f(x)&=m_Q(T\mu)-m_R(T\mu)-m_Q(S\mu)\\&=m_Q(T\mu)-m_R(T\mu)-m_Q(\wh T\mu)+m_Q(\wh T^{\phi}\mu)\\&=
 m_Q(T_\mu\wt\chi_{M,R})+m_Q(T_\mu(1-\wt\chi_{M,R}))-m_R(T_\mu\wt\chi_{M,R})\\
 &\quad -m_R(T_\mu(1-\wt\chi_{M,R}))-m_Q(\wh T_{\mu}\wt\chi_{M,R})-m_Q(\wh T_{\mu}(1-\wt\chi_{M,R}))\\&\quad+
 m_Q(\wh T^{\phi}_{\mu}\wt\chi_{M,R})+m_Q(\wh T^{\phi}_{\mu}(1-\wt\chi_{M,R})),
 \end{split}
\end{equation}
so that we have
\begin{equation}\label{8terms}
 \begin{split}
 \bigl|T_R\mu(x)-f(x)\bigr| &\leq |m_Q(T_\mu\wt\chi_{M,R})-m_Q(\wh T_{\mu}\wt\chi_{M,R})|\\&\quad + |m_Q(T_\mu(1-\wt\chi_{M,R}))-m_R(T_\mu(1-\wt\chi_{M,R}))| \\&\quad + |m_Q(\wh T^{\phi}_{\mu}(1-\wt\chi_{M,R}))-m_Q(\wh T_{\mu}(1-\wt\chi_{M,R}))| 
 \\&\quad  +|m_R(T_\mu\wt\chi_{M,R})|+
 |m_Q(\wh T^{\phi}_{\mu}\wt\chi_{M,R})|\\ & =: I_1+ I_2 + I_3 + I_4 + I_5.
 \end{split}
\end{equation}
We perform the analogous splitting for $\bigl|T_R^H\mu(x)-f^H(x)\bigr|$, so that
we have 
$$\bigl|T_R^H\mu(x)-f^H(x)\bigr| \leq I_1^H+ I_2^H + I_3^H + I_4^H + I_5^H,$$
with 
\begin{align}
I_1^H &= |m_Q(T_\mu^H\wt\chi_{M,R})-m_Q(\wh T_{\mu}^H\wt\chi_{M,R})|,\\
I_2^H &=|m_Q(T_\mu^H(1-\wt\chi_{M,R}))-m_R(T_\mu^H(1-\wt\chi_{M,R}))|, \\
I_3^H &= |m_Q(\wh T^{\phi,H}_{\mu}(1-\wt\chi_{M,R}))-m_Q(\wh T_{\mu}^H(1-\wt\chi_{M,R}))|,\\
I_4^H &= |m_R(T_\mu^H\wt\chi_{M,R})|,\\
I_5^H & = |m_Q(\wh T^{\phi,H}_{\mu}\wt\chi_{M,R})|
.\end{align}
 Obviously, $I_i^H\leq I_i$ for each $i$.

\vspace{2mm}
\noindent{\bf Estimate of $I_2$.}
Notice that for $x'\in Q$,
\begin{equation}
\begin{split}
|m_Q(T_\mu(1-\wt\chi_{M,R}))-m_R(T_\mu &(1-\wt\chi_{M,R}))|\\&\leq \frac1{\mu(Q)}\int_Q\left|T_\mu(1-\wt\chi_{M,R})(x)-T_\mu(1-\wt\chi_{M,R})(x')\right|d\mu(x)\\&\quad+
\frac1{\mu(R)}\int_R\left|T_\mu(1-\wt\chi_{M,R})(x)-T_\mu(1-\wt\chi_{M,R})(x')\right|d\mu(x)\\&\leq 2\sup_{y,y'\in R}|T_\mu(1-\wt\chi_{M,R})(y)-T_\mu(1-\wt\chi_{M,R})(y')|
\end{split}
\end{equation}
and to estimate this supremum, observe that for $y,y'\in R$, by Lemma \ref{lemcz},
\begin{equation}
 \begin{split}
  |T_\mu(1-\wt\chi_{M,R})(y)-T_\mu(1-\wt\chi_{M,R})(y')|&\leq\int_{\big(\tfrac12MB_R\big)^c}|K(y,z)-K(y',z)|d\mu(z)\\&\lesssim\int_{\big(\tfrac12MB_R\big)^c}\frac{\ell(R)^\alpha}{|x_R-z|^{n+\alpha}}d\mu(z)\lesssim M^{-\alpha},   
 \end{split}
\end{equation}
where the last inequality follows by standard estimates using the growth of the measure $\mu$.
Therefore $$I_2=|m_Q(T_\mu(1-\wt\chi_{M,R}))-m_R(T_\mu(1-\wt\chi_{M,R}))|\lesssim M^{-\alpha}.$$

\vspace{2mm}
\noindent{\bf Estimate of $I_3$.}
 By Lemma \ref{lemcz} and standard arguments, 
\begin{equation}
 \begin{split}
   I_3&=\frac1{\mu(Q)}\left|\int_Q\left(\wh T_\mu(1-\wt\chi_{M,R})(x^*)-\wh T_\mu(1-\wt\chi_{M,R})(x)\right)d\mu(x)\right|\\&
    \leq\sup_{x\in Q}\big|\wh T_\mu(1-\wt\chi_{M,R})(x^*)-\wh T_\mu(1-\wt\chi_{M,R})(x)\big|\\&\leq\sup_{x\in Q}\int_{\big(\tfrac12MB_R\big)^c}|\wh K(x,y)-\wh K(x^*,y)|d\mu(y)\\&
    \lesssim\int_{\big(\tfrac12MB_R\big)^c}\frac{\Delta^{\alpha/2}\ell(R)^{\alpha/2}}{|x_Q-y|^{n+{\alpha/2}}}d\mu(y)
    \lesssim\frac{\Delta^{\alpha/2}}{M^{\alpha/2}}.
 \end{split}
\end{equation}

\vspace{2mm}
\noindent{\bf Estimate of $I_1$.}
This term is estimated by a freezing argument. Indeed, recalling that $\ve\ll\Delta$, we have $\wh A(x)=A(x)$ for all 
$x\in Q$, and thus
\begin{align}
|\nabla_1\E_A(x,y)-\nabla_1\E_{\wh A}(x,y)| & \leq |\nabla_1\E_A(x,y)-
\nabla_1\Theta(x,y;A(x))| \\
&\quad  + |\nabla_1\E_{\wh A}(x,y) - \nabla_1\Theta(x,y;\wh A(x))|
\lesssim \frac{1}{|x-y|^{n-\alpha/2}},
\end{align}
Integrating with respect to $y\in MB_R$ we derive
$$|T_\mu\wt\chi_{M,R}(x)-\wh T_{\mu}\wt\chi_{M,R}(x)|
\lesssim \int_{MB_R}\frac{d\mu(y)}{|x-y|^{n-\alpha/2}}\lesssim (M\ell(R))^{\alpha/2},$$
and so
$$I_1=|m_Q(T_{\mu}\wt\chi_{M,R})-m_Q(\wh T_\mu\wt\chi_{M,R})|\lesssim (M\ell(R))^{\alpha/2}
\leq \ve_3,$$
for $\ell(R)$ small enough (depending on $M$).

\vspace{2mm}
\noindent{\bf Estimate of $I_4$.}
We write 
\begin{equation}\label{third}
m_R(T_\mu\wt\chi_{M,R})=m_R(T_\mu(\wt\chi_{M,R}-\chi_R))+m_R(T_{\mu}\chi_R).
\end{equation}
Concerning the term $m_R(T_{\mu}\chi_R)$, the antisymmetric part of the kernel of $T_\mu$ does not contribute to the average, hence we can write
$$m_R(T_{\mu}\chi_R)=\frac1{2\mu(R)}\iint_{R\times R}\left(K(y,z)+K(z,y)\right)d\mu(y)d\mu(z).$$
Using now the estimate \rf{remantis} and the $n$-growth of the measure $\mu$,  for any $y\in R$  we get
 $$\int_R|K(y,z)+K(z,y)|d\mu(z)\lesssim\int_R\frac{d\mu(z)}{|y-z|^{n-\alpha}}\lesssim\ell(R)^{\alpha},
 $$ and so
 \begin{equation}\label{eqmrpet}
|m_R(T_{\mu}\chi_R)|\lesssim\ell(R)^\alpha.
\end{equation}

To conclude  with $I_4$ it remains to estimate $\left|m_R(T_\mu(\wt\chi_{M,R}-\chi_R))\right|$.
Given some small constant $\kappa\in (0,1/10)$ to be chosen below,
let $\wt\chi_{\kappa,R}$ be a $C^1$ function
which equals 1 
on $\mathcal{U}_{\kappa\ell(R)}(R)$, vanishes out of $\mathcal{U}_{\kappa2\ell(R)}(R)$ and satisfies $\|\nabla\wt\chi_{\kappa,R}\|_{\infty}\lesssim(\kappa\ell(R))^{-1}$, and denote $\varphi=\wt\chi_{M,R} - \wt\chi_{\kappa,R}$.
In particular we have
$$\wt\chi_{M,R} - \chi_R =  \varphi +  \wt\chi_{\kappa,R} -\chi_R.$$
Then we split as follows:
\begin{equation}\label{abc}
 \begin{split}
   \left|\int_R T_\mu(\wt\chi_{M,R}-\chi_R)d\mu\right|&\leq \left|\int_RT_\mu\varphi\;d\mu\right| +   
   \int_R|T_\mu( \wt\chi_{\kappa,R} -\chi_R)|\,d\mu=:A+B.
 \end{split}
\end{equation}
The Cauchy-Schwarz inequality and the thin boundary condition \eqref{thin_bdry} of $R$ give us the estimate
\begin{align}\label{eqBB}
B&\leq\|T_\mu( \wt\chi_{\kappa,R} -\chi_R)\|_{L^2(\mu{|_R})}\,\mu(R)^{1/2}\lesssim\|\wt\chi_{\kappa,R} -\chi_R\|_{L^2(\mu)}\,\mu(R)^{1/2}\\&\leq\mu\left(\mathcal{U}_{2\kappa\ell(R)}(R)\setminus R\right)^{1/2}\mu(R)^{1/2}\lesssim\kappa^{\gamma_0/2}\mu(R).
\end{align}
Now it remains to estimate the term $A$. We consider another auxiliary function $\tilde\varphi$ supported on $\mathcal{U}_{\kappa\ell(R)/4}(R)$ such that $\tilde\varphi\equiv 1$ on 
$\mathcal{U}_{\kappa\ell(R)/8}(R)$ 
and $\|\nabla\tilde\varphi\|_{\infty}\lesssim(\kappa\ell(R))^{-1}$. Write
\begin{equation}\label{termC}
A=\left|\int_R T_\mu\varphi\;d\mu\right|\leq\left|\int\tilde\varphi \,T_\mu\varphi\;d\mu\right|+\left|\int(\chi_R-\tilde\varphi) T_\mu\varphi\;d\mu\right|
\end{equation}
For the second term above, notice that the definition of $\tilde\varphi$ and the thin boundary condition imply that
$\left\|\chi_R-\tilde\varphi\right\|_{L^2(\mu)}\lesssim \kappa^{\gamma_0/2}\mu(R)^{1/2}.$
Therefore, 
\begin{equation}\label{eqAA5}
\begin{split}
\left|\int(\chi_R-\tilde\varphi) T_\mu\varphi\;d\mu\right| & \lesssim \|\varphi\|_{L^2(\mu)}
\left\|\chi_R-\tilde\varphi\right\|_{L^2(\mu)}\\
&\lesssim \mu\big(B(x_R',\tfrac34M\ell(R))\big)^{1/2}\,\kappa^{\gamma_0/2}\mu(R)^{1/2} \lesssim M^{1/2}\kappa^{\gamma_0/2}\mu(R).
\end{split}
\end{equation}

To treat the first term in \eqref{termC}, taking $c_R\geq 0$, split it as follows
\begin{equation}\label{eqa123}
 \begin{split}
   \left|\int\tilde\varphi \,T_\mu\varphi\;d\mu\right|&\leq \left|\int\tilde\varphi \,T_\mu\varphi\;d\left(\mu-c_R\HH^n|_{L_R}\right)\right|+ 
   \left|c_R\int\tilde\varphi\, T_{\mu-c_R\HH^n|_{L_ R}}\varphi\;d\HH^n|_{L_R}\right|\\&\quad+ \left|c_R\int\tilde\varphi \,T_{c_R\HH^n|_{L_R}}\varphi\;d\HH^n|_{L_R}\right|=:A_1+A_2+A_3.
 \end{split}
\end{equation}

To estimate  $A_1$ we would like to use the $\alpha$-numbers. However, we can only guarantee that $T_\mu\varphi$
is H\"older continuous on $\supp\tilde\varphi$. So we convolve this function with a non-negative, radial, $C^\infty$ function $\theta$ 
supported on $B(0,\hat\kappa\ell(R))$, and such that $\int \theta\,d\LL^{n+1}=0$ and  $\|\nabla\theta\|_\infty\lesssim
(\hat\kappa\ell(R))^{-n+2}$, with $\hat\kappa\in(0,\kappa/20)$ to be chosen.
Then we write
\begin{align}
A_1 &\leq \left|\int\tilde\varphi \,\bigl[\theta* T_\mu\varphi\bigr]\;d\left(\mu-c_R\HH^n|_{L_R}\right)\right| + 
\left|\int\tilde\varphi \,\bigl[T_\mu\varphi - \theta * T_\mu\varphi\bigr]\;d\left(\mu-c_R\HH^n|_{L_R}\right)\right|\\
& =: A_{1,1} + A_{1,2}.
\end{align}
We turn first our attention to $A_{1,1}$:
\begin{equation}\label{eqalp92}
A_{1,1}\leq\bigl\|\nabla\bigl(\tilde\varphi \,\bigl[\theta* T_\mu\varphi\bigr]\bigr)\bigr\|_{\infty}M^{n+1}\ell(R)^{n+1}\alpha_\mu^{L_R}(MB_R).
\end{equation}
Notice that
$$\bigl\|\nabla\bigl(\tilde\varphi \,\bigl[\theta* T_\mu\varphi\bigr]\bigr)\bigr\|_{\infty}\leq
\bigl\|\nabla\bigl(\theta* T_\mu\varphi\bigr)\bigr]\bigr\|_{\infty,\supp\tilde\varphi} 
+
\|\nabla\tilde\varphi\|_{\infty}
\|\theta* T_\mu\varphi\|_{\infty,\supp\tilde\varphi}.$$
Since $\dist(\supp\varphi,\supp\tilde\varphi)\geq\kappa\ell(R)/4$ and $\supp\theta\subset B(0,\kappa\ell(R)/20)$, we derive
$$\|\theta* T_\mu\varphi\|_{\infty,\supp\tilde\varphi} \lesssim \frac{\mu\big(B(x_R',M\ell(R))\big)}{(\kappa\,\ell(R))^n}\lesssim
\frac {M^n}{\kappa^n}$$
and 
$$\bigl\|\nabla\bigl(\theta* T_\mu\varphi\bigr)\bigr\|_{\infty,\supp\tilde\varphi}\lesssim \frac {M^n}{\kappa^n}\,\|\nabla\theta\|_1 \lesssim \frac {M^n}{\kappa^n\,\hat\kappa \,\ell(R)}.$$
Hence, using also that $\|\nabla\tilde\varphi\|_{\infty}\lesssim(\kappa\ell(R))^{-1}$,
$$\bigl\|\nabla\bigl(\tilde\varphi \,\bigl[\theta* T_\mu\varphi\bigr]\bigr)\bigr\|_{\infty}\lesssim 
\frac {M^n}{\kappa^n\,\hat\kappa\,\ell(R)} + \frac {M^n}{\kappa^{n+1}\ell(R)} \lesssim \frac {M^n}{\kappa^n\,\hat\kappa\,\ell(R)}.$$
Plugging this estimate into \rf{eqalp92}, we obtain
$$A_{1,1}\lesssim\varepsilon\frac{M^{2n+1}}{\hat\kappa\kappa^n}\mu(R).$$

Concerning the term $A_{1,2}$, we have
$$A_{1,2}\leq \int\tilde\varphi \,\bigl|T_\mu\varphi - \theta * T_\mu\varphi\bigr|\;d\bigl|\mu-c_R\HH^n|_{L_R}\bigr|
\lesssim \bigl\|T_\mu\varphi - \theta * T_\mu\varphi\bigr\|_{\infty,\supp\tilde\varphi}\,\ell(R)^n.
$$
For each $x\in\supp\tilde\varphi$, we write
\begin{align}
|T_\mu\varphi(x) - \theta * T_\mu\varphi(x)|&\leq \sup_{y\in B(x,\hat\kappa\ell(R))} |T_\mu\varphi(x) -T_\mu\varphi(y)| \\& \leq \sup_{y\in B(x,\hat\kappa\ell(R))} \int_{\supp\varphi}|K(x,z) - K(y,z)|\,d\mu(z).
\end{align}
Using the fact that $\dist(x,\supp\varphi)\geq\kappa\ell(R)$ and the H\"older continuity of $K$, for $x$ and $y$ as above we get
$$ \int_{\supp\varphi}|K(x,z) - K(y,z)|\,d\mu(z)\lesssim \int_{|x-z|\geq\kappa\ell(R)}\frac{(\hat\kappa\ell(R))^{\alpha}}{|x-z|^{n+\alpha}}\,d\mu(z) \lesssim \frac{\hat\kappa^{\alpha}}{\kappa^{\alpha}},$$
and thus
$$A_{1,2}\lesssim \frac{\hat\kappa^{\alpha}}{\kappa^{\alpha}}\,\ell(R)^n.$$
Together with the estimates for $A_{1,1}$, choosing $\hat\kappa=\kappa^2$, this gives
$$
A_1\leq A_{1,1}+A_{1,2}\lesssim \mu(R)\left(
\varepsilon\frac{M^{2n+1}}{\kappa^{n+2}}
+\kappa^{\alpha}\right).$$

To deal with $A_2$, we write
\begin{equation}
 \begin{split}
    A_2&=\left|c_R\int\tilde\varphi(x) T_{\mu-c_R\HH^n|_{L_ R}}\varphi(x)\;d\HH^n|_{L_R}(x)\right|
    \approx \left|\int T_{\HH^n|_{L_R}}^{*}\tilde\varphi(x) \,d(\mu-c_R\HH^n|_{L_ R})(x)\right|,
     \end{split}
\end{equation}
where $T^*$ denotes the transpose of the gradient of the single layer potential.
Arguing as for the term $A_1$, essentially reversing the roles of $\varphi$ and $\tilde\varphi$, 
we get
$$A_2\lesssim \mu(R)\left(
\varepsilon\frac{M^{n+1}}{\kappa^{n+2}}
+M^n\,\kappa^{\alpha}\right).$$
We leave the details for the reader.

Now we  will estimate the term $A_3$ in \rf{eqa123}. To this end, first we take into account that
$$\left|\int\tilde\varphi\, T_{\HH^n|_{L_R}}\tilde\varphi\;d\HH^n|_{L_R}\right| \lesssim\ell(R)^{n+\alpha}.$$
This follows by the same argument used to prove that $|m_R(T_{\mu}\chi_R)|\lesssim\ell(R)^\alpha$ in \rf{eqmrpet}.
Then we have
\begin{equation}
 \begin{split}
    A_3&\approx\left|\int\tilde\varphi\, T_{\HH^n|_{L_R}}\varphi\;d\HH^n|_{L_R}\right|\\&\lesssim \ell(R)^{n+\alpha}+
    \left|\int\tilde\varphi \,T_{\HH^n|_{L_R}}(\varphi+\tilde\varphi)\;d\HH^n|_{L_R}\right|\\
    &\leq
    \ell(R)^{n+\alpha}+
    \left|\int\tilde\varphi\, T_{\HH^n|_{L_R}}\wt\chi_{M,R}\;d\HH^n|_{L_R}\right|+
    \left|\int\tilde\varphi \,T_{\HH^n|_{L_R}}(\wt\chi_{M,R}-\varphi-\tilde\varphi)\;d\HH^n|_{L_R}\right|
  \\&=\ell(R)^{n+\alpha}+A_{3,1}+A_{3,2}.
 \end{split}
\end{equation}

The Cauchy-Schwarz inequality and the $L^2(\HH^n|_{L_R})$-boundedness of $T_{\HH^n|_{L_R}}$ imply
$$A_{3,2}\lesssim\|\wt\chi_{M,R}-\varphi-\tilde\varphi\|_{L^2(\HH^n|_{L_R})}\,\ell(R)^{n/2}
= \|\wt\chi_{\kappa,R}-\tilde\varphi\|_{L^2(\HH^n|_{L_R})}\,\ell(R)^{n/2}
.$$
To estimate $\|\wt\chi_{\kappa,R}-\tilde\varphi\|_{L^2(\HH^n|_{L_R})}$ we use the $\alpha$-numbers and
the thin boundary condition of $R$ with respect to $\mu$:
\begin{align}
\|\wt\chi_{\kappa,R}-\tilde\varphi\|_{L^2(\HH^n|_{L_R})}^2 & \leq
\int |\wt\chi_{\kappa,R}-\tilde\varphi|^2\,d\mu + \left|\int |\wt\chi_{\kappa,R}-\tilde\varphi|^2\,d(\mu 
-\HH^n|_{L_R})\right|\\
& \lesssim \mu\big(\mathcal{U}_{2\kappa\ell(R)}(R)\setminus R\big) + \alpha_\mu^{L_R}(MB_R)\,(M\ell(B_R))^{n+1}\,
\|\nabla(|\wt\chi_{\kappa,R}-\tilde\varphi|^2)\|_\infty\\
& \lesssim \kappa^{\gamma_0}\,\mu(R) + \ve M^{n+1}\,\kappa^{-1}\,\ell(R)^n.
\end{align}
where we took into account that $\|\nabla(|\wt\chi_{\kappa,R}-\tilde\varphi|^2)\|_\infty\lesssim (\kappa \ell(R))^{-1}$. Thus,
$$A_{3,2}\lesssim \left(\kappa^{\gamma_0/2} + \ve^{1/2} M^{(n+1)/2}\,\kappa^{-1/2}\right)\,\mu(R).$$

Next we deal with $A_{3,1}$. To this end, we write
\begin{equation}\label{eqa31**}
\begin{split}
A_{3,1} &\lesssim \sup_{x\in L_R\cap B(x_R',2\ell(R))} |T_{\HH^n|_{L_R}}\wt\chi_{M,R}(x)|\,\,\mu(R)\\
&\lesssim \sup_{x\in L_R\cap B(x_R',2\ell(R))} |T_{x,\HH^n|_{L_R}}\wt\chi_{M,R}(x)|\,\,\mu(R) + M^{n+\alpha}\ell(R)^\alpha\mu(R),
\end{split}
\end{equation}
where $T_x$ denotes the frozen operator. To simplify notation we denote by $K_x(\cdot)=\nabla_1\Theta(\cdot,0;A(x))$ its kernel.
For any $x\in L_R\cap B(x_R',2\ell(R))$, by the change of variable $z=2x-y$,
\begin{equation}\label{eqtxh22}
T_{x,\HH^n|_{L_R}}\wt\chi_{M,R}(x) = \int K_x(x-y)\,\wt\chi_{M,R}(y)\, d\HH^n|_{L_R}(y) = 
\int K_x(z-x)\,\wt\chi_{M,R}(2x-z)\, d\HH^n|_{L_R}(z).
\end{equation}
Hence,
\begin{equation}\label{eqtxh2222}
\begin{split}
2\,T_{x,\HH^n|_{L_R}}\wt\chi_{M,R}(x) &=  \int K_x(x-y)\,\wt\chi_{M,R}(y)\, d\HH^n|_{L_R}(y)\\&\qquad +
\int K_x(y-x)\,\wt\chi_{M,R}(2x-y)\, d\HH^n|_{L_R}(y)\\
& =
\int K_x(x-y)\,\bigl(\wt\chi_{M,R}(y)-\wt\chi_{M,R}(2x-y)\bigr)\, d\HH^n|_{L_R}(y).
\end{split}
\end{equation}
To estimate the last integral, recall that $\wt\chi_{M,R}$ is radial with respect to $x_R'$, and hence
$$\wt\chi_{M,R}(2x-y) = \wt\chi_{M,R}(2x_R'-(2x-y)) = \wt\chi_{M,R}( y + 2(x_R'-x)).$$
Thus, for all $x\in L_R\cap B(x_R',2\ell(R))$,
$$\supp\bigl(\wt\chi_{M,R}-\wt\chi_{M,R}(2x-\cdot)\bigr) \subset A\bigl(x_R',\tfrac12M\ell(R),2M\ell(R)\bigr).$$
 Also, for all $y\in L_R$, since $\wt\chi_{M,R}$ is Lipschitz with constant $c/(M\ell(R))$, 
$$
\bigl|\wt\chi_{M,R}(y)-\wt\chi_{M,R}(2x-y)\bigr|  = \bigl|\wt\chi_{M,R}(y)-\wt\chi_{M,R}(y + 2(x_R'-x))\bigr|
\lesssim \frac{|x_R'-x|}{M\,\ell(R)} \lesssim  \frac1M.
$$
So we get
\begin{equation}\label{eq1mm}
|T_{x,\HH^n|_{L_R}}\wt\chi_{M,R}(x)|\lesssim \frac1M\int_{A\big(x_R',\tfrac12M\ell(R),2M\ell(R)\big)} |K_x(x-y)|\,d\HH^n|_{L_R}(y)
\lesssim \frac1M.
\end{equation}
Together with  \rf{eqa31**}, this gives
$$A_{3,1} \lesssim \bigl(M^{n+\alpha}\ell(R)^\alpha + M^{-1}\bigr)\,\mu(R)
.$$
Now, gathering this estimate with the one of $A_{3,2}$, we get
$$A_3 \lesssim \left(\kappa^{\gamma_0/2} + \ve^{1/2} M^{(n+1)/2}\,\kappa^{-1/2} + M^{n+\alpha}\ell(R)^\alpha +  M^{-1}+\ell(R)^\alpha\right)\,\mu(R),$$
and then, by \rf{eqAA5},
\begin{align}
A& \lesssim 
M^{1/2}\kappa^{\gamma_0/2}\mu(R) + A_1+ A_2 + A_3 \\& \lesssim 
M^{1/2}\kappa^{\gamma_0/2} \mu(R)
+ \left(
\varepsilon\frac{M^{2n+1}}{\kappa^{n+2}} + \kappa^{\alpha}\right) \mu(R)
+
 \left(
\varepsilon\frac{M^{n+1}}{\kappa^{n+2}}
+M^n\,\kappa^{\alpha}\right) \mu(R)\\& \quad
+ \left(\kappa^{\gamma_0/2} + \ve^{1/2} M^{(n+1)/2}\,\kappa^{-1/2} + M^{n+\alpha}\ell(R)^\alpha +  M^{-1}+\ell(R)^\alpha\right)\,\mu(R).
\end{align}
Note that if $M$ is chosen big enough, then $\kappa$ and $\ell(R)$ small enough, and finally $\ve$ small enough (in this order), we get
$$A\leq \ve_3\,\mu(R).$$

We can now conclude the estimate of the term $I_4$ in \rf{8terms}.
From \rf{eqmrpet}, the last estimate, and \rf{eqBB}, we obtain
$$I_4\leq
|m_R(T_\mu(\wt\chi_{M,R}-\chi_R))|+|m_R(T_{\mu}\chi_R)| \lesssim\,\ell(R)^\alpha + \frac 1{\mu(R)}\,(A+B)
\lesssim
\ell(R)^\alpha + \kappa^{\gamma_0/2} + \ve_3\lesssim \ve_3,$$
assuming again $\ell(R)$ and $\kappa$ to be small enough.

\vspace{2mm}
\noindent{\bf Estimate of $I_5$ and $I_5^H$.}
Recall that
 $I_5=|m_Q(\wh T^{\phi}_{\mu}\wt\chi_{M,R})|$
 and that, for $x\in Q\in{\Ch}_{\Stop}(R)$, by definition we have $\wh T_{\mu}^{\phi}\wt\chi_{M,R}(x)=\wh T_\mu\wt\chi_{M,R}(x^*)$. We split it as follows
 \begin{equation}
\begin{split}
\label{num7}
\bigl|\wh T\left(\wt\chi_{M,R}\mu\right)(x^*)\bigr|&\leq \bigl|\wh T(\wt\chi_{M,R}\mu)(x^*)-\wh T(\wt\chi_{M,R}c_R\HH^n|_{L_R})(x^*)\bigr|
\\&\quad+\bigl|c_R\wh T(\wt\chi_{M,R}\HH^n|_{L_R})(x^*)\bigr|.
\end{split}
\end{equation}

We consider now the first term in the right hand side of inequality \eqref{num7}. Let $\wh T_{x^*}$ be the frozen operator associated with the kernel $\wh K_{x^*}(\cdot)\coloneqq\nabla_1\Theta(\cdot,0;\wh A(x^*))$. Notice that by Lemma \ref{holder/2} and Lemma \ref{lemm_freezing},
\begin{equation}\label{eqspl5298}
 \begin{split}
  \bigl|\wh T_\mu(\wt\chi_{M,R})(x^*)-\wh T_{c_R\HH^n|_{L_R}}(\wt\chi_{M,R})(x^*)\bigr|
  &\lesssim \bigl|\wh T_{x^*,c_R\HH^n|_{L_R}}(\wt\chi_{M,R})(x^*)-\wh T_{c_R\HH^n|_{L_R}}(\wt\chi_{M,R})(x^*)\bigr|
  \\&\quad+\bigl|\wh T_\mu(\wt\chi_{M,R})(x^*)-\wh T_{x^*,\mu}(\wt\chi_{M,R})(x^*)\bigr|
  \\&\quad+\bigl|\wh T_{x^*,\mu}(\wt\chi_{M,R})(x^*)-\wh T_{x^*,c_R\HH^n|_{L_R}}(\wt\chi_{M,R})(x^*)\bigr|\\&\lesssim 
  \biggl|\int \wt\chi_{M,R}(y)\, \wh K_{x^*}(x^*-y)d(\mu-c_R\HH^n|_{L_R})(y)\biggr|\\&\quad+M^{\alpha/2}\ell(R)^{\alpha/2}.
 \end{split}
\end{equation}
 To estimate the remaining term in the last inequality, we will use the $\alpha$-numbers. To this end we consider an auxiliary smooth function $\psi$ which equals $1$ on 
 $\R^{n+1}\setminus B(x^*,\Delta\ell(R)/2)$ and vanishes in $B(x^*,\Delta\ell(R)/4)$,
 with $\|\nabla \psi\|_\infty\leq 1/(\Delta\ell(R))$. Then taking into account that $\psi\equiv1$ on $MB_R\cap\supp\mu$,
 the remaining term in the inequality above equals
\begin{equation}
 \begin{split}
   \biggl|\int \wt\chi_{M,R}(y)\,\psi(y)\, \wh K_{x^*}&(x^*-y)d(\mu-c_R\HH^n|_{L_R})(y)\biggr|\\
   &\leq
 \alpha_\mu^{L_R}(MB_R) \,(M\ell(R))^{n+1}\, 
 \bigl\|\nabla\bigl(\wt\chi_{M,R}\,\psi\, \wh K_{x^*}(x^*-\cdot)\bigr)\bigr\|_\infty.
 \end{split}
\end{equation} 
It is easy to check that
$ \bigl\|\nabla\bigl(\wt\chi_{M,R}\,\psi\, \wh K_{x^*}(x^*-\cdot)\bigr)\bigr\|_\infty\lesssim C(M,\Delta)\,\ell(R)^{-n-1}.$
Thus, the integral on the right hand side of \rf{eqspl5298} does not exceed $C(M,\Delta)\,\ve$, and so
\begin{equation}\label{eqdjgh47}
\left|\wh T_\mu(\wt\chi_{M,R})(x^*)-\wh T_{c_R\HH^n|_{L_R}}(\wt\chi_{M,R})(x^*)\right|\lesssim
C(M,\Delta)\,\ve + (M\ell(R))^{\alpha/2}.
\end{equation}

 To estimate the second term on the right hand side of inequality \eqref{num7}, 
we denote by $w$ the orthogonal projection of $x$ on $L_R$ (recall that $x\in Q$), by $w^*$ the reflection of $w$ with respect to $H$, and we split
\begin{equation}\label{eqsp934**}
\begin{split}
\bigl|\wh T(\wt\chi_{M,R}\HH^n|_{L_R})(x^*)\bigr|&\leq \bigl|\wh T(\wt\chi_{M,R}\HH^n|_{L_R})(x^*)-\wh T_{x^*}(\wt\chi_{M,R}\HH^n|_{L_R})(x^*)\bigr|
\\&\quad+\bigl|\wh T_{x^*}(\wt\chi_{M,R}\HH^n|_{L_R})(x^*) - \wh T_{x_R}(\wt\chi_{M,R}\HH^n|_{L_R})(x^*)
\bigr|\\
& \quad+ \bigl|\wh T_{x_R}(\wt\chi_{M,R}\HH^n|_{L_R})(x^*) - \wh T_{x_R}(\wt\chi_{M,R}\HH^n|_{L_R})(w^*)\bigr|\\
& \quad + \bigl|\wh T_{x_R}(\wt\chi_{M,R}\HH^n|_{L_R})(w^*)\bigr|,
\end{split}
\end{equation}
where $\wh T_{x_R}$ is the frozen operator associated with the kernel $\wh K_{x_R}(\cdot)\coloneqq\nabla_1\Theta(\cdot,0;\wh A(x_R))$.
Using Lemma \ref{lemm_freezing}, it is easy to check that the first term on the right hand side does not exceed
$C\,(M\ell(R))^{\alpha/2}$. For the third term, since
$$\dist(x^*,\supp\mu\cap 2MB_R)\approx\Delta\ell(R)\ll |x^*-w^*| = |x-w|\lesssim \ve^{1/(n+1)}\ell(R),$$
by standard arguments we derive
\begin{align}
\bigl|\wh T_{x_R}(\wt\chi_{M,R}\HH^n|_{L_R})(x^*) - \wh T_{x_R}(\wt\chi_{M,R}\HH^n|_{L_R})(w^*)\bigr|&
\lesssim \frac{|x^*-w^*|}{(\Delta\ell(R))^{n+1}}\,\HH^n|_{L_R}\big(B(x_R',M\ell(R))\big)\\ &\lesssim \ve^{1/(n+1)}\Delta^{-n}M^n.
\end{align}

Next we estimate the second term on the right hand side of \rf{eqsp934**}.
We have
$$\bigl|\wh T_{x^*}(\wt\chi_{M,R}\HH^n|_{L_R})(x^*) - \wh T_{x_R}(\wt\chi_{M,R}\HH^n|_{L_R})(x^*)
\bigr| \leq \int_{2MB_R} \!\!\!|\wh K_{x^*}(x^*-y) - \wh K_{x_R}(x^*-y)|\,
d\HH^n|_{L_R}(y).$$
By \rf{grad_const_coeff_sol}, we have
\begin{equation}\label{eqkern56}
\wh K_{x^*}(z) - \wh K_{x_R}(z) =
\frac{\omega_n^{-1}}{\sqrt{\det \wh A(x^*)}}\frac{\wh A(x^*)^{-1}z}{(\wh A(x^*)^{-1}z\cdot z)^{(n+1)/2}} 
- \frac{\omega_n^{-1}}{\sqrt{\det \wh A(x_R)}}\frac{\wh A(x_R)^{-1}z}{(\wh A(x_R)^{-1}z\cdot z)^{(n+1)/2}} .
\end{equation}
By standard estimates and the H\"older continuity of $\wh A$ it follows that, for any $z\in\R^{n+1}$,
\begin{equation}\label{estim_diff_kern_hold}
|\wh K_{x^*}(z) - \wh K_{x_R}(z)\bigr| \lesssim \frac{|x^*- x_R|^{\alpha/2}}{|z|^n}\lesssim
\frac{\ell(R)^{\alpha/2}}{|z|^n}.
\end{equation}
Since, for any $x\in R$, $\dist(x^*,L_R)\approx \Delta\,\ell(R)$, we deduce
\begin{align}
\int_{2MB_R} \bigl|\wh K_{x^*}(x^*-y) - \wh K_{x_R}(x^*-y)\bigr|\,
d\HH^n|_{L_R}(y)& \lesssim  \frac{\ell(R)^{\alpha/2}}{(\Delta\ell(R))^n}\,\HH^n(2MB_R\cap L_R) \\
&
\approx M^n\Delta^{-n}\ell(R)^{\alpha/2}.
\end{align}
Therefore, plugging all these estimates in \eqref{eqsp934**}, we get
\begin{equation}
\begin{split}
|\wh T(\wt\chi_{M,R}\HH^n|_{L_R})(x^*)\bigr|\lesssim 
M^{\alpha/2}\ell(R)^{\alpha/2} &+ \ve^{1/(n+1)}\Delta^{-n}M^n \\
&+
M^n\Delta^{-n}\ell(R)^{\alpha/2} + \bigl|\wh T_{x_R}(\wt\chi_{M,R}\HH^n|_{L_R})(w^*)\bigr|.
\end{split}
\end{equation}

To deal with the last term on the right hand side of \rf{eqsp934**}, we distinguish between the vertical and the horizontal components, so we set
$$\bigl|\wh T_{x_R}(\wt\chi_{M,R}\HH^n|_{L_R})(w^*)\bigr| \leq \bigl|\wh T_{x_R}^V(\wt\chi_{M,R}\HH^n|_{L_R})(w^*)\bigr|
+ \bigl|\wh T_{x_R}^H(\wt\chi_{M,R}\HH^n|_{L_R})(w^*)\bigr|.$$
Being $\wh A(x_R)=Id$, $\wh T_{x_R}$ coincides with the Riesz transform modulo some constant factor.
Hence its vertical component coincides with the Poisson transform modulo some constant factor, so that
\begin{equation}\label{eqvertical5}
\bigl|\wh T_{x_R}^V(\wt\chi_{M,R}\HH^n|_{L_R})(w^*)\bigr|\lesssim 1.
\end{equation}
The horizontal component
is estimated like the term $T_{x}^H(\wt\chi_{M,R}\HH^n|_{L_R})(x)$ in \rf{eqtxh22}. The reader can check that the same estimates hold just replacing
$x$ by either $w^*$ or $x_R$ appropriately, and $T_x$ by $\wh T_{x_R}$. A key point is that, for the kernel $\wh K^H_{x_R}$ of $\wh T^H_{x_R}$, the change of variable $z=2w-y$ gives us
\begin{align}
\wh T_{x_R,\HH^n|_{L_R}}^H\wt\chi_{M,R}(w^*) &= \int \wh K_{x_R}^H(w^*-y)\,\wt\chi_{M,R}(y)\, d\HH^n|_{L_R}(y)\\& = 
\int \wh K_{x_R}^H(z-(2w-x^*))\,\wt\chi_{M,R}(2w-z)\, d\HH^n|_{L_R}(z) \\ &= 
\int \wh K_{x_R}^H(z-w^*)\,\wt\chi_{M,R}(2w-z)\, d\HH^n|_{L_R}(z),
\end{align}
which is analogous to \rf{eqtxh22}. Notice that the last identity is only valid for the horizontal component of the kernel $\wh K_{x_R}$ (taking into account that $\wh K_{x_R}$ is the kernel of the Riesz transform modulo some constant factor, since $\wh A(x_R)=Id$). Then, as in \rf{eqtxh2222}, we can write
$$2\,\wh T_{x_R,\HH^n|_{L_R}}^H\wt\chi_{M,R}(w^*) = \int \wh K_{x_R}^H(w^*-y)\,\bigl(\wt\chi_{M,R}(y)-\wt\chi_{M,R}(2w-y)\bigr)\, d\HH^n|_{L_R}(y).
$$
Thus, as in \rf{eq1mm}, we get
$$ \bigl|\wh T_{x_R}^H(\wt\chi_{M,R}\HH^n|_{L_R})(w^*)\bigr| \lesssim \frac1M.$$
Together with \rf{eqdjgh47}, this yields
\begin{align}
&I_5  \leq \sup_{x\in R}
\bigl|\wh T_{\mu}\wt\chi_{M,R}(x^*)\bigr| \\ 
&\lesssim C(M,\Delta)\,\ve + M^{\alpha/2}\ell(R)^{\alpha/2}
+ M^{\alpha/2}\ell(R)^{\alpha/2} + \ve^{1/(n+1)}\Delta^{-n}M^n + 1+\frac1M +
M^n\Delta^{-n}\ell(R)^{\alpha/2}.
\end{align}
For $I_5^H$ we get almost the same estimate. The only difference is that we do not have to estimate the vertical term 
in \rf{eqvertical5}, and thus the summand $1$ does not appear in the last inequality. So we have
$$I_5^H \lesssim C(M,\Delta)\,\ve + M^{\alpha/2}\ell(R)^{\alpha/2}
+ M^{\alpha/2}\ell(R)^{\alpha/2} + \ve^{1/(n+1)}\Delta^{-n}M^n + \frac1M +
M^n\Delta^{-n}\ell(R)^{\alpha/2}.
$$
Thus, for $M$ big enough, $\ell(R)$ small enough and $\ve$ small enough, we get
$$I_5\lesssim 1\quad \mbox{ and }\quad I_5^H\lesssim \ve_3.$$

\vspace{2mm}
Recall that we showed that $I_i^H\leq I_i\lesssim\ve_3$ for $i=1,\ldots,4$, by choosing  the parameters $M$ and $\kappa$ properly and assuming $\ve$ and $\ell(R)$ small enough.
Then, gathering the estimates obtained for $I_1$, \ldots, $I_5$ and $I_5^H,$ the lemma follows. 

\vspace{2mm}
 
\subsection{Proof of Lemma \ref{lemftr1.5}}

Recall that
$$f- \tilde f =  \sum_{Q\in{\Ch}_{\Stop}(R)} m_{\mu,Q}(S(\mu-\tmu))\,\chi_Q$$
and
$$f^H- \tilde f^H =  \sum_{Q\in{\Ch}_{\Stop}(R)} m_{\mu,Q}(S^H(\mu-\tmu))\,\chi_Q.$$
So we have
$$\|f^H- \tilde f^H\|_{L^2(\mu)} \leq \|f- \tilde f\|_{L^2(\mu)} \leq \|S(\mu-\tmu)\|_{L^2(\mu|_R)}
\leq \|S(\mu|_R-\tmu)\|_{L^2(\mu|_R)} + \|S(\mu|_{R^c})\|_{L^2(\mu|_R)}.$$
To estimate  the first term on the right hand side we use the $L^2(\mu|_R)$ boundedness of $S_\mu$ and
\rf{eqdif79}:
$$\|S(\mu|_R-\tmu)\|_{L^2(\mu|_R)}\lesssim 
\|\tmu-\mu|_R\|\lesssim t^{\gamma_0}\mu(R).$$
To deal with the second term we split $R^c$ in two regions:
$$D_1 = \UU_{\Delta^{1/2} \ell(R)}(R)\setminus R,\qquad D_2= \R^{n+1}\setminus\UU_{\Delta^{1/2} \ell(R)}(R).$$
Then we have
$$\|S(\mu|_{R^c})\|_{L^2(\mu|_R)}\leq 
\|S(\chi_{D_1}\mu)\|_{L^2(\mu|_R)} + \|S(\chi_{D_2}\mu)\|_{L^2(\mu|_R)}.
$$
By the $L^2(\mu|_R)$ boundedness of $S_\mu$ and the thin boundary property, we have
$$\|S(\chi_{D_1}\mu)\|_{L^2(\mu|_R)}^2 \lesssim \mu(D_1) \lesssim \Delta^{\gamma_0/2}\,\mu(R).$$

To estimate $\|S(\chi_{D_2}\mu)\|_{L^2(\mu|_R)}$, recall that 
$$S(\chi_{D_2}\mu)(x) = \int_{D_2} \bigl(\wh K(x,y)- \wh K(x^*,y)\bigr)\,d\mu(y)$$
For $x\in R$, $y\in D_2$, we have
$$|x-x^*|\leq 2\Delta\,\ell(R)\ll \frac12\,\Delta^{1/2}\ell(R)\leq \frac12\,|x-y|,$$
and thus
$$\bigl|\wh K(x,y)- \wh K(x^*,y)\bigr|\lesssim \frac{\Delta^{\alpha/2}\ell(R)^{\alpha/2}}{|x-y|^{n+{\alpha/2}}}.$$
Therefore, by standard estimates using the $n$-growth of $\mu$, for $x\in R$,
$$\big|S(\chi_{D_2}\mu)(x)\big| \lesssim \int_{|x-y|>\frac12\Delta^{1/2}\ell(R)} 
\frac{\Delta^{\alpha/2}\ell(R)^{\alpha/2}}{|x-y|^{n+{\alpha/2}}}\,d\mu(y)\lesssim \frac{\Delta^{\alpha/2}\ell(R)^{\alpha/2}}{
(\Delta^{1/2}\ell(R))^{\alpha/2}} \lesssim \Delta^{{\alpha/4}}.
$$
Hence,
$$\|S(\chi_{D_2}\mu)\|_{L^2(\mu|_R)}^2\lesssim \Delta^{{\alpha/2}}\mu(R).$$
Together with the previous estimates, this yields
$$\|f^H- \tilde f^H\|_{L^2(\mu)} \leq \|f- \tilde f\|_{L^2(\mu)} \lesssim \big(t^{\gamma_0}+\Delta^{\min({\alpha/2},\gamma_0/2)}\big)\,\mu(R),$$
which proves the lemma.

\vspace{2mm}

 
\subsection{Proof of Lemma \ref{lemftr2}}
We will just prove \rf{eqsh454}. The arguments for the other inequalities \rf{eqsh452} and \rf{eqsh453} are totally analogous. Indeed, the reader can easily check that the operators $\wh T$, $S$, and $S^H$ are essentially interchangeable in the estimates below.

Recall that the measure $\sigma$ was defined in Section \ref{sec7}, and that $\tilde\ve$ is such that
$\beta_{\infty,\mu}(MB_Q)\leq\tve$ for all $Q\in{\Ch}_{\Stop}(R)$, $R\in\Nice$.

Let $\tau$ be a small number to be chosen below, with $\ve\ll\tau\ll\min(t,\Delta)\ll1$. For a fixed $Q\in{\Ch}_{\Stop}(Q)$ and $x\in\R^{n+1}$, let $\tchi_1(x)$ be a smooth radial function such that $\supp\tchi_1\subset B(0,\tau\ell(Q))$ and $\tchi_1(x)\equiv 1$ 
in $B(0,\frac12\tau\ell(Q))$. Let also $\tchi_2$ be a smooth radial function supported on the annulus $A\bigl(0, \frac12\tau\ell(Q),\frac12M\ell(Q)\bigr)$ and such that $\tchi_2\equiv 1$ in 
$A\bigl(0,\tau\ell(Q),\frac14M\ell(Q)\bigr)$. Finally, set $\tchi_3$ a smooth radial function supported on $B\bigl(0,\frac14M\ell(Q)\bigr)^c$, such that $\tchi_3\equiv 1$ in $B\bigl(0,\frac12M\ell(Q)\bigr)^c$.
We construct the functions $\tchi_i$ so that $\tchi_1+\tchi_2+\tchi_3=1$. Notice that they depend on the cube $Q$. Now denote $K_{S_{i,Q}^H}(x,y)=K_{S^H}(x,y)\tchi_i(|x-y|)$ for $i=1,2,3$, so that $K_{S^H}(x,y)=\sum_{i=1}^3K_{S_{i,Q}^H}(x,y)$, and denoting by $S^H_{i,Q}$ the operator associated with the truncated kernel $K_{S^H_{i,Q}}$, we also have $S^H=\sum_{i=1}^3S^H_{i,Q}.$ Further, we can write
$$S^H \sigma= \sum_{i=1}^3\sum_{Q\in {\Ch}_{\Stop}(R)} \chi_{\supp\sigma_Q}\cdot S_{i,Q}^H\sigma =: 
\sum_{i=1}^3 S_{i}^H\sigma\quad \mbox{ in $L^2(\sigma)$}$$
and 
$$S^H \mu= \sum_{i=1}^3\sum_{Q\in {\Ch}_{\Stop}(R)} \chi_{Q}\cdot S_{i,Q}^H\mu =: 
\sum_{i=1}^3 S_{i}^H\mu\quad \mbox{ in $L^2(\mu|_R)$}$$
To shorten the notation, we will write $K_{S_{i}^H}(x,y)$ instead of $K_{S_{i,Q}^H}(x,y)$ when $Q$ is clear from the context.
We split
\begin{equation}\label{eqspl9341}
\|S^H\sigma\|_{L^2(\sigma)}\leq \|S_1^H\sigma\|_{L^2(\sigma)} + \|S_2^H\sigma\|_{L^2(\sigma)}+ \|S_3^H\sigma\|_{L^2(\sigma)}.\end{equation}

\vspace{2mm}
\noindent {\bf Estimate of $ \|S_3^H\sigma\|_{L^2(\sigma)}$.}
 For $Q\in{{\Ch}_{\Stop}(R)}$ and $x,x'\in \UU_{10\tve\ell(Q)}(Q_{(t)})$,  we have
\begin{equation}\label{difS3}
 |S^H_{3,Q}\sigma(x)-S^H_{3,Q}\tmu(x')|\leq  |S^H_{3,Q}\sigma(x)-S^H_{3,Q}\tmu(x)|+ |S^H_{3,Q}\tmu(x)-S^H_{3,Q}\tmu(x')|=S_{31}+S_{32}.
\end{equation}
Notice that for $x\in\UU_{10\tve\ell(Q)}(Q_{(t)})$ and $y,\;y'\in\UU_{10\tve\ell(P)}(P_{(t)})$, $P\in{\Ch}_{\Stop}(R)$, by Lemma \ref{lemcz},
\begin{equation}\label{lip_alpha}
\bigl|K_{S_{3,Q}^H}(x,y)-K_{S_{3,Q}^H}(x,y')\bigr|\lesssim\frac{|y-y'|^{\alpha/2}}{C(t)(\ell(Q)+\ell(P)+\dist(P,Q))^{n+{\alpha/2}}}\lesssim\frac{\ell(P)^{\alpha/2}}{C(t)D(P,Q)^{n+{\alpha/2}}},
\end{equation}
where $D(P,Q)=\ell(Q)+\ell(P)+\dist(P,Q)$ and 
the $t$-dependence of $C(t)$ comes from the comparability $|x-y|\approx|x-y'|$, which depends on $t$ (due to $\tve\ll t$ being very small).
Applying now Lemma \ref{dif}, with the $\Lip_\alpha$-constant coming from \eqref{lip_alpha},
\begin{equation}\label{difS3}
\begin{split}
S_{31}&=|S^H_{3,Q}\sigma(x)-S^H_{3,Q}\tmu(x)|\leq\sum_{\tiny{P\in{\Ch}_{\Stop}(R)}}\left|\int K_{S^H_{3,Q}}(x,y)d(\sigma_P-\tmu_P)(y)\right|\\&\lesssim M^{\alpha/2}\tve^{\alpha/2} C(t)\underset{\tiny{P\in{\Ch}_{\Stop}(R)}}{\sum}\frac{\ell(P)^{\alpha/2}\mu(P)}{D(P,Q)^{n+{\alpha/2}}}.
\end{split}
\end{equation}

Concerning $S_{32}$, by standard estimates, one gets 
\begin{equation}\label{estim_32}S_{32}\leq\int| K_{S^H_{3,Q}}(x,y)-K_{S^H_{3,Q}}(x',y)|d\tmu(y)\lesssim\int_{|x-y|\geq \frac18M\ell(Q)}\frac{|x-x'|^{{\alpha/2}}}{|x_Q-y|^{n+{\alpha/2}}}d\tmu(y)\lesssim \frac1{M^{\alpha/2}}.\end{equation}
As a consequence of \eqref{difS3} and \eqref{estim_32},
$$|S^H_{3,Q}\sigma(x)-m_{\mu,Q}(S^H_{3,Q}\tmu)|\leq M^{\alpha/2}\tve^{\alpha/2} C(t)\underset{\tiny{P\in{\Ch}_{\Stop}(R)}}{\sum}\frac{\ell(P)^{\alpha/2}\mu(P)}{D(P,Q)^{n+{\alpha/2}}}+\frac{C}{M^{\alpha/2}}.$$
This  implies that for $x\in \supp\sigma_Q$,
$Q\in{\Ch}_{\Stop}(R)$,
\begin{equation}\label{3term}
|S^H_3\sigma(x)|\lesssim|m_{\mu,Q}(S^H_3\tmu)|+M^{\alpha/2}\tve^{\alpha/2} C(t)\underset{\tiny{P\in{\Ch}_{\Stop}(R)}}{\sum}\frac{\ell(P)^{\alpha/2}\mu(P)}{D(P,Q)^{n+{\alpha/2}}}+\frac{C}{M^{\alpha/2}}.
\end{equation}

Denote 
\begin{equation}\label{g}
g(x)=\sum_{\tiny{Q\in{\Ch}_{\Stop}(R)}}\sum_{\tiny{P\in{\Ch}_{\Stop}(R)}}\frac{\ell(P)^{\alpha/2}\mu(P)}{D(P,Q)^{n+{\alpha/2}}}\chi_Q(x).
\end{equation}
Since $\mu(Q)\approx\sigma(Q)$ for each $Q$, squaring and integrating \eqref{3term} with respect to $\sigma$, we obtain
\begin{equation}\label{norma2S3}
\begin{split}
\|S^H_3\sigma\|^2_{L^2(\sigma)}&\lesssim\underset{\tiny{Q\in{\Ch}_{\Stop}(R)}}{\sum}m_{\mu,Q}(S^H_3\tmu)^2\mu(Q)+M^{\alpha}\tve^{\alpha} C(t)\|g\|^2_{L^2(\mu)}+M^{-\alpha}\underset{\tiny{Q\in{\Ch}_{\Stop}(R)}}{\sum}\mu(Q)\\&
\approx\biggl\|\underset{\tiny{Q\in{\Ch}_{\Stop}(R)}}{\sum}m_{\mu,Q}(S^H_3\tmu)\chi_Q\biggr\|^2_{L^2(\mu|_R)}+M^{\alpha}\tve^{\alpha} C(t)\|g\|^2_{L^2(\mu)}+M^{-\alpha}\mu(R).
\end{split}
\end{equation}

We will estimate $\|g\|_{L^2(\mu)}$ by duality: for any non-negative funtion $h\in L^2(\mu)$ write
\begin{equation}\label{dual}
\begin{split}
\int g h\,d\mu&=\sum_{\tiny{Q\in{\Ch}_{\Stop}(R)}}\sum_{\tiny{P\in{\Ch}_{\Stop}(R)}}\frac{\ell(P)^{\alpha/2}\mu(P)}{D(P,Q)^{n+{\alpha/2}}}\int_Q hd\mu\\&
=\sum_{\tiny{P\in{\Ch}_{\Stop}(R)}}\mu(P)\sum_{\tiny{Q\in{\Ch}_{\Stop}(R)}}\frac{\ell(P)^{\alpha/2}}{D(P,Q)^{n+{\alpha/2}}}\int_Q hd\mu.
\end{split}
\end{equation}
Notice that for each $z\in P\in{\Ch}_{\Stop}(R)$, integrating on annuli we get
\begin{equation}\label{dualpart}
\begin{split}
\sum_{\tiny{Q\in{\Ch}_{\Stop}(R)}}\frac{\ell(P)^{\alpha/2}}{D(P,Q)^{n+{\alpha/2}}}\int_Q hd\mu&
\lesssim\int_Q\frac{\ell(P)^{\alpha/2} h(y)}{\left(\ell(P)+|z-y|\right)^{n+{\alpha/2}}}d\mu(y)\\&=\int_{|z-y|\leq\ell(P)}\frac{\ell(P)^{\alpha/2} h(y)}{\left(\ell(P)+|z-y|\right)^{n+{\alpha/2}}}d\mu(y)
\\&\quad+\sum_{i=1}^\infty\int_{2^{i-1}\ell(P)\leq|z-y|\leq 2^i\ell(P)}\frac{\ell(P)^{\alpha/2} h(y)}{(\ell(P)+|z-y|)^{n+{\alpha/2}}}d\mu(y)\\&\lesssim\sum_{i=0}^\infty \frac{2^{-i{\alpha/2}}\mu(B(z,2^i\ell(P)))}{\left(2^i\ell(P)\right)^n}m_{\mu,B(z,2^i\ell(P))}(h).
\end{split}
\end{equation}
Now let $M_\mu$ stand for the centered maximal Hardy-Littlewood operator with respect to $\mu$. Since $m_{\mu,B(z,2^i\ell(P))}(h)\lesssim M_\mu h(z)$ and 
$$\sum_{i=0}^\infty \frac{2^{-i\alpha/2}\mu(B(z,2^i\ell(P)))}{\left(2^i\ell(P)\right)^n}\leq C,$$ by \eqref{dual} and \eqref{dualpart},
$$\int g h\,d\mu\lesssim\sum_{\tiny{P\in{\Ch}_{\Stop}(R)}}\inf_{z\in P}M_\mu h(z)\,\mu(P)\leq\int M_\mu h\,d\mu\lesssim\|h\|_{L^2(\mu)}\mu(R)^{1/2}.$$
Therefore, $$\|g\|_{L^2(\mu)}\lesssim\mu(R)^{1/2}.$$
Plugging this into \eqref{norma2S3} we get
\begin{equation}\label{L2S3}
\begin{split}
\big\|S^H_3\sigma\big\|^2_{L^2(\sigma)} &\lesssim\biggl\|\underset{\tiny{Q\in{\Ch}_{\Stop}(R)}}{\sum}m_{\mu,Q}(S^H_3\tmu)\chi_Q\biggr\|^2_{L^2(\mu|_R)}+\left(M^{-\alpha}+M^{\alpha}\tve^{\alpha}C(t)\right)\mu(R)\\
& \lesssim\|\tilde f^H\|_{L^2(\mu)}^2 + \biggl\|\underset{\tiny{Q\in{\Ch}_{\Stop}(R)}}{\sum}m_{\mu,Q}(S^H_1\tmu)\chi_Q\biggr\|^2_{L^2(\mu|_R)}\\
&\quad+
\biggl\|\underset{\tiny{Q\in{\Ch}_{\Stop}(R)}}{\sum}m_{\mu,Q}(S^H_2\tmu)\chi_Q\biggr\|^2_{L^2(\mu|_R)}+\left(M^{-\alpha}+M^{\alpha}\tve^{\alpha}C(t)\right)\mu(R).
\end{split}
\end{equation}

\vspace{2mm}
\noindent {\bf Estimate of $ \|S_1^H\sigma\|_{L^2(\sigma)}$.}
Recall that, by \rf{eqsigma75}, for each $Q\in{\Ch}_{\Stop}(R)$,  
$$\supp\sigma_Q \subset \mathcal U_{3\tve\ell(Q)}(\supp \Pi_{L_Q\sharp}\mu|_{Q_{(t)}})
\subset \mathcal U_{6\tve\ell(Q)}(Q_{(t)}),$$
and, for $P,Q\in{\Ch}_{\Stop}(R)$ with $P\neq Q$, by \rf{eqdistsigma},
$$
\dist(\supp\sigma_P,\supp\sigma_Q) \geq \frac t2\,\max(\ell(P),\ell(Q)).
$$
Therefore, recalling that $\tau\ll t$,
\begin{equation}\label{eqsupp89}
S_1^H\sigma(x)=S_1^H\sigma_Q(x) \quad \mbox{ for all $x\in\supp\sigma_Q$}.
\end{equation}

 Let $J_Q$ be the convex hull of ${\mathcal U}_{10\tve\ell(Q)}(Q_{(3\tau)})\cap L_Q$. 
Then the following hold:\newline

\begin{enumerate}
 
 
 \item  By the thin boundary condition, we have $$\sigma_Q\big((J_Q)^c\big)\leq\mu(Q\setminus{\mathcal U}_{20\tve\ell(Q)}(Q_{(3\tau)}))\leq\mu(Q\setminus Q_{(4\tau)})\lesssim \tau^{\gamma_0}\mu(Q).$$ 

\vspace{2mm}
 \item Let $\psi_{3B_Q}$ be a smooth function that equals 1 in $2B_Q$, vanishes in $(3B_Q)^c$ and 
 such that $\|\nabla\psi_{3B_Q}\|_\infty\lesssim \ell(Q)^{-1}$. Then, for each $x\in L_Q\cap\UU_{\tau\ell(Q)}(J_Q)$, 
 $$ \chi_{B(x,3\tve\ell(Q))}\,\Pi_{L_Q\sharp}\mu_{|Q_{(t)}}=  \chi_{B(x,3\tve\ell(Q))}\, \Pi_{L_Q\sharp}(\psi_{3B_Q}\mu).$$
\end{enumerate}
\vspace{2mm}

Notice now that 
$$\|S_1^H\sigma\|^2_{L^2(\sigma)}=\sum_{\tiny{Q\in{\Ch}_{\Stop}(R)}}\|S_1^H\sigma\|^2_{L^2(\sigma_Q)},$$ and for each $Q\in{\Ch}_{\Stop}(R)$, 
$$\|S_1^H\sigma\|^2_{L^2(\sigma_Q)}=\|S_1^H\sigma\|^2_{L^2(\sigma_Q|_{J_Q})}+\|S_1^H\sigma\|^2_{L^2(\sigma_Q|_{(J_Q)^c})}= S_{11}+S_{12}.$$

Write $S_1^H\sigma(x)=\wh T_1^H\sigma(x)-\wh T_1^H\sigma(x^*)$. Since $\tau\ell(Q)\ll\Delta\ell(R)$, we have $\wh T_1^H\sigma(x^*)=0$. Therefore, 
by \rf{eqsupp89}, the Cauchy-Schwarz inequality, the property (1) of $J_Q$, and the $n$-growth of the measure $\sigma$,
\begin{equation}
 \begin{split}
S_{12}&=\big\|\wh T_1^H\sigma\big\|_{L^2(\sigma_Q|_{(J_Q)^c})}^2
=\big\|\wh T_1^H\sigma_Q\big\|_{L^2(\sigma_Q|_{(J_Q)^c})}^2
\\&\leq\big\|\wh T_1^H\sigma_Q\big\|^2_{L^4(\HH^n|_{L_Q})}\,\sigma_Q\big((J_Q)^c\big)^{1/2}
\lesssim\|\wh T_1\sigma_Q\|^2_{L^4(\HH^n|_{L_Q})}\,\tau^{\gamma_0/2}\mu(Q)^{1/2}.
\end{split}
\end{equation}
Recall that $\sigma_Q = g_Q\,\HH^n|_{L_Q}$ for some function $g_Q$ such that $0\leq g_Q\lesssim \chi_{2B_Q\cap L_Q}$. Since $\wh T_{1,\HH^n|_{L_Q}}^H$ is bounded in $L^4(\HH^n|_{L_Q})$ (by the uniform 
rectifiability of $L_Q$, for example), we have $\|\wh T_1\sigma_Q\|_{L^4(\HH^n|_{L_Q})}\lesssim \ell(Q)^{n/4}$, and thus
$$S_{12}\lesssim \tau^{\gamma_0/2}\mu(Q).$$

We treat now $S_{11}$. To this end, notice that for $x\in J_Q$, by \rf{eqsupp89}, a freezing argument and the antisymmetry of the kernel $\nabla_1\Theta(\cdot,\cdot;\wh A(x))$, we have
\begin{equation}\label{s1h}
  \begin{split}
     |S^H_1\sigma(x)|&=|S^H_1\sigma_Q(x)|\leq\left|\int \nabla_1\Theta(x,y;\wh A(x))d\sigma_Q(y)\right|+C\ell(Q)^{\alpha}\\&
     =\biggl|\int_{|x-y|\leq \tau\ell(Q)} \nabla_1\Theta(x,y;\wh A(x))(g_Q(y)-g_Q(x))d\HH^n|_{L_Q}(y)\biggr|+C\ell(Q)^{\alpha}\\&
     \lesssim\int_{|x-y|\leq \tau\ell(Q)}\frac{\Lip\Bigl(g_{Q|_{L_Q\cap\UU_{\tau\ell(Q)}(J_Q)}}\Bigr)}{|x-y|^{n-1}}\,d\HH^n|_{L_Q}(y)+\ell(Q)^{\alpha}\\&
     \lesssim \tau\ell(Q)\Lip\Bigl(g_{Q|_{L_Q\cap\UU_{\tau\ell(Q)}(J_Q)}}\Bigr)+\ell(Q)^{\alpha}.
  \end{split}
\end{equation}
To estimate $\Lip\Bigl(g_{Q|_{L_Q\cap\UU_{\tau\ell(Q)}(J_Q)}}\Bigr)$, observe that for $z\in L_Q\cap\UU_{\tau\ell(Q)}(J_Q)$, $\Pi_{L_Q}(z)=z$.  Then property (2) of $J_Q$ implies that
\begin{equation}
 \begin{split}
   \big|\nabla g_Q(z)\big|&=\big|\nabla(\Pi_{L_Q\sharp}\mu_{|Q_{(t)}}*\varphi_{2\tve\ell(Q)})(z)\big|
   =\big|(\nabla\varphi_{2\tve\ell(Q)}*\Pi_{L_Q\sharp}\psi_{3B_Q}\mu)(z)\big|\\&=\left|\int\nabla\varphi_{2\tve\ell(Q)}(z-y)\,d\Pi_{L_Q\sharp}\psi_{3B_Q}\mu(y)\right|\\&=
   \left|\int\nabla\varphi_{2\tve\ell(Q)}\big(z-\Pi_{L_Q}(y)\big)\psi_{3B_Q}(y)\,d\mu(y)\right|\\&=
   \left|\int\nabla\varphi_{2\tve\ell(Q)}\big(\Pi_{L_Q}(z)-\Pi_{L_Q}(y)\big)\psi_{3B_Q}(y)\,d\mu(y)\right|\\&=
   \left|\int\big(\nabla\varphi_{2\tve\ell(Q)}\circ\Pi_{L_Q})(z-y)\psi_{3B_Q}(y)\,d(\mu-c_Q\HH^n|_{L_Q})(y)\right|\\&\lesssim
   \alpha_\mu^{L_Q}(MB_Q)M^{n+1}\ell(Q)^{n+1}\Lip\left(\big(\nabla\varphi_{2\tve\ell(Q)}\circ\Pi_{L_Q})(z-\cdot)\psi_{3B_Q}\right)
  \end{split}
\end{equation}
To estimate $\Lip\left(\big(\nabla\varphi_{2\tve\ell(Q)}\circ\Pi_{L_Q})(x-\cdot)\psi_{3B_Q}\right)$, we use the fact that
 $$\Lip(\nabla(\varphi_{2\tve\ell(Q)}\circ\Pi_{L_Q}))\leq\Lip(\nabla\varphi_{2\tve\ell(Q)})+\|\nabla\varphi_{2\tve\ell(Q)}\|_{\infty}\frac{C}{\ell(Q)}
\lesssim\frac 1{(\tve\ell(Q))^{n+2}},$$
and then it follows easily also that
$$\Lip\left(\big(\nabla\varphi_{2\tve\ell(Q)}\circ\Pi_{L_Q})(x-\cdot)\psi_{3B_Q}\right)\lesssim\frac 1{(\tve\ell(Q))^{n+2}}.$$
Therefore,
 $$\big|\nabla g_Q(z)\big|\lesssim\frac{\alpha_\mu^{L_Q}(MB_Q)M^{n+1}}{\tve^{n+2}\ell(Q)}.$$
Plugging this estimate in \eqref{s1h}, we get that for $x\in J_Q$,
$$|S^H_1\sigma(x)|\lesssim \alpha_\mu^{L_Q}(MB_Q)\frac{\tau M^{n+1}}{\tve^{n+2}}+\ell(Q)^\alpha.$$
Thus,
$$S_{11}=\|S_1^H\sigma\|^2_{L^2(\sigma_{Q|J_Q})}\lesssim \big(M^{2n+2}\,\tau^2\,\tve^{-2n-4}\,\ve^2 +\ell(Q)^{2\alpha}\big)\,\mu(Q).$$

Therefore, if $\ell(Q)$ and $\ve$ are small enough, we obtain 
$$S_{11}=\|S_1^H\sigma\|^2_{L^2(\sigma_{Q|J_Q})}\leq\frac{\ve_5}{2}\,\mu(Q),$$
and finally
$$\|S_1^H\sigma\|^2_{L^2(\sigma)}=\sum_{\tiny{Q\in{\Ch}_{\Stop}(R)}}\|S_1^H\sigma\|^2_{L^2(\sigma_Q)}\leq \big(\frac{\ve_5}{2}+C\tau^{\gamma_0/2}\big)\mu(R)\leq \ve_5\,\mu(R),$$ 
for $\tau$ small enough.

\vspace{2mm}
\noindent {\bf Estimate of $ \|S_2^H\sigma\|_{L^2(\sigma)}$.}
First we will estimate $\|S^H_2\sigma\|_{L^2(\sigma)}$ in terms of $\|S^H_2\sigma\|_{L^2(\tmu)}$. 
Recall that, by definition, $\tsigma_Q=\Pi_{L_Q\sharp}\mu_{|Q_{(t)}}$. By Fubini
\begin{equation}\label{sigmatotmu}
\begin{split}
\|S^H_2\sigma\|^2_{L^2(\sigma)}&=\int|S_2^H\sigma(x)|^2d\sigma(x)=\sum_{{\tiny{Q\in{\Ch}_{\Stop}(R)}}}\int|S^H_2\sigma(x)|^2d\sigma_Q(x) \\&
=\sum_{\tiny{Q\in{\Ch}_{\Stop}(R)}}\int|S^H_2\sigma(x)|^2\left(\tsigma_Q*\varphi_{2\tve\ell(Q)}\right)(x)\,d\HH^n|_{L_Q}(x)\\&
=\sum_{\tiny{Q\in{\Ch}_{\Stop}(R)}}\int\left(\varphi_{\tve\ell(Q)}*|S^H_2\sigma|^2\HH^n|_{L_Q}\right)(x)\,d\tsigma_Q(x)\\&\leq
\sum_{\tiny{Q\in{\Ch}_{\Stop}(R)}}\int\sup_{\tiny{\begin{array}{l}|y-x|\leq2\tve\ell(Q)\\\hspace{6mm}y\in L_Q\end{array}}}\!\!|S^H_2\sigma(y)|^2d\tsigma_Q(x)\\&
=\sum_{\tiny{Q\in{\Ch}_{\Stop}(R)}}\int\sup_{\tiny{\begin{array}{l}|y-\Pi_{L_Q}(z)|\leq2\tve\ell(Q)\\\hspace{9mm}y\in L_Q\end{array}}}\!\!|S^H_2\sigma(y)|^2d\mu_{|Q_{(t)}}(z)
\\&\leq\sum_{\tiny{Q\in{\Ch}_{\Stop}(R)}}\int\sup_{\tiny{\begin{array}{l}|y-z|\leq3\tve\ell(Q)\\\hspace{6mm}y\in L_Q\end{array}}}\!\!|S^H_2\sigma(y)|^2d\mu_{|Q_{(t)}}(z),
\end{split}
\end{equation}
since $|y-z|\leq|y-\Pi_{L_Q}(z)|+|\Pi_{L_Q}(z)-z|\leq 3\tve\ell(Q)$. 
For such $y,z$, we write
$$\big|S^H_2\sigma(y)\big|\leq\big|S^H_2\sigma(z)\big|+\int\big|K_{S_2^H}(y,x)-K_{S_2^H}(z,x)\big|d\sigma(x).$$
Taking into account that $\big|K_{S_2^H}(y,\cdot)-K_{S_2^H}(z,\cdot)\big|$ is supported
in 
$$A\Bigl(y,\tfrac12\tau\ell(Q),\tfrac12M\ell(Q)\Bigr) \cup A\Bigl(z,\tfrac12\tau\ell(Q),\tfrac12M\ell(Q)\Bigr)$$
and that $\tve\ll\tau$,
by Lemma \ref{lemcz}, we deduce
\begin{equation}
\begin{split}
\int\big|K_{S_2^H}(y,x)-K_{S_2^H}(z,x)\big|d\sigma(x)\lesssim
\int_{\frac14\tau\ell(Q)\leq|x-y|\leq M\ell(Q)}\frac{|y-z|^{\alpha/2}}{|x-y|^{n+\alpha/2}}d\sigma(x)\lesssim \tve^{\alpha/2} M^n\,\tau^{-n-{\alpha/2}}.
\end{split}
\end{equation}
Therefore, by \eqref{sigmatotmu}, $$\|S^H_2\sigma\|^2_{L^2(\sigma)}\lesssim \|S^H_2\sigma\|^2_{L^2(\tmu)}+\tve^{{\alpha}} M^{2n}\,\tau^{-2n-{\alpha}}\mu(R).$$

Notice that arguing as in \eqref{difS3}, for $x\in\UU_{10\tve\ell(Q)}(Q_{(t)})$, we get
\begin{equation}\begin{split}
|S^H_2\sigma(x)|&\leq |S^H_2\sigma(x)-S^H_2\tmu(x)|+|S^H_2\tmu(x)|\\&\lesssim M^{\alpha/2}\tve^{\alpha/2} C(t,\tau)\underset{\tiny{P\in{\Ch}_{\Stop}(R)}}
{\sum}\frac{\ell(P)^{\alpha/2}\mu(P)}{D(P,Q)^{n+{\alpha/2}}}+|S^H_2\tmu(x)|.\end{split}
\end{equation}
Define $g$ as in \eqref{g}. Arguing as in \eqref{norma2S3}, \eqref{dual} and \eqref{dualpart}, we get
\begin{equation}\label{norma2S2}
\begin{split}
\|S^H_2\sigma\|^2_{L^2(\tmu)}&\lesssim\tve^{\alpha}M^{\alpha}C(t,\tau)\|g\|^2_{L^2(\tmu)}+\|S^H_2\tmu\|^2_{L^2(\tmu)}.
\end{split}
\end{equation}
Therefore, estimating $\|g\|^2_{L^2(\tmu)}$ by duality as it was done in the estimate of $S^H_3$, we have
\begin{equation}\label{eqs28io}
\|S^H_2\sigma\|^2_{L^2(\sigma)}\lesssim\tve^{\alpha}C(M,t,\tau)\mu(R)+\|S^H_2\tmu\|^2_{L^2(\mu_{|R})}.
\end{equation}

\vspace{2mm}
\noindent {\bf Estimate of $\|S^H_2\tmu\|^2_{L^2(\mu_{|R})}$.}
We write
\begin{equation}\label{eqsp832}
\begin{split}
\|S^H_2\tmu\|^2_{L^2(\mu_{|R})}&\lesssim\int|S^H_2(\mu_{|R})(x)|^2d\tmu(x)\\
& \quad+
\int|S^H_2(\mu_{|R})(x)|^2d|\mu|_R-\tmu|(x) +
\int_R|S^H_2\tmu-S^H_2(\mu_{|R})|^2d\mu(x).
\end{split}
\end{equation} 
Concerning the last term on the right hand side, by \rf{eqdif79} and the fact that the maximal operator $S^H_{*,\mu}$ is
bounded in $L^2(\mu|_R)$, we derive
$$\int_R|S^H_2\tmu-S^H_2(\mu_{|R})|^2d\mu(x)\lesssim \|\tmu-\mu|_R\|\lesssim t^{\gamma_0}\,\mu(R).$$
To deal with the second term on the right hand side of \rf{eqsp832} we argue analogously, using Cauchy-Schwarz and the $L^4(\mu|_R)$ boundedness of $S^H_{*,\mu}$. Then we get
$$\int|S^H_2(\mu_{|R})(x)|^2d\bigl|\mu|_R-\tmu\bigr|(x)\lesssim  t^{\gamma_0/2}\,\mu(R).$$

Finally we turn our attention to the first term. For $x\in Q\in{\Ch}_{\Stop}(R)$, we write $T^H_{2,x}$ for the corresponding frozen operator related to the kernel $\wh K_{2,x}$. Taking into 
account that $M\ell(Q)\ll\Delta\ell(R)$, we write
\begin{equation}
 \begin{split}
 \big|S^H_2(\mu_{|R})(x)\big|&=\big|\wh T^H_2(\mu_{|R})(x)\big|\leq\big|\wh T^H_{2,x}(\mu_{|R})(x)\big|+ \big|\wh T^H_2(\mu_{|R})(x)-\wh T^H_{2,x}(\mu_{|R})(x)\big|\\&\leq\big|\wh T^H_{2,x}(\mu_{|R})(x)\big|+ C\ell(R)^{\alpha/2}\\&\leq
\big |\wh T^H_{2,x}(c_Q\HH^n|_{L_Q})(x)\big|+\Big|\int\wh K^H_{2,x}(x-y)d(\mu-c_Q\HH^n|_{L_Q})(y)\Big|+ C\ell(R)^{\alpha/2}\\&=S_{21}+S_{22}+C\ell(R)^{\alpha/2}.
 \end{split}
\end{equation}
Notice that $\wh T_{2,x}^H(c_Q\HH^n|_{L_Q})(x')=0$ for $x'=\Pi_{L_Q}(x)$, $x\in Q$. Therefore, using  the standard estimates in Lemma \ref{lemcz}  we get
\begin{equation}
 \begin{split}
 S_{21}&=\big|\wh T_{2,x}(c_Q\HH^n|_{L_Q}))(x)-\wh T_{2,x}(c_Q\HH^n|_{L_Q})(x')\big|\\&
 \lesssim\int_{100MB_Q}|\wh K_{2,x}(x-y)-\wh K_{2,x}(x'-y)|\,d\HH^n|_{L_Q}(y)\lesssim C(M,\tau)\tve^{{\alpha/2}}.
 \end{split}
\end{equation}
To estimate the term $S_{22}$, we will use the fact that $\alpha_{\mu}^{L_Q}(MB_Q)\leq\ve$, that is 
$$S_{22}\leq C(M)\Lip(\wh K_2^H)\,\alpha_{\mu}^{L_Q}(MB_Q)\ell(Q)^{n+1}\lesssim C(M,\tau)\varepsilon.$$
Hence,
$$\int|S^H_2(\mu_{|R})(x)|^2d\tmu(x)
\lesssim C(M,\tau)(\tve^{\alpha} + \ve^2 + \ell(R)^\alpha)\mu(R).$$

Gathering the estimates above we get
\begin{equation}
   \|S_2^H\tmu\|^2_{L^2(\mu_{|R})}\lesssim \left( t^{\gamma_0/2} +C(M,\tau)(\tve^{\alpha} + \ve^2 + \ell(R)^\alpha)\right)  \mu(R)\leq \frac{\ve_5}{10}\,\mu(R),
\end{equation}
by choosing  $t,\ell(R),\ve, \tilde\ve$ small enough.
Together with \rf{eqs28io}, this implies that
$$\|S^H_2\sigma\|^2_{L^2(\sigma)}\lesssim\tve^{\alpha}M^{\alpha}C(t)\mu(R) +\frac{\ve_5}{10}\,\mu(R) \mu(R)\leq \frac{\ve_5}{5}\,\mu(R),$$
by appropriate choices of $M$, $t$ and $\tve$ again.

\vspace{2mm}
\noindent {\bf End of the proof of Lemma \ref{lemftr2}}.}
Taking into account that
$$\biggl\|\underset{\tiny{Q\in{\Ch}_{\Stop}(R)}}{\sum}m_{\mu,Q}(S^H_2\tmu)\chi_Q\biggr\|^2_{L^2(\mu|_R)}\leq \|S^H_2\tmu\|^2_{L^2(\mu_{|R})},$$
from the splitting \rf{eqspl9341} and the estimates obtained for $\|S_1^H\sigma\|_{L^2(\sigma)}$, $\|S_2^H\sigma\|_{L^2(\sigma)}$, $\|S_3^H\sigma\|_{L^2(\sigma)}$, and $\|S^H_2\tmu\|^2_{L^2(\mu_{|R})},$
we derive
\begin{equation}\label{eqsu181}
\begin{split}
\big\|S^H\sigma\big\|^2_{L^2(\sigma)} 
& \lesssim\|\tilde f^H\|_{L^2(\mu)}^2 + \underset{\tiny{Q\in{\Ch}_{\Stop}(R)}}{\sum} |m_{\mu,Q}(S^H_1\tmu)|^2\,\mu(Q)+
\frac{\ve_5}{2}\,\mu(R).
\end{split}
\end{equation}
Hence to conclude the proof of the lemma it just remains to estimate the second term on the right hand side above.

For a fixed cube $Q\in{\Ch}_{\Stop}(R)$, we write
\begin{equation}\label{eqsu83}
\mu(Q)\,\big|m_{\mu,Q}(S^H_1\tmu)\big| \leq \left|\int_Q  S^H_1(\chi_Q\mu)\,d\mu\right| + \left|
\int_Q  S^H_1(\tmu-\chi_Q\mu)\,d\mu\right|.
\end{equation}
To estimate the first term on the right hand side, recall that by \rf{remantis}, the difference between the
kernel of $S_{1,Q}^H$ and its antisymmetric part satisfies
$$\Bigl|
K_{S_{1,Q}^H}(x,y) - K_{S_{1,Q}^{H,(a)}}(x,y)\Bigr| \lesssim \frac1{|x-y|^{n-\alpha/2}},$$
and so  
\begin{equation}\label{eqsu84}
\left|\int_Q  S^H_1(\chi_Q\mu)\,d\mu\right|\lesssim \int_Q  \frac1{|x-y|^{n-\alpha/2}}\,d\mu\lesssim
\ell(Q)^{n+\alpha/2}\lesssim \ell(R)^{\alpha/2}\,\mu(Q).
\end{equation}
Concerning the second term on the right hand side of \rf{eqsu83}, observe that
$$\tmu-\chi_Q\mu = \chi_{Q^c}\tmu - \chi_{Q\setminus Q_{(t)}}\mu.$$
Then, 
using the fact that $\supp K_{S_{1,Q}^H}(x,\cdot)\subset B(x,M^{-1}\ell(Q))$ and Cauchy-Schwarz we deduce
\begin{align}
\left|
\int_Q  S^H_1(\tmu-\chi_Q\mu)\,d\mu\right| & = 
\left|\int_Q  \big|S^H_1(\chi_{\mathcal U_{M^{-1}\ell(Q)}(Q)\setminus Q}\tmu
-\chi_{Q\setminus Q_{(t)}}\mu
)\,d\tmu\right|\\
& \leq 
\big\|S^H_1(\chi_{\mathcal U_{M^{-1}\ell(Q)}(Q)\setminus Q}\tmu
-\chi_{Q\setminus Q_{(t)}}\mu
)\big\|_{L^2(\mu|_Q)} 
\,\mu(Q)^{1/2}.
\end{align}
Notice that $S_{1,Q}^H(\cdot \mu)$ is bounded in $L^2(\mu|_R)$. Indeed, one can easily check that for all
$g\in L^2(\mu|_R)$ and all $x\in\R^{n+1}$,
$$|S_{1,Q}^H(g\,\mu)(x)|\le S_{*}^H(g\,\mu)(x),$$
where $S_{*}^H(\cdot\,\mu)$ is the maximal operator associated with $S^H(\cdot\,\mu)$ and then the claim follows from Cotlar's inequality. This fact, together with the thin boundary condition for $Q$ yields
\begin{align}
\big\|S^H_1(\chi_{\mathcal U_{M^{-1}\ell(Q)}(Q)\setminus Q}\tmu
-\chi_{Q\setminus Q_{(t)}}\mu
)\big\|_{L^2(\mu|_Q)} &  \lesssim
\mu\big(
\mathcal U_{\tau\ell(Q)}(Q)\setminus Q\big)^{1/2} + \mu\big(Q\setminus Q_{(t)}\big)^{1/2}\\
&\lesssim (\tau^{\gamma_0/2} + t^{\gamma_0/2})\,\mu(Q)^{1/2}.
\end{align}
Therefore,
$$\left|
\int_Q  S^H_1(\chi_{Q^c}\tmu)\,d\tmu\right|\lesssim (\tau^{\gamma_0/2}+ t^{\gamma_0/2})\,\mu(Q).$$

Together with \rf{eqsu83} and \rf{eqsu84}, the last estimate yields
$$\big|m_{\mu,Q}(S^H_1\tmu)\big|\lesssim \ell(R)^{\alpha/2} + \tau^{\gamma_0/2} + t^{\gamma_0/2}.$$
Plugging this into \rf{eqsu181}, we get
$$
\big\|S^H\sigma\big\|^2_{L^2(\sigma)} 
\lesssim\|\tilde f^H\|_{L^2(\mu)}^2 + \big(\ell(R)^{\alpha} + \tau^{\gamma_0} + t^{\gamma_0}\big)\,\mu(R)+
\frac{\ve_5}{2}\,\mu(R),$$
which proves the lemma by choosing $\tau$, $t$, and $\ell(R)$ small enough.


\section{The continuous measure $\nu$}\label{secnou9}

We consider $R\in\Nice$ and $\sigma$ as above.
Because of technical reasons, it is convenient to replace $\sigma$ by a continuous measure $\nu$ (i.e., a measure absolutely 
continuous with respect to Lebesgue measure). 
Let $\varphi$ be a radial non-negative $C^\infty$ function supported in $B(0,1)$ such that $\int\varphi\,d\LL^{n+1}=1$, and set
\begin{equation}\label{eqdefnus}
\nu = \sigma * \frac1{s^{n+1}}\,\varphi\left(\frac{\cdot}s\right),
\end{equation}
where $s$ is small enough and will be fixed below. For the moment, let us say that
$s\ll\min_{Q\in{\Ch}_{\Stop}(R)}\ell(Q)$.

Recall that, by Lemma \ref{lemgrowthsigma}, $\sigma$ has $n$-polynomial growth. It is immediate to check
that the same holds for $\nu$, that is
\begin{equation}
\label{lemgrowthnu}
\nu(B(x,r))\leq C\,r^n\quad\mbox{ for all $x\in\R^{n+1}$, $r>0$.}
\end{equation}

The estimate in the following lemma is the analogue of \rf{eqsh421} in Lemma \ref{lemapprox0} with $\nu$ replacing $\sigma$.

\begin{lemm}\label{lemaux49}
Assume $s>0$ small enough in the definition of $\nu$ and $\ell(R)\leq1$.
We have
$$\int |\wh T\nu|^2\,d\nu\leq C\,\ell(R)^n.$$
\end{lemm}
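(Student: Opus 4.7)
The plan is to bound $\int|\wh T\nu|^2\,d\nu$ by comparing $\wh T\nu$ pointwise with a truncation of $\wh T\sigma$ and then invoking the $L^2(\sigma)$ bound $\|\wh T\sigma\|_{L^2(\sigma)}^2\lesssim\mu(R)\approx\ell(R)^n$ from Lemma~\ref{lemapprox0}. Since $\nu=\sigma*\varphi_s$, Fubini gives
$$\wh T\nu(x) = \int \wh K_s(x,z)\,d\sigma(z), \qquad \wh K_s(x,z) := \int \wh K(x,y)\,\varphi_s(y-z)\,dy,$$
and a direct calculation shows $|\wh K_s(x,z)|\lesssim \min(s,|x-z|)^{-n}$. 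Using the $\alpha/2$-Hölder continuity of $\wh K$ in the second variable (Lemma~\ref{lemcz}(b) applied to the $\alpha/2$-Hölder matrix $\wh A$ of Lemma~\ref{holder/2}), I would also derive
$$|\wh K_s(x,z) - \wh K(x,z)| \lesssim \frac{s^{\alpha/2}}{|x-z|^{n+\alpha/2}} \quad \text{for } |x-z|>2s.$$

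Next I would split $\wh T\nu = \wh T^{(2s)}\sigma + E$, with $\wh T^{(2s)}\sigma(x):=\int_{|x-z|>2s}\wh K(x,z)\,d\sigma(z)$ the $2s$-truncation of $\wh T\sigma$ and $E$ collecting the two remaining contributions. A dyadic-annular decomposition together with the $n$-growth of $\sigma$ (Lemma~\ref{lemgrowthsigma}) yields $\|E\|_\infty\leq C$ uniformly in $s$, hence $\int|E|^2\,d\nu\leq C\|\nu\|\lesssim \ell(R)^n$. For the main term, the key pointwise inequality to establish is that for every $x\in\supp\nu$ and any $z\in\supp\sigma$ with $|x-z|\leq s$,
$$|\wh T^{(2s)}\sigma(x)| \leq |\wh T\sigma(z)| + C,$$
where $\wh T\sigma(z)$ denotes the principal-value integral (defined $\sigma$-a.e.\ by Lemma~\ref{lemapprox0}). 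The proof of this pointwise bound has two ingredients: first, moving the center of truncation from $x$ to $z$ uses the Hölder regularity of $\wh K$ in the first variable plus the $n$-growth of $\sigma$ on the transition annulus $\{|x-y|>2s\}\triangle\{|z-y|>2s\}$; second, the inner truncation $\wh T^{(2s)}\sigma(z)-\wh T\sigma(z)$ is bounded by decomposing $\wh K$ into antisymmetric and symmetric parts, using \eqref{remantis} for the symmetric part and exploiting that $\sigma$ is locally given by a bounded density on the plane $L_Q$ to control the antisymmetric part.

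Finally, using $d\nu=(\varphi_s*\sigma)\,d\LL^{n+1}$ and Fubini,
$$\int |\wh T^{(2s)}\sigma|^2\,d\nu = \int \bigl(|\wh T^{(2s)}\sigma|^2 * \varphi_s\bigr)\,d\sigma \lesssim \int\bigl(|\wh T\sigma|^2 + C\bigr)\,d\sigma \lesssim \mu(R)\lesssim \ell(R)^n,$$
by the pointwise comparison and Lemma~\ref{lemapprox0}. Combining with the bound on $E$ yields the lemma. The main obstacle is the pointwise comparison above, and specifically the control of $\wh T^{(2s)}\sigma(z)-\wh T\sigma(z)$: while the symmetric part of $\wh K$ is handled by \eqref{remantis}, the antisymmetric part requires using the local flat structure of $\sigma_Q$ on each plane $L_Q$ together with a careful accounting of cross-plane contributions, which are small thanks to \rf{eqdistsigma}.
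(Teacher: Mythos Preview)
Your approach is valid and reaches the goal, though it is organized differently from the paper's argument. The paper freezes the coefficients first: since the frozen operator $\wh T_y$ is a genuine convolution operator, one has the exact identity $\wh T_y\nu=\varphi_s*(\wh T_y\sigma)$, which (after a further comparison $\wh T_x\sigma$ vs.\ $\wh T_y\sigma$ with qualitative error $C(\sigma)\,s^{\alpha/2}$) reduces matters to $|\wh T_{x,s}\sigma(x)|+C$. The paper then finishes with the soft statement $\int|\wh T_s\sigma|^2\,d\sigma\to\int|\wh T\sigma|^2\,d\sigma$ as $s\to0$ (standard Calder\'on--Zygmund theory, using only that $\wh T_\sigma$ is bounded in $L^2(\sigma)$) together with Lemma~\ref{lemapprox0}. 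Your route avoids freezing in the main reduction by working directly with the smoothed kernel $\wh K_s$ and H\"older regularity in the second variable; this is more hands-on but equally legitimate, and your endgame is a pointwise comparison rather than an $L^2$-limit.

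One point needs sharpening. For the inner truncation $\wh T^{(2s)}\sigma(z)-\wh T\sigma(z)$, the antisymmetric/symmetric split together with ``bounded density on $L_Q$'' does not by itself give a uniform pointwise bound: antisymmetry of $\wh K^{(a)}$ means $\wh K^{(a)}(z,y)=-\wh K^{(a)}(y,z)$, which is \emph{not} oddness in $y-z$ and produces no cancellation in a single integral $\int_{|z-y|\le 2s}\wh K^{(a)}(z,y)\,d\sigma_Q(y)$ against a merely bounded density. What actually saves you is that $g_Q=\tilde\sigma_Q*\varphi_{2\tve\ell(Q)}$ is $C^\infty$: after observing that $\wh K^{(a)}(z,y)=\wh K_z(z-y)+O(|z-y|^{-(n-\alpha/2)})$ (which is hidden inside your split, by \rf{remantis} and Lemma~\ref{lemm_freezing}), the odd frozen kernel integrated against a Lipschitz density on $L_Q$ gives a contribution $O(\Lip(g_Q)\cdot s)$, and one takes $s$ small enough. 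Alternatively, you can sidestep this issue entirely by stopping the pointwise comparison at $|\wh T^{(2s)}\sigma(x)|\le |\wh T^{(s)}\sigma(z)|+C$ (which uses only the center-moving argument and the growth of $\sigma$) and then invoking $\int|\wh T^{(s)}\sigma|^2\,d\sigma\to\int|\wh T\sigma|^2\,d\sigma$ as $s\to0$, exactly as the paper does.
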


We remark that the smallness requirement on $s$ in this lemma may depend on the number of cubes in ${\Ch}_{\Stop}(R)$, thus the value of
the threshold is merely qualitative.

\begin{proof}
By Fubini, Lemma \ref{lemm_freezing} and the $n$-growth of $\sigma$, 
\begin{align}\label{alg93}
\int |\wh T\nu|^2\,d\nu & = \int |\wh T\nu|^2\,d(\varphi_s*\sigma) = \int (|\wh T\nu|^2)*\varphi_s\,d\sigma\\
&\leq \int \sup_{|x-y|\leq s}|\wh T\nu(y)|^2\,d\sigma(x)\leq \int \sup_{|x-y|\leq s}|\wh T_y\nu(y)|^2\,d\sigma(x) + C\,\ell(R)^{n+{\alpha}}.
\end{align}
For all $x\in\supp\sigma$ and $y$ such that $|y-x|\leq s$, we write
\begin{equation}\label{eqfi9432}
|\wh T_y\nu(y)| = |(\varphi_s*\wh T_y\sigma)(y)|
\leq |(\varphi_s*\wh T_x\sigma)(y)| + |\varphi_s*(\wh T_x\sigma - \wh T_y\sigma)(y)|,
\end{equation}
where $\wh T_y$ stands for the frozen operator with kernel $\nabla_1\Theta(\cdot,0;A(y))$.
To estimate the last term on the right hand side, observe that one can estimate the kernel of $\wh T_x -\wh T_y$  as in \eqref{estim_diff_kern_hold}.
Recall that $\sigma$ is supported in a finite union of hyperplanes, and that it has a smooth density  with respect to $\HH^n$ on each hyperplane. Then one easily gets
\begin{equation}\label{eqhold45}
\bigl|\wh T_x \sigma(y) -\wh T_y \sigma(y) \bigr|\leq C(\sigma)\, |x-y|^{\alpha/2},
\end{equation}
with  $C(\sigma)$ depending on the precise form of $\sigma$ (like the number of cubes in ${\Ch}_{\Stop}(R)$, for example). 

Concerning the first term on the right hand side of \eqref{eqfi9432}, we claim that, if $|x-y|\leq s$,
$$|(\varphi_s*\wh T_x\sigma)(y)|\leq |\wh T_{x,s}\sigma(x)| + C,$$
where $\wh T_{x,s}$ stands for the $s$-truncated version of $\wh T_{x}$. The arguments to show this are quite standard, but we
show the details for the reader's convenience. We write
$$|(\varphi_s*\wh T_x\sigma)(y)| \leq \big|\big(\varphi_s*\wh T_x(\chi_{B(y,2s)}\sigma)\big)(y)\big| +  \big|\big(\varphi_s*\wh T_x(\chi_{B(y,2s)^c}\sigma)\big)(y)\big|.$$
We have
\begin{align}
|\varphi_s*\wh T_x(\chi_{B(y,2s)}\sigma)(y)|& \lesssim \int \varphi_s(y-w) \int_{B(y,2s)} \frac1{|w-z|^n}\,d\sigma(z)\,d\LL^{n+1}(w)\\
&\lesssim \frac 1{s^{n+1}} \int_{B(y,2s)} \int_{|w-z|\leq 3s}\frac1{|w-z|^n}\,d\LL^{n+1}(w)\,d\sigma(z)\\
&\lesssim \frac 1{s^{n+1}} \int_{B(y,2s)} s\,d\sigma(z)\lesssim1.
\end{align}
Also, by standard estimates,
\begin{align}
|\varphi_s*\wh T_x(\chi_{B(y,2s)^c}\sigma)(y)| & \leq \sup_{|y-z|\leq s}|\wh T_x(\chi_{B(y,2s)^c}\sigma)(z)|\\
&\leq |\wh T_{x,s}\sigma(x)| + C\!\sup_{r>s}\frac{\sigma(B(x,r))}{r^n}\leq |\wh T_{x,s}\sigma(x)| + C,
\end{align}
which concludes the proof of our claim.

By \eqref{eqfi9432}, \eqref{eqhold45}, and the claim above, we deduce
$$|\wh T_y\nu(y)|\leq C(\sigma)\, |x-y|^{\alpha/2} + |\wh T_{x,s}\sigma(x)| + C\leq  C(\sigma)\, s^{\alpha/2} + |\wh T_{x,s}\sigma(x)| + C,
$$
since $|x-y|\leq s$.
Plugging this estimate into \eqref{alg93}, we get
$$\int |\wh T\nu|^2\,d\nu \lesssim \int |\wh T_{x,s}\sigma(x)| ^2\,d\sigma(x) + C(\sigma)\,s^{\alpha}\,\ell(R)^n +
C\ell(R)^n + C\,\ell(R)^{n+{\alpha}}.
$$
Taking into account that $\ell(R)\leq1$ and using the connection between the kernels of $\wh T_x$ and $\wh T$ stated in Lemma \ref{lemm_freezing}, we derive
$$\int |\wh T\nu|^2\,d\nu \lesssim \int |\wh T_{s}\sigma|^2\,d\sigma +  C(\sigma)\,s^{\alpha}\,\ell(R)^n +
C\ell(R)^n.
$$
Since $\wh T$ is bounded in $L^2(\sigma)$ (with a qualitative bound on the norm, at least), by standard Calder\'on-Zygmund theory we deduce that
$$\int |\wh T_{s}\sigma|^2\,d\sigma \to \int |\wh T\sigma|^2\,d\sigma \quad\mbox{ as $s\to 0$.}$$
Thus, using also \rf{eqsh421},
$$ \int |\wh T\nu|^2\,d\nu \lesssim \int |\wh T\sigma|^2\,d\sigma + \ell(R)^n\lesssim\ell(R)^n.$$
which proves the lemma.
\end{proof}

Our next objective is to show that $\int |S^H\nu|^2\,d\nu$ is very small if $\int |S^H\sigma|^2\,d\sigma$
is also small. That is, we have to transfer the estimate in Lemma \ref{lemapprox} to the measure $\nu$.
The fact that we are considering just the horizontal component $H$ will be essential in this case.
 We need the following auxiliary result, proven in \cite[Lemma 1]{NTV_acta}.

\begin{lemm}
\label{lemsmoothtolip}
Suppose that $f$ is a $C^2$-smooth compactly supported function on a hyperplane $L$ parallel to $H$. Then the function $\RR^H(f\,\HH^n|_L)$ is a Lipschitz function in $\R^{n+1}$, harmonic outside
$\supp(f\,\HH^n|_L)$, and it satisfies
$$
\sup |\RR^H(f\,\HH^n|_L)|\le CD^2\sup_{L}|\nabla_ H^2 f|
$$
and
$$
\|\RR^H(f\,\HH^n|_L)\|_{\Lip}\le CD\sup_{L}|\nabla_ H^2 f|,
$$
where $D$ is the diameter of $\supp (f\,\HH^n|_ L)$ and $\nabla_H$ is the partial gradient
involving only the derivatives in the directions parallel to $H$.
\end{lemm}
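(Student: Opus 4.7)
The plan is to exploit the identity $\frac{x_i-y_i}{|x-y|^{n+1}}=-\tfrac{1}{n-1}\partial_{x_i}|x-y|^{-(n-1)}$ (valid for $n\ge 2$; in the $n=2$ case the same strategy works with $\log|x-y|$ replacing $|x-y|^{-(n-1)}$) and to regard $\RR^H(f\HH^n|_L)$ as the horizontal gradient of the Newtonian potential $u(x)=\int|x-y|^{-(n-1)}f(y)\,d\HH^n|_L(y)$. Since $L$ is parallel to $H$, each direction $\partial_{y_i}$ with $i\le n$ is tangent to $L$, so integration by parts on $L$ is legitimate and yields the basic identity
\begin{equation}\label{eq:ibp-plan}
\RR^H_i(f\HH^n|_L)(x)=-\tfrac{c_n}{n-1}\int_L\frac{\partial_i f(y)}{|x-y|^{n-1}}\,d\HH^n|_L(y),\qquad i=1,\dots,n.
\end{equation}
This representation makes harmonicity off $\supp(f\HH^n|_L)$ transparent, since the right-hand side is a Newtonian potential in $\R^{n+1}$ with a density supported on $L\cap\supp f$.

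For the sup bound I will use \eqref{eq:ibp-plan} together with the elementary estimate
$$\int_{L\cap B(\Pi_L x,D)}\frac{d\HH^n|_L(y)}{|x-y|^{n-1}}\lesssim D,$$
obtained by polar coordinates on $L$ using $|x-y|^2=\dist(x,L)^2+|\Pi_L x-y|^2$, and the observation that a $C^2$ function $f$ with compact support of diameter $D$ satisfies $\sup|\nabla_H f|\le D\sup|\nabla_H^2 f|$ (because $\nabla_H f$ is continuous and vanishes identically outside $\supp f$, hence vanishes on $\partial\supp f$). Combined this gives $\sup|\RR^H_i(f\HH^n|_L)|\lesssim D^2\sup_L|\nabla_H^2 f|$.

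For the Lipschitz bound I will differentiate \eqref{eq:ibp-plan} once more. In a horizontal direction $j\le n$, I transfer the derivative via $\partial_{x_j}|x-y|^{-(n-1)}=-\partial_{y_j}|x-y|^{-(n-1)}$ and integrate by parts on $L$ a second time, obtaining
$$\partial_{x_j}\RR^H_i(f\HH^n|_L)(x)=-\tfrac{c_n}{n-1}\int_L\frac{\partial_i\partial_j f(y)}{|x-y|^{n-1}}\,d\HH^n|_L(y),$$
which is bounded by $CD\sup|\nabla_H^2 f|$ by the same integral estimate as above. In the vertical direction, integration by parts is unavailable and a direct computation gives
$$\partial_{x_{n+1}}\RR^H_i(f\HH^n|_L)(x)=c_n\,(x_{n+1}-c_L)\int_L\frac{\partial_i f(y)}{|x-y|^{n+1}}\,d\HH^n|_L(y),$$
where $c_L$ is the $(n+1)$-th coordinate of $L$. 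The key point is that, for $y\in L$, the factor $|x_{n+1}-c_L|\cdot|x-y|^{-(n+1)}$ is precisely (up to constant) the Poisson kernel at height $d=\dist(x,L)$ on $L\simeq\R^n$, which has $L^1$-norm equal to $1$ independently of $d$. Thus this term is bounded by $\sup|\nabla_H f|\le D\sup|\nabla_H^2 f|$. To upgrade the pointwise gradient bound off $L$ to a Lipschitz bound on all of $\R^{n+1}$, I will note that $\RR^H_i(f\HH^n|_L)$ is continuous across $L$ (both sides of \eqref{eq:ibp-plan} are locally integrable in $y$ on the $n$-plane $L$, so the integral depends continuously on $x$), and then a standard segment-and-limiting argument converts the gradient bound in $\R^{n+1}\setminus L$ into a global Lipschitz estimate.

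The main obstacle is the vertical derivative, where the failure of tangential integration by parts forces one to recognize the resulting integrand as a Poisson-type kernel and to invoke the inequality $\sup|\nabla_H f|\le D\sup|\nabla_H^2 f|$; this is precisely the step that converts a naive first-derivative estimate into one involving the Hessian with the correct $D$-dependence.
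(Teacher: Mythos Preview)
Your proof is correct. The paper itself does not prove this lemma: it merely states the result and cites \cite[Lemma~1]{NTV_acta} for the argument. Your approach---writing each component $\RR^H_i(f\HH^n|_L)$ as a horizontal derivative of the Newtonian potential of $f$, integrating by parts along $L$ (legitimate precisely because $L$ is parallel to $H$) to transfer horizontal derivatives onto $f$, and recognizing the vertical derivative as a Poisson integral---is the natural one and recovers the claimed estimates with the correct powers of $D$. The one step that deserves a word of care is the identity \eqref{eq:ibp-plan} on $L$ itself: strictly speaking the integration by parts is carried out for $x\notin L$, and the equality is then extended to $L$ by the continuity of the right-hand side (which you note); this is exactly how $\RR^H(f\HH^n|_L)$ should be interpreted on $L$.
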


Note that the second differential $\nabla_ H^2 f$
and the corresponding supremum on the right hand side are considered on $L$ only (the function $f$ in
the lemma does not even need to be defined outside $L$) while the $H$-restricted Riesz
transform $\RR^H(f\,\HH^n|_L)$ on the left hand side is viewed as a function on the entire space
$\R^{n+1}$
and its supremum and the Lipschitz norm are also taken in $\R^{n+1}$.

\begin{rem}
Below, we will apply Lemma \ref{lemsmoothtolip} to the operator $\wh T_x$, by means of the change of variable $\phi(y) = \wh A(x)^{1/2}\,y$.
Note that then the matrix $A_\phi$ in Corollary \ref{cor:A(x0)=id} coincides with the identity, and
thus the operator $T_{\phi}$ in \eqref{eq:tphinu} equals the Riesz transform, modulo a universal
factor. Hence, by Lemma \ref{lemfac*1}, denoting $D_x = \wh A(x)^{1/2}$, for any measure $\eta$
we have
\begin{equation}\label{eqriesz*0}
c_n\,\RR\eta(y)=D_x\,\wh T_x((D_x)_{\sharp}\eta)(D_x y),
\end{equation}
for all $x,y$.
\end{rem}


\begin{lemm}\label{lem16}
Assume $s>0$ small enough in the definition of $\nu$ and let $\ve'>0$.
 If $\|T_R{\mu}\|^2_{L^2(\mu)}\leq\varepsilon_1\,\mu(R)$, then 
$$\int |S^H\nu|^2\,d\nu\lesssim \ve' \,\ell(R)^n,$$
assuming that $\ve$, $\ve_1$, $\ell(R)$, $t$, and $\Delta$ are small enough and $M$ is big enough (as in Lemma \ref{lemapprox}).
\end{lemm}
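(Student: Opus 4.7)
The plan is to pass the bound on $\|S^H\sigma\|_{L^2(\sigma)}^2$ from Lemma \ref{lemapprox} across the convolution $\nu = \sigma * \varphi_s$, following exactly the same blueprint as in the proof of Lemma \ref{lemaux49} but now tracking a quantitative small constant rather than just $\ell(R)^n$. The first step is the Fubini-plus-sup estimate
\begin{equation*}
\int |S^H\nu|^2\,d\nu
= \int (|S^H\nu|^2 *\varphi_s)\,d\sigma
\leq \int \sup_{|y-x|\leq s} |S^H\nu(y)|^2\,d\sigma(x),
\end{equation*}
which reduces matters to a pointwise control of $S^H\nu(y)$ by $S^H\sigma(x)$ for points $x\in\supp\sigma$ and $y\in B(x,s)$.

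Next, for $y \in B(x,s)$, I would freeze $S^H$ at the base point $x$, using Lemma \ref{lemm_freezing} applied to $\wh A$ together with the kernel-perturbation bound \eqref{estim_diff_kern_hold} and the $n$-growth of $\nu$, to get
\begin{equation*}
|S^H\nu(y)-S^H_x\nu(y)|\lesssim \ell(R)^{\alpha/2} + s^{\alpha/2}.
\end{equation*}
Because $S^H_x$ has a translation-invariant kernel, it commutes with $\varphi_s*$, yielding
\begin{equation*}
S^H_x\nu(y) = S^H_x(\sigma*\varphi_s)(y) = (\varphi_s * S^H_x\sigma)(y),
\end{equation*}
so $|S^H_x\nu(y)|\leq \sup_{|y-u|\leq s}|S^H_x\sigma(u)|$, and I would further split
\begin{equation*}
|S^H_x\sigma(u)|\leq |S^H\sigma(x)| + \bigl|S^H_x\sigma(x)-S^H\sigma(x)\bigr| + \bigl|S^H_x\sigma(u)-S^H_x\sigma(x)\bigr|.
\end{equation*}
The middle term is controlled again by Lemma \ref{lemm_freezing} by $\ell(R)^{\alpha/2}$ (as in \eqref{eqhold45}).

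The crucial input is a Lipschitz-type bound for the last term: $|S^H_x\sigma(u)-S^H_x\sigma(x)|\leq C(\sigma)|u-x|$. This holds because $\sigma$ is a \emph{finite} sum of smooth compactly supported densities $g_Q\HH^n|_{L_Q}$ on horizontal hyperplanes, so $S^H_x\sigma = \wh T^H_x\sigma - \wh T^H_x\sigma(\cdot^*)$ is a Lipschitz function of its argument near $\supp\sigma$ with some finite constant $C(\sigma)$; this can be extracted from Lemma \ref{lemsmoothtolip} after conjugating $\wh T_x$ by $\phi_x(z)=\wh A(x)^{1/2}z$ to reduce the frozen operator to the Riesz transform, applying the lemma on each (rotated) hyperplane $\phi_x^{-1}(L_Q)$, and then using that both $\wh T^H_x\sigma$ and its reflected partner $\wh T^H_x\sigma(\cdot^*)$ are Lipschitz in the required region (thanks also to the identity $\wh K^H(x,y)=\wh K^H(x^*,y^*)$ from \eqref{eqref56}).

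Squaring and integrating the resulting pointwise bound against $\sigma$ produces
\begin{equation*}
\int |S^H\nu|^2 \,d\nu
\lesssim \int |S^H\sigma|^2\,d\sigma
+ \bigl(C(\sigma)^2 s^2 + \ell(R)^{\alpha} + s^{\alpha}\bigr)\,\ell(R)^n.
\end{equation*}
By Lemma \ref{lemapprox} the first summand is $\leq \varepsilon_2\mu(R)\lesssim\varepsilon_2 \ell(R)^n$, so choosing first $\varepsilon_2$, then $\ell(R)$, and finally $s$ small enough (with $s$ allowed to depend qualitatively on the finite collection $\{g_Q\}$ defining $\sigma$, and hence on $R$) gives $\int|S^H\nu|^2\,d\nu\leq \varepsilon' \ell(R)^n$. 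The main obstacle I foresee is ensuring that the $C(\sigma)$-dependent Lipschitz bound is indeed Lipschitz and not just Hölder near $\supp\sigma$, and that the mollification scale $s$ can always be shrunk to absorb this constant without interfering with the bounds already established at earlier scales; both points use, in an essential way, that the hyperplanes $L_Q$ are horizontal, which forces the reflected kernel to behave tamely in the $H$-projection.
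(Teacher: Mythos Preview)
Your approach is correct and uses the same two ingredients as the paper --- the convolution/Fubini passage from $\nu$ to $\sigma$ and the Lipschitz continuity of the horizontal frozen potential coming from Lemma~\ref{lemsmoothtolip} --- but the organization differs. The paper does not follow the Lemma~\ref{lemaux49} template; instead it introduces the pivot $\Pi_{H_x}D_x\wh T_x\nu(x)$ (with $D_x=\wh A(x)^{1/2}$ and $H_x=D_x^{-1}H$), first controls $|\Pi_{H_x}D_x\wh T_x\nu(x)-\wh T^H\nu(x)|$ by $\ell(R)^{\alpha/2}(1+|\wh T\nu(x)|)$ (absorbing the cross term via Lemma~\ref{lemaux49}), and then splits $\int|\Pi_{H_x}D_x\wh T_x\nu(x)-\wh T^H\nu(x^*)|^2\,d\nu$ into four pieces $I_1,\dots,I_4$, where $I_1$ carries the input $\|S^H\sigma\|_{L^2(\sigma)}^2$ from Lemma~\ref{lemapprox} and $I_3$ uses Lemma~\ref{lemsmoothtolip} directly on $\Pi_{H_x}D_x\wh T_x\sigma$. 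Your packaging is more economical and gets to the same place.

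One point in your justification of the Lipschitz bound needs sharpening. After conjugation, Lemma~\ref{lemsmoothtolip} shows that $\Pi_{H_x}\RR\bigl((D_x^{-1})_\sharp\sigma\bigr)$ is Lipschitz, while the $H_x^\perp$-component of $\RR$ retains the usual Poisson-type jump across the rotated hyperplanes. To conclude that $\wh T^H_x\sigma=\Pi_H\wh T_x\sigma$ is itself Lipschitz you need the algebraic fact $\Pi_H D_x^{-1}|_{H_x^\perp}=0$: since $D_x$ is symmetric one has $H_x^\perp=\mathrm{span}(D_xe_{n+1})$, hence $D_x^{-1}(H_x^\perp)=\mathrm{span}(e_{n+1})=H^\perp$, and therefore $\Pi_H\wh T_x\sigma=\Pi_HD_x^{-1}\Pi_{H_x}\bigl(D_x\wh T_x\sigma\bigr)$, which is Lipschitz. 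Equivalently, the jump of $\wh T_x\sigma$ across a horizontal hyperplane is purely in the $e_{n+1}$-direction and is killed by $\Pi_H$. This is precisely the content encoded in the paper's choice of pivot $\Pi_{H_x}D_x\wh T_x$. Your reference to \eqref{eqref56} is misplaced here (that identity concerns the variable-coefficient kernel $\wh K$, not the frozen $\wh K_x$) and is in any case unnecessary: $\wh T^H_x\sigma(u^*)$ is smooth simply because $u^*\in\R^{n+1}_-$ lies far from $\supp\sigma$.
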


\begin{proof}
Recall  that
$$
S^H\nu(x) = \wh T^H\nu(x) - \wh T^H\nu(x^*).$$


Consider the matrix $D_x= \wh A(x)^{1/2}$ and the hyperplane
$H_x= D_x^{-1}(H)$. 
Then we write
\begin{equation}\label{eqfir33}
\int |S^H\nu|^2\,d\nu  \lesssim \int |\Pi_{H_x}\,D_x\,\wh T_x \nu(x) - \wh T^H\nu(x)|^2\,d\nu(x) +
\int |\Pi_{H_x}\,D_x\,\wh T_x \nu(x) - \wh T^H\nu(x^*)|^2\,d\nu(x).
\end{equation}
To estimate the first integral on the right hand side we claim that
\begin{equation}\label{eqclaim76}
|\Pi_{H_x}\,D_x\,\wh T_x \nu(x) - \Pi_H \wh T\nu(x)|\lesssim  \ell(R)^{\alpha/2}\,(1+|\wh T\nu(x)|)\quad \mbox{ for
all $x\in\supp\nu$,}
\end{equation}
and also that the same estimate holds replacing $\nu$ by $\sigma$. That is,
\begin{equation}\label{eqclaim77}
|\Pi_{H_x}\,D_x\,\wh T_x \sigma(x) - \Pi_H \wh T\sigma(x)|\lesssim\ell(R)^{\alpha/2}\,(1+|\wh T\sigma(x)|)
\quad \mbox{ for
all $x\in\supp\sigma$.}
\end{equation}

To prove \eqref{eqclaim76}, we fix $x\in\supp\nu$ and we set
\begin{equation}\label{eqcl8g}
|\Pi_{H_x}\,D_x\,\wh T_x \nu(x) - \Pi_H \wh T\nu(x)|  \leq 
|\Pi_{H_x}\,D_x\,(\wh T_x\nu(x) -\wh T\nu(x))| + |(\Pi_{H_x}\,D_x - \Pi_H)\,\wh T\nu(x)|.
\end{equation}
Now we estimate the first summand on the right hand side:
\begin{equation}\label{projDxThat}
|\Pi_{H_x}\,D_x\,(\wh T_x\nu(x) -\wh T\nu(x))| 
\lesssim|\wh T_x\nu(x) -\wh T\nu(x)|\lesssim\int \frac1{|x-y|^{n-{\alpha/2}}}\,d\nu(y) \lesssim\ell(R)^{\alpha/2},
\end{equation}
using \eqref{lemgrowthnu} in the last inequality.

Concerning the last summand on the right hand side of \eqref{eqcl8g}, we have
$$|(\Pi_{H_x}\,D_x - \Pi_H)\,\wh T\nu(x)|\leq
 \bigl(\|\Pi_{H_x} D_x- \Pi_{H_x}\| +  \|\Pi_{H_x} - \Pi_{H}\|\bigr)\,|\wh T\nu(x)|.$$
 By the H\"older continuity of $\wh A$, we have 
 $$\|\Pi_{H_x} D_x- \Pi_{H_x}\|\leq \|D_x- Id\|\lesssim|x-x_R|^{\alpha/2}\leq C\,\ell(R)^{\alpha/2}.$$
Also, taking into account that $H_x=D_x^{-1}(H)$, we get
$$\|\Pi_{H_x} - \Pi_{H}\| \lesssim\|D_x- Id\|\lesssim\ell(R)^{\alpha/2}.$$
Thus, 
$$|(\Pi_{H_x}\,D_x - \Pi_H)\,\wh T\nu(x))|\lesssim\ell(R)^{\alpha/2}|\wh T\nu(x)|,$$
which together with \eqref{projDxThat} concludes the proof of \eqref{eqclaim76}. The arguments for \eqref{eqclaim77} are analogous and are left for the reader.

From the claim \eqref{eqclaim76} and applying Lemma \ref{lemaux49}, we derive
$$\int |\Pi_{H_x}\,D_x\,\wh T_x \nu(x) - \wh T^H\nu(x)|^2\,d\nu(x)\lesssim \ell(R)^{\alpha} \left(\ell(R)^n + \int
|\wh T\nu|^2\,d\nu\right) \lesssim \ell(R)^{n+\alpha}.$$

To deal with the second integral on the right hand side of \eqref{eqfir33}, we write
\begin{align}
\int |\Pi_{H_x}\,D_x\,\wh T_x \nu(x) - \wh T^H\nu(x^*)|^2\,d\nu(x) & \lesssim 
\int |\Pi_{H_x}\,D_x\,\wh T_x \sigma(x) - \wh T^H\sigma(x^*)|^2\,d\sigma(x)\\
&\quad+
\left|\int |\Pi_{H_x}\,D_x\,\wh T_x \sigma(x) - \wh T^H\sigma(x^*)|^2\,d(\sigma-\nu)(x)\right|\\
& \quad +
\int |\Pi_{H_x}\,D_x\,\wh T_x \sigma(x) - \Pi_{H_x}\,D_x\,\wh T_x \nu(x)|^2\,d\nu(x)\\
&\quad +
\int |\wh T^H\sigma(x^*)- \wh T^H\nu(x^*)|^2\,d\nu(x)\\
& =: I_1+ I_2 + I_3 + I_4.
\end{align}

To deal with the term $I_1$ we apply \eqref{eqclaim77} and Lemmas \ref{lemapprox} and \ref{lemapprox0}, and then we get
\begin{align}
I_1 & \lesssim \int |S^H\sigma|^2 \,d\sigma +
\int |\Pi_{H_x}\,D_x\,\wh T_x \sigma(x) - \wh T^H\sigma(x)|^2\,d\sigma(x)\\
&\lesssim \int |S^H\sigma|^2 \,d\sigma + \ell(R)^{\alpha} \left(\ell(R)^n + \int
|\wh T\sigma|^2\,d\sigma\right) \\&\lesssim (\ve_2 + \ell(R)^{\alpha})\,\ell(R)^n.
\end{align}

Next we consider the integral $I_3$. To this end, observe that for any given $x$, since $\wh T_x$ is a convolution operator, 
$$\Pi_{H_x}\,D_x\,\wh T_x \nu(x) = \Pi_{H_x}\,D_x\,\wh T_x (\varphi_s*\sigma)(x) = \varphi_s*\bigl(\Pi_{H_x}\,D_x\,\wh T_x \sigma\bigr)(x).$$
Therefore,
\begin{equation}\label{eqsp83w}
\begin{split}
|\Pi_{H_x}\,D_x\,\wh T_x \sigma(x) - \Pi_{H_x}\,D_x\,\wh T_x \nu(x)| & =
\bigl|\Pi_{H_x}\,D_x\,\wh T_x \sigma(x) - \varphi_s*\bigl(\Pi_{H_x}\,D_x\,\wh T_x \sigma\bigr)(x)\bigr|\\
& \leq\sup_{|y-x|\leq s} |\Pi_{H_x}\,D_x\,\wh T_x \sigma(x) - \Pi_{H_x}\,D_x\,\wh T_x \sigma(y)|.
\end{split}
\end{equation}
Recall now that, by \eqref{eqriesz*0},
\begin{equation}\label{eqriesz*1}
D_x\,\wh T_x \sigma(x) = c_n\RR(D_{x^{-1}\sharp}\sigma)(D_x^{-1}x).
\end{equation}
Since $\sigma$ is supported on a finite union of planes parallel to $H$, it follows that the measure $D_{x^{-1}\sharp}\sigma$ is supported on a finite union of planes which are parallel to $H_x=D_x^{-1}H$. Then, by Lemma \ref{lemsmoothtolip} (applied with $H_x$ instead of $H$), it turns out that
$\Pi_{H_x}\,D_x\,\wh T_x \sigma(\cdot)$ is a Lipschitz function (with the Lipschitz norm depending on the precise construction of $\sigma$, and in particular on the number of cubes in ${\Ch}_{\Stop}(R)$). Hence, the right hand side of \rf{eqsp83w} tends to $0$ uniformly on $x$ as $s\to0$, so
$$|\Pi_{H_x}\,D_x\,\wh T_x \sigma(x) - \Pi_{H_x}\,D_x\,\wh T_x \nu(x)|\to0\quad\mbox{as $s\to0$,}$$
uniformly on $x$ too. This implies that
$$I_3= I_3(s) = \int |\Pi_{H_x}\,D_x\,\wh T_x \sigma(x) - \Pi_{H_x}\,D_x\,\wh T_x \nu(x)|^2\,d\nu(x)\to 0
\quad\mbox{as $s\to0$.}$$

To estimate $I_4$, note that
$$\wh T^H\nu(x^*) = \int \wh K^H(x^*,y)\,d\nu(y) = \int \bigl(\wh K^H(x^*,\cdot) * \varphi_s\bigr)(y)\,d\sigma(y).$$
By the H\"older continuity of $\wh K^H(x^*,\cdot)$ with $x\in\supp\sigma$, it follows easily that
$\wh T^H\nu(x^*)\to \wh T^H\sigma(x^*)$ as $s\to 0$ uniformly for $x\in\supp\sigma$, taking into account 
also that for $x\in\supp\sigma\cup\supp\nu$,
$$\dist(x^*,\supp\sigma\cup\supp\nu)\gtrsim \Delta\,\ell(R)\gg s,$$
for $s$ small enough. Then we deduce that 
$$I_4 = I_4(s) \to 0\qquad \mbox{as $s\to 0$.}$$

Finally we turn our attention to the term $I_2$. Observe that
\begin{align}
I_2 & = 
\left|\int |\Pi_{H_x}\,D_x\,\wh T_x \sigma(x) - \wh T^H\sigma(x^*)|^2\,d(\sigma-\varphi_s*\sigma)(x)\right|\\
& \leq 
\int \bigl||\Pi_{H_x}\,D_x\,\wh T_x \sigma(x) - \wh T^H\sigma(x^*)|^2 - \varphi_s*\bigl(|\Pi_{H_x}\,D_x\,\wh T_x \sigma(x) - \wh T^H\sigma(x^*)|^2\bigr)\bigr|\,d\sigma(x)\\
&\lesssim \ell(R)^n\,\sup_{\substack{x\in\supp\sigma\\|y-x|\leq s}}
\bigl||\Pi_{H_x}\,D_x\,\wh T_x \sigma(x) - \wh T^H\sigma(x^*)|^2 - |\Pi_{H_y}\,D_y\,\wh T_y \sigma(y) - \wh T^H\sigma(y^*)|^2\bigr|.
\end{align}
We claim now that $\Pi_{H_x}\,D_x\,\wh T_x \sigma(x) - \wh T^H\sigma(x^*)$ is a H\"older continuous function of $x$, for $x$ in a small neighborhood of $\supp\sigma$. Clearly, this implies that
$$\sup_{\substack{x\in\supp\sigma\\|y-x|\leq s}}
\bigl||\Pi_{H_x}\,D_x\,\wh T_x \sigma(x) - \wh T^H\sigma(x^*)|^2 - |\Pi_{H_y}\,D_y\,\wh T_y \sigma(y) - \wh T^H\sigma(y^*)|^2\bigr|\to 0\qquad \mbox{as $s\to 0$,}$$
and thus
$$I_2= I_2(s)\to 0\qquad \mbox{as $s\to 0$.}$$

By the same arguments used to estimate $I_4$, it is easy to check that $\wh T^H\sigma(x^*)$ is a H\"older
continuous function of $x$, for $x$ in a small neighborhood of $\supp\sigma$. Thus, to prove our claim it 
suffices to show that $\Pi_{H_x}\,D_x\,\wh T_x \sigma(x)$ is a H\"older
continuous function of $x$ in that neighborhood. To this end,  for $x,y$ in a small neighborhood of $\supp\sigma$ we write
\begin{align}
\bigl|\Pi_{H_x}\,D_x\,\wh T_x \sigma(x) - \Pi_{H_y}\,D_y\,\wh T_y \sigma(y)\bigr| & \leq
\bigl|\Pi_{H_x}\,D_x\,\wh T_x \sigma(x) - \Pi_{H_x}\,D_x\,\wh T_x \sigma(y)\bigr| \\
&\quad + \bigl| \Pi_{H_x}\,D_x\,(\wh T_x \sigma(y) -\wh T_y \sigma(y)) \bigr|\\
&\quad+\bigl| (\Pi_{H_x}\,D_x - \Pi_{H_y}\,D_y) \, \wh T_y \sigma(y)) \bigr| =: J_1+ J_2+J_3.
\end{align}
By \eqref{eqriesz*1} and Lemma \ref{lemsmoothtolip} (applied with $H_x$ replacing $H$) we have
$$J_1 = c_n\,\bigl|\Pi_{H_x}\RR(D_{x^{-1}\sharp}\sigma)(D_x^{-1}x)- \Pi_{H_x}\RR(D_{x^{-1}\sharp}\sigma)(D_x^{-1}y)\bigr| \leq C(\sigma)\,|D_x^{-1}x - D_x^{-1}y|\leq C(\sigma)|x-y|.$$
 Regarding $J_2$, we have
$$J_2\lesssim \bigl|\wh T_x \sigma(y) -\wh T_y \sigma(y) \bigr|.$$
Recall that $\wh T_x -\wh T_y$ is an odd convolution operator whose kernel $K=\wh K_x-\wh K_y$ is given as in \eqref{eqkern56}, and it satisfies
\begin{equation}\label{eqdifker5}
|K(z)|\leq C\,|x-y|^{\alpha/2}\,\frac1{|z|^n}\quad \text{ and }\quad |\nabla K(z)|\leq C\,|x-y|^{\alpha/2}\,\frac1{|z|^{n+1}}.
\end{equation}
From this fact and the smoothness of the density of $\sigma$ with respect to $\HH^n$ on a finite union of hyperplanes, one easily gets
$$\bigl|\wh T_x \sigma(y) -\wh T_y \sigma(y) \bigr|\leq C(\sigma) |x-y|^{\alpha/2}.$$
 Next we turn to $J_3$:
\begin{align}
J_3 &\leq \|\Pi_{H_x}\,D_x - \Pi_{H_y}\,D_y\| \, |\wh T_y \sigma(y)|\\
&\leq \bigl(\|(\Pi_{H_x} - \Pi_{H_y}) \,D_x\| + \|\Pi_{H_y}\,(D_x - D_y)\|\bigr)
\, |\wh T_y \sigma(y)|\\
& \lesssim \bigl(\|\Pi_{H_x} - \Pi_{H_y}\| + \|D_x - D_y\|\bigr)
\, |\wh T_y \sigma(y)|.
\end{align}
Recall that $D_x =\wh A(x)^{1/2}$ and $H_x=D_x^{-1}(H)$. Then, by the H\"older continuity of $\wh A$, we derive
$$\|\Pi_{H_x} - \Pi_{H_y}\| + \|D_x - D_y\|\lesssim_\sigma |x-y|^{\alpha/2}.$$
Taking into account that $|\wh T_y \sigma(y)|\leq C(\sigma)$, we deduce that
$$J_3\leq C(\sigma)\,|x-y|^{\alpha/2}.$$
Thus $\Pi_{H_x}\,D_x\,\wh T_x \sigma(x)$ is a H\"older
continuous function of $x$ with exponent $\alpha/2$, as claimed.

The lemma follows from the estimates obtained for $I_1$, $I_2$, $I_3$, and $I_4$.
\end{proof}


\section{The function $h$ and the vector field $\Psi$}\label{secpsi}

For each cube $Q$ from the intermediate non-BAUP layer $\NB(R)$ with non-BAUPness
parameter $\delta>0$, we define a function $h_Q$ as follows. First we consider a radial $C^\infty$ function $h_0$ supported
in $B(0,1)$ such that $h_0=1$ on $B(0,1/2)$ and $0\leq h_0\leq1$.
Then we set
$$h_Q(x) = h_0\left(\frac{x-z_Q^a}{\delta\,\ell(Q)}\right) - h_0\left(\frac{x-z_Q^b}{\delta\,\ell(Q)}\right),$$
where $z^a_Q$ and $z^b_Q$ are the points introduced in Definition \ref{definition_NB_cube} and such that
the vector $z^a_Q- z^b_Q$ is parallel to $H$. This can be achieved by taking the hyperplane $L$ in Definition \ref{definition_NB_cube} parallel to $H$.
Note that $\supp h_Q\subset 3B_Q$, and the support of the negative part of 
$h_Q$ does not intersect $\supp\mu$.  On the other hand, the support of the positive part of $h_Q$ includes a sufficiently big portion of the measure, so that $\int h_Q\,d\mu\gtrsim c(\delta)\,\mu(Q)$.

Next, by a Vitali type covering lemma, we extract a subfamily $\NB'(R)\subset\NB(R)$ such that the balls
$4B_Q$, $Q\in\NB'(R)$, are pairwise disjoint and so that
$$\sum_{Q\in\NB'(R)} \mu(Q)\geq c\,\mu(R),$$
where $c$ depends at most on the AD-regularity constant of $\mu$. Then we define
$$h= \sum_{Q\in\NB'(R)} h_Q.$$

\begin{lemm}\label{lemeta}
Assume $\varepsilon$ and the parameter $s$ in the definition of $\nu$ in \rf{eqdefnus} small enough.
Then the function $h$ satisfies: $\supp h\subset 3B_R$, $\dist(\supp h,H)\geq \Delta\,\ell(R)/2$, 
$h\geq0$ on $\supp\nu$
and
$$\int h\,d\nu \geq c_7(\delta)\,\,\nu(\R^{n+1}),$$
with $c_7(\delta)>0$.
\end{lemm}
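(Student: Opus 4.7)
My plan is to verify the four conclusions in sequence, leveraging three key facts: that every $z_Q^a\in Q\cap\supp\mu$ (and hence $z_Q^b$, which lies on the horizontal hyperplane through $z_Q^a$) sits at vertical distance $\lesssim \ve^{1/(n+1)}\ell(R)$ from $L_R=\{x_{n+1}=2d\}$, thanks to $\alpha_\mu^{L_R}(MB_R)\leq\ve$; that $\ell(Q)\leq C\ve\delta^{-1}\ell(R)$ by \eqref{Delta}, which makes $\ell(Q)\ll\Delta\ell(R)$ under the standing smallness hypothesis $\ve\ll\Delta\delta$; and that the balls $\{4B_Q\}_{Q\in\NB'(R)}$ are pairwise disjoint by the Vitali extraction.

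The inclusion $\supp h\subset 3B_R$ is immediate from $\supp h_Q\subset 3B_Q$ together with $x_Q\in Q\subset R$ and $\ell(Q)\ll\ell(R)$. The distance bound $\dist(\supp h,H)\geq\Delta\ell(R)/2$ follows by observing that each point in $\supp h_Q\subset B(z_Q^a,\delta\ell(Q))\cup B(z_Q^b,\delta\ell(Q))$ has $(n{+}1)$-coordinate at least $2d - C\ve^{1/(n+1)}\ell(R) - \delta\ell(Q)$, which exceeds $d=\Delta\ell(R)/2$ once $\ve$ and $\ell(Q)$ are sufficiently small relative to $\Delta$.

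For $h\geq 0$ on $\supp\nu$, the disjointness of the balls $4B_Q$ reduces the task to showing $B(z_Q^b,\delta\ell(Q))\cap\supp\nu=\varnothing$ for each $Q\in\NB'(R)$, since this ball carries the only negative part of $h_Q$. I would argue as follows: $\supp\nu\subset\mathcal U_s(\supp\sigma)$, and each $\supp\sigma_P$ lies on the horizontal plane $L_P$ inside a $2\tve\ell(P)$-in-plane neighborhood of $\Pi_{L_P}(P_{(t)})$. Combining the non-BAUP condition $B(z_Q^b,\delta\ell(Q))\cap\supp\mu=\varnothing$ with $\beta_{\infty,\mu}^{L_P}(B_P)\leq\tve/10$ (which forces the projection $\Pi_{L_P}$ to move points of $\supp\mu\cap P$ by at most $\tve\ell(P)/10$), every point of $\Pi_{L_P}(P_{(t)})$ stays at distance at least $\delta\ell(Q) - \tve\ell(P)/10$ from $z_Q^b$; the additional in-plane thickening by $2\tve\ell(P)$ and the subsequent $s$-convolution still keep the result outside $B(z_Q^b,\delta\ell(Q))$ provided $s$ and $\tve$ are chosen small enough relative to $\delta$ at the scales involved.

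The mass bound $\int h\,d\nu\geq c_7(\delta)\,\nu(\R^{n+1})$ then reduces, by the positivity already established and the disjointness of the $4B_Q$, to showing $\sum_{Q\in\NB'(R)}\int h_0\big((\cdot-z_Q^a)/\delta\ell(Q)\big)\,d\nu\gtrsim c(\delta)\,\mu(R)$. Since $z_Q^a\in\supp\mu$ lies close to $L_P$ for some $P\in\Ch_{\Stop}(R)$ and $\nu$ is well-approximated there by a multiple of $\HH^n|_{L_P}$ at the scale $\delta\ell(Q)$, each summand is $\gtrsim c(\delta)\,\ell(Q)^n\approx c(\delta)\,\mu(Q)$; the Vitali-type selection of $\NB'(R)$ gives $\sum_{Q\in\NB'(R)}\mu(Q)\gtrsim\mu(R)\approx\|\nu\|$, closing the estimate. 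The main obstacle will be the third step: the two-directional thickening of $\nu$ (vertical by $s$, in-plane by $2\tve\ell(P)$) requires a careful comparison of all small parameters to certify that the outer portion of $B(z_Q^b,\delta\ell(Q))$ is indeed missed by $\supp\nu$, not merely its inner half.
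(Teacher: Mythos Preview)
The paper omits the argument entirely (``elementary and follows from the construction of $h$''), so your sketch is already far more detailed than what is written; your overall plan and the verification of the first two items are correct (up to the harmless slip $d=\Delta\ell(R)$, not $\Delta\ell(R)/2$). There are, however, two points where your sketch does not close.

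\textbf{Step 3 (the sign of $h$ on $\supp\nu$).} You want $B(z_Q^b,\delta\ell(Q))\cap\supp\nu=\varnothing$, and your argument gives $\dist(\supp\nu,z_Q^b)\ge\delta\ell(Q)-s-C\tve\ell(P)$ for the relevant $P$. But the non-BAUP condition only guarantees that the \emph{open} ball $B(z_Q^b,\delta\ell(Q))$ misses $\supp\mu$; it provides no buffer, so $\supp\mu$ may touch the sphere $\partial B(z_Q^b,\delta\ell(Q))$. Hence for \emph{any} positive $s,\tve$ the support of $\nu$ can creep into the outer shell of the ball, where the negative bump is still nonzero. Your closing sentence correctly identifies this obstacle, but ``$s,\tve$ small enough'' cannot cure it. The standard remedy (implicit in the paper's informal treatment) is to take the two bumps in $h_Q$ of radius $\tfrac{\delta}{2}\ell(Q)$ rather than $\delta\ell(Q)$; then the negative bump is supported in $B(z_Q^b,\tfrac{\delta}{2}\ell(Q))$ while $B(z_Q^b,\delta\ell(Q))\cap\supp\mu=\varnothing$, leaving a buffer $\tfrac{\delta}{2}\ell(Q)$. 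With this, your transport estimate works once you also verify $\ell(P)\lesssim t^{-1}\ell(Q)$ for every $P$ with $\supp\sigma_P\cap 3B_Q\neq\varnothing$: if some $z\in P_{(t)}$ lies in $4B_Q$ and $z_Q^a\notin P$, then $t\ell(P)\le\dist(z,\supp\mu\setminus P)\le|z-z_Q^a|\lesssim\ell(Q)$; if $z_Q^a\in P$ then $P\subset Q$ by the tree structure. Either way $\tve\ell(P)\lesssim(\tve/t)\ell(Q)\ll\delta\ell(Q)$ under $\tve\ll\delta t$.

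\textbf{Step 4 (the mass lower bound).} Your cube-by-cube claim $\int h_Q\,d\nu\gtrsim c(\delta)\mu(Q)$ need not hold for an individual $Q$: if $z_Q^a$ happens to lie in (or near) the thin exceptional set where $\tilde\mu$ loses mass, then $\nu$ can carry little mass in $B(z_Q^a,\tfrac{\delta}{2}\ell(Q))$. The clean route is to argue globally: since $h\ge0$ on $\supp\mu$ and the balls $B(z_Q^a,\tfrac{\delta}{2}\ell(Q))\subset 3B_Q$ are pairwise disjoint, one has
\[
\int h\,d\tilde\mu \;\ge\; \sum_{Q}\mu\bigl(B(z_Q^a,\tfrac{\delta}{4}\ell(Q))\bigr) \;-\; \mu\bigl(R\setminus\textstyle\bigcup_P P_{(t)}\bigr) \;-\;\mu\bigl(R^c\cap\textstyle\bigcup_Q 3B_Q\bigr)
\;\gtrsim\; c(\delta)\mu(R),
\]
using AD-regularity for the first sum, \eqref{eqdif79} for the second term, and the thin-boundary property of $R$ together with $\max_Q\ell(Q)\le C\ve\delta^{-1}\ell(R)$ for the third. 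Then transfer to $\sigma$ via Lemma~\ref{dif} (cube-by-cube, with the scale comparison from step~3) and to $\nu$ by letting $s\to0$.
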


The proof of this lemma is elementary and follows from the construction of $h$. 
\vspace{2mm}

Our next objective consists in constructing a vector field $\Psi$ satisfying the properties stated in the next lemma.

\begin{lemm}\label{lemapsi}
There exists a compactly supported Lipschitz vector field $\Psi:\R^{n+1}\to\R^{n+1}$ which satisfies the following:
\begin{itemize}
\item[(i)] $\Psi = \sum_{Q\in\NB'(R)}\Psi_Q$, $\supp\Psi\subset 3B_R\cap \R^{n+1}_+$, and $\dist(\supp\Psi,H)\geq \frac\Delta2\,\ell(R)$.

\item[(ii)] For each  $Q\in\NB'(R)$, $\supp\Psi_Q\subset 3 B_Q$ and 
$$\int\Psi_Q\,d\LL^{n+1}=0,\quad \|\Psi_Q\|_\infty\lesssim\frac 1{\delta\,\ell(Q)},\quad \text{and}\quad \|\Psi_Q\|_{\rm Lip}\lesssim 
\frac{1}{\delta^2\ell(Q)^2}.$$

\item[(iii)] $\displaystyle\int |\Psi| \,d\LL^{n+1} \lesssim	\delta^{-1}\,\ell(R)^n$.

\item[(iv)] For each  $Q\in\NB'(R)$, $$\wh T^{H,*}(\Psi_Q\,\LL^{n+1}) = h_Q + e_Q,$$ with the ``error term'' $e_Q$ satisfying
$$|e_Q(x)|\lesssim \frac{C(\delta)\,\ell(R)^{\tilde\gamma}\,\ell(Q)^{n+\tilde\beta}}{(|x-x_Q|+\ell(Q))^{n+\tilde\beta}} \quad\mbox{ for all $x\in 10B_R$,}
$$
where $\tilde\beta$ and $\tilde\gamma$ are some fixed positive constants depending on $n$ and $\alpha$.


\item[(v)] $\|S^H(|\Psi|\LL^{n+1})\|_{L^2(\nu)}\leq C(\delta)\,\mu(R)^{1/2}$, assuming the parameter $s$ in the
definition of $\nu$ small enough.

\end{itemize}
\end{lemm}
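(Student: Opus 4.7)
\smallskip

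The plan is to build each $\Psi_Q$ via a freezing-plus-PDE argument, and then to patch them together using the disjointness of the balls $4B_Q$, $Q\in\NB'(R)$. Fix $Q\in\NB'(R)$ and write $\wh A_Q=\wh A(x_Q)$. Since $\wh A_Q$ is constant, Corollary \ref{cor:A(x0)=id} and Lemma \ref{lemfac*1} reduce the frozen operator $\wh T_{x_Q}$, after the linear change of variables $D_Q=\wh A_Q^{1/2}$, to the $n$-dimensional Riesz transform (times a constant matrix factor). In those coordinates the problem becomes: find an $H_Q$-valued vector field $\tilde\Psi_Q$ ($H_Q=D_Q^{-1}H$) supported in a slight dilation of $3B_Q$, of mean zero, with $\RR^{H_Q,*}(\tilde\Psi_Q\LL^{n+1})$ equal to $\tilde h_Q=h_Q\circ D_Q$ up to a small decaying error. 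This is precisely the type of construction carried out in \cite[Section 22]{NTV_acta}, where one essentially takes $\tilde \Psi_Q$ to be a ``dipole-type'' field whose horizontal divergence is $-\Delta \tilde h_Q$; the fact that $z^a_Q-z^b_Q\in H$ is what forces both the mean-zero property and the compact support. Setting $\Psi_Q=(D_Q^T)^{-1}\tilde\Psi_Q\circ D_Q^{-1}$ then yields a vector field whose size, Lipschitz, and support properties (ii) are controlled by standard estimates for the Newton potential of a smooth compactly supported function of size $O(1)$ and scale $\delta\ell(Q)$.

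The next step is to transfer the identity $\wh T_{x_Q}^{H,*}(\Psi_Q\LL^{n+1})=h_Q+\tilde e_Q$ (with $\tilde e_Q$ coming purely from the NTV construction and having rapid decay) to the non-frozen operator $\wh T^{H,*}$. For this, bound
$$\bigl|\wh T^{H,*}(\Psi_Q\LL^{n+1})(y)-\wh T_{x_Q}^{H,*}(\Psi_Q\LL^{n+1})(y)\bigr|\lesssim \int_{3B_Q}\bigl|\wh K^H(x,y)-\wh K^H_{x_Q}(x,y)\bigr|\,|\Psi_Q(x)|\,dx,$$
and use the analogue of Lemma \ref{lemm_freezing} together with the H\"older-$\alpha/2$ estimate \eqref{estim_diff_kern_hold} on the kernel difference. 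Combined with the size bound on $\Psi_Q$ from (ii), this freezing error and the Riesz-level error $\tilde e_Q$ both satisfy a pointwise bound of the form $C(\delta)\ell(R)^{\tilde\gamma}\ell(Q)^{n+\tilde\beta}(|x-x_Q|+\ell(Q))^{-n-\tilde\beta}$, producing (iv). Properties (i) and (iii) then follow at once: the support condition uses $\dist(z_Q^a,H)\geq\Delta\ell(R)/2$ (encoded in the definition of $h_Q$) plus \eqref{Delta} to keep everything inside $3B_R\cap\R^{n+1}_+$, and
$$\int|\Psi|\,d\LL^{n+1}\le\sum_{Q\in\NB'(R)}\|\Psi_Q\|_\infty\,|3B_Q|\lesssim \sum_Q \delta^{-1}\ell(Q)^n\lesssim \delta^{-1}\mu(R)/\mu\text{-const}\lesssim \delta^{-1}\ell(R)^n,$$
by disjointness of the $4B_Q$ and AD-regularity of $\mu$.

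For the $L^2(\nu)$ estimate (v), split the kernel $K_{S^H}(x,y)=\wh K^H(x,y)-\wh K^H(x^*,y)$ according to the scale $\ell(Q)$ of the cube supporting $y$ inside $\Psi$. On the near-field part $|x-y|\lesssim \ell(Q)$, the density $|\Psi|\leq C(\delta)\ell(Q)^{-1}\chi_{3B_Q}$ and a Cotlar-type estimate (coming from the $L^2(\mu|_R)$ boundedness of $S_\mu$ recorded in Remark \ref{reml2ts}) give an $L^2(\nu)$ bound of order $C(\delta)\mu(Q)^{1/2}$. On the far-field part, use the reflection: since $\supp\Psi$ sits at height $\gtrsim\Delta\ell(R)$ above $H$ and $\supp\nu$ similarly, $|x-x^*|\ll|x-y|$ once $|x-y|\gtrsim\Delta\ell(R)$, and the H\"older regularity of $\wh K^H$ improves the kernel $K_{S^H}$ by a factor $(\Delta\ell(R))^{\alpha/2}/|x-y|^{\alpha/2}$, reducing to an essentially integrable situation which is readily bounded pointwise by $C(\delta)$ and then squared-integrated against $\nu$ with mass $\lesssim \mu(R)$. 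Summing over $Q$ using the disjointness of $4B_Q$ yields the required $C(\delta)\mu(R)^{1/2}$ bound.

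The main obstacle is the freezing in Step 2 combined with the error accounting in (iv): one must track how the non-constancy of $\wh A$ contaminates the clean Riesz-type identity from \cite{NTV_acta} while still producing an error term that decays fast enough in $|x-x_Q|/\ell(Q)$ to be summable across the (many) cubes of $\NB'(R)$. The secondary difficulty is (v), because $\nu$ is only supported on a collection of smoothed hyperplane pieces rather than on a globally AD-regular set, so one cannot invoke standard Calder\'on--Zygmund $L^2$-boundedness directly; the separation of $\supp\Psi$ from $H$ (hence from $\supp\nu$ modulo the reflection) is what makes the argument go through.
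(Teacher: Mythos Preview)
Your construction for (i)--(iv) is close in spirit to the paper's but unnecessarily complicated. The paper freezes at $x_R$, not at $x_Q$: since $\wh A(x_R)=Id$, the frozen operator $\wh T_{x_R}$ \emph{is} the Riesz transform, no change of variables is needed, and one simply sets $g_Q(x)=\int_{-\infty}^0 h_Q(x+tv_Q)\,dt$ and $\Psi_Q=-\Delta g_Q\,v_Q$, so that $\wh T_{x_R}^{H,*}\Psi_Q=\nabla_{v_Q}g_Q=h_Q$ exactly. The error $e_Q$ is then just $\wh T^{H,*}\Psi_Q-\wh T_{x_R}^{H,*}\Psi_Q$, and the final decay bound comes from interpolating (via a geometric mean) a flat bound $|e_Q|\lesssim\delta^{-1}\ell(R)^{\alpha/2}$ on $10B_R$ with the standard mean-zero decay $|e_Q(x)|\lesssim\delta^{-1}\ell(Q)^{n+\alpha/2}(|x-x_Q|+\ell(Q))^{-n-\alpha/2}$. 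Your route through $D_Q$ and $H_Q$ can be made to work but forces you to track the discrepancy between $H$ and $H_Q$ in the projection, which you do not address; freezing at $x_R$ avoids this entirely. Also, there is no ``Riesz-level error $\tilde e_Q$'' in the NTV construction: the identity at the frozen level is exact.

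Your argument for (v), however, has a real gap. The reflection cancellation $|K_{S^H}(x,y)|\lesssim |x-x^*|^{\alpha/2}|x-y|^{-n-\alpha/2}$ requires $|x-x^*|\ll|x-y|$, i.e.\ $|x-y|\gg\Delta\ell(R)$; but your ``far field'' begins at $|x-y|\gtrsim\ell(Q)$, and since $\ell(Q)\ll\Delta\ell(R)$ the entire range $\ell(Q)\lesssim|x-y|\lesssim\Delta\ell(R)$ is left uncovered. In that range there is no reflection gain, the kernel is just $|x-y|^{-n}$, and the resulting quantity is not pointwise bounded --- it is only controlled in $L^2$, which is exactly what you are trying to prove. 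Moreover, invoking ``the $L^2(\mu|_R)$-boundedness of $S_\mu$'' gives you nothing directly about $L^2(\nu)$, since $\nu$ and $\mu|_R$ are different measures. The paper handles (v) by a genuinely different mechanism: it introduces the auxiliary measure $\xi=\sum_{Q\in\NB'(R)}\ell(Q)^{-1}\LL^{n+1}|_{3B_Q}$, checks that $\xi$ has $n$-polynomial growth, writes $|\Psi|\LL^{n+1}=|\wt\Psi|\xi$ with $\|\wt\Psi\|_{L^\infty(\xi)}\lesssim\delta^{-1}$, and then uses David's theorem (boundedness of $\wh T_{\xi,*}$ and $M_n(\cdot\,\xi)$ from $L^2(\xi)$ to $L^2(\mu|_R)$) to get the bound first in $L^2(\mu|_R)$, then transfers it to $L^2(\sigma)$ cube by cube, and finally passes to $L^2(\nu)$ by continuity as $s\to0$. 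This $\xi$-measure trick is the missing idea in your sketch.
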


We remark that in the statement (iv) above,  $\wh T^{H,*}(\Psi_Q\,\LL^{n+1})$ stands for the adjoint of 
$\wh T^{H}$ applied to the vectorial measure $\Psi_Q\,\LL^{n+1}$. That is,
$$\wh T^{H,*}(\Psi_Q\,\LL^{n+1})(x) = \int \wh K^H(y,x)\cdot \Psi_Q(y)\,d\LL^{n+1}(y),$$
where `$\cdot$' is the scalar product. Sometimes, abusing notation, we will write $\wh T^{H,*}\Psi_Q$ instead of
$\wh T^{H,*}(\Psi_Q\,\LL^{n+1})$. We will use analogous notations for other operators.

\begin{proof}
To construct each function $\Psi_Q$ for $Q\in\NB'(R)$ we argue as in \cite[Section 24]{NTV_acta}.
Let $v_Q$ be the unit vector in the direction $z_Q^a-z_Q^b$. Consider the function
$$
g_Q(x)=\int_{-\infty}^0 h_Q(x+tv_Q)\,dt,
$$
so that $\nabla_{v_Q} g_Q=h_Q$.
Since the restriction of $h_Q$ to any line parallel to $v_Q$ consists of two 
opposite bumps, the support of $h_Q$ is contained in the convex hull of $B(z_Q^a,\delta\ell(Q))$
and $B(z_Q^b,\delta\ell(Q))$. Also, since $\|\nabla^j h_Q\|_{L^\infty}\le C(j)[\delta\ell(Q)]^{-j}$
and since $\supp h_Q$ intersects any line parallel to $v_Q$ over two intervals of total
length $4\delta\ell(Q)$ or less, we have 
\begin{equation}\label{eqgqj1}
|\nabla^j g_Q(x)|\leq \int_{-\infty}^0 |(\nabla^j h_Q)(x+tv_Q)|\,dt\leq 
\frac{C(j)}{[\delta\ell(Q)]^{j-1}}
\end{equation}
for all $j\ge 0$. 

We define the vector fields 
$$
\Psi_Q=-\Delta g_Q \,v_Q,\qquad \Psi=\sum_{Q\in\NB'(R)}\Psi_{Q},
$$
so that the properties (i) and (ii) in the lemma hold, because of \rf{eqgqj1}. 
Indeed, the mean zero property holds because the integral
of the Laplacian of a compactly supported $C^\infty$ function over the entire space
is $0$ and the support property holds because the balls $B(x_Q,3\ell(Q))$ 
lie deep inside $3B_R$. 
The property (iii) is also
immediate:
\begin{align}
\int|\Psi|\,d\LL^{n+1} &=\sum_{Q\in\NB'(R)}\int|\Psi_{Q}|\,d\LL^{n+1} \lesssim
\sum_{Q\in\NB'(R)}[\delta\ell(Q)]^{-1}\,\LL^{n+1}(B(x_{Q},3\ell(Q))) 
\\
&\lesssim\delta^{-1}\sum_{Q\in\NB'(R)}\ell(Q)^n
\lesssim\delta^{-1}\sum_{Q\in\NB'(R)}\mu(Q)\lesssim\delta^{-1}\mu(R)\,.
\end{align}

Next we turn our attention to the statement (iv). 
Since $\wh A(x_R)=Id$, the kernel of $\wh T_{x_R}$ is the gradient of the fundamental solution
of the Laplacian (i.e., the Riesz kernel times an absolute constant). Thus,
$\wh T_{x_R} (\Delta g_Q) =\nabla g_Q$ and so
$\wh T_{x_R}^H (\Delta g_Q)=\nabla_H g_Q$. Therefore, since $v_Q\in H$,
$$\wh T_{x_R}^{H,*} \Psi_Q = 
\wh T_{x_R}^{H,*} (-\Delta g_Q\,v_Q) = \wh T_{x_R}^H (\Delta g_Q)\cdot v_Q = 
\wh T_{x_R} (\Delta g_Q)\cdot v_Q = \nabla_{v_Q} g_Q = h_Q.
$$
Hence,
$$\wh T^{H,*} \Psi_Q = h_Q + \bigl(\wh T^{H,*} \Psi_Q  - \wh T_{x_R}^{H,*} \Psi_Q\bigr) =: h_Q + e_Q.$$

We estimate $e_Q$ as follows:
\begin{align}\label{eqali934}
|e_Q(x)| & \leq \bigl|\wh T^{H,*} \Psi_Q (x) - \wh T_{x}^{H,*}\Psi_Q(x)\bigr| + \bigl|\wh T^{H,*}_x \Psi_Q (x) - \wh T_{x_R}^{H,*}\Psi_Q(x)\bigr|.
\end{align}
For the first summand on the right hand side we write
\begin{align}
\bigl|\wh T^{H,*} \Psi_Q (x) - \wh T_{x}^{H,*}\Psi_Q(x)\bigr| &\leq
\int |\wh K^H(y,x) - \wh K^H_x(y,x)|\,|\Psi_Q(y)|\,d\LL^{n+1}(y) \\
& \lesssim\int \frac1{|x-y|^{n-{\alpha/2}}}\,|\Psi_Q(y)|\,d\LL^{n+1}(y)\\
& \lesssim  \frac 1{\delta\,\ell(Q)} \int_{B(x_Q,3\ell(Q))} \frac1{|x-y|^{n-{\alpha/2}}}\,d\LL^{n+1}(y)\\
& \lesssim  \frac 1{\delta\,\ell(Q)}\, \ell(Q)^{1+{\alpha/2}} \lesssim\delta^{-1} \ell(R)^{\alpha/2}.
\end{align} 
Concerning the last summand in \rf{eqali934}, we write
\begin{align}
\bigl|\wh T^{H,*}_x \Psi_Q (x) - \wh T_{x_R}^{H,*}\Psi_Q(x)\bigr|\leq
\int |\wh K^H_x(y-x) - \wh K^H_{x_R}(y-x)|\,|\Psi_Q(y)|\,d\LL^{n+1}(y).
\end{align}
As in \rf{eqdifker5} we have
$$
|\wh K^H_x(y-x) - \wh K^H_{x_R}(y-x)|\leq |\wh K_x(y-x) - \wh K_{x_R}(y-x)|
\lesssim \frac{|x-x_R|^{\alpha/2}}{|x-y|^n}\lesssim\frac{\ell(R)^{\alpha/2}}{|x-y|^n}
$$
for all $x\in 10B_R$. Hence, for such points $x$,
\begin{align}
\bigl|\wh T^{H,*}_x \Psi_Q (x) - \wh T_{x_R}^{H,*}\Psi_Q(x)\bigr| & \lesssim\ell(R)^{\alpha/2}
\int \frac1{|x-y|^n}\,|\Psi_Q(y)|\,d\LL^{n+1}(y)\\
& \lesssim  \frac{\ell(R)^{\alpha/2}}{\delta\,\ell(Q)}\int_{B(x_Q,3\ell(Q))} \frac1{|x-y|^n}\,d\LL^{n+1}(y)\lesssim\delta^{-1} \ell(R)^{\alpha/2}.
\end{align} 
Therefore,
\begin{equation}\label{eqeq11}
|e_Q(x)|\lesssim\delta^{-1}\ell(R)^{\alpha/2}\quad \mbox{ for all $x\in 10B_R$.}
\end{equation}

On the other hand, we also have
$$|e_Q(x)|\leq \bigl|\wh T^{H,*} \Psi_Q (x)\bigr| + \bigl|\wh T_{x_R}^{H,*} \Psi_Q(x)\bigr|.$$
For $x\in 6B_Q$, we have
$$\bigl|\wh T^{H,*} \Psi_Q (x)\bigr| \lesssim 
\frac1{\delta\,\ell(Q)}\int_{B(x_Q,3\ell(Q))} \frac1{|x-y|^n}\,d\LL^{n+1}(y) \lesssim\delta^{-1}.$$
Using that $\Psi_Q$ has zero mean and standard estimates, for $x\in (6B_Q)^c$ we get
\begin{align}
\bigl|\wh T^{H,*} \Psi_Q (x)\bigr| &\leq \int |\wh K^H(y-x) - \wh K^H(x_Q-x)|\,|\Psi_Q(y)|\,d\LL^{n+1}(y)\\
&\lesssim \frac{1}{\delta\,\ell(Q)}\int_{B(x_Q,3\ell(Q))} \frac{\ell(Q)^{\alpha/2}}{|x-x_Q|^{n+{\alpha/2}}}\,d\LL^{n+1}(y)\\
&\lesssim
 \frac{\ell(Q)^{n+{\alpha/2}}}{\delta\,|x-x_Q|^{n+{\alpha/2}}}.
\end{align}
So we infer that for all $x\in\R^{n+1}$,
$$\bigl|\wh T^{H,*} \Psi_Q (x)\bigr| \lesssim  \frac{\delta^{-1}\ell(Q)^{n+{\alpha/2}}}{(\ell(Q)+|x-x_Q|)^{n+{\alpha/2}}}.$$
The same estimate holds for $\bigl|\wh T^{H,*}_x \Psi_Q (x)\bigr|$, and thus
\begin{equation}\label{eqeq23}
|e_Q(x)| \lesssim  \frac{\delta^{-1}\ell(Q)^{n+{\alpha/2}}}{(\ell(Q)+|x-x_Q|)^{n+{\alpha/2}}}\quad
\mbox{ for all $x\in\R^{n+1}$.}
\end{equation}

Denote $\overline\gamma=\alpha/(2(2n+\alpha))$. Notice that $\overline\gamma\alpha/2=\alpha^2/(4(2n+\alpha))<1/4$ and $(1-\overline\gamma)(n+\gamma)=n+\alpha/4$. So, by taking a suitable weighted geometric mean of \rf{eqeq11} and \rf{eqeq23}, we obtain
$$|e_Q(x)|=|e_Q(x)|^{\overline\gamma}|e_Q(x)|^{1-\overline\gamma}\lesssim \frac{\delta^{-1}\ell(R)^{\alpha^2/(4(2n+\alpha))}\ell(Q)^{n+\alpha/4}}{\big(|x-x_Q|+\ell(Q)\big)^{n+\alpha/4}}$$
for all $x\in 10B_R$, which completes the proof of (iv) by choosing 
$\tilde\gamma=\alpha^2/(4(2n+\alpha))$ and $\tilde\beta=\alpha/4$.

\vspace{2mm}
Finally we turn our attention to the estimate (v).
First we will show that
\begin{equation}\label{eqfifi58}
\|S^H(|\Psi|\LL^{n+1})\|_{L^2(\mu|_R)}^2\leq C(\delta)\,\mu(R).
\end{equation}
We consider the auxiliary measure 
$$\xi = \sum_{Q\in\NB'(R)} \frac1{\ell(Q)}\,\LL^{n+1}|_{3B_Q}.$$
We claim that $\xi$ has $n$-polynomial growth. That is,
\begin{equation}\label{eqgrowthxi}
\xi(B(x,r))\lesssim r^n\quad \mbox{ for all $x\in\R^{n+1}$, $r>0$.}
\end{equation}
The arguments to prove this are standard, but we show the details for the reader's convenience.
It suffices to prove the preceding inequality for $x\in\supp\xi\subset \bigcup_{Q\in\NB'(R)} 3{B_Q}$. 
So fix a point $x\in 3{B_Q}$, for some $Q\in\NB'(R)$. Since the balls $4B_P$, $P\in\NB'(R)$, are pairwise disjoint, it is clear that the condition \rf{eqgrowthxi} 
holds for $r< \ell(Q)$.
In the case $r\geq \ell(Q)$, let $I(x,r)$ denote the family of cubes $P\in\NB'(R)$ such that $3{B_P}\cap B(x,r)\neq
\varnothing$. Taking into account again that the balls $4B_S$, $S\in\NB'(R)$, are pairwise disjoint,
it follows that, for any $P\in I(x,r)$, $r\geq \ell(P)$ and then $B_P\subset B(x,7r)$. Therefore,
$$\xi(B(x,r))\leq \sum_{P\in I(x,r)} \xi(3B_P) \approx \sum_{P\in I(x,r)} \ell(P)^n\leq \sum_{P\in I(x,r)} \mu(P)
\leq \mu(B(x,7r))\lesssim r^n.
$$

Recall now that $\mu|_R$ is $n$-AD-regular and $\wh T_{\mu|_R}$ is bounded in $L^2(\mu|_R)$.  As a consequence, the maximal operator
$$\wh T_{\xi,*}f(x) = \sup_{\ve>0} |\wh T_{\xi,\ve}f(x)| = \sup_{\ve>0} \Big|\int_{|x-y|>\ve} \wh K(x,y)\,f(y)\,d\xi(y)\Big|$$
is bounded from $L^2(\xi)$ to $L^2(\mu|_R)$ (see Proposition 5 from \cite{David-corbes}).

Consider the vector field $\wt\Psi$ defined by
$$\wt\Psi = \sum_{Q\in\NB'(R)}\ell(Q)\,\Psi_Q,$$
so that $|\Psi|\,\LL^{n+1} = |\wt \Psi|\,\xi$. Observe that, by (ii),
$$\|\wt\Psi\|_{L^\infty(\xi)}\lesssim \delta^{-1},$$
and thus
\begin{align}
\|\wt\Psi\|_{L^2(\xi)}^2 
\lesssim\delta^{-2}\!\!\sum_{Q\in\NB'(R)}\ell(Q)^n
\lesssim\delta^{-2}\!\!\sum_{Q\in\NB'(R)}\mu(Q)\lesssim\delta^{-2}\,\mu(R).
\end{align}
For each $x\in R$, we split
$$|S^H(|\Psi|\,\LL^{n+1})(x)| = |S^H(|\wt\Psi|\,\xi)(x)| 
\leq |\wh T(|\wt\Psi|\xi)(x)| + |\wh T(|\wt\Psi|\xi)(x^*)|.$$
By standard estimates, it is also immediate to check that
$$|\wh T(|\wt\Psi|\xi)(x^*)| \leq |\wh T_{*}(|\wt\Psi|\xi)(x)| + M_n (|\wt\Psi|\xi)(x),$$
where $M_n$ is the maximal radial operator
\begin{equation}\label{maximalradial}
M_n \tau(x) = \sup_{r>0}\frac{|\tau|(B(x,r))}{r^n},
\end{equation}
for any signed measure $\tau$.
So we deduce that
$$\|S^H(|\Psi|\,\LL^{n+1})\|_{L^2(\mu|_R)}^2 \lesssim
\|\wh T_{\xi,*}(|\wt\Psi|)\|_{L^2(\mu|_R)}^2 + \|M_n (|\wt\Psi|\xi)\|_{L^2(\mu|_R)}^2.$$
Analogously to $\wh T_{\xi,*}$, the operator $M_n(\cdot\,\xi)$ is also bounded from $L^2(\xi)$ to $L^2(\mu|_R)$ (see
\cite{David-corbes} again). Hence,
\begin{align}\label{equl2**}
\|S^H(|\Psi|\,\LL^{n+1})\|_{L^2(\mu|_R)}^2\lesssim \|\wt\Psi\|_{L^2(\xi)}^2\lesssim\delta^{-2}\,\mu(R).
\end{align}

Our next objective is to prove the analogous estimate in $L^2(\sigma)$, that is,
$$\|S^H(|\Psi|\LL^{n+1})\|_{L^2(\sigma)}^2\leq C(\delta)\,\mu(R).$$
Recall that $\sigma = \sum_{P\in{\Ch}_{\Stop}(R)}\sigma_P$, where $\sigma_P =g_P\,\HH^n|_{L_P}$, with $g_P\lesssim \chi_{2B_P}$. So we have
$$\|S^H(|\Psi|\LL^{n+1})\|_{L^2(\sigma)}^2=  \sum_{P\in{\Ch}_{\Stop}(R)}\|S^H(|\Psi|\LL^{n+1})\|_{L^2(\sigma_P)}^2.$$

For each $P\in{\Ch}_{\Stop}(R)$ we split
\begin{equation}\label{eqspli84}
\|S^H(|\Psi|\LL^{n+1})\|_{L^2(\sigma_P)}^2 \leq 2 \int |S^H(\chi_{3B_P}|\Psi|\LL^{n+1})|^2\,d\sigma_P
+ 2 \int |S^H(\chi_{(3B_P)^c}|\Psi|\LL^{n+1})|^2\,d\sigma_P.
\end{equation}
Concerning the first summand on the right hand side, we have
\begin{equation}\label{eqspli84'}
\int |S^H(\chi_{3B_P}|\Psi|\LL^{n+1})|^2\,d\sigma_P\lesssim 
\int |S^H(\chi_{3B_P}|\Psi|\LL^{n+1})|^2\,d\HH^n|_{L_P}.
\end{equation}
Since $\wh T_{\HH^n|_{L_P}}$ is bounded in $L^2(\HH^n|_{L_P})$, the same argument as in \rf{equl2**} shows
that 
\begin{equation}\label{eqprim584}
\|S^H(\chi_{3B_P}|\Psi|\,\LL^{n+1})\|_{L^2(\HH^n|_{L_P})}^2\lesssim \|\chi_{3B_P}\wt\Psi\|_{L^2(\xi)}^2\lesssim \delta^{-2}\,\ell(P)^n,
\end{equation}
taking into account that $\|\wt\Psi\|_{L^\infty(\xi)}\lesssim \delta^{-1}$ and the polynomial growth of $\xi$ for the last inequality.

To estimate the last integral on the right hand side of \rf{eqspli84} we will show first that
\begin{equation}\label{eqdif832}
\bigl|S^H(\chi_{(3B_P)^c}|\Psi|\LL^{n+1})(x) - S^H(\chi_{(3B_P)^c}|\Psi|\LL^{n+1})(y)\bigr|\lesssim \delta^{-1}\quad
\mbox{ for all $x,y\in 2B_P$.}
\end{equation}
To this end, note that the left hand side above equals
\begin{align}
\bigl|S^H(\chi_{(3B_P)^c}|\wt\Psi|\xi)(x) - S^H(\chi_{(3B_P)^c}|\wt\Psi|\xi)(y)\bigr|
& \leq
\bigl|\wh T_\xi^H(\chi_{(3B_P)^c}|\wt\Psi|)(x) - \wh T_\xi^H(\chi_{(3B_P)^c}|\wt\Psi|)(y)\bigr|\\
& \quad + \bigl|\wh T_\xi^H(\chi_{(3B_P)^c}|\wt\Psi|)(x^*) - \wh T_\xi^H(\chi_{(3B_P)^c}|\wt\Psi|)(y^*)\bigr|.
\end{align}
Taking into account that both $x$ and $y$ are far from the $\supp(\chi_{(3B_P)^c}|\wt\Psi|)$, more precisely,
$|x-y|\lesssim \ell(P)\lesssim \min(\dist(x,(3B_P)^c),\dist(y,(3B_P)^c))$, by standard estimates from Calder\'on-Zygmund theory it follows that
$$\bigl|\wh T_\xi^H(\chi_{(3B_P)^c}|\wt\Psi|)(x) - \wh T_\xi^H(\chi_{(3B_P)^c}|\wt\Psi|)(y)\bigr|\lesssim 
\big\|\chi_{(3B_P)^c}|\wt\Psi|\big\|_{L^\infty(\xi)} \lesssim \delta^{-1}.$$
By analogous reasons, the same estimate holds replacing $x$ by $x^*$ and $y$ by $y^*$. Hence, \rf{eqdif832}
is proven.

From \rf{eqdif832} we infer that
\begin{align}
\big\|S^H(\chi_{(3B_P)^c}|\Psi|\LL^{n+1})\big\|_{\infty,2B_Q} &\leq \bigl|m_{\mu,P}(S^H(\chi_{(3B_P)^c}|\Psi|\LL^{n+1}))\bigr| + C\delta^{-1}\\
& \leq \bigl|m_{\mu,P}(S^H(|\Psi|\LL^{n+1}))\bigr| + \bigl|m_{\mu,P}(S^H(\chi_{3B_P}|\Psi|\LL^{n+1}))\bigr| + C\delta^{-1}.
\end{align}
Arguing again as in \rf{equl2**}, we obtain
$$\bigl|m_{\mu,P}(S^H(\chi_{3B_P}|\Psi|\LL^{n+1}))\bigr|^2
\leq m_{\mu,P}\bigl(|S^H(\chi_{3B_P}|\Psi|\LL^{n+1})|^2\bigr) \lesssim \frac1{\mu(P)} \,\bigl\|\chi_{3B_P}|\wt\Psi|\bigr\|_{L^2(\xi)}^2\lesssim\delta^{-2}.$$
Therefore,
$$\|S^H(\chi_{(3B_P)^c}|\Psi|\LL^{n+1})\|_{\infty,2B_P} \leq \bigl|m_{\mu,P}(S^H(|\Psi|\LL^{n+1}))\bigr|  + C\delta^{-1}.$$
As a consequence,
\begin{align}
\int |S^H(\chi_{(3B_P)^c}|\Psi|\LL^{n+1})|^2\,d\sigma_P \lesssim \bigl|m_{\mu,P}(S^H(|\Psi|\LL^{n+1}))\bigr|^2\,\ell(P)^n + \delta^{-2}\ell(P)^n.
\end{align}
Together with \rf{eqspli84'} and \rf{eqprim584}, this yields
\begin{align}
\int |S^H(|\Psi|\LL^{n+1})|^2\,d\sigma_P & \lesssim \bigl|m_{\mu,P}(S^H(|\Psi|\LL^{n+1}))\bigr|^2\,\ell(P)^n + \delta^{-2}\ell(P)^n\\
&\lesssim \int_P |S^H(|\Psi|\LL^{n+1})|^2\,d\mu + \delta^{-2}\ell(P)^n.
\end{align}
Summing on $P\in{\Ch}_{\Stop}(R)$ and using \rf{eqfifi58}, we obtain
$$\|S^H(|\Psi|\LL^{n+1})\|_{L^2(\sigma)}^2 \lesssim \|S^H(|\Psi|\LL^{n+1})\|_{L^2(\mu|_R)}^2+ \delta^{-2}\ell(R)^n\leq C(\delta)\,\ell(R)^n.$$

To prove the final estimate in (v) we just use the preceding inequality and take into account that
\begin{align}
\int |S^H(|\Psi|\LL^{n+1})|^2\,d\nu & = \int |S^H(|\Psi|\LL^{n+1})|^2\,d(\varphi_s*\sigma) \\
&=
\int (|S^H(|\Psi|\LL^{n+1})|^2)*\varphi_s \,d\sigma\to \int |S^H(|\Psi|\LL^{n+1})|^2\,d\sigma
\end{align}
as $s\to0$, since $|S^H(|\Psi|\LL^{n+1})|^2$ is a continuous function.
\end{proof}


\section{The variational argument} \label{sec12}

In this section we will prove the following:

\begin{prop} \label{lemavar}
Let $R\in\Nice$ and $\nu$ be as in Section \ref{secnou9}. 
Suppose that $\varepsilon$ and $\ell(R)$ are small enough, depending on the non-BAUPness parameter $\delta$.
Then we have
$$\| S^H\nu\|^2_{L^2(\sigma)}\geq c_8(\delta)\,\mu(R).$$
\end{prop}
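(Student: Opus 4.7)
The plan is to test the operator $S^H\nu$ against the Lipschitz vector field $\Psi$ constructed in Lemma~\ref{lemapsi}, exploiting the reflection symmetry of $\wh A$ across $H$ to produce a quantitative lower bound of order $\mu(R)$ on the pairing, and then to convert this into the claimed $L^2(\sigma)$ lower bound via Cauchy--Schwarz together with a norm-transfer step. Set
$$I := \int S^H\nu \cdot \Psi\,d\LL^{n+1}.$$
Using $S^H\nu(x) = \wh T^H\nu(x) - \wh T^H\nu(x^*)$, Fubini, and the substitution $x \mapsto x^*$ together with the reflection identity $\wh K^H(x,y) = \wh K^H(x^*,y^*)$ from \rf{eqref56}, we obtain $I = \int \wh T^{H,*}\Psi\,d(\nu - \nu^*)$, where $\nu^* = \phi_\sharp \nu$. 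By property (iv) of Lemma~\ref{lemapsi}, $\wh T^{H,*}\Psi = h + e$, and since $\supp h$ lies at height $\geq \Delta\ell(R)/2$ above $H$ while $\supp\nu^* \subset \R^{n+1}_-$, the function $h$ vanishes on $\supp\nu^*$, giving
$$I = \int h\,d\nu + \int e\,d(\nu - \nu^*).$$

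For the main term, Lemma~\ref{lemeta} yields $\int h\,d\nu \geq c_7(\delta)\|\nu\| \gtrsim c_7(\delta)\mu(R)$, using that $\|\nu\| = \|\sigma\|$ and $\|\sigma\| = \|\tmu\| \geq \mu(R) - Ct^{\gamma_0}\mu(R)$ by \rf{eqdif79}. For the error, the pointwise bound from (iv) of Lemma~\ref{lemapsi} gives, for $x \in 10B_R$,
$$|e(x)| \lesssim C(\delta)\,\ell(R)^{\tilde\gamma}\!\sum_{Q\in\NB'(R)} \frac{\ell(Q)^{n+\tilde\beta}}{(|x-x_Q|+\ell(Q))^{n+\tilde\beta}}.$$
Since $\nu + \nu^*$ has $n$-growth and support contained in $10B_R$, a dyadic-annular computation gives
$$\int \frac{\ell(Q)^{n+\tilde\beta}}{(|x-x_Q|+\ell(Q))^{n+\tilde\beta}}\,d(\nu+\nu^*)(x) \lesssim \ell(Q)^n \lesssim \mu(Q),$$
and summing over $Q \in \NB'(R)$ yields $\bigl|\int e\,d(\nu-\nu^*)\bigr| \lesssim C(\delta)\ell(R)^{\tilde\gamma}\mu(R)$, which is at most $\tfrac12 c_7(\delta)\mu(R)$ provided $\ell(R)$ is small enough depending on $\delta$. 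Hence $I \geq \tfrac12 c_7(\delta)\mu(R)$.

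For the upper bound, Cauchy--Schwarz in $L^2(|\Psi|\LL^{n+1})$ combined with (iii) of Lemma~\ref{lemapsi} gives
$$|I| \leq \|S^H\nu\|_{L^2(|\Psi|\LL^{n+1})}\,\|\Psi\|_{L^1(\LL^{n+1})}^{1/2} \lesssim \delta^{-1/2}\,\mu(R)^{1/2}\,\|S^H\nu\|_{L^2(|\Psi|\LL^{n+1})},$$
and squaring we find $\|S^H\nu\|_{L^2(|\Psi|\LL^{n+1})}^2 \gtrsim c(\delta)\mu(R)$. The main obstacle is then the transfer of this lower bound from the bulk measure $|\Psi|\LL^{n+1}$ to the hyperplane measure $\sigma$. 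I would handle this by expanding
$$\int |S^H\nu|^2\,d(|\Psi|\LL^{n+1}) = \int S^H\nu(x)\cdot \biggl(\int K_{S^H}(x,y)\,d\nu(y)\biggr)|\Psi(x)|\,dx$$
and applying Fubini to rewrite it as a pairing of $S^H\nu$ on $\nu$ against the adjoint-type operator $\int K_{S^H}(x,\cdot)\cdot S^H\nu(x)|\Psi(x)|\,dx$; using the near-antisymmetry $\wh K(x,y) \approx -\wh K(y,x)$ from \rf{remantis} and property (v) of Lemma~\ref{lemapsi} together with a $T(1)$-type argument, one can bound this iterated pairing by $C(\delta)\|S^H\nu\|_{L^2(\nu)}^2 + o(1)\mu(R)$, and conclude since $\|S^H\nu\|_{L^2(\nu)}$ is comparable to $\|S^H\nu\|_{L^2(\sigma)}$ for $s$ small (as $\nu = \sigma*\varphi_s$ and $|S^H\nu|^2$ is continuous). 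Combining the lower and upper bounds yields $\|S^H\nu\|_{L^2(\sigma)}^2 \geq c_8(\delta)\mu(R)$ as claimed.
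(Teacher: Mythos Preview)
Your lower bound on $I=\int S^H\nu\cdot\Psi\,d\LL^{n+1}$ is correct and mirrors the paper's computation in \rf{eqsh100}--\rf{eqnb56}. The genuine gap is the final ``transfer'' from $\|S^H\nu\|_{L^2(|\Psi|\LL^{n+1})}^2$ to $\|S^H\nu\|_{L^2(\nu)}^2$ (or $L^2(\sigma)$). You assert that the iterated pairing can be bounded by $C(\delta)\|S^H\nu\|_{L^2(\nu)}^2+o(1)\mu(R)$ via near-antisymmetry and a $T(1)$-type argument, but no such mechanism is available. The measures $|\Psi|\,\LL^{n+1}$ and $\nu$ have disjoint supports at different scales, and there is no a~priori reason why $|S^H\nu|$ on $\supp\Psi$ should be dominated by its values on $\supp\nu$. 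Indeed, the non-BAUP condition is designed precisely to force $|S^H\nu|$ to be \emph{large} in the bulk; the whole content of the proposition is that this largeness propagates back to $\supp\nu$. Near-antisymmetry only rewrites the bilinear form, it does not change which measure you are integrating against; and $T(1)$ theorems concern operator boundedness, not comparison of a fixed function in two different $L^2$-spaces.

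The paper closes exactly this gap with the variational argument (Lemma~\ref{Le:DesigualdadAERiNu}) plus the maximum principle (Lemma~\ref{Le:PrMax}). Assuming $\int|S^H\nu|^2\,d\nu\le\lambda\,\|\nu\|$, a minimisation produces $\eta=b\nu$ satisfying the pointwise bound $|S^H\eta|^2+4\,S^{H,*}((S^H\eta)\eta)\le C(\delta)\lambda$ on $\supp\eta$. The crucial observation is that each component of $S^{H,*}\vec\omega$ is $L_{\wh A}$-harmonic in $\R^{n+1}_+\setminus\supp\vec\omega$ (because $x\mapsto\partial_{y_k}\EE_{\wh A}(y,x)$ is $L_{\wh A}$-harmonic), whereas $S^H\nu$ itself is \emph{not} --- for variable coefficients, derivatives of $L_{\wh A}$-harmonic functions fail to be harmonic. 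The maximum principle for $S^{H,*}$ then extends the pointwise bound to all of $\R^{n+1}_+$, in particular to $\supp\Psi$, giving $\int|S^H\eta|^2|\Psi|\,d\LL^{n+1}\le C(\delta)(\lambda^{1/2}+\ell(R)^{\alpha/2})\mu(R)$. Your lower-bound argument, run with $\eta$ in place of $\nu$ (the minimiser still satisfies $\int h\,d\eta\ge c_7(\delta)\|\nu\|$), then yields the contradiction. Without the variational step producing a pointwise inequality and the max-principle step propagating it, the bridge between the bulk and the hyperplanes is missing.
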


Together with Lemma \ref{lem16} this shows that, for each $R\in\Nice$, $\|T_R{\mu}\|^2_{L^2(\mu)}\geq\varepsilon_1\,\mu(R)$,
assuming that $\ve$, $\ell(R)$, $t$, and $\Delta$ are small enough and $M$ is big enough. This proves Proposition \ref{prop3} and Theorem \ref{teo1}.

\subsection{A pointwise inequality}

The first step to prove Proposition \ref{lemavar} is the next one.

\begin{lemm}
\label{Le:DesigualdadAERiNu}
Suppose that for some $0<\lambda\leq1$ the inequality
\begin{equation}
\int |S^H\nu|^2 d\nu \leq \lambda \,\nu(\R^{n+1})
\end{equation}
holds. Let $h$ be the function in Lemma \ref{lemeta} and $c_7(\delta)$ the constant in the same lemma.
Then, there is some 
function $b\in L^{\infty}(\nu)$ such that
\begin{enumerate}
\item[(i)]\label{le:3-1} $0\leq b\leq 2$,
\item[(ii)]\label{le:3-3} $\displaystyle\int b\,h\,d\nu \geq c_7(\delta) \,\nu(\R^{n+1})$,
\end{enumerate}
and such that the measure $\eta = b\nu$ satisfies
\begin{equation}\label{eqa331}
\int |S^H\eta|^2 d\eta \leq 2\lambda \,\nu(\R^{n+1})
\end{equation}
and
\begin{equation}
\label{eq:DesCTPNu}
|S^H\eta(x)|^2 + 2S^{H,*}((S^H\eta)\eta)(x) \leq 6c_7(\delta)^{-1}\lambda \quad\mbox{ for $\eta$-a.e. $x\in\R^{n+1}$.}
\end{equation}
\end{lemm}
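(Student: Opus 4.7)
The plan is a constrained variational argument in the spirit of \cite[Section 26]{NTV_acta}. Define the weak-$*$ compact convex subset
\[
\mathcal{B} := \Bigl\{b \in L^\infty(\nu) : 0 \le b \le 2,\ \textstyle\int b h\, d\nu \ge c_7(\delta)\,\nu(\R^{n+1})\Bigr\},
\]
which contains $b \equiv 1$ by Lemma \ref{lemeta}, together with the cubic functional $F(b) = \int |S^H(b\nu)|^2\, b\, d\nu$. Because $\nu$ is compactly supported with $n$-growth and $K_S^H$ is a locally Calder\'on--Zygmund kernel, $b \mapsto S^H(b\nu)$ is sequentially weak-$*$ to pointwise continuous off $\supp\nu$ and uniformly bounded, so $F$ is weak-$*$ lower semicontinuous on $\mathcal{B}$. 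Combined with compactness this produces a minimizer $b_0 \in \mathcal{B}$; set $\eta = b_0 \nu$. The comparison $F(b_0) \le F(1) = \int |S^H\nu|^2\,d\nu \le \lambda\,\nu(\R^{n+1})$ yields the integral estimate \eqref{eqa331} immediately.

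For the pointwise estimate, I would compute the first variation of $F$. Using the adjoint pairing $\int \langle S^H(\varphi\nu), G\rangle\,d\nu = \int \varphi\, (S^H)^*(G\nu)\,d\nu$, a direct differentiation gives
\[
\frac{d}{dt}\bigg|_{t=0} F(b_0 + t\varphi) = \int \varphi\, W\, d\nu, \qquad W := |S^H\eta|^2 + 2\,(S^H)^*\bigl((S^H\eta)\,\eta\bigr).
\]
The Karush--Kuhn--Tucker theorem at $b_0$ then produces a Lagrange multiplier $\mu \ge 0$ (with $\mu = 0$ unless the mass constraint is tight at $b_0$) for which, $\nu$-a.e.,
\[
W \ge \mu h \ \text{on}\ \{b_0 = 0\}, \qquad W = \mu h \ \text{on}\ \{0<b_0<2\}, \qquad W \le \mu h \ \text{on}\ \{b_0 = 2\}.
\]
In particular, the inequality $W \le \mu h$ holds pointwise on $\supp\eta = \{b_0 > 0\}$.

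The principal remaining step, and the expected main obstacle, is to upgrade this to the absolute bound $\mu \le 6\,c_7(\delta)^{-1}\lambda$; combined with $0 \le h \le 1$ on $\supp\nu$ (from Lemma \ref{lemeta}), this gives the target pointwise estimate on $\supp\eta$. The plan is to combine two inputs. First, taking $\varphi = b_0$ in the first-variation formula and unwinding the adjoint yields the identity $\int W b_0\, d\nu = 3F(b_0) \le 3\lambda\,\nu(\R^{n+1})$. Second, the minimality inequality $\int W\psi\, d\nu \ge \int W b_0\, d\nu$ is tested against the explicit competitor $\psi = 2\chi_{\{h \ge c_7(\delta)/2\}}$, which lies in $\mathcal{B}$ by the elementary bound $2\int_{\{h \ge c_7/2\}} h\, d\nu \ge c_7(\delta)\,\nu(\R^{n+1})$ deduced from Lemma \ref{lemeta} and $h \le 1$. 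Combining these with the pointwise KKT relations and the tightness $\int b_0 h\,d\nu = c_7(\delta)\,\nu(\R^{n+1})$ (the only nontrivial case $\mu > 0$) produces the required bound on $\mu$, yielding \eqref{eq:DesCTPNu} $\eta$-a.e.\ and completing the proof.
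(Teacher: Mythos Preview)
Your overall strategy is in the same spirit as the paper, but the step you flag as ``the principal remaining step'' is a genuine gap, and the sketch you give for it does not close. The difficulty is that by imposing the hard constraint $0\le b\le 2$ instead of the paper's penalty term $\lambda\|a\|_{L^\infty(\nu)}\nu(\R^{n+1})$, you introduce an extra multiplier $\beta\ge0$ for the upper bound. The first-order relation $W=\mu h+\alpha-\beta$, $\alpha b_0=0$, $\beta(2-b_0)=0$, together with the homogeneity identity $\int Wb_0\,d\nu=3F(b_0)$ and tightness, gives
\[
\mu\,c_7(\delta)\,\nu(\R^{n+1})=3F(b_0)+2\int\beta\,d\nu,
\]
so an upper bound on $\mu$ requires controlling $\int\beta\,d\nu$. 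Your competitor $\psi=2\chi_{\{h\ge c_7/2\}}$ and the first-order optimality inequality $\int W\psi\,d\nu\ge\int Wb_0\,d\nu$ go the wrong way: substituting $W=\mu h+\alpha-\beta$ into $\int W\psi\,d\nu\ge3F(b_0)$ and subtracting the displayed identity yields only the trivially nonnegative combination $2\mu(\int_{\{h\ge c_7/2\}}h\,d\nu-\tfrac{c_7}{2}\nu)+2\int_{\{h\ge c_7/2\}}\alpha\,d\nu+2\int_{\{h<c_7/2\}}\beta\,d\nu\ge0$. No upper bound on $\mu$ (equivalently, on $\int\beta\,d\nu$) follows, and nothing else in your outline supplies one. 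Concretely, if the minimizer saturates $b_0=2$ on a set of significant $h$-mass, $\beta$ is uncontrolled.

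The paper avoids this by minimizing $J(a)=\lambda\|a\|_{L^\infty(\nu)}\nu(\R^{n+1})+F(a)$ over $\{a\ge0:\int ah\,d\nu\ge c_7\nu\}$ with \emph{no} upper constraint on $a$. The penalty forces $\|b\|_\infty\le2$ at the minimum and, crucially, permits the multiplicative perturbation $b_t=(1-t\chi_B+t\,\frac{(h\eta)(B)}{(h\eta)(\R^{n+1})})\,b$, which remains admissible because there is no ceiling to violate. Differentiating $J$ along this path and using $J(b)\le2\lambda\nu(\R^{n+1})$ gives directly $\int_B W\,d\eta\le6c_7(\delta)^{-1}\lambda\,\eta(B)$ for every ball $B$, and Lebesgue differentiation finishes. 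In your setup the same perturbation exits $\mathcal B$ wherever $b_0=2$, which is exactly the region responsible for the uncontrolled $\beta$. The fix is to replace the hard upper bound by the penalty; after that your KKT bookkeeping is unnecessary and the argument reduces to the one-parameter computation above.
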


\begin{proof}
In order to find such a function $b$, we consider the following class of admissible functions
\begin{equation}
\mathcal{A} = \Bigl\{ a\in L^{\infty}(\nu): \,a\geq0,\, \textstyle\int a\,h\,d\nu\geq c_7(\delta)\,\nu(\R^{n+1}) \Bigr\}
\end{equation}
and we define a functional $J$ on $\mathcal{A}$ by
\begin{equation}
J(a) = \lambda \|a\|_{L^\infty(\nu)}\,\nu(\R^{n+1}) + \int |S^H(a\nu)|^2 a\, d\nu.
\end{equation}

Observe that $1\in \mathcal{A}$ and 
$$
 J(1) = \lambda \,\nu(\R^{n+1}) + \int |S^H\nu|^2\, d\nu \leq 2\lambda \,\nu(\R^{n+1}).
$$
Thus 
$$\inf_{a\in\cA} J(a)\leq2\lambda \,\nu(\R^{n+1}).$$
Since $J(a)\geq \lambda \|a\|_{L^\infty(\nu)}\,\nu(\R^{n+1})$,
it is clear that
$$\inf_{a\in\cA} J(a) = \inf_{a\in\cA:\|a\|_{L^\infty(\nu)}\leq 2} J(a).$$
We claim that $J$ attains a global minimum on $\mathcal{A}$, i.e. there is a function $b\in \mathcal{A}$ such that $J(b)\leq J(a)$ for all $a\in \mathcal{A}$. 
Indeed, by the Banach-Alaoglu theorem there exists a sequence $\{a_k\}_k\subset \cA$, with 
$J(a_k)\to \inf_{a\in\cA} J(a)$, $\|a_k\|_{L^\infty(\nu)}\leq 2$, 
so that $a_k$ converges weakly $*$ in $L^\infty(\nu)$ to some function
$b\in \cA$. 
It is clear that $b$ satisfies (i) and (ii). Recall that we denoted by $K_S^H$ the kernel of $S^H$.
Since $y\mapsto K_S^H(x,y)$ belongs to $L^1(\nu)$
(recall that $\nu$ has bounded density with respect to Lebesgue measure), it follows that
for all $x\in \R^{n+1}$ $S^H(a_k\nu)(x)\to S^H(b\nu)(x)$ as
$k\to\infty$. 
Taking into account that, for every $k$,
$$|S^H(a_k\nu)(x)| \lesssim
\int\frac1{|x-y|^n}\,d\nu(y) <\infty$$
by the dominated convergence theorem we infer that
$$\int |S^H(a_k\nu)|^2 d\nu \to \int |S^H(b\nu)|^2 d\nu\quad\mbox{ as\;
$k\to\infty$.}$$  
Using also that $\|b\|_{L^\infty(\nu)}\leq \limsup_k \|a_k\|_{L^\infty(\nu)}$, it follows 
that $J(b)\leq \limsup_k J(a_k)$, which proves the claim that $J(\cdot)$ attains a minimum at $b$.

\vspace{2mm}

The estimate \rf{eqa331} for $\eta=b\,\nu$ follows from the fact that $J(b)\leq J(1)\leq 2\lambda\,\nu(\R^{n+1})$.

\vspace{2mm}
To prove \rf{eq:DesCTPNu} we will apply a variational argument taking advantage of the fact that $b$ is a minimizer for $J$. Let $B$ be any ball centered in $\supp\eta$. Now, for every $0\leq t<1$, define
\begin{equation}
b_t = (1-t\chi_{B})b + t\,\frac{(h\eta)(B)}{(h\eta)(\R^{n+1})}\,b,
\end{equation}
where we used  the notation $(h\eta)(A)=\int_Ah\,d\eta$. To make the writing easier, we will also write below just
$(h\eta)(A)$.
It is clear that $b_t\in\mathcal{A}$ for all $0\leq t<1$ and $b_0=b$. Therefore,
\begin{equation}
\begin{aligned}
J(b) & \leq J(b_t) = \lambda \|b_t\|_{\infty} \nu(\R^{n+1}) + \int  | S^H(b_t\nu)|^2 b_t\,d\nu 
\\
	&\leq \lambda \left(1+t\frac{(h\eta)(B)}{(h\eta)(\R^{n+1})}\right)\|b\|_{\infty}\nu(\R^{n+1}) + \int  | S^H(b_t\nu)|^2 b_t\, d\nu := H(t).
\end{aligned}
\end{equation}
\par\medskip
Since $H(0)=J(b)$, we have that $H(0)\leq H(t)$ for $0\leq t<1$, thus $H'(0+)\geq 0$
(assuming that $H'(0+)$ exists). Notice that
\begin{equation}
\frac{db_t}{dt}\Big|_{t=0}  = -\chi_{B}b + \frac{(h\eta)(B)}{(h\eta)(\R^{n+1})}\,b,
\end{equation} 
Therefore,
\begin{align}
0 \leq H'(&0+) = \lambda\frac{(h\eta)(B)}{(h\eta)(\R^{n+1})}\,\|b\|_{\infty}\nu(\R^{n+1}) + \frac{d}{dt}\Big|_{t=0}\int  | S^H(b_t\nu)|^2 b_t d\nu \\
	&= \lambda\frac{(h\eta)(B)}{(h\eta)(\R^{n+1})}\|b\|_{\infty}\nu(\R^{n+1}) + 2\int  \!  S^H \left(\frac{db_t}{dt}\Big|_{t=0}\nu\right)\cdot  S^H\eta \,\, b\,d\nu + \!\int  | S^H\eta|^2 \frac{db_t}{dt}\Big|_{t=0}d\nu\\
	&=  \lambda\frac{(h\eta)(B)}{(h\eta)(\R^{n+1})}\,\|b\|_{\infty}\nu(\R^{n+1}) + 2\int   S^H\left(\left(-\chi_{B}b + \frac{(h\eta)(B)}{(h\eta)(\R^{n+1})}b\right)\nu\right)\cdot  S^H\eta \,\, b\,d\nu \\
	& \qquad+ \int  | S^H\eta|^2 \left(-\chi_{B}b + \frac{(h\eta)(B)}{(h\eta)(\R^{n+1})}b\right)d\nu\\
	&=  \lambda\frac{(h\eta)(B)}{(h\eta)(\R^{n+1})}\|b\|_{\infty}\nu(\R^{n+1}) - 2\int    S^H(\chi_{B}\eta)\cdot S^H\eta\,d\eta + 2\frac{(h\eta)(B)}{(h\eta)(\R^{n+1})}\int  | S^H\eta|^2\,d\eta\\
	& \qquad -\int_{B}| S^H\eta|^2 \,d\eta + \frac{(h\eta)(B)}{(h\eta)(\R^{n+1})}\int  | S^H\eta|^2\,d\eta.
\end{align}
The fact that the derivatives above commute with the integral sign and with the
operator $ S^H$ is guaranteed by the fact that $b_t$
is an affine function of $t$ and then one can expand the integrand
$| S^H(b_t\nu)|^2 b_t$ and obtain a polynomial expression on $t$.
Rearranging terms and using also that
$\lambda\leq1$ and that $J(b)\leq2\lambda\,(h\eta)(\R^{n+1})$, we get
\begin{equation}
\begin{split}
\int_B | S^H\eta|^2 \,d\eta + 2\int    S^H(\chi_{B}\eta)\cdot S^H\eta\,d\eta & \leq \frac{(h\eta)(B)}{(h\eta)(\R^{n+1})}\left[\lambda \|b\|_{\infty}\nu(\R^{n+1}) + 3\int  | S^H\eta|^2\,d\eta \right]\\
&\leq 3\,c_7(\delta)^{-1}J(b)\,(h\eta)(B)\leq 6\,c_7(\delta)^{-1}\lambda\,(h\eta)(B).
\end{split}
\end{equation}
Dividing by $\eta(B)$, recalling that $h\leq1$ and taking into account that
$$
\int    S^H(\chi_{B}\eta)\cdot S^H\eta\,d\eta = \int_B  S^{H,*}(( S^H\eta)\eta)\,d\eta,
$$
  we obtain 
\begin{equation}
\frac{1}{\eta(B)}\int_{B} | S^H\eta|^2d\eta + \frac{2}{\eta(B)}\int_B S^{H,*}(( S^H\eta)\eta)\,d\eta \leq 6\,c_7(\delta)^{-1}\lambda.
\end{equation}
Then, letting $\eta(B)\rightarrow 0$ and applying Lebesgue's differentiation theorem, we deduce that
\begin{equation}
| S^H\eta(x)|^2 + 2S^{H,*}(( S^H\eta)\eta)(x) \leq  6\,c_7(\delta)^{-1}\lambda\quad \mbox{ for $\eta$-a.e. $x\in\R^{n+1}$,}
\end{equation}
 as desired.
\end{proof}

\begin{lemm}
\label{Le:PrMax}
Assume that 
$
\displaystyle\int  |S^H\nu|^2\,d\nu \leq \lambda \nu(\R^{n+1})
$
 for some $0<\lambda\leq1$,
and let $b$ and $\eta$ be as in Lemma \ref{Le:DesigualdadAERiNu}. Then we have
\begin{equation}\label{eqd*10}
|S^H\eta(x)|^2 + 4S^{H,*}((S^H\eta)\eta)(x) \leq 12\,c_7(\delta)^{-1}\lambda + C\ell(R)^{\alpha/2}\quad\mbox{ for all $x\in\R^{n+1}_+$.}
\end{equation}
\end{lemm}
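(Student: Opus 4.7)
The plan is to interpret the inequality as a maximum principle on the open set $\Omega := \R^{n+1}_+ \setminus \supp\eta$ for the function $v := |F|^2 + 4G$, where $F := S^H\eta$ and $G := S^{H,*}(F\eta)$. From \eqref{eq:DesCTPNu} in Lemma \ref{Le:DesigualdadAERiNu} and the algebraic identity
\[
|F|^2 + 4G \;=\; 2(|F|^2 + 2G) - |F|^2 \;\leq\; 12c_7(\delta)^{-1}\lambda - |F|^2 \;\leq\; 12c_7(\delta)^{-1}\lambda
\]
I would get $v \leq 12c_7(\delta)^{-1}\lambda$ for $\eta$-a.e.\ $x$. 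Since $\eta = b\nu$ has bounded density with respect to $\LL^{n+1}$ and $K^H_S$ is locally integrable, both $F$ and $G$ are continuous on $\R^{n+1}$, so this bound in fact holds everywhere on $\supp\eta$.

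The remaining boundary data on $\partial\Omega$ is collected as follows. The reflection identities \eqref{eqref55} and \eqref{eqref56}, together with the fact that $D$ acts as the identity on $H$, yield $F(x^*) = -F(x)$ and $G(x^*) = -G(x)$, so $v$ vanishes on $H$; compactness of $\supp\eta$ and the decay of $\wh K^H$ give $v(x) \to 0$ as $|x|\to\infty$; and the construction of $\sigma,\nu,\eta$ ensures $\dist(\supp\eta,H) \gtrsim \Delta\ell(R) > 0$, so $\supp\eta^* \subset \R^{n+1}_-$ stays disjoint from $\overline{\R^{n+1}_+}$. Introducing the potential $\wh U(x) := \int \E_{\wh A}(x,y)\,d(\eta - \phi_\sharp\eta)(y)$ with $\phi(x) = x^*$, the identity \eqref{eqref55} together with $\Pi_H D = \Pi_H$ gives $F = \nabla_H\wh U$ on $\Omega$, and because $\wh A = \wh A^T$ the function $\wh U$ is $L_{\wh A}$-harmonic in $\Omega$. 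Similarly, for each fixed $y$ the kernel $x \mapsto \wh K^H(y,x)$ is $L_{\wh A}$-harmonic in $x$ away from $y$, so $G$ is $L_{\wh A}$-harmonic in $\Omega$ as well.

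The core of the proof is then a Bochner-type sub-solution estimate: for any constant elliptic matrix $\wh A_0$ and any $L_{\wh A_0}$-harmonic function $w$,
\[
L_{\wh A_0}|\nabla_H w|^2 \;=\; -2\sum_{k=1}^{n}\langle \wh A_0\,\nabla\partial_k w,\,\nabla\partial_k w\rangle \;\leq\; 0,
\]
so $|\nabla_H w|^2$ is an $L_{\wh A_0}$-subsolution. Freezing $\wh A$ pointwise and invoking Lemma \ref{lemm_freezing} with the $C^{\alpha/2}$ regularity of $\wh A$ would upgrade this to $L_{\wh A} v \leq C(\delta)\ell(R)^{\alpha/2}$ in $\Omega$; a standard maximum principle for uniformly elliptic operators (truncated to large balls to handle the unboundedness of $\Omega$ and using the decay of $v$ at infinity) then delivers the claimed pointwise inequality. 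The main obstacle will be the Bochner/freezing step: since $\wh A$ is only H\"older continuous, $\partial_k\wh U$ does not satisfy an exact equation, so the error terms produced when freezing the coefficients must be controlled uniformly using the $C^{1,\alpha/2}$ regularity of $\wh U$ coming from elliptic theory and propagated compatibly with the decay of $v$ at infinity.
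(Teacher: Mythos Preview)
Your plan has a genuine gap at exactly the point you flag as ``the main obstacle.'' The Bochner identity
\[
L_{\wh A_0}|\nabla_H w|^2 = -2\sum_{k=1}^n\langle \wh A_0\nabla\partial_k w,\nabla\partial_k w\rangle
\]
is valid for \emph{constant} $\wh A_0$ because $\partial_k w$ is again $L_{\wh A_0}$-harmonic. For variable $\wh A$ the honest computation produces an extra term $-2\sum_k \partial_k w\cdot \text{div}\bigl((\partial_k\wh A)\nabla w\bigr)$, and since $\wh A$ is only $C^{\alpha/2}$ this term is not even classically defined. Lemma~\ref{lemm_freezing} is of no help here: it compares the \emph{fundamental solutions} of $L_{\wh A}$ and $L_{\wh A(x_0)}$, not the action of $L_{\wh A}$ on a given function like $|\nabla_H\wh U|^2$. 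There is no mechanism in your outline that converts $|\wh A(x)-\wh A(x_0)|\lesssim |x-x_0|^{\alpha/2}$ into a distributional inequality $L_{\wh A}|\nabla_H\wh U|^2 \leq C\ell(R)^{\alpha/2}$, and even if one could, integrating such a right-hand side over the unbounded domain $\Omega$ against the Green function would not obviously stay $O(1)$. As written, the subsolution step is an assertion, not an argument.

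The paper avoids this difficulty completely by never asking $|S^H\eta|^2$ to be a subsolution. Instead it linearizes via the elementary identity
\[
\tfrac12|z|^2=\sup_{\beta>0,\ |e|=1}\bigl(\beta\langle e,z\rangle-\tfrac12\beta^2\bigr),
\]
and then uses the near-antisymmetry of $K_S^H$ (a consequence of \eqref{remantis} applied to $\wh A$) to write
\[
\langle e,S^H\eta(x)\rangle = -S^{H,*}(\eta e)(x) + F(x)\cdot e,\qquad |F(x)|\lesssim\ell(R)^{\alpha/2}.
\]
The point is that $x\mapsto S^{H,*}\vec\omega(x)$, for any nice vector measure $\vec\omega$ supported in $\R^{n+1}_+$, is \emph{exactly} $L_{\wh A}$-harmonic in $\R^{n+1}_+\setminus\supp\vec\omega$ and vanishes on $H$ --- this needs only the definition of the fundamental solution and the reflection identity \eqref{eqref56}, no coefficient regularity beyond what is already assumed. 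The genuine maximum principle then applies to $S^{H,*}\bigl(-2\beta\eta e+4(S^H\eta)\eta\bigr)$ for each fixed $(\beta,e)$, and one passes the supremum through. The $\ell(R)^{\alpha/2}$ error enters only through the controlled remainder $F$, not through any failure of harmonicity. Your observations about continuity on $\supp\eta$, the vanishing on $H$, and the $L_{\wh A}$-harmonicity of $G=S^{H,*}((S^H\eta)\eta)$ are all correct and are indeed used; what you are missing is this linearization trick that replaces the quadratic, non-harmonic term $|S^H\eta|^2$ by a family of linear, exactly harmonic ones.
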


\begin{proof}
Since $\eta$ has a bounded density with respect to Lebesgue measure which is also uniformly bounded, it is immediate to
check that the expression on the left hand side of \rf{eqd*10} is a continuous function of $x$. Thus,
by Lemma \ref{Le:DesigualdadAERiNu} and by continuity, the inequality \rf{eqd*10} holds
for all $x\in\supp\eta$.

For  any $x\in\partial\R^{n+1}_+=H$, using 
\rf{eqref56} and that $x=x^*$,
we get 
$$\wh K^H(y^*,x) = \wh K^H(y,x^*) = \wh K^H(y,x),$$
and thus, for  any vectorial measure $\vec{\omega}$,
\begin{align}\label{eqal999}
S^{H,*}\vec{\omega}(x) & = \int K_S^H(y,x) \cdot d\vec{\omega}(y)\\
& = \int \wh K^H(y,x) \cdot d\vec{\omega}(y) - \int \wh K^H(y^*,x) \cdot d\vec{\omega}(y)=0.
\end{align}

\vspace{2mm}

Now we claim that the definition of $S^H$ implies
\begin{equation}\label{ENV.simplemaxprincestimate}
\sup_{x \in \mathbb{R}^{n+1}_+} |S^{H,*}\vec \omega(x)| \leq \sup_{x \in \mathrm{supp}(\vec \omega)} |S^{H,*}\vec \omega(x)|,
\end{equation}
for each vector valued measure $\vec\omega$ which is compactly supported in $\R^{n+1}$ and absolutely continuous with respect to Lebesgue measure with a bounded density function. 
To show this, by the maximum principle, it is enough to show that 
$S^{H,*}\vec \omega$ is $\wh A$-harmonic in $\R^{n+1}_+\setminus \supp(\vec \omega)$. In turn, to this
end it suffices to show that for $1\leq k\leq n$ and for
any signed measure $d\omega = g\,dx$, with 
$g\in L^\infty$ and compactly supported in $\R^{n+1}_+$, 
the function
$$
f(x):= \int \bigl(\partial_{y_k} \EE_{\wh A} (y,x)- \partial_{y_k} \EE_{\wh A} (y^*,x)\bigr) \, d \omega(y)$$
is $\wh A$-harmonic in $\R^{n+1}_+\setminus \supp(\omega)$. 
Given $\varphi \in C^\infty_c(\R^{n+1}_+ \setminus \supp\omega)$, by Fubini's theorem we get
\begin{align}
\int \wh A \,\nabla f \, \nabla \varphi\, dx & =  
\int \wh A(x) \nabla_x \left(\int \partial_{y_k} (\EE_{\wh A}(y,x)
-\EE_{\wh A}(y^*,x))\, g(y) \,dy\right)
\cdot \nabla \varphi(x) \, dx  \\
&=  \int  \!\!\! \int \wh A(x) \nabla_x\partial_{y_k}(\EE_{\wh A}(y,x)
-\EE_{\wh A}(y^*,x)) \cdot \nabla \varphi(x) \, dx \, g(y) \,dy \\
& = \int  \partial_{y_k} \int \wh A(x) \nabla_x\EE_{\wh A}(y,x) \cdot \nabla \varphi(x) \, dx \,  g(y) dy \\
&\quad - 
\int  \partial_{y_k} \int \wh A(x) \nabla_x\EE_{\wh A}(y^*,x) \cdot \nabla \varphi(x) \, dx \,  g(y) dy\\
&= \int  (\partial_{y_k} \varphi(y)-\partial_{y_k}\varphi(y^*)) \,g(y)\,dy= 0.
\end{align} 
Therefore, $f$ is $\wh A$-harmonic $\R^{n+1}_+\setminus \supp(\vec \omega)$ and thus \rf{ENV.simplemaxprincestimate} holds.

To prove \rf{eqd*10} we use the elementary formula
$$
\frac12 |z|^2 = \sup_{\begin{subarray}{c} \beta > 0 \\ e \in \mathbb{R}^{n+1}, \|e\|=1 \end{subarray}} \beta \,\langle e, z \rangle - \frac12 \beta^2\quad \mbox{ for all $z\in\R^{n+1}$.}
$$ 
We apply it with  $z=S^H\eta(x)$ and we get
\begin{equation}
\label{ENV.Tnucuadrado}
\frac12 |S^H\eta(x)|^2 = \sup_{\begin{subarray}{c} \beta > 0 \\ e \in \mathbb{R}^{n+1}, \|e\|=1 \end{subarray}} \beta \,\langle e, S^H\eta(x) \rangle - \frac12 \beta^2.
\end{equation}
Now, if $e=(e_1, \ldots, e_{n+1})$ and we define the vector valued measure $\eta e = (\eta e_1, \ldots, \eta e_{n+1})$, for all 
$x\in\R^{n+1}_+$ we obtain
\begin{align}
\langle e, S^H\eta(x) \rangle & = \int K_S^H(x,y) \cdot e \; d\eta(y) 
 =  \int K_S^H(x,y) \cdot d(\eta e)(y) \\
& =  -S^{H,*}(\eta e)(x) + e\cdot\int \left[ K_S^H(x,y) + K_S^H(y,x) \right]\, d\eta(y).
\end{align}
Taking into account $\wh K^H(y^*,x)= \wh K^H(y,x^*)$ and \rf{remantis} applied to $\wh A$, we derive
\begin{align}
|K_S^H(x,y) + K_S^H(y,x)| &= |\wh K^H(x,y) - \wh K^H(x^*,y) + \wh K^H(y,x) - \wh K^H(y^*,x)|\\
& \leq |\wh K^H(x,y)  + \wh K^H(y,x)| + |\wh K^H(y,x^*) + \wh K^H(x^*,y)| \\
& \lesssim \frac{1}{|x-y|^{n-{\alpha/2}}} + \frac{1}{|x^*-y|^{n-{\alpha/2}}}\lesssim \frac{1}{|x-y|^{n-{\alpha/2}}},
\end{align}
since $|x-y|\leq |x^*-y|$ for all $x,y\in\R^{n+1}_+$.
So the function $F(x) := \int \left[ K_S^H(x,y) + K_S^H(y,x) \right]\, d\eta(y)$ satisfies
$$|F(x)|\lesssim \int \frac{1}{|x-y|^{n-{\alpha/2}}}\,d\eta(y)\lesssim\ell(R)^{\alpha/2}$$
if $\dist(x,R)\leq 1$.
In the case that $\dist(x,Q)\geq1$, we use the fact that 
$|\wh K^H(x,y)|+ |\wh K^H(y,x)|\lesssim1$ by Lemma \ref{lemcz} (c),
and it also follows that
$$|F(x)|\leq \int \left| K_S^H(x,y) + K_S^H(y,x) \right| d\eta(y)
\lesssim \|\eta\|\lesssim \ell(R)^n \lesssim \ell(R)^{\alpha/2},$$
So in both cases we get
\begin{equation}\label{eqcas1*}
\langle e, S^H\eta(x) \rangle = -S^{H,*}(\eta e)(x) + F(x)\cdot e,
\end{equation}
with $|F(x)|\lesssim \ell(R)^{\alpha/2}$.

We insert the above calculation in \eqref{ENV.Tnucuadrado} and by \rf{ENV.simplemaxprincestimate} we get,
for $x\in\R^{n+1}_+$,
\begin{align}
|S^H\eta(x)|^2 &+ 4 S^{H,*}\left( [S^H\eta]\eta \right)(x) \\
& =  \sup_{\begin{subarray}{c} \beta > 0 \\ e \in \mathbb{R}^{n+1}, \|e\|=1 \end{subarray}} \left\{ -2\beta S^{H,*}(\eta e)(x) +  2\beta F(x)\cdot e  -\beta^2 + 4 S^{H,*}\left( [S^H\eta]\eta \right)(x) \right\} \\
& =   \sup_{\begin{subarray}{c} \beta > 0 \\ e \in \mathbb{R}^{n+1}, \|e\|=1 \end{subarray}} \left\{ S^{H,*}\left(-2\beta \eta e + 4[S^H\eta]\eta \right)(x) + 2\beta F(x) \cdot e  -\beta^2 \right\} \\
& \leq  \sup_{\begin{subarray}{c} \beta > 0 \\ e \in \mathbb{R}^{n+1}, \|e\|=1 \end{subarray}} \sup_{z \in \mathrm{supp}(\eta)} \left\{ S^{H,*}\left(-2\beta \eta e + 4[S^H\eta]\eta \right)(z) + 2\beta F(x) \cdot e -\beta^2 \right\} \\
& =   \sup_{z \in \mathrm{supp}(\eta)} \!\sup_{\begin{subarray}{c} \beta > 0 \\ e \in \mathbb{R}^{n+1}, \|e\|=1 \end{subarray}} \left\{ S^{H,*}\left(-2\beta \eta e + 4[S^H\eta]\eta \right)(z) + 2\beta F(x) \cdot e -\beta^2 \right\}. 
\end{align}
 
Now we reverse the process using again \rf{eqcas1*} to obtain
\begin{align}
|&S^H\eta(x)|^2 + 4 S^{H,*}\left( [S^H\eta]\eta \right)(x)  \\
&\leq   \sup_{z \in \mathrm{supp}(\eta)} \sup_{\begin{subarray}{c} \beta > 0 \\ e \in \mathbb{R}^{n+1}, \|e\|=1 \end{subarray}} \!\!\left\{ -2\beta S^{H,*}( \eta e)(z) + 4S^{H,*}\left( [S^H\eta]\eta \right)(z) + 2\beta F(x) \cdot e -\beta^2 \right\} \\
& = \sup_{z \in \mathrm{supp}(\eta)}\!\! \sup_{\begin{subarray}{c} \beta > 0 \\ e \in \mathbb{R}^{n+1}, \|e\|=1 \end{subarray}}  \! \!\!\left\{-2\beta \langle S^H\eta(z),e\rangle -2 \beta F(z)\cdot e + 4S^{H,*}\left( [S^H\eta]\eta \right)(z) + 2\beta F(x) \cdot e -\beta^2  \right\} \\
& =  \sup_{z \in \mathrm{supp}(\eta)} \sup_{\begin{subarray}{c} \beta > 0 \\ e \in \mathbb{R}^{n+1}, \|e\|=1 \end{subarray}}\! \!\!\left\{-2\beta \,\big\langle S^H\eta(z) + (F(x)-F(z)) ,\,e\big\rangle + 4S^{H,*}\left( [T\eta]\eta \right)(z) -\beta^2  \right\} \\
& =  \sup_{z \in \mathrm{supp}(\eta)} \left\{ \left| S^H\eta(z) + (F(x)+G(z))  \right|^2 +4S^{H,*}\left( [S^H\eta]\eta \right)(z)\right\} \\
& \leq  \sup_{z \in \mathrm{supp}(\eta)} \left\{ 2|S^H\eta(z)|^2 + 4 S^{H,*}\left( [S^H\eta]\eta \right)(z)\right\} + C\,\ell(R)^{\alpha/2}.  
\end{align}
Finally, we apply \eqref{eq:DesCTPNu} to get
\begin{equation}
\label{ENV.goodPointwise}
|S^H\eta(x)|^2 + 4S^{H,*}((S^H\eta)\eta)(x) \leq 12\,c_7(\delta)^{-1}\lambda + C\ell(R)^{\alpha/2}\quad\mbox{ for all $x\in\R^{n+1}_+$,}
\end{equation}
as wished.
\end{proof}


\subsection{Proof of Proposition \ref{lemavar}}

Let $R\in\Nice$ and $\nu$ be as in Section \ref{secnou9}. We have to show that
$$\| S^H\nu\|^2_{L^2(\sigma)}\geq c_8(\delta)\,\mu(R),$$
with $c_8(\delta)>0$.
We assume that this does not hold and we argue by contradiction. So we suppose that 
$\int |S^H\nu|^2 d\nu \leq \lambda \,\nu(\R^{n+1})$ for some small $\lambda\in (0,1)$ to be fixed below  and then we will get a contradiction if $\lambda$ is chosen small enough (depending on $\delta$). 
 By Lemma \ref{Le:PrMax}, our assumption implies that the measure $\eta$ defined in Lemma \ref{Le:DesigualdadAERiNu}
satisfies
$$|S^H\eta(x)|^2 + 4S^{H,*}((S^H\eta)\eta)(x) \leq 12\,c_7(\delta)^{-1} \lambda+ C\ell(R)^{\alpha/2}\quad\mbox{ for all $x\in\R^{n+1}_+$.}$$

Consider the vector field $\Psi$ from Lemma \ref{lemeta} in Section \ref{secpsi}. Multiplying  the preceding inequality by $|\Psi|$ and integrating with respect to Lebesgue measure, we derive
\begin{equation}\label{eqfi77}
\int|S^H\eta|^2\,|\Psi|\,d\LL^{n+1} \leq 4\int S^{H,*}\big((S^H\eta)\eta\big)\,|\Psi|\,d\LL^{n+1} + \big(12c_7(\delta)^{-1}\lambda + C\ell(R)^{\alpha/2}\big)
\int |\Psi|\,d\LL^{n+1}.
\end{equation}
By Lemma \ref{lemapsi} we have
$$\big(12c_7(\delta)^{-1}\lambda + C\ell(R)^{\alpha/2}\big)
\int |\Psi|\,d\LL^{n+1}\leq C(\delta)\,\big(\lambda + \ell(R)^{\alpha/2}\big)\,\ell(R)^n.$$
Regarding the first integral on the right hand side of \rf{eqfi77}, we have
\begin{align}
\int S^{H,*}\big((S^H\eta)\eta\big)\,|\Psi|\,d\LL^{n+1} &= \int S^H\eta\cdot S^H(|\Psi|\,\LL^{n+1})\,d\eta\\
& \leq 
\left(\int |S^H \eta|^2\,d\eta\right)^{1/2} \left( 2\int |S^H(|\Psi|\,\LL^{n+1})|^2\,d\nu\right)^{1/2}\\
& \leq \lambda^{1/2}\,\eta(\R^{n+1})^{1/2}\,C(\delta)\,\mu(R) \leq C(\delta)\,\lambda^{1/2}\,\mu(R),
\end{align}
by \rf{eqa331} and (v) from Lemma \ref{lemapsi}. So we derive
\begin{equation}\label{eqfi770}
\int|S^H\eta|^2\,|\Psi|\,d\LL^{n+1} \!\leq C(\delta)\,\lambda^{1/2}\mu(R)+
C(\delta)\,(\lambda + \ell(R)^{\alpha/2})\,\mu(R)\!\leq C(\delta)\,(\lambda^{1/2} + \ell(R)^{\alpha/2})\,\mu(R).
\end{equation}

Next we will estimate from below the integral on the left hand side above. By Cauchy-Schwarz,
we have
\begin{equation}\label{eqsh100}
\begin{split}
\int|S^H\eta|^2\,|\Psi|\,d\LL^{n+1} & \geq \left(\int |S^H \eta|\,|\Psi|\,d\LL^{n+1} \right)^2\,\left(\int |\Psi|\,d\LL^{n+1}\right)^{-1}\\
& \geq \frac {c(\delta)}{\mu(R)}
\left(\int S^H \eta\cdot\Psi\,d\LL^{n+1} \right)^2 = \frac {c(\delta)}{\mu(R)}
\left(\int S^{H,*}(\Psi\,\LL^{n+1})\,d\eta \right)^2.
\end{split}
\end{equation}
By the definition of $S^H$ and the fact that $\wh K^H(y^*,x) = \wh K^H(y,x^*)$ (by \rf{eqref56}), we get
\begin{align}
S^{H,*}(\Psi\,\LL^{n+1})(x) & = \int \wh K^H(y,x) \cdot\Psi(y)\, d\LL^{n+1}(y) - \int \wh K^H(y^*,x) \cdot\Psi(y)\, d\LL^{n+1}(y)\\
& = \wh T^{H,*}(\Psi\,\LL^{n+1})(x) - \wh T^{H,*}(\Psi\,\LL^{n+1})(x^*).
\end{align}
Thus, by Lemma \ref{lemapsi} (iv),
\begin{align}\label{eqsh10}
\int S^{H,*}(\Psi\,\LL^{n+1})\,d\eta &= \int\wh T^{H,*}(\Psi\,\LL^{n+1})(x)\,d\eta(x) - \int\wh T^{H,*}(\Psi\,\LL^{n+1})(x^*)\,d\eta(x) \\ 
&=\sum_{Q\in\NB'(R)} \int (h_Q + e_Q)\,d\eta - \int\wh T^{H,*}(\Psi\,\LL^{n+1})(x^*)\,d\eta(x).
\end{align}

By Lemma \ref{lemeta} and Lemma \ref{lemapsi} (iv),
$$\sum_{Q\in\NB'(R)} \int (h_Q + e_Q)\,d\eta \geq c(\delta)\,\mu(R)- C(\delta)\sum_{Q\in\NB'(R)}\int
\frac{\ell(R)^{\tilde\gamma}\,\ell(Q)^{n+\tilde\beta}}{(|x-x_Q|+\ell(Q))^{n+\tilde\beta}} \,d\eta(x).$$
Using the polynomial growth of $\nu$ (recall \eqref{lemgrowthnu}) and standard estimates, for each 
$Q\in\NB'(R)$ we get 
\begin{equation}\label{eqeq*4}
\int
\frac{\ell(R)^{\tilde\gamma}\,\ell(Q)^{n+\tilde\beta}}{(|x-x_Q|+\ell(Q))^{n+\tilde\beta}} \,d\eta(x)\lesssim 
\ell(R)^{\tilde\gamma}\,\ell(Q)^n.
\end{equation}
Thus
\begin{equation}\label{eqnb56}
\sum_{Q\in\NB'(R)} \int (h_Q + e_Q)\,d\eta \geq c(\delta)\,\mu(R)- C(\delta)\,\ell(R)^{\tilde\gamma}\!\!\sum_{Q\in\NB'(R)}\!
\mu(Q) \geq \bigl(c(\delta)- C'(\delta)\,\ell(R)^{\tilde\gamma}\bigr)\,\mu(R).
\end{equation}

To estimate the last integral on the right hand side of \rf{eqsh10} we take into account that,
if $x\in\supp\eta$, then $x^*\in \R^{n+1}_-$, and thus
$$h_Q(x^*)=0\quad \mbox{ for all $Q\in\NB'(R)$,}$$ 
since $\supp h_Q\subset 3B_Q\subset \R^{n+1}_+$ because, recalling \eqref{Delta} and choosing $\Delta$ as in Section \ref{choosingdelta}, $\ell(Q)\ll \Delta\,\ell(R)$. 

Therefore, for $x\in\supp\eta$, using again Lemma \ref{lemapsi} (iv),
\begin{align}
\bigl|\wh T^{H,*}(\Psi\,\LL^{n+1})(x^*)\bigr|  = \Biggl|\sum_{Q\in\NB'(R)} e_Q(x^*)\Biggr|
 &\leq
 C(\delta)\sum_{Q\in\NB'(R)}
\frac{\ell(R)^{\tilde\gamma}\,\ell(Q)^{n+\tilde\beta}}{(|x^*-x_Q|+\ell(Q))^{n+\tilde\beta}}\\
&\leq 
 C(\delta)\sum_{Q\in\NB'(R)}
\frac{\ell(R)^{\tilde\gamma}\,\ell(Q)^{n+\tilde\beta}}{(|x-x_Q|+\ell(Q))^{n+\tilde\beta}}.
\end{align}
Hence, from \rf{eqeq*4} we derive
$$\left|\int\wh T^{H,*}(\Psi\,\LL^{n+1})(x^*)\,d\eta(x)\right|\leq  C(\delta)\,\ell(R)^{\tilde\gamma}\,\sum_{Q\in\NB'(R)}
\ell(Q)^n\leq C(\delta)\,\ell(R)^{\tilde\gamma}\mu(R).$$
Plugging this estimate and \rf{eqnb56} into \rf{eqsh10}, we obtain
$$\int S^{H,*}(\Psi\,\LL^{n+1})\,d\eta\geq \bigl(c(\delta)- C''(\delta)\,\ell(R)^{\tilde\gamma}\bigr)\,\mu(R).$$
Then, by \rf{eqsh100},
$$\int|S^H\eta|^2\,|\Psi|\,d\LL^{n+1}  \geq \bigl(c(\delta)- C''(\delta)\,\ell(R)^{\tilde\gamma}\bigr)^2\,
\mu(R).
$$
Together with \rf{eqfi770}, this implies that
$$\bigl(c(\delta)- C''(\delta)\,\ell(R)^{\tilde\gamma}\bigr)^2
\mu(R)\leq C(\delta)\,\big(\lambda^{1/2} + \ell(R)^{\alpha/2}\big)\,\mu(R).
$$
So we get a contradiction if $\ell(R)$ and $\lambda$ are small enough, depending on $\delta$.
This concludes the proof of Proposition \ref{lemavar}, and thus of Theorem \ref{teo1}.


\section{Proof of Theorem \ref{teo3}} \label{sec12*}

The arguments are very similar to the ones in \cite{AHM3TV} and thus we only sketch them.

To simplify notation, we will write $\omega^p$ instead of $\omega_{L_A}^p$.
Recall that the Green function for the operator $L_A u =-\Div A(\cdot)\nabla u$ satisfies, for 
every $\varphi\in C_c^\infty(\R^{n+1})$,
$$\int_{\partial\Omega} \varphi\,d\omega^x - \varphi(x) = - \int_\Omega A^T(y)\nabla_yG(x,y)\cdot \nabla\varphi(y)\,dy,
\quad\mbox{ for a.e. $x\in\Omega$.}$$
See (2.6) in \cite{AGMT}, for example. 
From this equation it easily follows that
\begin{equation}\label{eqgreen1}
G(p,x) =\EE (p,x) - \int \EE(z,x)\,d\omega^p(z)\quad\mbox{for all $p,x\in\Omega$.}
\end{equation}	
We assume that $G(p,x)=0$ if $x\not\in\Omega$, so that the preceding
identity also holds in this case.
The identity \rf{eqgreen1} provides the key connection between the gradient of the single layer potential and elliptic measure.
Indeed, differentiating with respect to $x$, we derive
$$\nabla_2 G(p,x) = \nabla_2\EE (p,x) - \int \nabla_2\EE(z,x)\,d\omega^p(z).$$

By almost the same arguments as in \cite[Lemma 3.3]{AHM3TV} one can prove the following:

\begin{lemm}\label{l:w>G}
Let $n\ge 2$ and $\Omega\subset\R^{n+1}$ be a bounded open connected Wiener regular set.
Let $B=\bar B(x_0,r)$ be a closed ball with $x_0\in\partial\Omega$ and $0<r<\diam(\partial \Omega)$. Then, for all $a>0$,
\begin{equation}\label{eq:Green-lowerbound}
 \omega^{x}(aB)\gtrsim \inf_{z\in 2B\cap \Omega} \omega^{z}(aB)\, r^{n-1}\, G(x,y)\quad\mbox{
 for all $x\in \Omega\setminus  2B$ and $y\in B\cap\Omega$,}
 \end{equation}
 with the implicit constant independent of $a$.
\end{lemm}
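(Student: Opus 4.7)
The plan is to compare the two $L_A$-harmonic functions
\[
u(x) := \omega^{x}(aB),\qquad v(x) := r^{n-1}\,G(x,y)
\]
on the open set $D := \Omega\setminus \overline{2B}$ and to deduce the estimate from a maximum principle. The function $u$ is $L_A$-harmonic in $\Omega$ by definition of elliptic measure. For $v$, the identity \eqref{eqgreen1}, applied with $y$ fixed, expresses $G(\,\cdot\,,y)$ as $\EE(\,\cdot\,,y)$ minus the $L_A$-solution in $\Omega$ with boundary data $\EE(\,\cdot\,,y)$; hence $G(\,\cdot\,,y)$ is $L_A$-harmonic in $\Omega\setminus\{y\}$, and since $y\in B\subset \overline{2B}$ its only singularity lies outside $D$. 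Thus $v$ is $L_A$-harmonic in $D$ as well.

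The next step is a boundary comparison. Set $c:=\inf_{z\in 2B\cap\Omega}\omega^{z}(aB)$. At Wiener regular points of $\partial\Omega\setminus \overline{2B}$ one has $v=0$ by the standard boundary behaviour of the Green function, while $u\ge 0$ trivially. On $\partial(2B)\cap\Omega$ one has $u\ge c$ by definition of $c$; and since $|z-y|\ge r$ for $z\in \partial(2B)$ and $y\in B$, the classical pointwise upper bound $G(z,y)\le C_0|z-y|^{-(n-1)}$ for the Green function of a uniformly elliptic divergence form operator (Gr\"uter--Widman) gives $v(z)\le C_0$. Hence the $L_A$-harmonic function $w := u - (c/C_0)\,v$ is nonnegative at Wiener regular points of $\partial D$, and the maximum principle (applicable since $D$ is bounded and Wiener regular, its boundary lying either in $\partial\Omega$ or on the smooth sphere $\partial(2B)$) yields $w\ge 0$ throughout $D$. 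This is exactly the claimed estimate.

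The main technical input is the pointwise upper bound for the Green function; this relies only on the ellipticity \eqref{eqelliptic1}--\eqref{eqelliptic2} of $A$ and on $n\ge 2$, and is independent of the H\"older continuity of the coefficients, so the implicit constant is uniform in $a$. The restriction $0<r<\diam(\partial\Omega)$ ensures that the configuration does not degenerate and that $2B\cap\Omega\neq \varnothing$, so that the infimum defining $c$ is meaningful. All the remaining ingredients, namely the boundary vanishing of $G$ at Wiener regular points and the validity of the maximum principle on such domains, are standard in the theory of divergence form elliptic operators.
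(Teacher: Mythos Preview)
Your argument is correct and is essentially the standard maximum-principle comparison that the paper has in mind when it refers to \cite[Lemma 3.3]{AHM3TV}: compare $\omega^{x}(aB)$ with $r^{n-1}G(x,y)$ on $\Omega\setminus\overline{2B}$, using the Gr\"uter--Widman bound $G(z,y)\lesssim |z-y|^{1-n}$ on $\partial(2B)\cap\Omega$ and the vanishing of $G(\cdot,y)$ on $\partial\Omega$. The paper does not give its own proof, and your write-up matches the referenced argument adapted from the Laplacian to $L_A$.
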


Analogously, as in \cite[Lemma 3.4]{AHM3TV}, we have:

\begin{lemm}
\label{lembourgain}
There is $\delta_{0}\in(0,1)$ depending only on $n\geq 1$ so that the following holds for $\delta\in (0,\delta_{0}]$. Let $\Omega\subsetneq \R^{n+1}$ be a  bounded Wiener regular domain, $n-1<s\le n+1$,  $\xi \in \partial \Omega$, $r>0$, and $B=B(\xi,r)$. Then 
\[ \omega^{x}(B)\gtrsim_{n,s} \frac{\mathcal \HH_\infty^{s}(\partial\Omega\cap \delta B)}{(\delta r)^{s}}\quad \mbox{  for all }x\in \delta B\cap \Omega .\]
\end{lemm}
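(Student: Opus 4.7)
The plan is to adapt Bourgain's classical lower estimate for harmonic measure to the elliptic setting, following the scheme of \cite[Lemma 3.4]{AHM3TV}. The key ingredients are a Frostman-type measure on $\partial\Omega\cap\delta B$, a quantitative control on its $L_A$-potential, and the Green function representation \rf{eqgreen1}.

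First I would apply Frostman's lemma to select a positive Radon measure $\mu$ supported on $\partial\Omega\cap\delta B$ such that
\begin{equation*}
\mu(B(y,\rho))\le\rho^s\quad\text{for every $y\in\R^{n+1}$, $\rho>0$,}\qquad \|\mu\|\gtrsim_{n,s}\HH^s_\infty(\partial\Omega\cap\delta B).
\end{equation*}
Set $u(z)\coloneqq\int\EE_A(z,y)\,d\mu(y)$. For $n\ge 2$, the fundamental solution of $L_A$ is positive and satisfies the sharp two-sided bound $\EE_A(z,y)\asymp|z-y|^{1-n}$ for $|z-y|$ bounded, by the Hofmann--Kim construction \cite{HK} combined with the freezing argument in Lemma \ref{lemm_freezing} and the maximum principle. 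Using this with the Frostman growth condition and the hypothesis $s>n-1$, a layer-cake computation yields the uniform upper bound
\begin{equation*}
u(z)\le C_{n,s}(\delta r)^{s-n+1}\qquad\text{for every $z\in 2B$.}
\end{equation*}
On the other hand, for $x\in\delta B\cap\Omega$ every $y\in\supp\mu$ satisfies $|x-y|\le 2\delta r$, which yields the pointwise lower bound $u(x)\ge c_n(\delta r)^{1-n}\|\mu\|$.

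Next I would integrate the Green function representation \rf{eqgreen1} against $\mu$. Since $\mu$ is concentrated on $\partial\Omega$ and $\Omega$ is Wiener regular, $G(x,\cdot)$ vanishes $\mu$-a.e.\ on $\partial\Omega$, so $u(x)=\int u\,d\omega^x$. Splitting this identity as
\begin{equation*}
u(x)=\int_{B\cap\partial\Omega}u\,d\omega^x+\int_{\partial\Omega\setminus B}u\,d\omega^x,
\end{equation*}
one observes that for $z\in\partial\Omega\setminus B$ and $y\in\supp\mu$ one has $|z-y|\ge(1-\delta)r$, which gives $u(z)\le C_n r^{1-n}\|\mu\|$; on $B$ the universal bound from the first step applies. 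Combining the three estimates produces
\begin{equation*}
c_n(\delta r)^{1-n}\|\mu\|\le \omega^x(B)\cdot C_{n,s}(\delta r)^{s-n+1}+C_n r^{1-n}\|\mu\|.
\end{equation*}
Choosing $\delta_0=\delta_0(n)$ small enough that the last term is absorbed into half of the left-hand side (using $\delta^{1-n}\to\infty$ as $\delta\to 0^+$ for $n\ge 2$), one concludes
\begin{equation*}
\omega^x(B)\gtrsim_{n,s}\frac{\|\mu\|}{(\delta r)^s}\gtrsim_{n,s}\frac{\HH^s_\infty(\partial\Omega\cap\delta B)}{(\delta r)^s},
\end{equation*}
which is the desired estimate. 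The case $n=1$ is handled analogously using a logarithmic potential in place of the Riesz-type kernel.

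The main obstacle will be the verification of the sharp upper and lower pointwise bounds, together with the positivity, of the fundamental solution $\EE_A$ for the variable-coefficient operator; these follow from \cite{HK}, Lemma \ref{lemm_freezing}, and the Gr\"uter--Widman maximum principle applied to real symmetric elliptic operators. The remaining ingredients (Frostman's lemma, the identity \rf{eqgreen1}, and the splitting over the boundary) transfer verbatim from the harmonic case, so no further subtleties are anticipated beyond those already accounted for in the paper.
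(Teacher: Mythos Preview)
Your proposal is correct and follows precisely the Bourgain-type argument that the paper invokes: the paper does not prove this lemma at all but simply refers to \cite[Lemma 3.4]{AHM3TV}, and what you have written is exactly that argument adapted to the elliptic operator $L_A$. One minor remark: rather than justifying $u(x)=\int u\,d\omega^x$ through \rf{eqgreen1} (which in the paper is stated for interior points and extended only $\LL^{n+1}$-a.e.\ off $\Omega$), it is cleaner to note directly that $u$ is $L_A$-harmonic in $\Omega$ (since $\supp\mu\subset\partial\Omega$) and continuous on $\overline\Omega$ (by the Frostman growth with $s>n-1$), so the representation via elliptic measure is immediate from Wiener regularity.
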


In the statement above, $\HH_\infty^s$ stands for the $s$-dimensional Hausdorff content.

 The following can be proved as in  \cite[Lemma 3.1]{AHM3TV}:

\begin{lemm}\label{lemgreen*}
Let $\Omega$ be as above and let $p\in\Omega$. For $\LL^{n+1}$-almost all $x\in\Omega^c$ we have
\begin{equation}\label{eqdf12}
\mathcal{E}(p,x) - \int_{\partial\Omega} \mathcal{E}(z,x)\,d\omega^p(z)=0.
\end{equation}
\end{lemm}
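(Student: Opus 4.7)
The plan is to consider the function
\[
u(x)\coloneqq \mathcal{E}(p,x) - \int_{\partial\Omega}\mathcal{E}(z,x)\,d\omega^p(z),
\]
which is locally integrable on $\R^{n+1}$ thanks to the pointwise bound $|\mathcal{E}(z,x)|\lesssim |z-x|^{1-n}$ (using $n\geq 2$ and Fubini), and to prove that $u=\chi_\Omega\,G(p,\cdot)$ as elements of $L^1_{\mathrm{loc}}(\R^{n+1})$; the desired identity \eqref{eqdf12} for a.e.\ $x\in\Omega^c$ is then immediate. Throughout I use that $A$ is symmetric, so $\mathcal{E}(z,x)=\mathcal{E}(x,z)$ and consequently $L_{A,x}\mathcal{E}(z,x)=\delta_z(x)$.

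First I would establish that $u\equiv 0$ pointwise on $\R^{n+1}\setminus\bar\Omega$. For such $x$, the map $z\mapsto\mathcal{E}(z,x)$ is continuous on $\bar\Omega$ and $L_A$-harmonic in $\Omega$ (no singularity, since $x\notin\bar\Omega$). By Wiener regularity, the unique $L_A$-harmonic extension to $\Omega$ of these boundary data is $\mathcal{E}(\cdot,x)$ itself; by the definition of elliptic measure, that extension is $y\mapsto\int_{\partial\Omega}\mathcal{E}(z,\cdot)\,d\omega^y$. Evaluating at $y=p$ gives $\int_{\partial\Omega}\mathcal{E}(z,x)\,d\omega^p(z)=\mathcal{E}(p,x)$, hence $u(x)=0$.

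Next I would compute $L_A u$ as a distribution on $\R^{n+1}$. Testing against $\varphi\in C_c^\infty(\R^{n+1})$ and applying Fubini,
\[
\langle L_A u,\varphi\rangle=\varphi(p)-\int_{\partial\Omega}\varphi\,d\omega^p=\langle \delta_p-\omega^p,\varphi\rangle.
\]
On the other hand, extending $G(p,\cdot)$ by zero outside $\Omega$, the zero trace property of $G(p,\cdot)$ on the Wiener regular set $\Omega$ (obtained via a standard truncation near $\partial\Omega$) implies that the distributional gradient of $\chi_\Omega\,G(p,\cdot)$ is $\chi_\Omega\,\nabla_y G(p,\cdot)$; combined with the defining Green identity recalled just above \eqref{eqgreen1}, this yields
\[
L_A\!\bigl(\chi_\Omega\,G(p,\cdot)\bigr)=\delta_p-\omega^p \qquad \text{in } \mathcal{D}'(\R^{n+1}).
\]
Hence $w\coloneqq u-\chi_\Omega\,G(p,\cdot)$ satisfies $L_A w=0$ in the distributional sense on $\R^{n+1}$, and by De Giorgi--Nash--Moser regularity admits a continuous representative.

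By the second paragraph, this representative vanishes on the open set $\R^{n+1}\setminus\bar\Omega$. Choosing any open ball $U$ with $\bar\Omega\subset U$, the function $w$ is $L_A$-harmonic in $U$ with continuous boundary data identically zero on $\partial U\subset \R^{n+1}\setminus\bar\Omega$; since balls are Wiener regular, the maximum principle forces $w\equiv 0$ on $U$, and hence on $\R^{n+1}$. This gives $u=\chi_\Omega\,G(p,\cdot)$ a.e., which is \eqref{eqdf12} on $\Omega^c$. The main technical obstacle I expect is the rigorous justification of the second distributional identity above: proving that the zero-extension of $G(p,\cdot)$ across $\partial\Omega$ has distributional gradient $\chi_\Omega\,\nabla G(p,\cdot)$ requires showing that $G(p,\cdot)$ has zero Sobolev trace on $\partial\Omega$ in the Wiener regular setting, which can be done via a cutoff approximation combined with the variational characterization of $G$.
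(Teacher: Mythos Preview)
Your approach is correct and is the standard argument; the paper does not prove this lemma itself but defers to \cite[Lemma 3.1]{AHM3TV}, whose proof follows essentially the same outline you give.

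Two minor remarks. First, the obstacle you flag at the end is real but milder than you suggest: the Green-function constructions the paper relies on (see \cite{HK}, or the classical Gr\"uter--Widman construction) already yield $G(p,\cdot)\in W^{1,q}_0(\Omega)$ for every $q<\tfrac{n+1}{n}$, and for any function in $W^{1,q}_0(\Omega)$ the zero extension to $\R^{n+1}$ automatically lies in $W^{1,q}(\R^{n+1})$ with distributional gradient $\chi_\Omega\nabla G(p,\cdot)$; this is exactly the ingredient you need for step (4). Second, Theorem~\ref{teo3} is stated for general (not necessarily symmetric) $A$, and indeed the paper writes $A^T$ in the Green identity just before \eqref{eqgreen1}. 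So strictly speaking your steps (3)--(5) should be run with $L_{A^T}$ in place of $L_A$, using the identity $\mathcal{E}_A(z,x)=\mathcal{E}_{A^T}(x,z)$ to obtain $L_{A^T,x}\mathcal{E}_A(z,x)=\delta_z$ without any symmetry hypothesis; the rest of the argument is unchanged.
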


Then we get:

\begin{lemm}\label{lemaxcor}
Let $L_A$, $\Omega$ and $E$ be as in Theorem \ref{teo3}. Then we have
\begin{equation}\label{eqdk10}
M_n\omega^p (x) + T_{*}\omega^p(x) <\infty \quad\mbox{ for $\omega^p$-a.e.\ $x\in E$.}
\end{equation}
\end{lemm}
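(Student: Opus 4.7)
My plan is to establish the two maximal function bounds separately, following the strategy used in the harmonic case of \cite{AHM3TV}.

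For $M_n\omega^p$ I would use a standard Vitali covering argument. Set $A_M := \{x \in E : \limsup_{r\to 0^+}\omega^p(B(x,r))/r^n > M\}$ and, for each $x \in A_M$, pick arbitrarily small $r_x$ with $r_x^n \le M^{-1}\omega^p(B(x,r_x))$. A $5r$-Vitali selection yields a disjoint subfamily $\{B(x_i,r_i)\}$ covering $A_M$ by $\{B(x_i,5r_i)\}$, so, using that $\omega^p$ is a probability measure, $\HH^n_\infty(A_M) \lesssim \sum r_i^n \le M^{-1}$. Hence $\HH^n(\bigcap_M A_M) = 0$ and, by $\omega^p|_E \ll \HH^n|_E$, also $\omega^p(\bigcap_M A_M) = 0$, so $\limsup_{r\to 0^+}\omega^p(B(x,r))/r^n < \infty$ for $\omega^p$-a.e.\ $x \in E$. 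Since trivially $\omega^p(B(x,r))/r^n \le r^{-n}$ for $r$ bounded below, this limsup bound promotes to $M_n\omega^p(x) < \infty$.

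For $T_{*}\omega^p$, the starting point is Lemma \ref{lemgreen*}, which combined with the Green identity $G(p,\cdot) = \EE(p,\cdot) - \int \EE(z,\cdot)\,d\omega^p(z)$ gives, for every $x \in \Rn1 \setminus \partial\Omega$,
\begin{equation*}
\int \EE(z,x)\,d\omega^p(z) = \EE(p,x) - G(p,x),
\end{equation*}
with $G(p,\cdot)\equiv 0$ on $\Omega^c$. Since $\supp\omega^p \subset \partial\Omega$, the left-hand side is smooth in $x$ on $\Rn1\setminus\partial\Omega$, so we may differentiate under the integral sign. Using that $A$ is symmetric (an already-made reduction), $\EE(z,x) = \EE(x,z)$, hence $\nabla_x\EE(z,x) = \nabla_1 \EE(x,z) = K(x,z)$, which yields
\begin{equation*}
T\omega^p(x) = K(x,p) \text{ for } x\in\Rn1\setminus\overline\Omega, \qquad T\omega^p(x) = K(x,p) - \nabla_x G(p,x) \text{ for } x\in\Omega\setminus\{p\}.
\end{equation*}

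Now fix $x_0 \in E$ with $M_n\omega^p(x_0) < \infty$. The main geometric step, which I expect to be the hard part, is to find for $\omega^p$-a.e.\ such $x_0$ and each small $\varepsilon$ a \emph{corkscrew} point $x_\varepsilon$ with $|x_\varepsilon - x_0| \le \varepsilon/4$ and $\dist(x_\varepsilon, \partial\Omega)\ge c\varepsilon$, lying either in $\Omega$ or in $\Rn1\setminus\overline\Omega$. This accessibility is proved by contradiction as in \cite{AHM3TV}: failure forces $\overline\Omega$ to fill $B(x_0,\varepsilon)$ to near-full density along a positive $\omega^p$-measure subset of $E$ for arbitrarily small $\varepsilon$, and then Lemma \ref{lembourgain} (with $s=n$ and $\delta$ fixed) together with Lemma \ref{l:w>G}, the finite upper density of $\HH^n|_{\partial\Omega}$ coming from $\HH^n(E) < \infty$, and a Vitali-type cover produce a contradiction with $\omega^p(\R^{n+1}) < \infty$. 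Granted $x_\varepsilon$, the interior identity together with the boundary gradient estimate $|\nabla_x G(p,x_\varepsilon)| \lesssim G(p,x_\varepsilon)/\varepsilon$ and the bound $G(p,x_\varepsilon) \lesssim \omega^p(B(x_0,c'\varepsilon))/\varepsilon^{n-1} \lesssim M_n\omega^p(x_0)\,\varepsilon$ (coming from Lemmas \ref{l:w>G}--\ref{lembourgain}) give $|T\omega^p(x_\varepsilon)|\lesssim M_n\omega^p(x_0) + \dist(p,\partial\Omega)^{-n}$ uniformly in $\varepsilon$. Finally, writing
\begin{equation*}
T_\varepsilon \omega^p(x_0) - T\omega^p(x_\varepsilon) = \int_{|y-x_0|>\varepsilon}\!\bigl(K(x_0,y)-K(x_\varepsilon,y)\bigr)\,d\omega^p(y) - \int_{|y-x_0|\le \varepsilon}\!K(x_\varepsilon,y)\,d\omega^p(y),
\end{equation*}
the first integral is $\lesssim M_n\omega^p(x_0)$ by Lemma \ref{lemcz}(b) summed over dyadic annuli (yielding a geometric series $\sum_{j\ge 0}2^{-j\alpha}$), and the second is $\lesssim M_n\omega^p(x_0)$ because the corkscrew condition forces $|x_\varepsilon-y|\gtrsim \varepsilon$ on the domain of integration, so $|K(x_\varepsilon,y)|\lesssim \varepsilon^{-n}$ and $\omega^p(B(x_0,\varepsilon))\lesssim M_n\omega^p(x_0)\,\varepsilon^n$. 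Combining gives $|T_\varepsilon\omega^p(x_0)|\lesssim 1 + M_n\omega^p(x_0)$ uniformly in $\varepsilon$, whence $T_{*}\omega^p(x_0) < \infty$.
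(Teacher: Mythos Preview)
Your $M_n\omega^p$ argument via Vitali is fine and essentially equivalent to what the paper does (which instead factors through $\HH^n|_E$ and Radon--Nikodym). For $T_*\omega^p$, however, your route diverges from the paper and contains a genuine gap. You claim that for $\omega^p$-a.e.\ $x_0\in E$ with $M_n\omega^p(x_0)<\infty$ and \emph{every} small $\varepsilon$ there is a two-sided corkscrew point $x_\varepsilon$, and your contradiction sketch invokes ``the finite upper density of $\HH^n|_{\partial\Omega}$ coming from $\HH^n(E)<\infty$''. But $\HH^n(E)<\infty$ says nothing about $\partial\Omega\setminus E$; in a general Wiener-regular domain $\partial\Omega$ may even have positive $\LL^{n+1}$-measure, so there is no reason for corkscrews to exist at all small scales. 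The original \cite{AHM3TV} does not assert this either: it uses a stopping-time construction to select \emph{good} scales, which is precisely what the present paper says it is avoiding.

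The paper sidesteps corkscrew points altogether by averaging. Working on $E_k=\{M_n\omega^p\le k\}$ and at a density point $x$ of $E_k$, it introduces the smoothly truncated potential $u_r(z)=\EE(p,z)-\int\EE(y,z)\,\varphi_r(x-y)\,d\omega^p(y)$, which is $L_{A^T}$-harmonic in $B(x,r)$, so that $\wt T_r\omega^p(x)=\nabla_2\EE(p,x)-\nabla u_r(x)$ and interior regularity gives $|\nabla u_r(x)|\lesssim r^{-1}\avint_{B(x,r)}|u_r|$. Using \rf{eqfj33} this average splits into a local piece controlled by $M_n\omega^p(x)$ and a Green-function piece $r^{-1}\avint_{B(x,r)}G(p,\cdot)$. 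The latter is bounded via Lemmas~\ref{l:w>G}--\ref{lembourgain} together with the Frostman-type estimate $\HH^n_\infty(B(x,r)\cap E_k)\gtrsim_k\omega^p(B(x,r))$ (valid at density points of $E_k$), yielding $r^{-1}G(p,y)\lesssim_k 1$ for all $y\in B(x,r)\cap\Omega$ whenever a doubling condition \rf{lemdoub**} holds at scale $r$; non-doubling scales are handled by passing to the nearest doubling scale as in standard non-homogeneous Calder\'on--Zygmund theory. The upshot is that averaging over $B(x,r)$ replaces the need to locate any individual good point $x_\varepsilon$, which is exactly the step where your sketch breaks down.
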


Above, $M_n$ is the maximal radial operator defined in \rf{maximalradial}.

This result can be deduced from the
preceding lemmas arguing as in \cite{AHM3TV}. For  the convenience of the reader we show the detailed proof below. Remark that, instead of the stopping time arguments
from \cite{AHM3TV}, we use a simpler approach relying on the Lebesgue differentiation theorem.

\begin{proof}
For $\omega^p$-a.e.\ $x\in E$, we write
$$\limsup_{r\to 0} \frac{\omega^p(B(x,r))}{r^n} \leq \limsup_{r\to 0} \frac{\omega^p(B(x,r))}{\HH^n(B(x,r)\cap E)}\,\,
\limsup_{r\to 0} \frac{\HH^n(B(x,r)\cap E)}{r^n}.$$
The first $\limsup$ on the right hand side is finite $\omega^p$-a.e.\ in $E$ because of the absolute continuity of  
$\omega^p$ with respect to $\HH^n$ in $E$, while the last one is also finite by the classical density bounds for Hausdorff measure. Hence the left hand side is also finite  $\omega^p$-a.e.\ in $E$, or equivalently,
$$M_n\omega^p (x)  <\infty \quad\mbox{ for $\omega^p$-a.e.\ $x\in E$.}$$

It remains to show that $T_{*}\omega^p(x) <\infty$ for $\omega^p$-a.e.\ $x\in E$.
To this end, for $k\geq 1$ we define
$$E_k = \{x\in E: M_n\omega^p(x)\leq k\},$$
so that $E=\bigcup_{k\geq 1} E_k$, up to a set of $\omega^p$-measure zero. For a fixed $k\geq1$, let $x\in E_k$ be a density point of $E_k$, and let $r_0$ be small enough so that
$$\frac{\omega^p(B(x,r)\cap E_k)}{\omega^p(B(x,r))} \geq \frac12\quad\mbox{ for $0<r\leq r_0$.}$$
Observe that, since $\omega^p(B(z,\rho)\cap E_k)\leq k \rho^n$ for all $z\in E_k$ and all $\rho>0$,
by Frostman's Lemma we have
\begin{equation}\label{eqfrostman}
\HH_\infty^n(B(x,r)\cap\partial\Omega )\geq \HH_\infty^n(B(x,r)\cap E_k)\geq C(k)\, \omega^p(B(x,r)\cap E_k)\geq \frac{C(k)}2\, \omega^p(B(x,r)),
\end{equation}
for $0<r\leq r_0$.

Next we consider  a radial $C^\infty$ function
 $\varphi:\R^{n+1}\to[0,1]$  which vanishes in $B(0,1)$ and equals $1$ on $\R^{n+1}\setminus B(0,2)$,
and for $r>0$ and $z\in \R^{n+1}$ we denote
$\varphi_r(z) = \varphi\left(\frac{z}r\right) $ and $\psi_r = 1-\varphi_r$.
We set
$$
\wt T_r\omega^p(z) =\int \nabla_2\EE(y,z)\,\varphi_r(z-y)\,d\omega^p(y).$$
Note that, by Lemma \ref{lemm_freezing},
\begin{equation}\label{eqkey**}
\begin{split}
|T_r\omega^p(x)| & \leq \left|\int \varphi(x-y) \nabla_2\EE(y,x)\,d\omega^p(y)\right|  
+ \int \big|\chi_{|x-y|>r}-\varphi(x-y)\big| \,\big|\nabla_2\EE(y,x)\big|\,d\omega^p(y)\\
& \quad +
\int_{|x-y|>r} \big|\nabla_1\EE(x,y) - \nabla_2\EE(y,x)\big|\,d\omega^p(y)\\
& \leq \wt T_r\omega^p(x) + C\,M_n\omega^p(x) + \int \frac{C}{|x-y|^{n-\alpha}}\,d\omega^p(y)\\
& \leq \wt T_r\omega^p(x)+ C\,M_n\omega^p(x).
\end{split}
\end{equation}

To estimate $\wt T_r\omega^p(x)$, first we assume that
\begin{equation}\label{lemdoub**}
\omega^{p}(B(x,2\delta_0^{-1}r))\leq 2\delta_0^{-(n+1)}\omega^p(B(x,2r)),
\end{equation}
with $\delta_0$ as in Lemma \ref{lembourgain}.
For a fixed $x \in E_k$ and $z\in \R^{n+1}\setminus \bigl[\supp(\varphi_r(x-\cdot)\,\omega^p)\cup \{p\}\bigr]$, consider the function
\begin{equation}\label{lemufundsol*}
u_r(z) = \EE(p,z) - \int \EE(y,z)\,\varphi_r(x-y)\,d\omega^p(y),
\end{equation}
so that, by \rf{eqgreen1} and Lemma \ref{lemgreen*},
\begin{equation}\label{eqfj33}
G(p,z) = u_r(z) - \int \EE(y,z)\,\psi_r(x-y)\,d\omega^p(y)\quad \mbox{ for $\LL^{n+1}$-a.e.\   $z\in\R^{n+1}$.}
\end{equation}
Differentiating \eqref{lemufundsol*} with respect to $z$, we obtain
$$\nabla u_r(z) = \nabla_2 \EE(p,z) - \int \nabla_2\EE(y,z)\,\varphi_r(x-y)\,d\omega^p(y)
.$$
In the particular case $z=x$ we get (using also the H\"older continuity of $u_r$)
$$\nabla u_r(x) = \nabla_2 \EE(p,x)  - \wt T_r\omega^p(x),$$
and thus
\begin{equation}\label{eqcv1}
|\wt T_r\omega^p(x)|\lesssim \frac1{\dist(p,\partial\Omega)^n} + |\nabla u_r(x)|.
\end{equation}

Since $u_r$ is $L_{A^T}$-harmonic in $\R^{n+1}\setminus \bigl[\supp(\varphi_r(x-\cdot)\,\omega^p)\cup \{p\}\bigr]$ (and so in $B(x,r)$) and $A$ is H\"older continuous, using Moser's Harnack inequality,
we have
\begin{equation}\label{eqcv2}
|\nabla u_r(x)| \lesssim \frac1r\,\left(\avint_{B(x,r/2)}|u_r(z)|^2\,dz\right)^{1/2}\lesssim
\frac1r\,\avint_{B(x,r)}|u_r(z)|\,dz.
\end{equation}
From the identity \rf{eqfj33} we deduce that
\begin{align}\label{eqcv3}
|\nabla u_r(x)| &\lesssim \frac1r\,\avint_{B(x,r)}G(p,z)\,dz + 
\frac1r\,\avint_{B(x,r)}
\int \EE(y,z)\,\psi_r(x-y)\,d\omega^p(y)\,dz \nonumber\\
& =:I + II.
\end{align}
To estimate the term $II$ we use Fubini and the fact that $\supp\psi_r\subset B(x,2r)$:
\begin{align}
II & \lesssim \frac1{r^{n+2}}\, \int_{y\in B(x,2r)}\int_{z\in B(x,r)} \frac 1{|z-y|^{n-1}} \,dz\,d\omega^p(y)\\
& \lesssim \frac{\omega^p(B(x,2r))}{r^{n}} \lesssim M_n\omega^p(x).
\end{align}

We want to show now that $I\lesssim_k 1$.
Clearly it is enough to show that
\begin{equation}\label{eqsuf1}
\frac1r\,| G(p,y)|\lesssim_k 1\qquad\mbox{for all $y\in  B(x,r)\cap\Omega$}
\end{equation}
(still under the assumptions $x\in E_k$, $0<r\leq r_0/2$, and \rf{lemdoub**}).
To prove this, observe that by Lemma \ref{l:w>G} (with $B= B(x,r)$, $a=2\delta_0^{-1}$), for all $y\in B(x,r)\cap\Omega$,
we have
$$\omega^{p}(B(x,2\delta_0^{-1}r))\gtrsim \inf_{z\in B(x,2r)\cap \Omega} \omega^{z}(B(x,2\delta_0^{-1}r))\, r^{n-1}\,|G(p,y)|.$$
On the other hand, by Lemma \ref{lembourgain} and \rf{eqfrostman}, for any $z\in B(x,2r)\cap\Omega$ and 
$0<r\leq r_0/2$,
$$\omega^{z}(B(x,2\delta_0^{-1}r))\gtrsim \frac{\HH^n_\infty(B(x,2r)\cap\partial\Omega)}{r^n}\gtrsim C(k)\frac{\omega^p(B(x,2r))}{r^n}.$$
Therefore we have
$$
\omega^{p}(B(x,2\delta_0^{-1}r))
\gtrsim C(k)
\frac{\omega^p(B(x,2r))}{r^n}\, r^{n-1}\,|G(p,y)|,
$$
and thus, by \rf{lemdoub**},
$$
\frac1r\,| G(p,y)|\lesssim_k \frac{\omega^{p}(B(x,2\delta_0^{-1}r))}{\omega^p(B(x,2r))}\lesssim_k 1,
$$
which proves \rf{eqsuf1}. So we deduce that
\begin{equation}\label{eqry33}
|\wt T_r\omega^p(x)|\lesssim_k \frac1{\dist(p,\partial\Omega)^n} + 1
\end{equation}
for $x\in E_k$ and $0<r\leq r_0/2$ satisfying \rf{lemdoub**}.

In the case where \rf{lemdoub**} does not hold, we consider the largest $s>0$ of the form
$s= 2\delta_0^{j} r$, $j>0$, such that \rf{lemdoub**} holds with $s$ replacing $r$. By standard methods 
from non-doubling Calder\'on-Zygmund theory, it follows that such $s$ exists for $\omega^p$-a.e.\ $x\in E_k$ and moreover
$$|\wt T_r\omega^p(x)| \leq |\wt T_s\omega^p(x)| + C\,M_n \omega^p(x).$$
See, for example, Lemmas 2.8 and 2.20 from \cite{Tolsa-llibre}.
Then, applying \rf{eqry33} with $r=s$, we infer that
$$|\wt T_r\omega^p(x)| \lesssim_k \frac1{\dist(p,\partial\Omega)^n} + 1 + M_n \omega^p(x) 
\lesssim _k \frac1{\dist(p,\partial\Omega)^n} + 1.$$
So in any case we deduce that  $|\wt T_r\omega^p(x)|$ is bounded uniformly for $\omega^p$-a.e.\ $x\in E_k$
and $r$ small enough. By \rf{eqkey**}, this implies that the same holds for $|T_r\omega^p(x)|$, and thus it follows that $T_*\omega^p(x)<\infty$ for $\omega^p$-a.e.\ $x\in E_k$, and so for $\omega^p$-a.e.\ $x\in E$, as wished.
\end{proof}

From the preceding lemma and \rf{remantis} we deduce that the antisymmetric operator $T^{(a)}$ satisfies
$$T_{*}^{(a)}\omega^p(x) \leq M_n\omega^p (x) + T_{*}\omega^p(x)<\infty.$$
Next we apply the following $Tb$ type theorem due to Nazarov, Treil and Volberg \cite{NTV}, \cite{Volberg}
 in combination with the methods in \cite{Tolsa-pams}. For the detailed proof in the case of the Cauchy
transform, see \cite[Theorem 8.13]{Tolsa-llibre}. The same arguments with very minor modifications work for antisymmetric operators.

\begin{theor}  \label{teo**}
Let $\sigma$ be a Radon measure with compact support on $\R^{n+1}$   and consider a $\sigma$-measurable set
$G$ with $\sigma(G)>0$ such that
$$G\subset\{x\in \R^{n+1}: 
M_n\sigma(x) < \infty \mbox{ and } \,T^{(a)}_* \sigma(x) <\infty\}.$$
Then there exists a Borel subset $G_0\subset G$ with $\sigma(G_0)>0$  such
that $\sup_{x\in G_0}M_n\sigma|_{G_0}(x)<\infty$ and  $T^{(a)}_{\sigma|_{G_0}}$ is bounded in $L^2(\sigma|_{G_0})$.
\end{theor}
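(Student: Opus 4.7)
My plan for Theorem \ref{teo**} is to extract successively smaller compact subsets of $G$ on which the relevant pointwise data becomes uniform, and then invoke the non-homogeneous $Tb$ theorem of Nazarov--Treil--Volberg (as used in \cite[Theorem~8.13]{Tolsa-llibre}).

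\emph{Step 1: Extraction of a uniformly controlled big piece.} Since $M_n\sigma$ and $T^{(a)}_*\sigma$ are $\sigma$-measurable and finite $\sigma$-a.e.\ on $G$, a routine Severini--Egorov argument combined with inner regularity of $\sigma$ produces a compact subset $G_1\subset G$ with $\sigma(G_1)>0$ and a constant $C_1>0$ such that
\begin{equation}
\sup_{x\in G_1}\bigl(M_n\sigma(x)+T^{(a)}_*\sigma(x)\bigr)\leq C_1.
\end{equation}
The uniform bound on $M_n\sigma$ yields $\sigma(B(x,r))\leq C_1 r^n$ for all $x\in G_1$ and $r>0$, so $\sigma$ (and hence any restriction) has polynomial growth of degree $n$ on $G_1$.

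\emph{Step 2: A density refinement.} Applying Lebesgue differentiation to $\chi_{G_1}$ with respect to $\sigma$, at $\sigma$-a.e.\ point of $G_1$ the density $\sigma(G_1\cap B(x,r))/\sigma(B(x,r))$ tends to $1$ as $r\to 0$. A further Egorov step upgrades this to uniform convergence on a compact subset $G_0\subset G_1$ with $\sigma(G_0)>0$, so that for some $r_0>0$,
\begin{equation}
\sigma(G_0\cap B(x,r))\geq \tfrac{1}{2}\,\sigma(B(x,r))\quad \text{for all } x\in G_0,\ 0<r\leq r_0.
\end{equation}
This density property makes $\sigma|_{G_0}$ locally comparable to $\sigma$ on $G_0$, and obviously $\sup_{x\in G_0}M_n\sigma|_{G_0}(x)\leq C_1$.

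\emph{Step 3: Pointwise bound for the restricted operator and application of the $Tb$ theorem.} I would then show that $T^{(a)}_*(\sigma|_{G_0})$ is uniformly bounded on $G_0$. Writing $T^{(a)}_\varepsilon(\sigma|_{G_0})(x)=T^{(a)}_\varepsilon\sigma(x)-T^{(a)}_\varepsilon(\sigma|_{G_0^c})(x)$, the first term is bounded by $C_1$ on $G_0$; for the second, one combines the $n$-growth of $\sigma$ with a Cotlar-type inequality for antisymmetric kernels on non-doubling measures, converting $T^{(a)}_*(\sigma|_{G_0^c})(x)$ into an average of $|T^{(a)}(\sigma|_{G_0^c})|$ on balls about $x$, which is controlled through the density property of $G_0$. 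With $b\equiv 1$ as the test function---para-accretive with respect to $\sigma|_{G_0}$ since $m_{\sigma|_{G_0},B}\,b=1$ on every ball $B$ meeting $\supp\sigma|_{G_0}$---the pair $(\sigma|_{G_0},b)$ satisfies the hypotheses of the NTV non-homogeneous $Tb$ theorem for antisymmetric Calder\'on--Zygmund kernels. Its conclusion gives $L^2(\sigma|_{G_0})$-boundedness of $T^{(a)}_{\sigma|_{G_0}}$, finishing the proof.

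The main obstacle is Step 3: $\sigma|_{G_0^c}$ may carry significant mass arbitrarily close to points of $G_0$, so the crude size estimate $|K^{(a)}(x,y)|\lesssim |x-y|^{-n}$ on its own is not summable and one is genuinely forced to use both the antisymmetry of the kernel and the density property of $G_0$ to cancel the non-local contribution. In practice, the most efficient way to organize Steps~3 and~4 is to run the NTV machinery directly with ``suppressed'' kernels tuned to the density profile of $G_0$, as in Tolsa's proof of \cite[Theorem~8.13]{Tolsa-llibre}, which folds the pointwise control of $T^{(a)}_*\sigma$ into the suppression weight and bypasses the need for an independent a priori $L^\infty$-bound on $T^{(a)}(\sigma|_{G_0})$.
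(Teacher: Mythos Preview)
The paper does not actually prove Theorem~\ref{teo**}; it is stated as a known result due to Nazarov, Treil and Volberg \cite{NTV}, \cite{Volberg}, combined with the methods of \cite{Tolsa-pams}, and the reader is referred to \cite[Theorem~8.13]{Tolsa-llibre} for a detailed proof (in the Cauchy setting, with the remark that the same arguments work for general antisymmetric Calder\'on--Zygmund operators). Your outline is consistent with that cited approach: the extraction of a big piece with uniform control on $M_n\sigma$ and $T^{(a)}_*\sigma$, followed by the NTV $Tb$ machinery with suppressed kernels, is exactly the scheme of \cite[Chapter~8]{Tolsa-llibre}.

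You have also correctly identified where a na\"{\i}ve version of the argument breaks down. The decomposition $T^{(a)}_\varepsilon(\sigma|_{G_0})=T^{(a)}_\varepsilon\sigma-T^{(a)}_\varepsilon(\sigma|_{G_0^c})$ and an appeal to a Cotlar-type inequality for the second term is circular as written, since Cotlar's inequality presupposes the very $L^2$-boundedness one is after. As you note in your final paragraph, the correct resolution---and the one used in the references---is to bypass any independent $L^\infty$ bound on $T^{(a)}(\sigma|_{G_0})$ and instead feed the uniform pointwise control on $M_n\sigma$ and $T^{(a)}_*\sigma$ directly into the suppressed-kernel version of the NTV $Tb$ theorem, with $b\equiv 1$. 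So your proposal is on the right track, and once you commit to the suppressed-kernel route your Steps~1--2 together with the NTV theorem give the conclusion.
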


Applying this theorem to the measure $\sigma=\omega^p$ and the set $G=E$, we infer that there exists 
a subset $G_0\subset E$ with $\omega^p(E)>0$ such that $T^{(a)}_{\omega^p|_{G_0}}$ is bounded
in $L^2(\omega^p|_{G_0})$. Then, by Lemma \ref{lemantisym} it turns out that $T_{\omega^p|_{G_0}}$ is also bounded
in $L^2(\omega^p|_{G_0})$.
Since $\omega^p$ is absolutely continuous with respect to $\HH^n$ on $G_0$, by applying
Theorem \ref{teo2} we deduce that $G_0$ is $n$-rectifiable. Now, by a standard exhausting argument we deduce that $\omega^p$ is concentrated in an $n$-rectifiable set and thus $\omega^p$ is $n$-rectifiable.


\label{Bibliography}

\end{document}